\title[Growth of Couette flow]{Long time instability of the Couette flow in low Gevrey spaces}
\author{Yu Deng, Nader Masmoudi}
\date{}
\newtheorem{theorem}{Theorem}[section]
\newtheorem{proposition}[theorem]{Proposition}
\newtheorem{remark}[theorem]{Remark}
\numberwithin{equation}{section}
\begin{document}
\maketitle

\begin{abstract}

We prove the instability of the Couette flow if the disturbances   is less smooth than 
the Gevrey space of class  2.  This shows that this is the critical regularity  for this problem
 since  it was proved in \cite{BM15}  that stability and inviscid damping hold for 
 disturbances   which are  smoother  than 
the Gevrey space of class  2.  A big novelty is that this critical space is due to 
an instability mechanism which is completely nonlinear and is due to some 
energy cascade. 

\end{abstract}

\section{Introduction} 

In recent years  a lot of effort was devoted to proving several 
stability results in fluid mechanics  \cite{BM15,BGM17,Wu11,GMS12,DIPP16}, plasma physics  \cite{MV11,Guo03,GM14,GIP16}  and general relativity \cite{LR10}. 
These results usually assume  some smallness in  some space with  high regularity  and/or 
spacial localization.  The main idea  is to prove that the linear problem has some good properties 
(dispersion, mixing, decay)  that allow to control the nonlinear term  and hence yield a global existence 
result.  Of course the particular properties of the nonlinearity are very important 
(energy conservation, null structure,...).    A natural question to be asked is  whether these  regularity,  decay and smallness   assumptions on the initial  data   
are  really needed: Is it  possible to  prove some 
instability if one of  these  assumptions is removed?  
  A second natural question is to describe the formation of the instability if stability does not hold.  
     This kind of questions turns out to be much more difficult (see of instance \cite{GHS07,GN12,BGM15II}).  
 One has to find a growth mechanism, construct an approximate solution that 
 mimics that growth and then prove that there is an exact solution that stays 
 close to this approximate solution.   In some sense one has to prove the "stability" of this 
 unstable profile.   In some cases the instability is so strong that it yields an ill-posedness result 
 \cite{GN12}. 
 
 In this paper,  we prove such  an instability result for the 2D Euler system close to the  Couette 
 flow.  Unlike previous results where the instability comes from the linear part, namely the presence of 
a growing mode  \cite{GHS07,GN12}  or  the presence of a  Jordan block structure   \cite{BGM15II}, the growth mechanism   here 
 is completely nonlinear.  Indeed, the linear problem is  in some sense stable. 
   The nonlinear mechanism   is a sort of  inverse energy cascade where energy 
  propagates 
 from higher modes in $x$  to lower modes. This movement  happens with some amplification 
 that causes the growth and yields the instability.

 Our result should be compared to the result of \cite{BM15}, where stability of the Couette flow was proved in Gevrey class $\mathcal{G}^{\lambda,1/2+}$; we are showing instability in the regularity class that is just below. The main goal here is to 
 prove   that weakly nonlinear effects  can create a self-sustaining process and  push 
the solution out of  the linear regime. 
%However, in our setting, the primary interest is to understand how the nonlinearity interacts with the transient growth predicted by the linear theory.
The idea that the interaction between nonlinear effects and non-normal transient growth can lead to instabilities is classical in fluid mechanics (see e.g. \cite{TTRD93}).
The basic mechanism suggested in \cite{TTRD93} is that nonlinear effects can repeatedly excite growing modes 
and precipitate a sustained cascade or so-called `nonlinear bootstrap', studied further in the fluid mechanics context in, for example, \cite{BaggettEtAl,VMW98,Vanneste02}. 
Actually, this effect is very similar to what is at work behind \emph{plasma echos} in the Vlasov equations, first captured experimentally in \cite{MalmbergWharton68}. 
This phenomenon is referred to as an `echo' because the measurable result of nonlinear effects can occur long after the event.   
Very similar echos have been studied and observed in 2D Euler, both numerically \cite{VMW98,Vanneste02} and experimentally \cite{YuDriscoll02,YuDriscollONeil} (interestingly, non-neutral plasmas in certain settings make excellent realizations of 2D Euler). %, a fact which is not only of interest for experiments \cite{CerfonEtAl13}). 

Consider the $2D$ Euler equation
\begin{equation}\label{euler0}
\left\{\begin{split}\partial_tu+(u\cdot\nabla)u&=-\nabla p,\\
\nabla\cdot u&=0,
\end{split}\right.
\end{equation} on $\mathbb{T}_x\times\mathbb{R}_y$. A \emph{shear flow} is a specific solution $u=(V(y),0)$ to (\ref{euler0}), and the specific case $V(y)=y$ is called the \emph{Couette flow}.

We are interested in long-time instability of small perturbations to the Couette flow. Writing the perturbation of (\ref{euler0}) in vorticity form, i.e. define $\omega=\nabla\times u+1$, then we can reduce (\ref{euler0}) to
\begin{equation}\label{euler}
\left\{
\begin{aligned}
\partial_t\omega&+y\partial_x\omega+U\cdot\nabla\omega=0,\\
U&=\nabla^{\perp}\Delta^{-1}\omega,
\end{aligned}
\right.
\end{equation} where $\nabla^{\perp}=(-\partial_y,\partial_x)$. The equation (\ref{euler}) will be the main object of study in this paper.
\subsection{Backgrounds, and the main theorem}
\subsubsection{Inviscid damping and stability} The stability problem for the Couette flow, as well as the more general shear flows, has been a topic of interest since the end of the nineteenth century  \cite{Case59,Case60,Kelvin87,MP77,Orr07,Rayleigh80,Rayleigh87,Rayleigh95}.

The linearization of (\ref{euler}) is the transport equation $\partial_t\omega+y\partial_x\omega=0$, which is stable, i.e., has no growing mode.
In fact the same is true for a class of shear flows \cite{Rayleigh80}, and for the corresponding Navier-Stokes equations. Nevertheless, in the case of sufficiently small viscosity, experiments realizing the Navier-Stokes system have exhibited evidence of long-time instability for the Couette and other linearly stable shear flows. This phenomenon, usually referred to as the \emph{Sommerfeld paradox}, suggests the presence of some subtle nonlinear effects in the small data problem for (\ref{euler}).

Two important observations, regarding the interplay between linear and nonlinear effects, are made by Orr \cite{Orr07}. Expressed in modern terminology, the idea is to invert the linearized flow of (\ref{euler}) by the change of coordinates
\begin{equation}\label{naive}f(t,z,y)=\omega(t,z+ty,y),
\end{equation} then $f$ is transported by the vector field $\nabla^{\perp}\phi$, where $\phi$ is the stream function $\Delta^{-1}\omega$ written in new coordinates, such that
\begin{equation}\label{naive2}\widehat{\phi}(t,k,\xi)=\frac{\widehat{f}(t,k,\xi)}{(\xi-tk)^2+k^2}.
\end{equation} The observations of Orr are then the followings:

(1) the equation (\ref{naive2}) incorporates $t^{-2}$ decay for $\phi$ at the cost of two derivatives. This is due to the effect of the linear transport equation moving $\widehat{\omega}$ to high frequencies, which is known as \emph{inviscid damping};

(2) the denominator of (\ref{naive2}) is minimized at $\xi=kt$. Thus the $(k,\xi)$-Fourier mode of the stream function $\phi$ is peaked at time $t=\xi/k$, known as the \emph{Orr critical times}. These peaks  cause the vorticity $f$ to exhibit transient growth near  the critical times, which Orr used as a possible explanation for the Sommerfeld paradox.

The evolution of small perturbations around the  Couette flow is then basically governed by these two effects. This is similar to the case of Vlasov equations in plasma physics, where  \emph{Landau damping} replaces inviscid damping  and  \emph{plasma echoes} replace the Orr mechanism.  
Despite the good understanding of the linear picture, the asymptotic behavior of small perturbations 
 for  both equations  remained open for a long time.

In \cite{MV11}, Mouhot and Villani were able to justify Landau damping for the full nonlinear system in analytic regularity, thus establishing stability of the equilibrium state (see also \cite{BMM16} for an improvement to Gevrey spaces). The corresponding result (i.e. nonlinear inviscid damping and stability) for the $2D$ Couette flow was then obtained by Bedrossian-Masmoudi \cite{BM15} (see also \cite{BMV16} for a corresponding result for Navier-Stokes). Precisely, they proved in \cite{BM15} the following\footnote{The precise statement proved in \cite{BM15} only covers the case $1/2<s<1$; however, the proof can be adapted, with small modifications, to the analytic case.}
\begin{theorem}[Bedrossian-Masmoudi \cite{BM15}]\label{backthm} Let $\lambda>\lambda'>0$ and $1/2<s\leq 1$, and let $\varepsilon>0$ be sufficiently small. Suppose the initial data $\omega(0)$ satisfies $\|\omega(0)\|_{\mathcal{G}^{\lambda,s}}\leq\varepsilon$, and the assumptions \begin{equation}\label{auxassum}\int_{\mathbb{T}\times\mathbb{R}}(1+|y|)\cdot|\omega(0,x,y)|\,\mathrm{d}x\mathrm{d}y\leq\varepsilon,\quad\int_{\mathbb{T}\times\mathbb{R}}\omega(0,x,y)\,\mathrm{d}x\mathrm{d}y=0,\end{equation} where the Gevrey norm is defined by 
\[\|f\|_{\mathcal{G}^{\lambda,s}}^2=\sum_k\int_{\mathbb{R}}e^{2\lambda(|k|+|\xi|)^s}|\widehat{f}(k,\xi)|^2\,\mathrm{d}\xi,
\] then there exists a unique global solution $\omega$ to (\ref{euler}). Moreover, the solution satisfies that
\begin{equation}\label{backthm1}\|\omega(t,x+t\Psi(y),y)\|_{\mathcal{G}^{\lambda',s}}\lesssim\varepsilon,
\end{equation} where $\Psi(y)=y+O(\varepsilon)$ is another shear flow near the Couette flow, and there exists 
  $  \omega_\infty \in {\mathcal{G}^{\lambda',s}}  $  such  that
\begin{equation}\label{backthm2}\|\omega(t,x+\Phi(t,y),y) -\omega_\infty(x+\Phi(t,y),y) \|_{\mathcal{G}^{\lambda',s}}\lesssim\varepsilon^2(1+|t|)^{-1},
\end{equation} where $\Phi(t,y)=t\Psi(y)+O(\varepsilon^2)$ is a correction term depending on the solution.
\end{theorem}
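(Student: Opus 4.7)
The plan is to rewrite the equation in a coordinate system adapted to the eventual shear profile, construct a Fourier multiplier tailored to the chain of Orr critical times, and close an energy estimate in the resulting weighted Gevrey space.

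First I would pass from $(x,y)$ to nonlinear coordinates $(z,v)=(x+t\Psi(y),\Psi(y))$, where $\Psi$ is determined self-consistently from the $x$-average of the velocity. In these variables the new vorticity $f(t,z,v)=\omega(t,x,y)$ is transported by the gradient of a stream function whose Fourier symbol is essentially that of \eqref{naive2}, and the leading-order shear advection is subtracted off. Doing this upfront is essential: the drift $\Psi-y$ is only of size $\varepsilon$, yet on long time scales it fails to be Gevrey-small, so the bare change of variables \eqref{naive} would never close.

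Next I would build a Fourier multiplier $A(t,k,\xi)=e^{\lambda(t)(|k|+|\xi|)^s}M(t,k,\xi)$, in which $\lambda(t)$ decreases monotonically from $\lambda$ to $\lambda'$ and $M$ is a weight whose logarithm tracks the total transient amplification accumulated along the sequence of Orr critical times $\xi=kt$ preceding the frequency $(k,\xi)$. The multiplier is engineered so that, for a typical wave interaction carrying $(k',\xi')$ to $(k,\xi)$ near a critical time, the ratio $A(t,k,\xi)/A(t,k',\xi')$ absorbs the near-resonant denominator $((\xi'-tk')^2+(k')^2)^{-1}$ coming from \eqref{naive2}. A combinatorial count of the echo cascade (whose length at frequency $N$ is of order $N^{1-s}$) shows $\log M\lesssim(|k|+|\xi|)^{1/2}$, so $A$ still defines a Gevrey-$s$ norm exactly when $s>1/2$; this is the mechanism that selects the critical exponent.

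With $A$ in hand I would run a bootstrap on $\|Af(t)\|_{L^2}^2$. A paraproduct decomposition splits the nonlinearity $U\cdot\nabla f$ into Reaction (high-frequency $\phi$ meets low-frequency $f$), Transport (high-frequency $f$) and Remainder pieces. The Reaction term is the one that builds the echo; it is precisely compensated by the commutator gain built into $M$. The Transport term is dominated by the Cauchy--Kovalevskaya good term generated by $\dot\lambda(t)<0$, and the Remainder is strictly lower order. Closing by Gronwall yields the global bound \eqref{backthm1}; the scattering statement \eqref{backthm2} follows from the uniform control of $f$ combined with the linear $t^{-2}$ damping visible in \eqref{naive2}, while the $O(\varepsilon^2)$ correction $\Phi(t,y)$ absorbs the nonlinear self-consistent shift of the characteristics (and $\omega_\infty$ is obtained as the Cauchy limit of $f$ in the weaker norm $\mathcal{G}^{\lambda',s}$, using \eqref{auxassum} to enforce the zero-mean condition).

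The principal obstacle is the design and exploitation of $M$: it must grow enough along each resonant echo to defeat the Reaction term, but not so much that Gevrey-$s$ regularity is forfeited. Balancing these competing requirements, and carrying out the delicate bookkeeping of the echo cascade, is the technical heart of the argument; it is also exactly the computation that forces the restriction $s>1/2$, and hence matches the instability regime studied in the rest of the paper.
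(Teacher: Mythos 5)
The paper does not prove Theorem~\ref{backthm}: it is quoted as background from Bedrossian--Masmoudi \cite{BM15}, so there is no proof here to compare against line by line. That said, your sketch is a faithful high-level account of the actual strategy in \cite{BM15}, and it dovetails with what this paper borrows from there: the nonlinear change of coordinates (cf.\ \eqref{changeva} and Proposition~\ref{mapping}, which this paper reproduces), a time-decreasing Gevrey radius $\lambda(t)$ providing a Cauchy--Kovalevskaya good term, a multiplier weight tracking the accumulated Orr-critical-time amplification, the Reaction/Transport/Remainder paraproduct split, and a Gronwall bootstrap on $\|Af\|_{L^2}$.

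Two small inaccuracies worth flagging. First, the coordinate change is not $(z,v)=(x+t\Psi(y),\Psi(y))$; in \cite{BM15} and in \eqref{changeva} it is $z=x-tv(t,y)$ with $v(t,y)=y-\tfrac1t\bigl(\gamma(y)+\int_1^t\partial_y^{-1}\mathbb{P}_0\omega\,d\tau\bigr)$, i.e.\ one straightens by the \emph{self-generated} mean flow accumulated up to time $t$, not by a fixed profile $\Psi$; the limiting profile $\Psi$ emerges a posteriori. Second, the parenthetical claim that the echo cascade has length $N^{1-s}$ at frequency $N$ is off: the chain of active critical times $t=\xi/k$ has length $\sim\sqrt{\varepsilon_0|\xi|}$ independently of $s$, and the total amplification is $\prod_{k\lesssim\sqrt{\varepsilon_0|\xi|}}\tfrac{\varepsilon_0|\xi|}{k^2}\sim e^{c\sqrt{|\xi|}}$; the restriction $s>1/2$ then comes from demanding $e^{\lambda(|k|+|\xi|)^s}$ dominate $e^{c\sqrt{|\xi|}}$, not from the cascade length scaling with $s$. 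Neither point changes the skeleton of the argument, which you have right.
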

\subsubsection{Orr growth mechanism and instability}\label{instab00} Notice that the results in \cite{BM15,BMV16} and \cite{MV11} are all proved in Gevrey spaces, which are exponentially regular. As discussed before, it is natural to ask whether this regularity requirement is really necessary.

One reason to believe that exponential regularity is required is the presence of the Orr growth mechanism (or plasma echo for Vlasov equations). In \cite{BM15}, as a crucial step towards Theorem \ref{backthm}, it is proved that the total amplification factor caused by the growth at critical times has an upper bound of $e^{O(1)\sqrt{|\xi|}}$ at frequency $\xi$ (hence the assumption $s>1/2$ in Theorem \ref{backthm}). It is expected that this upper bound can in fact be saturated, which would then imply the optimality of the exponent in Theorem \ref{backthm}. The same thing happens for the Vlasov equations.

However, this is not easily justified, due to the complexity of the system and the possible cancellations in the growth mechanism that are hard to exclude. For the $2D$ Couette flow, the only known situation where the dynamics of Theorem \ref{backthm} does not happen is due to Lin and Zeng \cite{LinZeng11}, who constructed time periodic solutions to (\ref{euler}) in the Sobolev space $H^s$ where $s<3/2$ (thus proving asymptotic instability). For Vlasov equations,  Lin and Zeng \cite{LZ11b}  also proved a similar  result, and recently Bedrossian \cite{Bedrossian16} has established instability in the Sobolev space $H^s$ for  all $s$.

In the current paper,  we prove the criticality of the Gevrey  exponent $1/2$  
% we will fill the gap between the existing stability and instability results
 by proving instability of the $2D$ Couette flow in any Gevrey space $\mathcal{G}^{\lambda,s}$ where $0<s<1/2$. In fact we will prove something slightly stronger, where $\mathcal{G}^{\lambda,s}$ is replaced by a log-corrected version of $\mathcal{G}^{\lambda,1/2}$.
\subsubsection{The main result} We now state the main result of this paper.
\begin{theorem}\label{main}
Let $N_0=9000$, $N_1=60000$, and denote $\log^+(x)=\log(2+|x|)$. For a function $f:\mathbb{T}\times\mathbb{R}\to\mathbb{R}$, define the Gevrey-type norm $\mathcal{G}^{*}$ by
\begin{equation}\label{gevrey}
\|F\|_{\mathcal{G}^{*}}^{2}=\sum_{k\in\mathbb{Z}}\int_{\mathbb{R}}e^{2\kappa(k,\xi)}|\widehat{F}(k,\xi)|^2\,\mathrm{d}\xi,\quad \kappa(k,\xi)=\frac{(|k|+|\xi|)^{1/2}}{(\log^+(|k|+|\xi|))^{N_1}}.
\end{equation}Then, for any sufficiently small $\varepsilon>0$, there exists a solution $\omega=\omega(t,x,y)$ to \eqref{euler}, such that:
\begin{enumerate}
\item The initial data $\omega(0)$ satisfies the assumptions (\ref{auxassum}), and that
\begin{equation}\label{init}\|\omega(0)\|_{\mathcal{G}^*}\leq\varepsilon;
\end{equation}
\item At some later time $T$, the solution $\omega$ satisfies that
\begin{equation}\label{final}\|\langle \partial_x\rangle^{N_0}\omega(T,x,y)\|_{L^2(\mathbb{T}\times\mathbb{R})}\geq\frac{1}{\varepsilon}.
\end{equation}
\end{enumerate}
\end{theorem}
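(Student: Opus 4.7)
The strategy is to construct a carefully tuned initial perturbation that excites an \emph{echo chain} --- a cascade of nonlinear interactions concentrated at the Orr critical times $t=\xi/k$ --- whose compounded amplification exceeds the Gevrey-$\mathcal{G}^*$ smallness of the initial data. After passing to Orr's moving frame $f(t,z,y):=\omega(t,z+ty,y)$, the linear transport is killed and the dynamics is driven by the stream function $\widehat\phi(t,k,\xi)=\widehat f(t,k,\xi)/((\xi-tk)^2+k^2)$, whose $(k,\xi)$-mode is strongly peaked at the critical time $t_k^\xi:=\xi/k$. Near each such time the bilinear coupling $U\cdot\nabla f$ transfers energy primarily between neighboring modes $(k,\xi)$ and $(k-1,\xi)$ sharing the same $\xi$. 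Restricting to these dominant interactions yields a toy $2\times 2$ transfer matrix across each $t_k^\xi$, and chaining them over $k=M,M-1,\ldots,1$ gives a finite-dimensional cascade model.

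I would choose $\omega(0)$ with Fourier support concentrated near the Orr curve $\xi\approx Mk$ with $k$ close to $M$, where $M\to\infty$ as $\varepsilon\to 0$ at the rate forced by \eqref{init}. The amplitudes are taken geometrically small in $k$ and the phases are tuned so that the transfer across each successive critical time $t_M^\xi<t_{M-1}^\xi<\cdots<t_1^\xi$ is constructive rather than cancelling; the conditions \eqref{auxassum} are then secured by projecting out the $x$-mean and multiplying by a smooth $y$-cutoff, since the cascade uses only non-zero $x$-frequencies. The key estimate is that while the per-echo gain is modest, the compounded cascade gain is of order $\exp(c\sqrt\xi/\log^{N_1}\xi)$, which beats the initial $\mathcal{G}^*$-size $\sim\exp(-\sqrt\xi/\log^{N_1}\xi)$ for $\xi$ large enough depending on $\varepsilon$; the energy deposited at low $k$ and frequency $\xi\gg N_0$ after the cascade then produces \eqref{final} for a suitable $T\sim\xi$.

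The heart of the argument is the \emph{stability of the unstable profile}: showing that the true Euler flow tracks this toy cascade throughout $[0,T]$. I would run a bootstrap energy estimate with a multiplier modeled on the one introduced in \cite{BM15}, but used here to produce a \emph{lower} bound on the targeted bootstrap mode, while reusing the $\exp(c\sqrt\xi)$ upper bound from \cite{BM15} to control all non-resonant and error interactions. A paradifferential decomposition of $U\cdot\nabla f$ separates the resonant bilinear piece (the $2\times 2$ toy interaction near each $t_k^\xi$) from a remainder that must be shown to be smaller than the cascade gain. The main obstacles are: (i)~proving that the bilinear kernel near each $t_k^\xi$ is well approximated by its toy reduction, with quantitative errors small compared to the accumulated gain; (ii)~ruling out destructive interference between successive echoes through precise phase tracking across the $\log$-many stages; and (iii)~propagating the lower bound on the driving mode and the upper bound on the remainder simultaneously through the whole bootstrap. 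Establishing matching lower and upper bounds on the same highly nonlinear object in Gevrey regularity exactly at the critical threshold $s=1/2$ is the principal technical difficulty.
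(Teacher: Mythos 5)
Your proposal captures the echo-cascade heuristic correctly, but it departs from the paper's strategy at several points where the differences are not merely stylistic --- they are where the difficulty actually lives, and as stated your plan would likely stall.

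\textbf{Where the data is placed.} You propose choosing $\omega(0)$ directly at $t=0$, with amplitudes decaying geometrically in $k$ and phases tuned to produce constructive interference through the entire chain of critical times $t_M^\xi<\cdots<t_1^\xi$. The paper does something structurally different: the perturbation is assigned at a \emph{late} time $T_0\sim\sqrt{\eta_0/\varepsilon_0}$, concentrated at essentially a single pair of Fourier modes $(\pm k_0,\pm\eta_0)$, and the flow is run \emph{forward} on $[T_0,T_1]$ (to get the growth lower bound) and \emph{backward} on $[1,T_0]$ (to get an upper bound proving the data at $t=1$ is small in $\mathcal{G}^*$). The paper is explicit that it only knows how to saturate the Orr growth on a narrow window $[T_0,T_1]$, i.e.\ for critical indices $m\in[k_1,k_0]$ with $k_0-k_1\sim\sigma k_0$; tracking a cascade forward from $t=1$ through all critical times up to $t=T$ is exactly the open ``asymptotic instability'' problem mentioned in the paper's discussion. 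Your plan of ruling out destructive interference via ``precise phase tracking across the log-many stages'' from $t=0$ onward is, in effect, assuming that hardest part solved.

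\textbf{Background/perturbation split.} In the paper the cascade is driven by the interaction between a \emph{separate} low-frequency analytic background solution $\underline{g}$ (of size $\varepsilon_0$, with dominant mode $\varepsilon_0\cos z\cdot\varphi_b(v)$) and the single-frequency perturbation (of size $\varepsilon_1\ll\varepsilon_0$). The recurrence $\beta_{m\pm 1}(t_{m-1})=\beta_{m\pm1}(t_m)\mp(\alpha k_0^2/m^2)\varphi_b\cdot\beta_m(t_m)+\mathcal{R}$ comes from $\underline{f}\,\partial\,\underline{\Delta}_t^{-1}f'$-type terms, not from self-interaction of a multi-mode perturbation. Your plan seeds the cascade inside the perturbation itself (geometric profile in $k$), which removes the two-tier smallness $\varepsilon_1\ll\varepsilon_0$ and the decoupling that makes the toy model accurate mode by mode.

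\textbf{Lower bound mechanism.} The paper's lower bound is not obtained by phase tuning in Fourier space. It comes from a \emph{physical-space localization} estimate (the weighted $L^2_v$ bound showing $\beta_k(t,\cdot)$ stays concentrated where $\varphi_b(v)\approx 1$), so that the multiplicative factor $\varphi_b(v)$ in the recurrence does not degrade the size of $\beta_m$. Your proposal replaces this by an a priori phase choice in the initial data, which will not by itself control the multiplicative loss from the profile $\varphi_b$ or the nonlocal error $\mathcal{R}$.

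\textbf{Closing the nonlinear argument.} You propose a single bootstrap energy estimate with a BM-type multiplier. The paper states explicitly that this cannot work here: the $n$-th order iterate $g^{(n)}$ in the Taylor expansion lives at frequency $\sim(nk_0,n\eta_0)$, so a single Gevrey bootstrap norm cannot see the hierarchy $\|g^{(n)}\|\lesssim\|g^{(1)}\|^n$. Instead the paper performs a formal Taylor expansion to order $n_0=\eta_0^{N'}$, proves localization and $L^2$ bounds for each $g^{(n,q)}$ separately (reusing the linear analysis of Sections 5--6 with $(k_*,\eta_*)\approx(qk_0,q\eta_0)$), and only then closes a perturbative argument around the truncated sum $G^{(n_0)}$. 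This is a genuinely different nonlinear scheme, and it is forced by the structure of the problem; the plain bootstrap you sketch would not distinguish the growth at $(k_0,\eta_0)$ from the contamination at the harmonics.

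In short: the heuristic is right, but four load-bearing ideas of the actual proof are absent --- assignment of data at $T_0$ with backward control, the two-scale background/perturbation decomposition, the physical-space localization yielding the lower bound, and the Taylor expansion replacing the bootstrap. These are precisely the places where the expected cancellations and frequency spreading would otherwise defeat the argument.
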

\begin{remark} Note that \eqref{final} implies
\[\|\omega(T,x+\Phi(T,y),y)\|_{H^{N_0}(\mathbb{T}\times\mathbb{R})}\geq\frac{1}{\varepsilon},
\] for any function $\Phi=\Phi(t,y)$. Comparing this with (\ref{backthm1}) or (\ref{backthm2}), we see that no analog of Theorem \ref{backthm} can hold with initial data in the space $\mathcal{G}^*$.
\end{remark}
\subsection{Ideas of the proof} The proof starts by preforming a change of coordinates $(x,y)\mapsto (z,v)$. Following \cite{BM15}, we will use a slightly modified version of (\ref{naive}), see (\ref{changeva}) below, in order to handle the difficulties caused by the zeroth mode. This then reduces (\ref{euler}) to a system of equations satisfied by $g=(f,h,\theta)$, see (\ref{eulernew})$\sim$(\ref{eulersup}) below for details.
\subsubsection{The choice of data, and setup} The goal is to find a solution $(f,h,\theta)$ to (\ref{eulernew})$\sim$(\ref{eulersup}) that satisfies the required instability assumptions, see Proposition \ref{main1.5}. This solution will be constructed as the superposition of a background solution $(\underline{f},\underline{h},\underline{\theta})$, and a perturbation $(f^*,h^*,\theta^*)$ (which is a ``second order'' perturbation to the Couette flow). It turns out that $(h,\theta)$ plays a relatively less important role in the proof, so for simplicity, here we will consider $f$ only.

The background solution $\underline{f}$ is guaranteed to exist by Theorem \ref{backthm}; we will assume it has analytic regularity (i.e. $s=1$ in Theorem \ref{backthm}), and has size $\varepsilon_0\ll\varepsilon$, see Section \ref{background}. In practice we will think $\underline{f}$ as only having low frequency components, as it is much more regular than the perturbation $f^*$ we will construct.

The perturbation $f^*$ will be fixed by assigning the data at some time $t=T_0$, see Section \ref{perturbation}:
\[f^*(T_0)=\varepsilon_1 \cos(k_0z+\eta_0v)\varphi_p(k_0\sqrt{\sigma}v),
\] where $\varphi_p$ is a suitable Schwartz function. The relationship between the parameters $(\varepsilon_0,k_0,T_0,\eta_0,\sigma)$ are listed in (\ref{defpara}); for now it suffices to note that $\varepsilon_1\ll\varepsilon_0$, and $\widehat{f^*(T_0)}$ is concentrated near only two frequencies, $(k_0,\eta_0)$ and $(-k_0,-\eta_0)$, where $(k_0,\eta_0)$ is  the high frequency mode compared to $\underline{g}$. We also fix  two times $T_1\in[T_0,2T_0]$ and $T_2\leq T_0$, see (\ref{defineint}).
\subsubsection{The linearized system around $\underline{f}$}. Since $\varepsilon_1\ll\varepsilon_0$, it is natural to first study the linearization of (\ref{eulernew})$\sim$(\ref{eulersup}) at the background solution $\underline{f}$. This linear system has the form $\partial_tf'=\mathcal{L}f'$, where $\mathcal{L}$ is a linear operator defined in (\ref{eulernewl}), and $f'(T_0)=f^*(T_0)$. Following the observation made in \cite{BM15}, we know $\mathcal{L}$ consists of two parts: the first one is a ``transport'' term,
\begin{equation}\label{trans0}\mathcal{L}^Tf'=\underline{\Phi}\cdot\nabla f',
\end{equation} where $\underline{\Phi}$ is a combination of the background solution, which has much higher regularity than $f'$ (and thus contributes low frequencies only), and moreover decays like $t^{-2}$ (roughly $\underline{\Phi}\sim t^{-2}\underline{f}$).

The second one is a ``reaction'' term, which is responsible for the nonlinear  growth mechanism,
\begin{equation}\label{react0}\mathcal{L}^Rf'=\underline{F}\cdot\nabla\underline{\Delta_t}^{-1}f',
\end{equation} where $\underline{F}$ again comes from the background solution, but has no decay in time; the operator $\underline{\Delta_t}^{-1}$ is defined, up to some error terms, by
\[\widehat{\underline{\Delta_t}^{-1}f}(t,k,\xi) \sim \frac{-1}{(\xi-kt)^2+k^2}\widehat{f}(t,k,\xi);
\] see (\ref{kernel1}) for the precise expression. Notice that, if one compares (\ref{trans0}) and (\ref{react0}), say at a critical time $t=\xi/k$, and assume that $\underline{\Phi}\sim t^{-2}\underline{f}$, 
 then $\mathcal{L}^T$ dominates $\mathcal{L}^R$ if $t\lesssim k$ or equivalently $t\lesssim\sqrt{|\xi|}$, and $\mathcal{L}^R$ dominates $\mathcal{L}^T$ if $t\gtrsim\sqrt{|\xi|}$.

Our strategy is to show that the size of $\widehat{f'}$, say near $(\pm k_0,\pm\eta_0)$, exhibits growth at critical times \emph{between $T_0$ and $T_1$} by the Orr mechanism, and in fact saturates the upper bound proved in \cite{BM15}. Note that this also explains the seemingly strange choice of assigning data at $t=T_0$ instead of $t=1$, since we only know how to saturate the optimal growth on $[T_0,T_1]$.

Moreover, we need to go \emph{backwards} from $T_0$ and recover the control for $f'$ at time $t=1$. There are two regimes here: when $t$ is small (namely $t\leq T_2$; note that $T_2$ is almost $  \sqrt{\eta_0}$, see (\ref{defineint})), the transport term dominates, and the growth of $f'$ can be easily controlled by an energy-type inequality for transport equations, see Proposition \ref{gevrey2}; when $t$ is large, namely $t\in[T_2,T_0]$, the reaction term dominates and the situation will be much similar to what happens on $[T_0,T_1]$, except that only an upper bound is needed.

Summing up, we need to obtain a lower bound for $f'$ on $[T_0,T_1]$, and an upper bound for $f'$ on $[T_2,T_0]$, of form
\begin{equation}\label{compare}|\widehat{f'}(T_1,\pm k_1,\pm\eta_0)|\gtrsim e^{c\sqrt{\eta_0}}\varepsilon_1;\quad |\widehat{f'}(T_2, . ,\pm\eta_0)|\lesssim e^{c'\sqrt{\eta_0}}\varepsilon_1,
\end{equation} for some suitable $c>c'>0$. This would then imply that $f'(T_1)$ is large in $H^{N}$, and that $f'(T_2)$ (and hence $f'(1)$) is small in $\mathcal{G}^*$, upon choosing $\varepsilon_1$ appropriately. In both cases it is crucial to obtain precise bounds on the size of $\widehat{f'}$ near the  frequency $(\pm k_0,\pm\eta_0)$, which is the next step of the proof.
\subsubsection{Linear analysis, and a more precise toy model}\label{precisetoy} We may now restrict the linearized system to time $t\in[T_2,T_1]$, where the transport term plays essentially no role, so we will focus on the reaction term only. Recall the expression in (\ref{react0}); for simplicity we assume that $\underline{F}$ is independent of time and has only $k=\pm 1$ modes, say $\widehat{\underline{F}}(t,k,\xi)=\varepsilon_0\mathbf{1}_{k=\pm 1}\varphi(\xi)/2$ with a Schwartz function $\varphi$.

By (\ref{react0}), we then write down the equation
\begin{multline}\label{react01}\partial_t\widehat{f'}(t,k,\xi)=\int_{\mathbb{R}}\frac{\varepsilon_0\eta/2}{(\eta-t(k+1))^2+(k+1)^2}\widehat{\varphi}(\xi-\eta)\widehat{f'}(t,k+1,\eta)\,\mathrm{d}\eta\\-\int_{\mathbb{R}}\frac{\varepsilon_0\eta/2}{(\eta-t(k-1))^2+(k-1)^2}\widehat{\varphi}(\xi-\eta)\widehat{f'}(t,k-1,\eta)\,\mathrm{d}\eta
\end{multline} for $\widehat{f'}$. In \cite{BM15}, the authors replaced the function $\varphi$ on the right hand side of (\ref{react01}) by the $\delta$ function, obtaining an ODE \emph{toy model} which is essentially an ``envelope'' of (\ref{react01}) and can be solved explicitly. This is perfect for obtaining an \emph{upper bound} for solutions to (\ref{react01}), but in order to get a \emph{lower bound} a more accurate approximation will be needed - which is precisely what we are able to obtain here, under the assumption $\eta\approx\eta_0$ and $t\in[T_2,T_1]$.
Of course, we will also prove that the Fourier transform of $f'$ is localized around $\eta_0$. 

For simplicity, let us assume $t\sim T_0$; recall from (\ref{defpara}) that $T_0\sim\sqrt{\eta_0/\varepsilon_0}$. Since $\eta\approx\eta_0$ due to the definition of $f'(T_0)$, we know that (\ref{react01}) plays a significant role only near the critical times $\eta_0/m$, where $m\sim\sqrt{\varepsilon_0\eta_0}$. We thus cut the time interval into subintervals, each containing exactly one critical time. Define, see also (\ref{defineint}),
\[t_m=\frac{2\eta_0}{2m+1};\qquad \frac{\eta_0}{m}\in[t_m,t_{m-1}]:=I_m; 
\] then on each $I_m$, according to (\ref{react01}), only the  mode $k=m$ will be active and hence the 
 modes 
 $k=m\pm 1$ will get significant increments. Ineed,  \[\frac{1}{(\eta_0-kt)^2+k^2}\lesssim\frac{1}{t^2}\ll\frac{1}{m^2},\quad\forall t\in I_m,k\neq m.\] We can therefore solve (\ref{react01}) approximately and explicitly\footnote{Note that this argument works precisely when $t\in[T_2,T_1]$: when $t$ si too small transport terms will come in, and when $t$ is too large the $(m\pm 1)$ modes $\widehat{f'}(t,m\pm 1,\xi)$ will grow too much and destroy the approximate decoupling.}, obtaining an approximate recurrence relation (see (\ref{defpara}) for definition of parameters):
 \begin{equation}\label{recur0200}\mathscr{F}_zf'(t_{m-1},k,v)=\mathscr{F}_zf'(t_m,k,v)+\mathcal{R}+\left\{
 \begin{aligned}&0,&k&\neq m\pm 1;\\
 \mp\frac{\alpha k_0^2}{m^2}\varphi(v)&\cdot \mathscr{F}_zf{'}(t_{m},m,v),&k&=m\pm 1,
 \end{aligned}
 \right.
 \end{equation} after taking inverse Fourier transform in $\xi$, where the error term $\mathcal{R}$ is small in $L^2$; see Propositions \ref{newtoyback} and \ref{errorsmallprop}.
 
The recurrence relation (\ref{recur0200}) then plays the role of the toy model in \cite{BM15}. In fact, if we choose $\varphi$ such that $\|\varphi\|_{L^\infty}=1$, then this already suffices to prove the upper bound on $[T_2,T_0]$, since (\ref{recur0200}) essentially implies that
\[\sup_{k}\|\mathscr{F}_zf'(t_{m},k,\cdot)\|_{L^2}\leq\max\bigg(1,\frac{\alpha k_0^2}{m^2}\bigg)\cdot \sup_{k}\|\mathscr{F}_zf'(t_{m-1},k,\cdot)\|_{L^2},
\] and thus by iteration,
\begin{equation}\label{backgrow}\sup_{k}\|\mathscr{F}_zf'(T_2,k,\cdot)\|_{L^2}\leq\varepsilon_1\prod_{m=k_2}^{k_0}\max\bigg(1,\frac{\alpha k_0^2}{m^2}\bigg)\sim e^{c'\sqrt{\eta_0}}\varepsilon_1.
\end{equation} See Proposition \ref{interval30} for details.

We turn to the lower bound for $f'$ on $[T_0,T_1]$. If $\varphi$ were identically $1$, then in view of the \emph{smallness} of $\mathcal{R}$, we can use the same argument to obtain that 
\[\sup_{k}\|\mathscr{F}_zf'(t_{m-1},k,\cdot)\|_{L^2}\geq\max\bigg(1,\frac{\alpha k_0^2}{m^2}\bigg)\cdot \sup_{k}\|\mathscr{F}_zf'(t_{m},k,\cdot)\|_{L^2},
\] and hence
\begin{equation}\label{backgrow2}\sup_{k}\|\mathscr{F}_zf'(T_1,k,\cdot)\|_{L^2}\geq \varepsilon_1\prod_{m=k_0}^{k_1}\max\bigg(1,\frac{\alpha k_0^2}{m^2}\bigg) \sim e^{c\sqrt{\eta_0} } \varepsilon_1.
\end{equation} Comparing (\ref{backgrow}) and (\ref{backgrow2}) we obtain the desired inequality (\ref{compare}) by direct computations, due to our choice of parameters.

Nevertheless $\varphi$ cannot be identically $1$, and moreover the error term $\mathcal{R}$ is not local. To recover (\ref{backgrow2}), in view of the factor $\varphi(v)$ on the right hand side of (\ref{recur0200}), we thus need to localize $v$ in the region where $\varphi(v)$ is equal or close to $1$. This localization is achieved by switching to physical space and performing an energy-type estimate for an $L^2$ norm with exponential weights of $v$; see Proposition \ref{phys} for details.
\subsubsection{Nonlinear analysis, and the Taylor expansion} Up to now we have only considered $f'$, which is the solution to the linearized system $\partial_tf'=\mathcal{L}f'$. The full nonlinear system (\ref{eulernew})$\sim$(\ref{eulersup}), in terms of $f^*$, can be written as 
\begin{equation}\label{onetwo}\partial_ff^*=\mathcal{L}f^*+\mathcal{N}(f^*,f^*),
\end{equation} if, say, we consider only quadratic nonlinearities. Note that $f'$ can also be regarded as the first order term in a formal Taylor expansion of $f^*$; we may write out the higher order terms by $f^{(1)}=f'$ and 
\[\partial_tf^{(n)}=\mathcal{L}f^{(n)}+\sum_{q_1+q_2=n-1}\mathcal{N}(f^{(q_1)},f^{(q_2)}),\quad f^{(n)}(T_0)=0;
\] for a precise description, see Section \ref{taylorexp}. Our next step is to prove that, in some sense, we have\footnote{Note however that $f^{(n)}$ is supported at higher and higher frequencies, namely $(nk_0,n\eta_0)$; thus this fact cannot be captured by a bootstrap argument in a single Gevrey norm, and this formal Taylor expansion seems necessary.}
\begin{equation}\label{higher00}(\textnormal{the size of }f^{(n)})\lesssim(\textnormal{the size of }f^{(1)})^n,
\end{equation} see Proposition \ref{l2allgn}. Since the size of $f^{(1)}$ is $O(\varepsilon_1)$, the bound (\ref{higher00}) guarantees that the contribution of $f^{(n)}$ with $n\geq 2$ will be negligible, and thus Theorem \ref{main} follows from the estimates for $f^{(1)}$ obtained above.

The proof of (\ref{higher00}) follows from an inductive argument, where at each step we combine the multilinear estimates for $\mathcal{N}$ (see Propositions \ref{multilnearest} and \ref{multilinearest2}) with the linear estimates for the inhomogeneous equation
\[\partial_tf=\mathcal{L}f+\mathcal{N},
\] which is proved in the same way as the linear homogeneous case. Here the main difficulty is that $f^{(n)}$, being essentially the $n$-th power of $f^{(1)}$, is supported in Fourier space at (say) the frequency $(nk_0,n\eta_0)$. We thus need to run the arguments above for this particular choice of frequency, instead of $(k_0,\eta_0)$. Fortunately this just corresponds to changing of parameters in the Orr growth mechanism, and most of the arguments above still go through; see Section \ref{largeeta} for details. There are few exceptional cases, though, and they can be treated by using the energy-type estimates in Propositions \ref{energy1}$\sim$\ref{lastone}.

Finally, to avoid the divergence issue caused by doing the Taylor expansion directly, we will close the whole proof by fixing some very large $n_0$ and claiming that
\[f^{(1)}+f^{(2)}+\cdots+f^{(n_0)}
\] is an approximate solution to (\ref{onetwo}), with error term so small that an actual solution to (\ref{onetwo}) can be constructed by a perturbative argument on the interval $[1,T_1]$. This is done in Section \ref{error}. 
\subsection{Further discussions} We mention two possible further questions related to Theorem \ref{main}.
\subsubsection{Asymptotic instability} Given Theorem \ref{main}, an immediate question is whether asymtotic instability can also be proved for (\ref{euler}). We believe this can be done by repeatedly applying the arguments in this paper.

Roughly speaking, we fix the background solution $\underline{f}$ and construct the perturbation $f^*=f_1^*$ as in Theorem \ref{main}. Note that $f_1^*$ grows from some time $T_0^1$ to some later time $T_1^1$; We now take $\underline{f}+f_1^*$ as the new background and construct a further perturbation $f_2^*$ which grows from time $T_0^2$ to $T_1^2$, and so on. We then pile up a sequence of perturbations and define
\[f:=\underline{f}+f_1^*+f_2^*+\cdots,
\] which we expect to satisfy that
\[\|f(1)\|_{\mathcal{G}^*}\leq\varepsilon,\quad \lim_{t\to\infty}\|\langle \partial_x\rangle^{N_0}f(t)\|_{L^2}=+\infty.
\]	

The main difficulty here is to control the evolution of $f_1^*$ after time $T_1^1$; we then have to extend our arguments, which now covers only critical times $\eta_0/m$ with $m\gtrsim k_0$, to \emph{all} critical times up to $m=1$. We believe that a suitable combination of the techniques used in this paper and the weighted energy method used in \cite{BM15} should be the key to solving this problem.
\subsubsection{Genericity} Another natural question is whether the Orr growth mechanism is generic, i.e., whether the full upper bound of growth can be saturated for ``most'' solutions in a suitable sense.

To study this problem, we have to consider solutions with general distribution in frequencies, instead of the $f^*$ we choose here, which essentially has only two modes. In such cases we no longer have the simple decoupling as in Section \ref{precisetoy}, nor the recurrence relation (\ref{recur0200}); the main challenge is thus to find a substitute to (\ref{recur0200}) and to approximate (\ref{react01}), and it would be crucial to be able to separate the different components of the solution that evolve differently. It seems that some further physical-space based techniques will be needed.

Another challenge is the possible cancellations for the toy model (if we can find one) in the generic case. This also depends on how well  different frequencies and different physical space locations are  separated - if they are mixed together then we would have less control of the solution.
\subsection{Plan of the paper} In Section \ref{changeofcoor} we preform the change of variables, transform (\ref{euler}) to (\ref{eulernew})$\sim$(\ref{eulersup}) and reduce Theorem \ref{main} to Proposition \ref{main1.5}. In Section \ref{buy} we fix the norms and parameters needed in the proof, and also fix the target solution by constructing the data at time $T_0$. In Section \ref{linearize} we linearize (\ref{eulernew})$\sim$(\ref{eulersup}) at the background solution, and collect some supporting estimates for this linear system. The proof of these estimates will be left to Appendix \ref{appproof}.

In Section \ref{for}, which is the core part of the proof, we analyze the linearized system and obtain the desired upper and lower bounds for solutions to this system. In Section \ref{largeeta} we apply similar arguments as in Section \ref{for} to the corresponding inhomogeneous linear system and obtain control of the solutions; this estimate is then used in Section \ref{approximate} to construct, via Taylor expansion, an approximate solution to (\ref{eulernew})$\sim$(\ref{eulersup}). Finally, in Section \ref{error}, we construct an exact solution to (\ref{eulernew})$\sim$(\ref{eulersup}) based on this approximate solution, thus finishing the proof of Theorem \ref{main}.

\section{Choice of coordinates}\label{changeofcoor}
Before coming to the analysis we first introduce, in a similar way as \cite{BM15}, a change of variables that reduces \eqref{euler} to a system whose structure is more transparent.

Let $\omega=\omega(t,x,y)$, and let $\gamma=\gamma(y)$ be a function of $y$. For $t\geq 1$ we make the change of variables $(t,x,y)\mapsto (t,z,v)$, where
\begin{equation}\label{changeva}
\left\{
\begin{aligned}
z(t,x,y)&=x-tv(t,y),\\
v(t,y)&=y-\frac{1}{t}\bigg(\gamma(y)+\int_{1}^t (\partial_y^{-1}\mathbb{P}_0\omega)(\tau,y)\,\mathrm{d}\tau\bigg),
\end{aligned}
\right.
\end{equation}
and $\mathbb{P}_0$ is the projection onto the zero frequency in $x$ variable, and $\partial_y^{-1}F$ denotes the unique antiderivative of $F$ that vanishes at infinity, as long as $F$ has vanishing integral (which $\mathbb{P}_0\omega$ does due to the second equation in \eqref{auxassum}. Define the new functions
\begin{equation}\label{newfunc}
f(t,z,v)=\omega(t,x,y),\quad h(t,v)=\partial_yv(t,y)-1,\quad\theta(t,v)=\partial_tv(t,y).
\end{equation}
\begin{proposition}\label{mapping}
Let $J\subset[1,+\infty)$ be an interval, $\omega$ be a solution to \eqref{euler} on $J$ that satisfies the second equation in \eqref{auxassum} for any $t\in I$, and $\gamma(y)$ be a function of $y$ such that \begin{equation}\label{termgm}\gamma(\pm\infty):=\lim_{y\to\pm\infty}\gamma(y)=0.\end{equation} 

Define $(f,h,\theta)$ as in \eqref{changeva} and \eqref{newfunc}, then they satisfy the following system of equations:
\begin{equation}\label{eulernew}
\left\{
\begin{aligned}
\partial_tf&=-\theta\cdot\partial_vf-(h+1)\nabla^{\perp}\phi\cdot\nabla f,\\
\partial_th&=-\theta\partial_vh-\frac{\mathbb{P}_0f+h}{t},\\
\partial_t\theta&=-\frac{2\theta}{t}-\theta\partial_v\theta+\frac{1}{t}\mathbb{P}_0(f\cdot\partial_z\phi),
\end{aligned}
\right.
\end{equation}
where the relevant quantities are defined as 
\begin{equation}\label{eulersup}
\left\{
\begin{aligned}
\phi&=\mathbb{P}_{\neq 0}\Delta_t^{-1}f,\\
\Delta_t&=\partial_z^2+(h+1)^2(\partial_v-t\partial_z)^2+(h+1)(\partial_vh)(\partial_v-t\partial_z).
\end{aligned}
\right.
\end{equation} Moreover, for $t\in I$ we have
\begin{equation}\label{aux}
t(h+1)\partial_v\theta(t)+\mathbb{P}_0f(t)+h(t)=0;\quad \int_{\mathbb{R}}\frac{h(t,v)}{h(t,v)+1}\,\mathrm{d}v=0.
\end{equation}
 
 Conversely, suppose $(f,h,\theta)$ is a solution to the system \eqref{eulernew}$\sim$\eqref{eulersup} on $J$ that satisfies \eqref{aux} at one time of $J$, then \eqref{aux} is true on all of $J$, and there exists a unique solution $\omega$ to \eqref{euler} on $J$ that satisfies the second equation in \eqref{auxassum}, and a unique $\gamma$ satisfying $\gamma(\pm\infty)=0$, such that \eqref{changeva} and \eqref{newfunc} hold.
\end{proposition}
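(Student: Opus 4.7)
The plan is to treat \eqref{changeva}--\eqref{newfunc} as a time-dependent change of variables and verify every claim through direct chain-rule bookkeeping, with the auxiliary identities \eqref{aux} emerging as consistency conditions. First I would record the pull-back derivative formulas: for $\omega(t,x,y)=f(t,z(t,x,y),v(t,y))$ one has $\partial_x\omega = \partial_z f$, $\partial_y\omega = (h+1)(\partial_v - t\partial_z)f$, and $\partial_t|_{x,y}\omega = \partial_t f - (v+t\theta)\partial_z f + \theta\partial_v f$. Computing $\Delta = \partial_x^2+\partial_y^2$ in the new variables reproduces the operator $\Delta_t$ in \eqref{eulersup}, so if $\Psi(t,z,v) = \psi(t,x,y)$ is the pulled-back stream function then $\Delta_t\Psi = f$, and $\phi = \mathbb{P}_{\neq 0}\Psi$ by definition.

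For the $f$-equation I would decompose $U=\nabla^\perp\psi$ using the fact that $U^y = \partial_x\psi$ is purely nonzero-mode in $x$ while $U^x = -\partial_y\psi$ splits as $\mathbb{P}_0 U^x = -\partial_y^{-1}\mathbb{P}_0\omega$ (from $\partial_y^2\mathbb{P}_0\psi = \mathbb{P}_0\omega$ with vanishing at $\pm\infty$) and $\mathbb{P}_{\neq 0}U^x = -(h+1)(\partial_v - t\partial_z)\phi$, with $U^y = \partial_z\phi$. Substituting into $\partial_t\omega + y\partial_x\omega + U\cdot\nabla\omega = 0$ and collecting the coefficient of $\partial_z f$ produces a zero-mode part $y - v - t\theta + \mathbb{P}_0 U^x$ and a nonzero part. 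The precise form of $v$ in \eqref{changeva} is engineered so that the zero-mode coefficient vanishes identically: differentiating \eqref{changeva} in $t$ gives $t\theta = -\partial_y^{-1}\mathbb{P}_0\omega + (y-v)$, which is exactly the required cancellation. The remaining nonzero pieces combine, after simple algebra, into $-(h+1)\nabla^\perp\phi\cdot\nabla f$, yielding the $f$-equation in \eqref{eulernew}.

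The equations for $h$ and $\theta$ come from differentiating the explicit formulas in \eqref{changeva} and applying $\partial_t|_y = \partial_t|_v + \theta\partial_v$; the essential new input for $\theta$ is $\partial_t\partial_y^{-1}\mathbb{P}_0\omega = -\mathbb{P}_0(f\partial_z\phi)$, obtained by $y$-integrating $\partial_t\mathbb{P}_0\omega = -\partial_y\mathbb{P}_0(U^y\omega)$ (which uses $U\cdot\nabla\omega = \nabla\cdot(U\omega)$ and $\mathbb{P}_0\partial_x = 0$). The first identity in \eqref{aux} is then obtained by equating two expressions for $\partial_y(\partial_t v|_y)$: the chain rule gives $(\partial_v\theta)(h+1)$, while differentiating $\partial_t v = -\partial_y^{-1}\mathbb{P}_0\omega/t + (y-v)/t$ in $y$ gives $-(\mathbb{P}_0 f + h)/t$. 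The second identity $\int h/(h+1)\,\mathrm{d}v = 0$ reduces, under the substitution $\mathrm{d}y = \mathrm{d}v/(h+1)$, to $[v - y]_{-\infty}^{+\infty} = 0$; this holds because \eqref{changeva} gives $v - y = -G/t$ with $G = \gamma(y) + \int_1^t\partial_y^{-1}\mathbb{P}_0\omega\,\mathrm{d}\tau$, and the decay of both $\gamma$ and $\partial_y^{-1}\mathbb{P}_0\omega$ at $\pm\infty$ forces $G\to 0$.

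For the converse, given $(f,h,\theta)$ satisfying the system and \eqref{aux} at one time, I would construct $v(t,y)$ at each $t$ by solving $\partial_v y = 1/(h+1)$ in $v$ with the integration constant chosen so that $y - v\to 0$ as $v\to+\infty$; the second identity in \eqref{aux} is precisely the compatibility condition ensuring the same limit at $-\infty$. The main obstacle is to verify that this statically defined $v$ also satisfies $\partial_t v|_y = \theta(t,v(t,y))$: differentiating $\partial_y v = h(t,v)+1$ in $t$ and using $\partial_t|_y = \partial_t|_v + \theta\partial_v$ reduces this to $\partial_t h + \theta\partial_v h = (h+1)\partial_v\theta$, which is exactly the $h$-equation combined with the first identity in \eqref{aux}. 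With $v(t,y)$ in hand, $\omega$ is defined by \eqref{newfunc} and shown to satisfy \eqref{euler} by reversing the forward calculation, and $\gamma(y) := t(y-v(t,y)) - \int_1^t(\partial_y^{-1}\mathbb{P}_0\omega)(\tau,y)\,\mathrm{d}\tau$ is shown time-independent by direct $\partial_t$-differentiation and use of the formula for $\theta$; its decay at $\pm\infty$ follows from evaluating at $t=1$ and using the second identity in \eqref{aux}. Finally, propagation of \eqref{aux} along $J$ in either direction is obtained by $\partial_t$-differentiating each identity, substituting the equations, and invoking $\partial_t\partial_y = \partial_y\partial_t$, so that assuming \eqref{aux} at one time suffices.
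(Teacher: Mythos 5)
Your forward direction is essentially the paper's computation: chain-rule bookkeeping, extraction of the zero-mode cancellation engineered by the choice of $v$ in \eqref{changeva}, the identity $\partial_t\partial_y^{-1}\mathbb{P}_0\omega = -\mathbb{P}_0(f\partial_z\phi)$ for the $\theta$-equation, and cross-differentiating $\partial_t v$ for the first identity in \eqref{aux}. That part is fine.

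The converse as written has a logical ordering problem that, in your phrasing, looks like it could become circular. You construct $v(t,y)$ for each $t$ using the \emph{second} identity in \eqref{aux} (to match the boundary condition at both ends), then verify $\partial_t v=\theta$ using the \emph{first} identity in \eqref{aux} together with the $h$-equation --- but both identities are only assumed at a single time, and you list their propagation as the last step, phrased via ``invoking $\partial_t\partial_y=\partial_y\partial_t$.'' In the converse direction there is no $y$-variable available before you have built $v$, so any propagation argument that invokes the $(t,y)$ pull-back structure would be using what it is trying to prove. The propagation must be a self-contained computation in the $(t,v)$ variables for the system \eqref{eulernew}--\eqref{eulersup}, carried out \emph{before} constructing $v$. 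The paper does exactly this: set $\chi=t(h+1)\partial_v\theta+\mathbb{P}_0 f+h$ and show, by substituting the equations, that $\partial_t\chi=-\partial_v(\theta\chi)-\chi/t$, a homogeneous transport equation, so $\chi\equiv0$ on $J$ if it vanishes at one time; then, \emph{using} $\chi\equiv 0$, one finds $\partial_t\int h/(h+1)\,\mathrm{d}v = \int \chi/(t(h+1)^2)\,\mathrm{d}v = 0$, which propagates the second identity. Note in particular that the second identity's propagation is \emph{contingent} on the first, so ``differentiating each identity'' independently is not quite the right picture. Once you move the $\chi$-computation to the front of the converse and drop the reference to commuting $\partial_t$ with $\partial_y$, the rest of your construction of $v$, $\omega$, and $\gamma$ matches the paper and goes through.
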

\begin{proof} Much of the computation is contained in \cite{BM15}, we include a proof here for the sake of completeness. Given $\omega$ and $\gamma$, by \eqref{changeva} we have
\[
\partial_tv(t,y)+\frac{v(t,y)}{t}+\frac{\partial_y^{-1}\mathbb{P}_0\omega(t,y)}{t}=\frac{y}{t},
\]
taking a derivative in $y$ yields
\[
\partial_th(t,v(t,y))+\frac{h(t,v(t,y))}{t}+\frac{\mathbb{P}_0\omega(t,y)}{t}=0,
\]
which gives that
\[
\partial_th+\frac{\mathbb{P}_0f+h}{t}+\partial_vh\cdot\theta=0,
\] noticing that $\theta(t,v(t,y))=\partial_tv(t,y)$. This proves the second equality in \eqref{eulernew}. For the derivative of $f$, we compute
\[
\partial_tf(t,z,v)=\partial_t\omega+v\cdot\partial_x\omega+\partial_ty\cdot\partial_y\omega.
\]
We next have $\partial_ty=-(h+1)^{-1}\theta$, and
\[\partial_x\omega=\partial_zf,\quad \partial_y\omega=(h+1)(\partial_v-t\partial_z)f.
\] 
Moreover let $\nu=\Delta^{-1}\omega$, then $(\mathbb{P}_{\neq 0}\nu)(t,x,y)=\phi(t,z,v)$, where $\phi=\mathbb{P}_{\neq 0}\Delta_t^{-1}f$.
Plugging into the equation \eqref{euler}, we get that
\[
\begin{split}
\partial_tf&=-y\cdot \partial_zf-\nabla^{\perp}\nu\cdot\nabla\omega+v\cdot\partial_zf-\theta(\partial_v-t\partial_z)f\\
&=-\theta\cdot\partial_vf-(h+1)\nabla^{\perp}\phi\cdot\nabla f+(v-y+t\theta+\partial_y^{-1}\mathbb{P}_0\omega(t,y))\cdot \partial_zf,
\end{split}
\] and the last term is zero due to \eqref{changeva}, which proves the first equality in \eqref{eulernew}. For the derivative of $\theta$, differentiating in $t$ the equality $\theta(t,v(t,y))=\partial_tv(t,y)$ yields
\[
\partial_t\theta+\theta\cdot \partial_v\theta=\partial_t^2v(t,y)=\partial_t\bigg(\frac{y-v(t,y)}{t}-\partial_y^{-1}\frac{\mathbb{P}_0\omega(t,y)}{t}\bigg)=-\frac{2\theta}{t}-\frac{1}{t}\partial_t\partial_y^{-1}\mathbb{P}_0\omega,
\] where using \eqref{euler} one computes that
\[
-\partial_t\partial_y^{-1}\mathbb{P}_0\omega=\partial_y^{-1}\mathbb{P}_0(\partial_x\nu\cdot\partial_y\omega-\partial_y\nu\cdot\partial_x\omega)=\mathbb{P}_0(\omega\cdot\partial_x\nu)
\] by integrating by parts in $x$, and thus
\[
\partial_t\theta+\theta\cdot \partial_v\theta+\frac{2\theta}{t}=\frac{1}{t}\mathbb{P}_0(f\cdot\partial_z\phi),
\] which proves the third equality in \eqref{eulernew}. Moreover, differentiating in $y$ the equality $\theta(t,v(t,y))=\partial_tv(t,y)$ we get
\[
(h+1)\partial_v\theta=\partial_t\partial_yv(t,y)=\partial_th(t,v(t,y))=\partial_th+\theta\partial_vh=\frac{-\mathbb{P}_0f-h}{t},
\]
so the first equality in \eqref{aux} holds. For the second equality, just notice that by \eqref{changeva} and the fact that $\partial_y^{-1}\mathbb{P}_0\omega(t,\pm\infty)=0$ (which follows from the definition of $\partial_y^{-1}$) we have
\[
\lim_{y\to\pm\infty}(v(t,y)-y)=0,
\] and thus
\[
0=\int_{\mathbb{R}}(\partial_yv(t,y)-1)\,\mathrm{d}y=\int_{\mathbb{R}}\frac{h(t,v)}{h(t,v)+1}\,\mathrm{d}v.
\]

Conversely, given $(f,h,\theta)$ that solves \eqref{eulernew}$\sim$\eqref{eulersup}, let $\chi=t(h+1)\partial_v\theta+\mathbb{P}_0f+h$, we compute
\begin{align*}
\partial_t\chi&=(h+1)\partial_v\theta+t\big((h+1)\partial_v\partial_t\theta+\partial_th\cdot\partial_v\theta\big)+\mathbb{P}_0\partial_tf+\partial_th\\
&=\partial_v\theta\cdot(h+1-t\theta\partial-vh-\mathbb{P}_0f-h)-\theta\cdot\partial_vh-\frac{\mathbb{P}_0f+h}{t}+\mathbb{P}_0\partial-tf+t(h+1)\partial_v\partial_t\theta\\
&=\partial_v\theta\cdot(1-\mathbb{P}_0f-t\theta\partial_vh)-\theta\cdot\partial_vh-\frac{\mathbb{P}_0f+h}{t}-2(h+1)\partial_v\theta-t(h+1)\partial_v(\theta\partial_v\theta)\\
&-(h+1)\mathbb{P}_0(\partial_z\phi\cdot\partial_vf-\partial_v\phi\cdot\partial_zf)+(h+1)\partial_v\mathbb{P}_0(f\cdot\partial_z\phi)\\
&=-\theta\partial_v(\mathbb{P}_0f+h)-\theta\cdot t(h+1)\partial_v^2\theta-t(h+1)(\partial_v\theta)^2-\frac{\mathbb{P}_0f+h}{t}+\partial_v\theta\cdot(-2h-1-\mathbb{P}_0f-t\theta\partial_vh)\\
&+(h+1)\mathbb{P}_0\partial_z(f\cdot\partial_v\phi)\\
&=-\theta\partial_v\chi+\partial_v\theta\cdot(-2h-1-\mathbb{P}_0f)-\frac{\mathbb{P}_0f+h}{t}-t(h+1)(\partial_v\theta)^2\\
&=-\theta\partial_v\chi-\frac{\chi}{t}+\partial_v\theta(-h-\mathbb{P}_0f)-t(h+1)(\partial_v\theta)^2\\
&=-\partial_v(\theta\chi)-\frac{\chi}{t},
\end{align*}
 so if $\chi$ vanishes at one time, it must vanish at all times. Moreover, when $\chi=0$ we have
\[
\partial_t\int_{\mathbb{R}}\frac{h}{h+1}=\int_{\mathbb{R}}\frac{-\partial_th}{(h+1)^2}=\int_{\mathbb{R}}\frac{1}{(h+1)^2}\bigg(\theta\partial_vh+\frac{\mathbb{P}_0f+h}{t}\bigg)=\int_{\mathbb{R}}\frac{\chi}{t(h+1)^2}=0,
\]
so the second equality in \eqref{aux} is also preserved. Now when \eqref{aux} holds, we define for each $t$ the diffeomorphism $v=v(t,y)$ such that $\partial_yv(t,y)=h(t,v(t,y))+1$, and
\[
\lim_{y\to\pm\infty}(v(t,y)-y)=0.
\] Such $v$ exists due to the second equality in \eqref{aux}; if one defines $\omega$ by 
\[
\omega(t,x,y)=f(t,z,v):=f(t,x-tv(t,y),v(t,y)),
\] then by essentially reverting the arguments above, one can show that \eqref{newfunc} holds, and $\omega$ solves \eqref{euler}. The second equality in (\ref{auxassum}) is a consequence of (\ref{aux}). Moreover, via similar arguments one can show 
\[
\partial_t(tv(t,y))=y-\partial_y^{-1}\mathbb{P}_0\omega(t,y),
\] so there exists $\gamma(y)$ satisfying \eqref{changeva}, clearly they are unique. This completes the proof.
\end{proof}
With Proposition \ref{mapping}, we can then reduce Theorem \ref{main} to the following
\begin{proposition}\label{main1.5} There exists a time $T$ and solution $(f,h,\theta)$ to (\ref{eulernew})$\sim$(\ref{eulersup}) on $[1,T]$ that satisfies (\ref{aux}), and that
\begin{equation}\label{newbd}
\|(f(1),h(1))\|_{\mathcal{G}_{3/2}^*}\leq\varepsilon^2,\quad \int_{\mathbb{T}\times\mathbb{R}}(1+|v|)\cdot|f(1,z,v)|\,\mathrm{d}z\mathrm{d}v\leq \varepsilon^2;
\end{equation}
\begin{equation}\label{newbd2}\big\|\langle \partial_z\rangle^{N_0}f(T)\big\|_{L^2}\geq \varepsilon^{-2},\quad \|h(T)\|_{H^{20}}\leq \varepsilon^2.
\end{equation}
\end{proposition}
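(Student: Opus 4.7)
The plan is to construct the solution as a perturbation of a smooth background. First, I would fix parameters $(\varepsilon_0, k_0, \eta_0, T_0, \sigma, \varepsilon_1)$ obeying the hierarchy sketched in the introduction (so that $\varepsilon_1 \ll \varepsilon_0 \ll \varepsilon$, $T_0 \sim \sqrt{\eta_0/\varepsilon_0}$, and $k_0 \sim \sqrt{\varepsilon_0 \eta_0}$), together with the auxiliary times $T_2 \leq T_0 \leq T_1 \leq 2T_0$. Applying Theorem \ref{backthm} with $s=1$, I would fix an analytic background solution $\underline{f}$ of size $\varepsilon_0$, concentrated at low frequencies in $z$, and I would take $T$ slightly larger than $T_1$, with the target solution written as $f = \underline{f} + f^*$, and correspondingly for $h, \theta$.

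Next, I would prescribe the data for $f^*$ at the intermediate time $T_0$, namely
\[
f^*(T_0) = \varepsilon_1 \cos(k_0 z + \eta_0 v)\,\varphi_p(k_0 \sqrt{\sigma}\, v),
\]
with a suitable Schwartz profile $\varphi_p$, together with $h^*(T_0) = \theta^*(T_0) = 0$. The central analytic step is to solve the linearization $\partial_t f' = \mathcal{L} f'$ about $\underline{f}$ on $[1, T_1]$ with data $f'(T_0) = f^*(T_0)$, and then to implement the Taylor expansion $f^* = f^{(1)} + f^{(2)} + \cdots + f^{(n_0)} + \mathrm{error}$, where $f^{(1)} = f'$ and each $f^{(n)}$ solves the inhomogeneous linear problem $\partial_t f^{(n)} = \mathcal{L} f^{(n)} + \sum_{q_1 + q_2 = n-1}\mathcal{N}(f^{(q_1)}, f^{(q_2)})$ with $f^{(n)}(T_0) = 0$.

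The core work is the analysis of $\mathcal{L} = \mathcal{L}^T + \mathcal{L}^R$, split into transport and reaction. On $[T_2, T_1]$ the reaction term dominates near the Orr critical times $\eta_0/m$ for $m \sim k_0$; on each subinterval $I_m = [t_m, t_{m-1}]$ I would establish the approximate recurrence relation (\ref{recur0200}), first implementing it in Fourier space to obtain an upper bound $\sup_k \|\mathscr{F}_z f'(T_2, k, \cdot)\|_{L^2} \lesssim e^{c'\sqrt{\eta_0}}\varepsilon_1$ by iteration, and then in physical space with exponential $v$-weights to localize to the region where $\varphi_p \approx 1$, yielding the matching lower bound $\sup_k \|\mathscr{F}_z f'(T_1, k, \cdot)\|_{L^2} \gtrsim e^{c \sqrt{\eta_0}}\varepsilon_1$ with $c > c'$, as in (\ref{compare}). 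On the complementary interval $[1, T_2]$ the transport term dominates and standard energy estimates for transport equations in $\mathcal{G}^*$ propagate the Gevrey regularity backwards in time. For the higher-order terms I would combine this linear analysis, applied to the inhomogeneous equation, with the multilinear estimates for $\mathcal{N}$, proving inductively $\|f^{(n)}\| \lesssim \|f^{(1)}\|^n$ in the appropriate norm; the Fourier support of $f^{(n)}$ is concentrated near $(n k_0, n \eta_0)$, so the Orr analysis must be redone at this larger frequency, which by the parameter hierarchy still satisfies the same qualitative picture.

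Finally, fixing $n_0$ large enough that $f^{(n_0+1)}$ is negligible, the truncated sum $f^*_{\mathrm{app}} = \sum_{n=1}^{n_0} f^{(n)}$ together with the associated $h^*_{\mathrm{app}}, \theta^*_{\mathrm{app}}$ solves the perturbed system up to a tiny error, and a contraction argument on $[1, T_1]$ produces an exact solution $f^* = f^*_{\mathrm{app}} + R$ close to $f^*_{\mathrm{app}}$. Taking $T = T_1$ and using the constraint (\ref{aux}), which is preserved in time once imposed at $T_0$, yields the desired solution. Verifying (\ref{newbd}) at $t = 1$ uses the Gevrey smallness of $\underline{f}$ together with the backward upper bound at $T_2$ and transport propagation from $T_2$ to $1$; the bound (\ref{newbd2}) at $t = T_1$ follows from the Fourier lower bound in (\ref{compare}) combined with the fact that $f^{(1)}$ is concentrated at $z$-frequency $k_1 \gg 1$, so the weight $\langle \partial_z \rangle^{N_0}$ amplifies the lower bound above $\varepsilon^{-2}$, while $h$ and $\theta$ can be kept small in $H^{20}$ thanks to the $t^{-1}$ damping in their equations (\ref{eulernew}). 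The main obstacle will be the physical-space lower bound on $[T_0, T_1]$ that actually saturates the exponential factor $e^{c \sqrt{\eta_0}}$: one must simultaneously track the $v$-localization of the evolving modes, control the nonlocal error term $\mathcal{R}$ in (\ref{recur0200}), and prevent spurious coupling between adjacent $k$-modes across the iterated Orr critical times.
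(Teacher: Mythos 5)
Your proposal follows essentially the same strategy as the paper: fix the parameter hierarchy, take an analytic background $\underline{g}$ via Theorem \ref{backthm} with $s=1$, prescribe the perturbation at the intermediate time $T_0$, linearize about $\underline{g}$, approximate by the decoupled recurrence relation on the critical subintervals $[t_m,t_{m-1}]$, prove the upper bound backward to $T_2$ in Fourier space and the saturating lower bound forward to $T_1$ via a physical-space weighted $L^2$ estimate, control the higher-order Taylor terms $g^{(n)}$ (supported near $(nk_0,n\eta_0)$) by redoing the Orr analysis at those frequencies, and close with a contraction argument to pass from the approximate to the exact solution; the paper then takes $T=T_1$ exactly (you waver between "slightly larger than $T_1$" and "$T=T_1$" — the latter is what works, since the lower bound is established precisely at $T_1$). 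You also correctly identify the physical-space localization on $[T_0,T_1]$ (Proposition \ref{phys}) as the delicate step that distinguishes the lower bound from the easier upper bound.
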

 \begin{proof}[Proof of Theorem \ref{main} assuming Proposition \ref{main1.5}] Suppose we have the solution $(f,h,\theta)$ as in Proposition \ref{main1.5}. By Proposition \ref{mapping} we can find a solution $\omega=\omega(t,x,y)$ to \eqref{euler} satisfying the second equality in \eqref{auxassum} and $\gamma=\gamma(y)$ such that \eqref{changeva} and \eqref{newfunc} hold.
 
 The change of coordinates $v=v(t,y)$ (and the corresponding inverse $y=y(t,v)$) is uniquely determined by 
 \[\frac{\partial v}{\partial y}=h(v)+1,\quad \lim_{y\to\pm\infty}(v(t,y)-y)=0,
 \] and therefore at time $T$ we have
 \begin{equation}\label{difffin}\|\partial_vy(T,v)-1\|_{L^{\infty}}\leq C\varepsilon^2. 
 \end{equation} Notice that \[\omega(T,x,y)=f(T,x-Tv(T,y),v(T,y)),\] and since the change of coordinates $v\mapsto y$ has Jacobian close to $1$, we have
 \[\|\langle\partial_x\rangle^{N_0}\omega(T)\|_{L^2(\mathbb{T}\times\mathbb{R})}\geq\frac{1}{2}\big\|\langle \partial_z\rangle^{N_0}f(T)\big\|_{L^2(\mathbb{T}\times\mathbb{R})}\geq \frac{1}{\varepsilon}.
 \]
 
Similarly, using \eqref{newbd}, at time $t=1$ we can prove that 
\[\|\omega(1)\|_{\mathcal{G}^*}=\|\omega(1)\|_{\mathcal{G}_1^*}\lesssim\|f(1)\|_{\mathcal{G}_1^*}\lesssim \varepsilon^2,
\] using properties of compositions and inverses in Gevrey spaces which are well-known (except that we are working in slightly $\log$-corrected Gevrey spaces, but the proof still goes through). Also, using the second inequality in \eqref{newbd} and the fact that $v(1,y)\approx y$, we get that
\begin{equation}\label{local}\int_{\mathbb{T}\times\mathbb{R}}(1+|y|)\cdot|\omega(1,x,y)|\,\mathrm{d}x\mathrm{d}y\lesssim \varepsilon^2.
\end{equation}
Since the Euler equation \eqref{euler} is locally well-posed in $\mathcal{G}^*$, we can extend the solution to time $0$, and $\|\omega(0)\|_{\mathcal{G}^*}\lesssim\varepsilon^2\ll\varepsilon$. By a simple Gronwall argument, one can also show that \eqref{local} implies the first inequality in \eqref{auxassum}. We omit the details.
 \end{proof}

\begin{remark}\label{simnota}For convenience, below we will use the notation
\[g=(g_1,g_2,g_3)=(f,h,\theta),
\] and possibly with sub or superscripts, which will be introduced later. We may also occasionally regard $h$ and $\theta$, and their variants with sub or superscripts, as functions of $(t,z,v)$ that do not depend on $z$.
\end{remark}

\section{The target solution}\label{buy} In this section we will define the solution $g=(f,h,\theta)$ to the system \eqref{eulernew}$\sim$\eqref{eulersup}, and reduce Proposition \ref{main1.5} to Proposition \ref{main2}, which will then be proved in Sections \ref{linearize} to \ref{error}.
\subsection{Notations} We start by fixing some notations.
\subsubsection{Cutoff functions}
It is well known that there exists a function $\psi=\psi(\xi)$ such that $\psi$ is compactly supported and that
\begin{equation}\label{decay00}
|\mathscr{F}^{-1}\psi(v)|\lesssim \exp\bigg(-\frac{|v|}{(\log^+|v|)^2}\bigg).
\end{equation}
Denote by $\mathcal{E}$ the space of functions $\psi$ satisfying \eqref{decay00}; by standard convolution tricks, one can show that, for any fixed compact set $K\subset\mathbb{R}$ and any open set $U\supset K$, there exists $\psi\in\mathcal{E}$ such that $\psi=0$ in $K$ and $\psi=1$ in $U^c$. Below, $\psi$ will be used to denote such cutoff functions only.
\subsubsection{Choice of norms and parameters}\label{choicenorm}
Recall $N_0=9000$ and $N_1=60000$, we also define
\begin{equation}
N=3000,\quad N'=30,\quad N_2=30,\quad N_3=30000,
\end{equation} so we have $N_1=2N_3$, $N_3=1000N_2$ and $N=100N'$. Fix a large enough constant $C_0$; \emph{unless explicitly stated}, all the implicit constants are allowed to depend on $C_0$.

We use $C$ to denote any absolute constant that depends only on $C_0$, and $D$ to denote any absolute constant that satisfies $D\gg C$. Occasionally we will also need an absolute constant $E$ such that $D\gg E\gg C$; all of them will ultimately depend only on $C_0$.

Given $\varepsilon>0$ as in the statement of Theorem \ref{main}, we choose a positive integer $k_0$ large enough depending on $(C_0,\varepsilon)$, and define $(\sigma,\alpha,\varepsilon_0,\eta_0,T_0)$ by 
\begin{equation}\label{defpara}
\begin{aligned}
\sigma&=(\log k_0)^{-N_2},&\alpha&=1+(\log k_0)^{-2N_2},&\varepsilon_0&=(\log k_0)^{-N_3};\\
\eta_0&=\frac{2k_0^2\alpha}{\pi\varepsilon_0},&T_0&=\frac{2\eta_0}{2k_0+1}.
\end{aligned}
\end{equation}
Note that $\alpha-1=\sigma^2$ and $\varepsilon_0=\sigma^{1000}$. For later uses we will also define 
\begin{equation}\label{defineint}
\begin{aligned}
t_{m}&=\frac{2\eta_0}{2m+1},&T_j&=t_{k_j},\,\,1\leq j\leq 3;\\
\quad k_1&=(1-\sigma)k_0,& k_2&=\varepsilon_0^{-1/40}\sqrt{\eta_0}, &k_3&=\varepsilon_0^{-1/40}k_0.
\end{aligned}
\end{equation}
Note that $k_2>k_0>k_1$ and $T_1>T_0>T_2$. For simplicity we will assume all the $k_i$'s are integers (otherwise take their integer parts).

Recall the norm $\|\cdot\|_{\mathcal{G}^*}$ defined in \eqref{gevrey}; we will define its variant $\|\cdot\|_{\mathcal{G}_{\lambda}^*}$ by 
\begin{equation}\label{defglambda}\|F\|_{\mathcal{G}_{\lambda}^{*}}^{2}=\sum_{k\in\mathbb{Z}}\int_{\mathbb{R}}e^{2\lambda\kappa(k,\xi)}|\widehat{F}(k,\xi)|^2\,\mathrm{d}\xi,
\end{equation}so $\mathcal{G}^*=\mathcal{G}_1^*$. We also define the analytic norm $\mathcal{A}_\lambda$ for $F:\mathbb{T}\times\mathbb{R}\to\mathbb{R}$ by
 \begin{equation}
 \|F\|_{\mathcal{A}_{\lambda}}^2=\sum_{k\in\mathbb{Z}}\int_{\mathbb{R}}e^{2\lambda(|k|+|\xi|)}|\widehat{F}(k,\xi)|^2\,\mathrm{d}\xi.
 \end{equation} Similarly we can define the same norms for $F:\mathbb{R}\to\mathbb{R}$ (without summation in $k$). Throughout this paper the Fourier transform will be defined as
 \begin{equation}
 \widehat{F}(k,\xi)=\frac{1}{(2\pi)^2}\int_{\mathbb{T}\times\mathbb{R}}e^{-i(kz+\xi v)}F(z,v)\,\mathrm{d}z\mathrm{d}v.
 \end{equation}
 We also use the notation $\mathscr{F}(F)$, and define one-dimensional versions $\mathscr{F}_z(F)$ and $\mathscr{F}_v(F)$ similarly. For a statement $Q$, $\mathbf{1}_{Q}$ will denote the function that equals $1$ if $Q$ is true and $0$ otherwise.
  
\subsection{Choice of data}
The target solution $g=(f,h,\theta)$ to \eqref{eulernew}$\sim$\eqref{eulersup} will be constructed as the superposition of an analytic background $\underline{g}$,  which (essentially) has only two modes and a perturbation.

\subsubsection{The background solution}\label{background}
We fix the function
\[\varphi_b(v)=e^{-(C_0^{-1}v)^{18}}.\] Define the background solution $\underline{g}:=(\underline{f},\underline{h},\underline{\theta})$, which solves \eqref{eulernew}$\sim$\eqref{eulersup} with initial data 
\begin{equation}
\underline{f}(1,z,v)=\varepsilon_0\cos z\cdot\varphi_b(v),\quad \underline{h}(1,z,v)=\underline{\theta}(1,z,v)=0.
\end{equation} By Theorem \ref{backthm}, we know that $\underline{g}$ exists on $[1,+\infty)$, and satisfies the following properties, where recall that all constants here depend on $C_0$:
\begin{enumerate}
\item $\underline{f}$ and $\underline{h}$ are real-valued and even, $\underline{\theta}$ is real-valued and odd, and
\begin{equation}\label{ass1}
\|\underline{f}(t)\|_{\mathcal{A}_{C_0}}+\|\underline{h}(t)\|_{\mathcal{A}_{C_0}}+\|\underline{\theta}(t)\|_{\mathcal{A}_{C_0}}\lesssim \varepsilon_0,\quad \|\underline{\theta}(t)\|_{\mathcal{A}_{C_0-1}}\lesssim\frac{\varepsilon_0}{t^2};
\end{equation}
\item $\underline{f}$ and $\underline{h}$ converge as $t\to\infty$,
\begin{equation}\label{ass2}
\|\underline{f}(t)-f_{\infty}\|_{\mathcal{A}_{C_0-1}}\lesssim \frac{\varepsilon_0^2}{t},\quad \|\underline{h}(t)+\mathbb{P}_0f_{\infty}\|_{\mathcal{A}_{C_0-1}}\lesssim \frac{\varepsilon_0}{t};
\end{equation}
\item The limit $f_{\infty}$ is close to the specific profile we choose, namely
\begin{equation}\label{ass3}
\big\|f_{\infty}-\varepsilon_0\cos z\cdot\varphi_b(v)\big\|_{\mathcal{A}_{C_0-1}}\lesssim\varepsilon_0^2.
\end{equation}
\end{enumerate}
We also define the function $\underline{\phi}$ and the operator $\underline{\Delta_t}$, corresponding to $(\underline{f},\underline{h},\underline{\theta})$, as in \eqref{eulersup}.

Using the equations \eqref{eulernew}$\sim$\eqref{eulersup} once more, one can deduce from \eqref{ass1}$\sim$\eqref{ass3} the more convenient pointwise estimates in Fourier space, namely:
\begin{align}\label{useass1}
\big|\widehat{\underline{f}}(t,k,\xi)\big|&\lesssim \varepsilon_0 e^{-(C_0-2)(|k|+|\xi|)},& \big|\widehat{\underline{f}}(t,k,\xi)-\widehat{f_{\infty}}(k,\xi)\big|&\lesssim \varepsilon_0^2 \frac{e^{-(C_0-2)(|k|+|\xi|)}}{t},\\
\label{useass2}\big|\widehat{f_\infty}(k,\xi)\big|&\lesssim \varepsilon_0^2 e^{-(C_0-2)(|k|+|\xi|)},|k|\neq 1;&\bigg|\widehat{f_\infty}(\pm 1,\xi)-\frac{\varepsilon_0\widehat{\varphi_b}(\xi)}{2}\bigg|&\lesssim \varepsilon_0^2 e^{-(C_0-2)(|k|+|\xi|)},\\
\label{useass3}\big|\widehat{\underline{h}}(t,\xi)\big|&\lesssim \varepsilon_0 e^{-(C_0-2)|\xi|}, &\big|\widehat{\underline{h}}(t,\xi)+\widehat{f_{\infty}}(0,\xi)\big|&\lesssim\varepsilon_0\frac{e^{-(C_0-2)|\xi|}}{t},
\\
\label{useass4}\big|\widehat{\underline{\theta}}(t,\xi)\big|&\lesssim \varepsilon_0 \frac{e^{-(C_0-2)|\xi|}}{t^2}, &\big|\widehat{\underline{\phi}}(t,k,\xi)\big|&\lesssim \varepsilon_0 \mathbf{1}_{k\neq 0}\frac{e^{-(C_0-2)(|k|+|\xi|)}}{t^2}.
\end{align}
\subsubsection{The perturbation}\label{perturbation}
Recall $T_0$ and $T_1$, and other parameters defined in \eqref{defpara} and \eqref{defineint}. We perturb the solution $\underline{g}$ by assigning new data at time $T_0$, and define a solution $g=(f,h,\theta)$ to \eqref{eulernew}$\sim$\eqref{eulersup} by
\begin{equation}\label{perturbdata}
g(T_0)=\underline{g}(T_0)+(\varepsilon_1 \cos(k_0z+\eta_0v)\varphi_p(k_0\sqrt{\sigma}v),0,0),
\end{equation}
where $\varphi_p$ is a fixed, real valued Schwartz function such that 
\begin{equation}
\widehat{\varphi_p}\in\mathcal{E}, \quad\mathrm{supp}(\widehat{\varphi_p})\subset[-1,1], \quad\|\varphi_p\|_{L^2}=1,
\end{equation}
 and $0<\varepsilon_1\ll\varepsilon_0$ is a parameter that will be fixed later in Proposition \ref{choiceeps1}. Proposition \ref{main1.5} is then reduced to the  following
\begin{proposition}\label{main2}
There exists $\varepsilon_1$, satisfying $e^{-2\sigma^2k_0}\leq\varepsilon_1\leq e^{-\sigma^2k_0/2}$, such that \eqref{eulernew}$\sim$\eqref{eulersup} has a unique smooth solution $g=(f,h,\theta)$ on $[1,T_1]$ with data $g(T_0)$ as in \eqref{perturbdata} at time $T_0$. Moreover, this solution satisfies the conclusions of Proposition \ref{main1.5}, with $T=T_1$.
\end{proposition}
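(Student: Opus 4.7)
The plan is to implement the roadmap laid out in Section~1.2. Write $g = \underline{g} + g^{*}$, where $\underline{g}$ is the analytic background from Theorem~\ref{backthm} with data from Section~\ref{background}, and build the perturbation $g^{*}$ by a formal Taylor expansion around $\underline{g}$. Let $f^{(1)} = f'$ solve the linearized system $\partial_t f' = \mathcal{L} f'$ with $f'(T_0)$ equal to the oscillatory profile in \eqref{perturbdata}, and recursively define $\partial_t f^{(n)} = \mathcal{L} f^{(n)} + \sum_{q_1+q_2=n-1} \mathcal{N}(f^{(q_1)}, f^{(q_2)})$ with $f^{(n)}(T_0) = 0$ for $n \geq 2$, with parallel definitions for the $h^{(n)}$ and $\theta^{(n)}$ components. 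Truncating at a large absolute integer $n_0$, the partial sum $g^{*}_{\mathrm{app}} = \sum_{n=1}^{n_0}(f^{(n)}, h^{(n)}, \theta^{(n)})$ is an approximate solution to \eqref{eulernew}--\eqref{eulersup} whose residual is of order $\varepsilon_1^{n_0+1}$ in a suitable Gevrey ball on $[1, T_1]$; a standard contraction argument then upgrades $\underline{g} + g^{*}_{\mathrm{app}}$ to an exact smooth solution $g$, yielding both existence and uniqueness. Uniqueness on $[1, T_1]$ is reinforced by the local well-posedness of \eqref{euler} in the $\log$-corrected Gevrey class.

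The technical core is the analysis of $\mathcal{L} = \mathcal{L}^T + \mathcal{L}^R$ from \eqref{trans0}--\eqref{react0}. On $[1, T_2]$ the transport term $\mathcal{L}^T$ dominates because $t \lesssim \sqrt{|\xi|}$, so I propagate a $\mathcal{G}_{3/2}^{*}$ bound for $(f', h', \theta')$ by a weighted $L^2$ energy estimate; since $\underline{\Phi} \sim t^{-2} \underline{f}$ is analytic and integrable in time, the growth factor from $t = T_2$ down to $t = 1$ is bounded and yields the first inequality of \eqref{newbd}, together with the weighted integral bound by the same energy run with a polynomial weight in $v$. On $[T_2, T_1]$ the reaction term dominates, and I cut into subintervals $I_m = [t_m, t_{m-1}]$ each containing exactly one critical time $\eta_0 / m$. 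The approximate decoupling that only the $m$-th mode is active on $I_m$, combined with the explicit kernel in \eqref{react01}, yields the recurrence \eqref{recur0200} with a quantitative small error $\mathcal{R}$. Iterating from $m = k_0$ down to $m = k_2$ produces the upper bound $\sup_k \|\mathscr{F}_z f'(T_2, k, \cdot)\|_{L^2} \lesssim e^{c' \sqrt{\eta_0}} \varepsilon_1$, and iterating from $m = k_0$ down to $m = k_1$ produces the matching lower bound $\|\mathscr{F}_z f'(T_1, k_1, \cdot)\|_{L^2} \gtrsim e^{c \sqrt{\eta_0}} \varepsilon_1$ with $c > c'$. The lower bound requires switching to physical space and testing against an exponentially weighted $L^2$ inner product in $v$, localizing to the region where $\varphi_p(k_0\sqrt{\sigma} v) \approx 1$, so that the factor $\varphi(v)$ in \eqref{recur0200} does not destroy the iteration.

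For $n \geq 2$, Fourier localization forces $f^{(n)}$ to concentrate near $(n k_0, n \eta_0)$, so the same linear analysis runs with parameters $n k_0, n \eta_0$ in place of $k_0, \eta_0$; combined with multilinear bounds for $\mathcal{N}$ in the spirit of Propositions~\ref{multilnearest}--\ref{multilinearest2}, this gives the inductive estimate $\|f^{(n)}\| \lesssim C^n \varepsilon_1^n$ in the appropriate norm, with a handful of low-frequency or borderline outputs absorbed using the energy estimates of Propositions~\ref{energy1}--\ref{lastone}. Consequently, the tail $\sum_{n \geq 2} f^{(n)}$ is negligible compared to $f^{(1)} = f'$, and both \eqref{newbd} at $t=1$ and \eqref{newbd2} at $t = T_1$ reduce to the corresponding estimates for $f'$. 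We then pick $\varepsilon_1$ so that $\varepsilon_1 e^{c' \sqrt{\eta_0}} \lesssim \varepsilon^2$ and $\varepsilon_1 e^{c \sqrt{\eta_0}} \gtrsim \varepsilon^{-2}$ simultaneously; using $\sqrt{\eta_0} \sim k_0 / \sqrt{\varepsilon_0}$ and $c - c' \gtrsim \sigma^2$ from \eqref{defpara}--\eqref{defineint}, this window is exactly $\varepsilon_1 \in [e^{-2 \sigma^2 k_0}, e^{-\sigma^2 k_0 / 2}]$. The main obstacle is the sharp \emph{lower} bound in the reaction regime: the toy model of \cite{BM15} was designed as an envelope upper bound, and converting it into a recurrence tight enough to be iterated $\sigma k_0$ times without losing the prefactor $e^{c \sqrt{\eta_0}}$ requires precise control of $\mathcal{R}$ and of the physical-space localization against $\varphi_p$, all compatible with the $\log$-corrected weight $\kappa$ in \eqref{gevrey} whose exponent $N_1$ is calibrated to this iteration count.
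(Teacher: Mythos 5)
Your proposal is essentially the paper's proof (Sections \ref{linearize}--\ref{error}): the same Taylor expansion of $g^*$ around $\underline{g}$, the same decoupling of transport and reaction regimes, the same recurrence on the critical intervals $I_m$, the same physical-space localization for the lower bound, the same reuse of the linear machinery shifted to $(qk_0, q\eta_0)$ for the higher Taylor terms, and the same bootstrap to pass from the truncated Taylor approximation to an exact solution.

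A few of your explicit claims are off and would need fixing in a careful writeup. First, the estimate $\|f^{(n)}\| \lesssim C^n \varepsilon_1^n$ is not correct uniformly on $[1,T_1]$: $g^{(n)}$ concentrates near all $(qk_0,q\eta_0)$ with $|q|\le n$, $q\equiv n\pmod 2$, each piece undergoes its own Orr growth, and the correct envelope is $(B(t))^n$, where $B(t)$ is the time-dependent size of $g^{(1)}(t)$ from Propositions \ref{choiceeps1} and \ref{l2allgn}. This coincides with $\varepsilon_1^n$ only near $t=T_0$; at $T_1$ it is $\eta_0^{-n(N-1)}$, which is much larger than $\varepsilon_1^n$. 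The conclusion that the tail is negligible compared with $g^{(1)}$ still holds because $B(T_1)\ll1$, but the inductive statement must be phrased with $B(t)$, not with a constant $\varepsilon_1$. Second, the truncation level $n_0$ cannot be a fixed absolute integer: the residual $E^{(n_0)}$ is propagated backwards from $T_0$ to $1$ through a Gronwall factor that grows super-polynomially in $T_0$ (the crude estimate gives $e^{Ct^4}$-type factors from the multilinear bounds), so $n_0$ must grow with the parameters; the paper takes $n_0 = \eta_0^{N'}$. Third, the physical-space localization underpinning the lower bound lands at $|v|\le\sigma^{1/3}$, which is the region where the background profile $\varphi_b(v)=e^{-(C_0^{-1}v)^{18}}$ appearing in the recurrence \eqref{recur02} is within $\sigma^6$ of $1$. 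The data's envelope $\varphi_p(k_0\sqrt{\sigma}v)$ is supported on the far narrower scale $|v|\lesssim (k_0\sqrt{\sigma})^{-1}$; it sets up the initial localization but cannot be propagated at that scale, and the weighted energy of Proposition \ref{phys} lands exactly at $\sigma^{1/3}$.
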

\section{Linearization of the transformed system} \label{linearize} In this section we will linearize \eqref{eulernew}$\sim$\eqref{eulersup} at $\underline{g}$, and list some basic properties of this linearized system. They will be used as the base of the more precise analysis in Sections \ref{for} and \ref{largeeta}. The proofs are somewhat technical and not very related to the main ideas, so we leave them to Appendix \ref{appproof}.

Let $g'=(f',h',\theta')$ denote the unknown functions in this linearized system, then the system can be written as:
\begin{equation}\label{eulernewl}
\left\{
\begin{aligned}
\partial_tf'&=&&\!\!\!\!\!-\underline{\theta}\cdot\partial_v f'-\theta'\partial_v\underline{f}-(\underline{h}+1)\nabla^{\perp}\underline{\phi}\cdot \nabla f'-\big((\underline{h}+1)\nabla^{\perp}\phi'+\nabla^{\perp}\underline{\phi}\cdot h'\big)\cdot\nabla\underline{f},\\
\partial_th'&=&&\!\!\!\!\!-\big(\underline{\theta}\partial_vh'+\theta'\partial_v\underline{h}\big)-\frac{\mathbb{P}_0f'+h'}{t},\\
\partial_t\theta'&=&&\!\!\!\!\!-\frac{2\theta'}{t}-\partial_v(\theta'\underline{\theta})+\frac{1}{t}(\underline{f}\partial_z\phi'+f'\partial_z\underline{\phi}),\end{aligned}
\right.
\end{equation}
where the relevant quantities are defined as
\begin{equation}\label{eulersupl}
\left\{
\begin{aligned}
\phi'&=&&\!\!\!\mathbb{P}_{\neq 0}\underline{\Delta_{t}}^{-1}f'-\underline{\Delta_{t}}^{-1}\Delta_t'\underline{\phi},\\
\Delta_t'&=&&\!\!\!2(\underline{h}+1)h'(\partial_v-t\partial_z)^2+(h'\partial_v\underline{h}+(\underline{h}+1)\partial_vh')(\partial_v-t\partial_z).
\end{aligned}
\right.
\end{equation}
We will denote the right hand side of \eqref{eulernewl} by $\mathcal{L}g'$, where \begin{equation}\label{matrix}\mathcal{L}=(\mathcal{L}_1,\mathcal{L}_2,\mathcal{L}_3),\quad \mathcal{L}_ig=\sum_{j=1}^3\mathcal{L}_{ij}g_j\end{equation} represents $3\times 3$ matrix valued linear operator. Sometimes (like in the Appendix \ref{appproof}) we will also regard $(g_2',g_3')=(h',\theta')$ as functions of $(t,z,v)$ that does not depend on $z$; see Remark \ref{simnota}.
\subsection{Properties of the operator $\underline{\Delta_t}^{-1}$}
 Properties of the operator $\underline{\Delta_t}^{-1}$ are crucial in the analysis of \eqref{eulernewl}$\sim$\eqref{eulersupl}. We summarize them in the following \begin{proposition}\label{linstep0}
 For any function $F=F(z,v)$, we have
\begin{equation}\label{kernel1}
\widehat{\underline{\Delta_t}^{-1}F}(k,\xi)=\frac{-1}{(\xi-tk)^2+k^2}\bigg(\widehat{f}(k,\xi)+\int_{\mathbb{R}}M(t,k,\xi,\eta)\widehat{F}(k,\eta)\,\mathrm{d}\eta\bigg)
\end{equation}
 for $k\neq 0$, where $M$ satisfies that
\begin{equation}\label{kernel2}
|M(t,k,\xi,\eta)|\lesssim\varepsilon_0 e^{-(C_0-4)|\xi-\eta|}.
\end{equation}
 Moreover, we also have that
\begin{equation}\label{kernel3}
\mathscr{F}_z\underline{\Delta_t}^{-1}F(k,v)=\int_{\mathbb{R}}e^{itk(v-w)}K(t,k,v,w)\cdot \mathscr{F}_zF(k,w)\,\mathrm{d}w
\end{equation}
 for $k\neq 0$, where $c$ is a constant and $K$ satisfies that
 \begin{equation}\label{kernel4}
 |K(t,k,v,w)|\lesssim |k|^{-1}e^{-|k||v-w|/2},\quad |\partial_{v,w}K(t,k,v,w)|\lesssim e^{-|k||v-w|/2},\end{equation}
  and that
  \begin{equation}\label{kernel5}
  \partial_{w}^2 K(t,k,v,w)=
 K'(t,k,v)\delta(v-w)+K''(t,k,v,w),
 \end{equation}where
 \begin{equation}\label{kernel6}
 |K'(t,k,v)|\lesssim1,\quad |K''(t,k,v,w)|\lesssim |k|e^{-|k||v-w|/2}.
 \end{equation}
 \end{proposition}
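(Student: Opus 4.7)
The strategy is to split $\underline{\Delta_t} = \underline{\Delta_0} + \mathcal{P}$, where $\underline{\Delta_0} := \partial_z^2 + (\partial_v - t\partial_z)^2$ is the flat operator obtained by setting $\underline{h} = 0$, and $\mathcal{P}$ is a differential operator of order $\leq 2$ with coefficients polynomial in $\underline{h}$ and $\partial_v\underline{h}$, vanishing when $\underline{h} = 0$ and thus of size $O(\varepsilon_0)$ by \eqref{ass1}. For $k \neq 0$, the symbol $-((\xi-tk)^2+k^2)$ of $\underline{\Delta_0}$ is bounded below by $k^2$, so $\underline{\Delta_0}^{-1}$ is well-defined, and I write
\begin{equation*}
\underline{\Delta_t}^{-1} = \underline{\Delta_0}^{-1}\sum_{n \geq 0}(-\mathcal{P}\underline{\Delta_0}^{-1})^n.
\end{equation*}
Although $\mathcal{P}$ contains $(\partial_v - t\partial_z)^2$ with symbol possibly large in $t$, applying $\underline{\Delta_0}^{-1}$ cancels it via the ratio $(\xi - tk)^2/((\xi - tk)^2 + k^2) \leq 1$, so $\mathcal{P}\underline{\Delta_0}^{-1}$ has operator norm $O(\varepsilon_0)$ uniformly in $t$, ensuring convergence of the series.

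For \eqref{kernel1}, passing to Fourier space, the leading term gives $-\widehat{F}(k,\xi)/((\xi-tk)^2+k^2)$, producing the first summand inside the parentheses. Each iterate with $n \geq 1$ is an iterated $\xi$-convolution against $\widehat{\underline{h}}$ or $\widehat{\partial_v \underline{h}}$, which decay like $e^{-(C_0-2)|\cdot|}$ by \eqref{useass3}. Factoring out the common prefactor $-1/((\xi-tk)^2+k^2)$ and summing the remaining convolution contributions over $n \geq 1$ defines $M(t,k,\xi,\eta)$. A bookkeeping of analytic losses then yields $|M| \lesssim \varepsilon_0 e^{-(C_0-4)|\xi-\eta|}$, where the loss $C_0 \to C_0 - 4$ reflects the one-step loss in \eqref{useass3} plus a margin for summing the geometric-type series in $n$.

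For \eqref{kernel3}, I inverse-Fourier-transform in $\xi$. The key computation
\begin{equation*}
\mathscr{F}_\xi^{-1}\bigl[-((\xi - tk)^2 + k^2)^{-1}\bigr](v) = -\frac{\pi}{|k|}\,e^{itkv}e^{-|k||v|}
\end{equation*}
yields the oscillatory factor $e^{itk(v-w)}$ together with the principal kernel $K_0(t,k,v,w) = -\pi|k|^{-1} e^{-|k||v-w|}$, giving \eqref{kernel3} at leading order with \eqref{kernel4} satisfied. The Neumann corrections become physical-space operators where $\underline{h}$ enters as a multiplier of similar exponentially decaying kernels; these are controlled pointwise by $L^\infty$ bounds on $\underline{h}$ extracted from \eqref{ass1}. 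For \eqref{kernel5}, the distributional identity
\begin{equation*}
\partial_w^2\, e^{-|k||v-w|} = -2|k|\,\delta(v-w) + k^2\,e^{-|k||v-w|}
\end{equation*}
isolates the $\delta$-singularity arising from the kink at $v = w$: this produces $K'(t,k,v) = O(1)$ (the principal coefficient is constant; the Neumann corrections add smooth, bounded modifications since $\underline{h}$ is smooth), while the regular remainder combined with $k^2 e^{-|k||v-w|}$ from the principal part and all off-diagonal Neumann terms satisfies $|K''| \lesssim |k|e^{-|k||v-w|/2}$.

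The main obstacle will be the bookkeeping of the Neumann iteration: each iterate must simultaneously preserve (i) the convolution-in-$\xi$ structure with analytic decay against $\widehat{F}(k,\cdot)$ for \eqref{kernel1}, and (ii) the $e^{itk(v-w)}$-times-exponentially-decaying-kernel structure in $(v,w)$ for \eqref{kernel3}, while ensuring that the $\delta$-component of $\partial_w^2$ is isolated exclusively to the kink of the principal kernel $e^{-|k||v-w|}$. Keeping these structures intact under iteration, together with uniformly bounding the derivative losses coming from $(\partial_v - t\partial_z)^2$ in $\mathcal{P}$, is what the detailed computation (deferred to Appendix \ref{appproof}) must verify.
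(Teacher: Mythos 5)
Your proposal is correct and, after a change of frame, is essentially the paper's proof. The paper first restricts to the $k$-th $z$-mode and conjugates by $e^{itkv}$, replacing $\partial_v - t\partial_z$ by $\partial_v$; this reduces $\underline{\Delta_t}$ to the genuinely one-dimensional operator $Y = -k^2 + (\underline{h}+1)^2\partial_v^2 + (\underline{h}+1)(\partial_v\underline{h})\partial_v$ with no explicit $t$-dependence in the derivatives, and then decomposes $Y = Z + W$ with $Z = \partial_v^2 - k^2$ and sums the Neumann series $Y^{-1} = \sum_n Z^{-1}(WZ^{-1})^n$. Your decomposition $\underline{\Delta_t} = \underline{\Delta_0} + \mathcal{P}$ is exactly $Z + W$ pulled back through that conjugation, so the two approaches are algebraically identical. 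What the explicit conjugation buys, and what your sketch leaves implicit, is the precise mechanism by which the oscillatory factor $e^{itk(v-w)}$ in \eqref{kernel3} factors out of \emph{every} Neumann iterate, not just the leading term: since $(\partial_v - t\partial_z)e^{itkv}g(v) = e^{itkv}\partial_v g(v)$ on the $k$-th mode, multiplication by $\underline{h}$-type coefficients and application of $(\partial_v-t\partial_z)^j$ both commute with the conjugation, so the kernel of $\underline{\Delta_t}^{-1}|_k$ is exactly $e^{itk(v-w)}$ times the kernel of $Y^{-1}$, and the latter can then be bounded by the non-oscillatory estimates \eqref{kernel4}--\eqref{kernel6}. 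This is the step you flag as the ``main obstacle'' at the end of your write-up; the conjugation dissolves it in one line. The paper also handles the $\delta$-contribution to $\partial_w^2 K$ slightly more carefully than your sketch suggests: the $\delta$ at $v=w$ arises not only from the principal kernel $|k|^{-1}e^{-|k||v-w|}$ but from each Neumann iterate in which all internal $\delta$-factors of $\partial_v^2(\partial_v^2-k^2)^{-1}$ collapse, producing terms of the form $|k|^{-1}e^{-|k||v-w|}(h_1(w))^n$; the full $K'$ is the sum over $n$ of the resulting kink coefficients, which is $O(1)$ since $\|h_1\|_{L^\infty} = O(\varepsilon_0)$. Your remark that ``the Neumann corrections add smooth, bounded modifications'' is consistent with this but understates where the bookkeeping lives.
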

\subsection{Symbols of the linearized system}
Proposition \ref{linstep0} implies the following estimates for the symbols of the system \eqref{eulernewl}$\sim$\eqref{eulersupl}.
\begin{proposition}\label{systemrough} (1) The system \eqref{eulernewl}$\sim$\eqref{eulersupl} can be written as
\begin{equation}\label{simpsys}\left\{\begin{aligned}\partial_t\widehat{g_1'}(t,k,\xi)&=\widehat{\mathcal{L}_1g'}(t,k,\xi):=\sum_{j=1}^3\sum_{l}\int_{\mathbb{R}}q_{1j}(t,k,l,\xi,\eta)\widehat{g_j'}(t,l,\eta)\,\mathrm{d}\eta,\\ 
\partial_t\widehat{g_2'}(t,k,\xi)&=\widehat{\mathcal{L}_2g'}(t,k,\xi):=\sum_{j=1}^3\sum_{l}\int_{\mathbb{R}}q_{2j}(t,k,l,\xi,\eta)\widehat{g_j'}(t,l,\eta)\,\mathrm{d}\eta-\frac{\widehat{g_2'}(t,k,\xi)+\mathbf{1}_{k=0}\widehat{g_1'}(t,k,\xi)}{t},\\
\partial_t\widehat{g_3'}(t,k,\xi)&=\widehat{\mathcal{L}_3g'}(t,k,\xi):=\sum_{j=1}^3\sum_{l}\int_{\mathbb{R}}q_{3j}(t,k,l,\xi,\eta)\widehat{g_j'}(t,l,\eta)\,\mathrm{d}\eta-\frac{2\widehat{g_3'}(t,k,\xi)}{t},\end{aligned}\right.\end{equation} where, for $t\geq 1$, the symbols satisfy the following bounds:
\allowdisplaybreaks
\begin{align}\label{symbolbd1}|q_{11}|&\lesssim\varepsilon_0 e^{-(C_0/4)(|k-l|+|\xi-\eta|)}\bigg(\frac{|k|+|\xi|+1}{t^2}+\frac{\mathbf{1}_{l\neq 0}}{(\xi-tl)^2+l^2}\big(\mathbf{1}_{l\neq k}\cdot|\xi|+|k|+1\big)\bigg),\\
\label{symbolbd2}|q_{12}|&\lesssim\varepsilon_0^2\mathbf{1}_{l=0}e^{-(C_0/4)(|k|+|\xi-\eta|)}\bigg(\frac{1}{t}+\sum_{q}e^{-(C_0/4)|q|}\frac{\mathbf{1}_{q\neq 0}}{(\xi-qt)^2+q^2}\big(\mathbf{1}_{q\neq k}\cdot|\xi|+1\big)\bigg),\\ 
\label{symbolbd3}|q_{13}|&\lesssim\varepsilon_0\mathbf{1}_{l=0}e^{-(C_0/4)(|k|+|\xi-\eta|)},\quad q_{21}=0,\\
\label{symbolbd4}|q_{22}|&\lesssim\varepsilon_0\mathbf{1}_{k=0}\mathbf{1}_{l=0}e^{-(C_0/4)|\xi-\eta|}\frac{|\xi|+1}{t^2},\quad|q_{23}(t,\xi,\eta)|\lesssim \bigg(\varepsilon_0^2+\frac{\varepsilon_0}{t}\bigg)\mathbf{1}_{k=0}\mathbf{1}_{l=0}e^{-(C_0/4)|\xi-\eta|},\\
\label{symbolbd5}|q_{31}|&\lesssim\varepsilon_0\mathbf{1}_{k=0}\mathbf{1}_{l\neq0}e^{-(C_0/4)(|l|+|\xi-\eta|)}\bigg(\frac{1}{t^3}+\frac{|l|}{t((\xi-lt)^2+l^2)}\bigg),\\
\label{symbolbd6}|q_{32}|&\lesssim\varepsilon_0^2\mathbf{1}_{k=0}\mathbf{1}_{l=0}e^{-(C_0/4)|\xi-\eta|}\sum_{q}\mathbf{1}_{q\neq 0}e^{-(C_0/4)|q|}\bigg(\frac{1}{t^3}+\frac{|q|}{t((\xi-tq)^2+q^2)}\bigg),\\
\label{symbolbd7}|q_{33}|&\lesssim\varepsilon_0\mathbf{1}_{k=0}\mathbf{1}_{l=0}e^{-(C_0/4)|\xi-\eta|}\frac{|\xi|+1}{t^2}.\end{align}

(2) For $(i,j)\in\{(1,1),(1,3)\}$ we also have a decomposition $q_{ij}=q_{ij}'+q_{ij}''$, such that
 \begin{equation}\label{defq11'}
 q_{11}'(t,k,l,\xi,\eta)=\mathbf{1}_{l\neq 0}\mathbf{1}_{|l-k|=1}\frac{\eta(l-k)}{(\eta-tl)^2+l^2}\cdot\frac{\varepsilon_0\widehat{\varphi_b}(\xi-\eta)}{2},
 \end{equation}
\begin{equation}\label{defq11''}
|q_{11}''|\lesssim \varepsilon_0 e^{-(C_0/4)(|k-l|+|\xi-\eta|)}\bigg(\frac{|k|+|\xi|+1}{t^2}+\frac{\varepsilon_0\mathbf{1}_{l\neq 0}}{(\xi-tl)^2+l^2}\big(\mathbf{1}_{l\neq k}\cdot|\xi|+|k|+1\big)\bigg);
\end{equation}
\begin{equation}\label{defq13'}q_{13}'(t,k,l,\xi,\eta)=-\mathbf{1}_{(k,l)=(\pm 1,0)}\frac{\varepsilon_0\widehat{\partial_v\varphi_b}(\xi-\eta)}{2},\quad |q_{13}''|\lesssim\varepsilon_0^2e^{-(C_0/5)(|k-l|+|\xi-\eta|)}.
\end{equation} Moreover, if we decompose $\mathcal{L}_1g$, see the first equation in \eqref{eulernewl}, into
\begin{equation}\label{decompose}\mathcal{L}_1^Tg:=-\underline{\theta}\cdot\partial_v f'-(\underline{h}+1)\nabla^{\perp}\underline{\phi}\cdot \nabla f'\quad\mathrm{and}\quad \mathcal{L}_1^Rg:=-\theta'\partial_v\underline{f}-\big((\underline{h}+1)\nabla^{\perp}\phi'+\nabla^{\perp}\underline{\phi}\cdot h'\big)\cdot\nabla\underline{f},
\end{equation}
and decompose $q_{11}=q_{11}^T+q_{11}^R$ accordingly, then we have better estimates for $q_{11}^R$,
\begin{equation}\label{reactonly}|q_{11}^R|\lesssim\varepsilon_0 e^{-(C_0/4)(|k-l|+|\xi-\eta|)}\cdot\frac{\mathbf{1}_{l\neq 0}}{(\xi-tl)^2+l^2}\big(\mathbf{1}_{l\neq k}\cdot|\xi|+|k|+1\big).
\end{equation}
\end{proposition}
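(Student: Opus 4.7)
The proof is essentially a bookkeeping exercise: I would take Fourier transforms in $(z,v)$ of each equation in \eqref{eulernewl}, convert every product $\underline{g}\cdot g'$ into a convolution in $(k,\xi)$, handle the only nontrivial operators $\underline{\Delta_t}^{-1}$ and $\underline{\Delta_t}^{-1}\Delta_t'$ through Proposition \ref{linstep0}, and then read off the symbols $q_{ij}$. Bounds for these symbols follow by combining the kernel estimates \eqref{kernel2}--\eqref{kernel6} with the analytic background bounds \eqref{useass1}--\eqref{useass4}. The indicator factors $\mathbf{1}_{k=0}$ and $\mathbf{1}_{l=0}$ appearing throughout \eqref{symbolbd2}--\eqref{symbolbd7} are immediate from the fact (Remark \ref{simnota}) that $\underline{h},\underline{\theta},h',\theta'$ are $z$-independent, so their Fourier support lies on $\{k=0\}$.

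For the first equation I split terms into transport and reaction. The transport pieces $-\underline{\theta}\partial_vf'$ and $-(\underline{h}+1)\nabla^\perp\underline{\phi}\cdot\nabla f'$ become convolutions of $\widehat{f'}$ (with Fourier multiplier bounded by $|k|+|\xi|+1$) against $\widehat{\underline{\theta}}$ and $\widehat{\underline{\phi}}$; by \eqref{useass4} both are $O(\varepsilon_0 t^{-2}e^{-(C_0-2)(|k-l|+|\xi-\eta|)})$, giving the first summand of \eqref{symbolbd1}. The reaction piece $-(\underline{h}+1)\nabla^\perp\phi'\cdot\nabla\underline{f}$ with $\phi'=\mathbb{P}_{\neq 0}\underline{\Delta_t}^{-1}f'-\underline{\Delta_t}^{-1}\Delta_t'\underline{\phi}$ uses \eqref{kernel1}: the $\mathbb{P}_{\neq 0}\underline{\Delta_t}^{-1}f'$ part contributes the factor $1/((\eta-tl)^2+l^2)$, and, after pairing $\nabla^\perp\phi'$ with $\nabla\underline{f}$, produces the factor $\mathbf{1}_{l\neq k}|\xi|+|k|+1$, yielding the second summand of \eqref{symbolbd1} as well as the sharper bound \eqref{reactonly} for $q_{11}^R$. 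The $\underline{\Delta_t}^{-1}\Delta_t'\underline{\phi}$ part is bilinear in $(h',\underline{h},\underline{\phi})$ and localized to $l=0$, giving the main contribution to $q_{12}$ in \eqref{symbolbd2}; the remaining piece $-\nabla^\perp\underline{\phi}\cdot h'\nabla\underline{f}$ contributes the $t^{-1}$ term there. Finally $-\theta'\partial_v\underline{f}$ gives $q_{13}$, which is $O(\varepsilon_0 e^{-(C_0/4)(|k|+|\xi-\eta|)})$ and localized at $l=0$ for the same reason. The second and third equations of \eqref{eulernewl} are handled analogously, with the explicit $-g_i'/t$ and $-2g_3'/t$ terms pulled out separately as in \eqref{simpsys}.

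For part (2) I use the convergence \eqref{useass1}--\eqref{useass3} to write $\widehat{\underline{f}}(t,l,\xi) = \mathbf{1}_{|l|=1}\,\varepsilon_0\widehat{\varphi_b}(\xi)/2 + O\big(\varepsilon_0^2 e^{-(C_0-2)(|l|+|\xi|)}\big) + O(\varepsilon_0^2 t^{-1} e^{-(C_0-2)(|l|+|\xi|)})$. Substituting the leading term into the reaction contribution, retaining only the main kernel $1/((\eta-tl)^2+l^2)$ from \eqref{kernel1}, and recording that $\nabla^\perp\phi'\cdot\nabla\underline{f}$ produces the multiplier $\eta(l-k)$ after the two Fourier weights are paired, yields exactly the formula \eqref{defq11'} for $q_{11}'$; all other contributions carry either an extra factor of $\varepsilon_0$ (from the $\widehat{\underline{f}}$ error or the $M$-remainder in \eqref{kernel1}) or an extra factor of $t^{-1}$ (from the transport piece), which exactly matches \eqref{defq11''}. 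The decomposition of $q_{13}$ is even simpler: substituting the leading form of $\widehat{\underline{f}}$ into $-\theta'\partial_v\underline{f}$ at $k=\pm 1$, $l=0$ gives $q_{13}'$, and the remaining error is bounded by $\varepsilon_0^2$, giving \eqref{defq13'}.

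The hard part is not conceptual but notational: one must consistently decompose each of the many bilinear terms in \eqref{eulernewl}--\eqref{eulersupl}, label which background factor contributes the $\varepsilon_0 e^{-C_0(\cdot)}$ prefactor, which contributes the $t^{-2}$ decay, and which contributes the resonant denominator $((\eta-tl)^2+l^2)^{-1}$, while remaining careful about the $\Delta_t'\underline{\phi}$ contribution whose Fourier representation is a triple convolution. Separating the reaction-only bound \eqref{reactonly} from the full bound \eqref{symbolbd1} requires identifying $\mathcal{L}_1^R$ explicitly as in \eqref{decompose} and checking that the transport terms are the unique source of the $t^{-2}$ summand. This is exactly the kind of lengthy-but-mechanical argument that belongs in the appendix.
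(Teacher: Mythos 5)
Your proposal is correct and takes essentially the same route as the appendix proof: expand \eqref{eulernewl}--\eqref{eulersupl} into explicit bilinear terms (cf.\ \eqref{expanded1}--\eqref{expanded3} in the appendix), pass to the Fourier side, read off symbols, and bound them using \eqref{useass1}--\eqref{useass4} combined with the Fourier-side kernel estimates from Proposition~\ref{linstep0}; for part (2) you decompose $\widehat{\underline{f}}$ into its limit profile plus errors and peel off the leading kernel, exactly as the paper does. One small inaccuracy: only the Fourier-side kernel estimates \eqref{kernel1}--\eqref{kernel2} enter this proposition; the physical-space estimates \eqref{kernel3}--\eqref{kernel6} are used later in Proposition~\ref{phys}, not here.
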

 \subsection{Energy estimates} In Sections \ref{for} and \ref{largeeta}, we will be studying solutions $g'=(f',h',\theta')$ to the inhomogeneous system, see also the formulas in (\ref{simpsys}):
\begin{equation}\label{inhomo}\partial_tg'=\mathcal{L}g'+\rho',\quad \rho'=(\rho_1',\rho_2',\rho_3').\end{equation}
In this subsection we state three auxiliary Fourier-weighted energy estimates for (\ref{inhomo}). We fix a solution $g'$ to (\ref{inhomo}) on some interval $J\subset[1,T_1]$ with source term $\rho'$, and also fix a function $Z(t)$ on $J$, which is decreasing for $T\leq T_0$ and increasing for $T\geq T_0$.

The first estimate allows us to bound the analytic norm of $g'$, as well as ``shifted'' analytic norms, which controls the localization of $g'$ in Fourier space.
\begin{proposition}\label{energy1} Suppose $J=[1,T_1]$, $\tau\in[1/2,2]$, and fix $(k_*,\eta_*)\in\mathbb{Z}\times\mathbb{R}$. Define
\[\lambda_0(t)=1-\frac{|\log t-\log T_0|}{(\log T_0)^2}.
\] Define the weights
\begin{equation}\label{defakwei}A_k(t,\xi)=\frac{1}{(\xi-kt)^2+\varepsilon_0^{-1}k^2},\,k\neq 0,\quad A_0(t,\xi)=\frac{1}{\xi^2+\varepsilon_0^{-1}};\quad A_{*}(t,\xi)=\sum_{k}e^{-2|k|}A_{k}(t,\xi),\end{equation} and the Fourier-weighted energy
\begin{equation}\label{diffineq}M_0(t)=\sum_{k}\int_{\mathbb{R}}A_k(t,\xi)e^{\tau\lambda_0(t)(|k- k_*|+|\xi- \eta_*|)}\big|\widehat{g_1'}(t,k,\xi)\big|^2\,\mathrm{d}\xi+\sum_{j=2}^3\int_{\mathbb{R}}A_*(t,\xi)e^{\tau\lambda_0(t)(|k_*|+|\xi- \eta_*|)}\big|\widehat{g_j'}(t,0,\xi)\big|^2\,\mathrm{d}\xi.\end{equation} If for $t\in J$ we have
\[\sum_{j=1}^3\sum_{k}\int_{\mathbb{R}}e^{\tau\lambda_0(t)(|k-k_*|+|\xi-\eta_*|)}\big|\widehat{\rho_j'}(t,k,\xi)\big|^2\,\mathrm{d}\xi\leq (Z(t))^2,
\]then we have (recall $C$ is any absolute constant depending on $C_0$)\begin{equation}\label{diffineq2}\mathrm{sgn}(t-T_0)\cdot\partial_tM_0(t)\leq C\bigg(\varepsilon_0^{1/2}M_0(t)+\frac{M_0(t)}{t}+\varepsilon_0\frac{|k_*|+|\eta_*|}{t^2}M_0(t)+\sqrt{M_0(t)}\cdot Z(t)\bigg).\end{equation}

\end{proposition}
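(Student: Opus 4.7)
The plan is a direct Fourier-weighted energy estimate: differentiate $M_0(t)$ and substitute $\partial_t g' = \mathcal{L}g' + \rho'$ via (\ref{simpsys}). The resulting time derivative splits into four contributions, corresponding to (i) $\partial_t$ hitting the multiplier $A_k(t,\xi)$ or $A_*(t,\xi)$, (ii) $\partial_t$ hitting the exponential weight through $\partial_t\lambda_0$, (iii) the bilinear pairing with $\mathcal{L}g'$, and (iv) the cross term with $\rho'$.

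First, the easy pieces (i), (ii), (iv). For (i) a direct computation gives $\partial_t A_k = 2k(\xi-kt) A_k^2$, and combining $|\xi-kt|\leq A_k^{-1/2}$ with the a priori bound $A_k^{1/2}\leq \varepsilon_0^{1/2}/|k|$ (coming from the fact that the denominator of $A_k$ is at least $\varepsilon_0^{-1}k^2$) yields $|\partial_t A_k|\leq 2\varepsilon_0^{1/2}A_k$, with a similar estimate for $A_*$; hence (i) contributes at most $C\varepsilon_0^{1/2}M_0$. For (ii), $\partial_t\lambda_0(t) = -\mathrm{sgn}(\log t-\log T_0)/(t(\log T_0)^2)$, so $\mathrm{sgn}(t-T_0)\,\partial_t\lambda_0\leq 0$, and (ii) produces a non-positive \emph{Cauchy--Kovalevskaya} (CK) contribution
\begin{equation*}
-\frac{\tau}{t(\log T_0)^2}\bigg(\sum_k \int_{\mathbb{R}} A_k\,e^{\tau\lambda_0(|k-k_*|+|\xi-\eta_*|)}(|k-k_*|+|\xi-\eta_*|)\,|\widehat{g_1'}|^2\,d\xi + \cdots\bigg)
\end{equation*}
to $\mathrm{sgn}(t-T_0)\,\partial_t M_0$, which I retain to absorb the regularity-losing terms from (iii). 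For (iv), Cauchy--Schwarz combined with $A_k, A_*\leq C\varepsilon_0$ gives $|(\mathrm{iv})|\leq C\sqrt{\varepsilon_0 M_0}\,Z\leq C\sqrt{M_0}\,Z$.

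The main work is (iii). Each pairing has the form $2\mathrm{Re}\sum_{k,l}\int A_k(t,\xi)\,w(k,\xi)\,\overline{\widehat{g_j'}(k,\xi)}\,q_{jj'}(t,k,l,\xi,\eta)\,\widehat{g_{j'}'}(l,\eta)\,d\xi\,d\eta$ with $w(k,\xi)=e^{\tau\lambda_0(|k-k_*|+|\xi-\eta_*|)}$, to which I apply the triangle-inequality commutator
\[
w(k,\xi)\leq w(l,\eta)\cdot e^{\tau\lambda_0(|k-l|+|\xi-\eta|)}
\]
and absorb the factor $e^{\tau(|k-l|+|\xi-\eta|)}$ into the decay $e^{-(C_0/4)(|k-l|+|\xi-\eta|)}$ of $q_{jj'}$ in (\ref{symbolbd1})--(\ref{symbolbd7}), since $\tau\leq 2\ll C_0/4$. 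The resulting kernel is handled by Schur's test. The pieces of order $\varepsilon_0(|k|+|\xi|+1)/t^2$ are split via $|k|+|\xi|\leq (|k-k_*|+|\xi-\eta_*|)+(|k_*|+|\eta_*|)$; the second summand produces exactly the $C\varepsilon_0(|k_*|+|\eta_*|)t^{-2}M_0$ term of (\ref{diffineq2}), and the first summand, of order $(|k-k_*|+|\xi-\eta_*|)/t$, is absorbed by the CK term provided $\varepsilon_0(\log T_0)^2\ll \tau$, which holds by the parameter choice (\ref{defpara}). The singular pieces carrying $\varepsilon_0\mathbf{1}_{l\neq 0}/((\xi-tl)^2+l^2)$ are controlled via $((\xi-tl)^2+l^2)^{-1}\leq \varepsilon_0^{-1} A_l(t,\xi)$; Schur's test with weights $A_k^{1/2}, A_l^{1/2}$ and the extra $\varepsilon_0^{1/2}/|l|$ factor from $A_l^{1/2}\leq \varepsilon_0^{1/2}/|l|$ returns a kernel norm $\leq C\varepsilon_0^{1/2}$, contributing $C\varepsilon_0^{1/2}M_0$. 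Finally, the diagonal $-\widehat{g_j'}/t$ and the coupling $-\mathbf{1}_{k=0}\widehat{g_1'}/t$ from the $g_2',g_3'$ equations contribute at most $CM_0/t$ after Cauchy--Schwarz, using that the $A_*$-weighted $k=0$ slot of $M_0$ for $(g_2',g_3')$ shares the same exponential weight as the $A_0$-weighted $k=0$ slot of $g_1'$, together with $A_*\leq A_0 + C\varepsilon_0$.

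The main obstacle is to verify, across all nine symbol blocks $q_{jj'}$ in (\ref{symbolbd1})--(\ref{symbolbd7}), that every contribution falls into one of the four allowed categories $\{\varepsilon_0^{1/2}M_0,\ M_0/t,\ \varepsilon_0(|k_*|+|\eta_*|)t^{-2}M_0,\ \sqrt{M_0}\,Z\}$, possibly after absorption by the CK term. The indicator projections $\mathbf{1}_{k=0}$ and $\mathbf{1}_{l=0}$ built into the off-diagonal symbols (\ref{symbolbd2})--(\ref{symbolbd7}) are precisely what is needed to align each cross contribution with the correct weighted slot of $M_0$, so the Schur estimates close; summing over $(j,j')$ yields (\ref{diffineq2}).
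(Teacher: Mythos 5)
Your overall strategy is the same as the paper's: differentiate $M_0$, split into (i) $\partial_t$ hitting $A_k,A_*$, (ii) the CK term from $\partial_t\lambda_0$, (iii) the bilinear pairings with the symbols $q_{ij}$, (iv) the forcing cross term; then handle (iii) by the commutator inequality $w(k,\xi)\leq w(l,\eta)e^{\tau\lambda_0(|k-l|+|\xi-\eta|)}$, absorb into the symbol decay, and Schur. Your treatment of the singular pieces via $((\xi-tl)^2+l^2)^{-1}\leq\varepsilon_0^{-1}A_l(t,\xi)$ and $A_l^{1/2}\leq\varepsilon_0^{1/2}/|l|$ is loosely worded but captures the same mechanism as the paper's estimates \eqref{simple}--\eqref{simple2}, and the splitting $|k|+|\xi|\leq(|k-k_*|+|\xi-\eta_*|)+(|k_*|+|\eta_*|)$ with CK absorption is exactly what the paper does.

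There is, however, a genuine gap in your handling of the $-1/t$ diagonal term coupled with $-\mathbf{1}_{k=0}\widehat{g_1'}/t$ (the term the paper calls $I_3$). You claim the comparison $A_*\leq A_0+C\varepsilon_0$ closes this piece with bound $CM_0/t$. The inequality $A_*\leq A_0+C\varepsilon_0$ is true, but it is \emph{additive}, and the slot for $\widehat{g_1'}(0,\cdot)$ in $M_0$ carries weight $A_0(t,\xi)=(\xi^2+\varepsilon_0^{-1})^{-1}$, which decays like $\xi^{-2}$. After Cauchy--Schwarz you need to bound $\int A_* e^{\tau\lambda_0}|\widehat{g_1'}(0,\cdot)|^2$ by a multiple of $\int A_0 e^{\tau\lambda_0}|\widehat{g_1'}(0,\cdot)|^2$; the additive bound gives an extra term $\varepsilon_0\int e^{\tau\lambda_0}|\widehat{g_1'}(0,\cdot)|^2$ with no $A_0$ weight, and $\varepsilon_0/A_0(t,\xi)=\varepsilon_0\xi^2+1$ is unbounded in $\xi$, so this term is not controlled by $M_0$. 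What you actually need is the \emph{multiplicative} comparison, which the paper records as \eqref{simple3}: $\sqrt{A_*/A_0}\lesssim\varepsilon_0^{1/2}te^{-|\xi|/t}+1$, so $A_*\lesssim(\varepsilon_0 t^2 e^{-2|\xi|/t}+1)A_0$. The factor $e^{-2|\xi|/t}$ is what controls the regime $|\xi|\gg t$. With this, the $I_3$ contribution is bounded by $(\varepsilon_0^{1/2}+t^{-1})M_0(t)$ --- note the $\varepsilon_0^{1/2}M_0$ piece is essential and your claimed $CM_0/t$ is too optimistic. This does not affect the final conclusion, since \eqref{diffineq2} already allows an $\varepsilon_0^{1/2}M_0$ term, but the step as you wrote it does not close.
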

The second estimate essentially bounds $g'$ in $L^2$, and is more useful if $g'$ becomes less localized in frequency space (i.e. when the time $t$ is small).
\begin{proposition}\label{gevrey2}
Suppose $J=[1,T_1]$, and define \begin{equation}\label{defm1}M_1(t)=\sum_{k}\int_{\mathbb{R}}A_k(t,\xi)\big|\widehat{g_1'}(t,k,\xi)\big|^2\,\mathrm{d}\xi+\sum_{j=2}^3\int_{\mathbb{R}}A_*(t,\xi)\big|\widehat{g_j'}(t,0,\xi)\big|^2\,\mathrm{d}\xi ,\end{equation}where the weights $A_k$ and $B$ are as in (\ref{defakwei}). If for $t\in J$ we have
 \begin{equation}\sum_{j=1}^3\sum_{k}\int_{\mathbb{R}}\big|\widehat{\rho_j'}(t,k,\xi)\big|^2\,\mathrm{d}\xi\leq (Z(t))^2,
 \end{equation} then we have
 \begin{equation}\label{monotinicity2}
 \mathrm{sgn}(t-T_0)\cdot\partial_tM_1(t)\leq C\bigg(\varepsilon_0^{1/2}M_1(t)+\frac{M_1(t)}{t}+\sqrt{M_1(t)}\cdot Z(t)\bigg).
 \end{equation}
\end{proposition}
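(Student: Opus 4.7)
The approach is a direct $L^2$-type energy estimate: I would differentiate $M_1(t)$ in $t$ and control each contribution using the symbol bounds of Proposition \ref{systemrough}. The key simplification relative to Proposition \ref{energy1} is that $M_1$ carries no Fourier exponential weight, so the analysis reduces to Schur-test arguments on the convolution kernels $q_{ij}$, exploiting the uniform $L^1$-boundedness of the factor $e^{-(C_0/4)(|k-l|+|\xi-\eta|)}$ extracted in each symbol bound. From $\partial_t A_k(t,\xi)=2k(\xi-kt)A_k(t,\xi)^2$ and the AM-GM inequality $|2k(\xi-kt)|\leq\varepsilon_0^{1/2}\bigl((\xi-kt)^2+\varepsilon_0^{-1}k^2\bigr)$, I obtain $|\partial_t A_k|\leq\varepsilon_0^{1/2}A_k$, and the same bound for $A_*$. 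This produces the $C\varepsilon_0^{1/2}M_1(t)$ term in \eqref{monotinicity2}.

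Next, I would expand the cross terms $2\,\mathrm{Re}\sum_k\int A_k\overline{\widehat{g_j'}}\,\partial_t\widehat{g_j'}\,d\xi$ via \eqref{simpsys}. The explicit damping pieces $-\widehat{g_2'}/t$ and $-2\widehat{g_3'}/t$ give nonpositive contributions which additionally absorb the cross-mode piece $-\mathbf{1}_{k=0}\widehat{g_1'}/t$ in the $g_2'$ equation via a Young-inequality split, netting a contribution bounded by $CM_1(t)/t$. The source contribution is bounded by $2(\sup_k A_k)^{1/2}\sqrt{M_1(t)}\,Z(t)\leq C\sqrt{M_1(t)}\,Z(t)$, since $A_k,A_*\leq C\varepsilon_0\leq C$.

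The main work is the bilinear pairings $\sum_{j,l}\int\!\!\int A_k\,\overline{\widehat{g_i'}(k,\xi)}\,q_{ij}(k,l,\xi,\eta)\,\widehat{g_j'}(l,\eta)\,d\eta\,d\xi$ coming from $\mathcal{L}g'$. The transport-type pieces of each $q_{ij}$, carrying nonexponential factors of size $\varepsilon_0/t^2$ or $\varepsilon_0^2$, are handled by Schur's test in $(k-l,\xi-\eta)$ using the $L^1$-bounded kernel $e^{-(C_0/4)(|k-l|+|\xi-\eta|)}$, contributing $\leq C(\varepsilon_0^{1/2}M_1+M_1/t)$. The delicate piece is the reaction part of $q_{11}$, with singular factor $\mathbf{1}_{l\neq 0}(\mathbf{1}_{l\neq k}|\xi|+|k|+1)/((\xi-tl)^2+l^2)$. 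For this I would symmetrize the bilinear form in $(k,\xi)\leftrightarrow(l,\eta)$ and apply AM-GM with weight $\varepsilon_0^{1/2}$ to transfer the singular denominator onto the companion factor $\widehat{g_1'}(l,\eta)$, whose weight in $M_1$ is $A_l(t,\eta)=\bigl((\eta-lt)^2+\varepsilon_0^{-1}l^2\bigr)^{-1}$; combined with the exponential localization $|\xi-\eta|\lesssim 1$, this gives $\leq C\varepsilon_0^{1/2}M_1(t)$, absorbed into the first term.

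The principal obstacle is precisely this off-diagonal ($k\neq l$) reaction term in $q_{11}$: $A_k(t,\xi)$ peaks near $\xi=kt$ while the denominator $(\xi-tl)^2+l^2$ is small near $\xi=lt$, so a naive pointwise bound fails. To handle this I would dyadically decompose in $|\xi-kt|$ and $|\xi-lt|$ and exploit the elementary lower bound $(\xi-kt)^2+(\xi-lt)^2\geq\tfrac{1}{2}(k-l)^2t^2$, showing that in every dyadic region at least one of the two factors is large; the resulting $(k-l)^{-2}$-factor is summable in $l$, yielding the required $\varepsilon_0^{1/2}$-smallness after summation. Finally, the two-sided inequality with $\mathrm{sgn}(t-T_0)$ in \eqref{monotinicity2} is automatic, as every estimate above controls $|\partial_t M_1|$ in absolute value except the manifestly nonpositive damping pieces, which help in the forward direction and are themselves $\lesssim M_1/t$ in absolute value in the backward direction.
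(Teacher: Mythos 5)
There is a genuine gap in your treatment of the transport term. Your estimate asserts that the transport piece of $q_{11}$ carries a nonexponential factor of size $\varepsilon_0/t^2$, but the actual bound from \eqref{symbolbd1} is $\varepsilon_0\,e^{-(C_0/4)(|k-l|+|\xi-\eta|)}\,(|k|+|\xi|+1)/t^2$, with an \emph{unbounded} frequency factor $(|k|+|\xi|+1)$ coming from the derivative in $\underline{\theta}\cdot\partial_v f'$ and $\nabla^\perp\underline\phi\cdot\nabla f'$. After the Schur split into $\sqrt{A_k(\xi)}\,\overline{\widehat{g_1'}}(k,\xi)\cdot\sqrt{A_l(\eta)}\,\widehat{g_1'}(l,\eta)\cdot\sqrt{A_k(\xi)/A_l(\eta)}\,q_{11}^T$, the weight ratio is at best $O\bigl(1+\varepsilon_0^{1/2}t/\max(|k|,|l|)\bigr)$ via \eqref{simple2}, which does not compensate the growth in $|\xi|$ when $|\xi-tk|\gtrsim t$. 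In Proposition \ref{energy1} this difficulty is absorbed into the term $I_2$ generated by $\partial_t\lambda_0<0$ (for $t>T_0$) acting on the exponential weight; but $M_1$ has no such weight, so there is nothing to absorb the growth, and a bare Schur bound of the form $\varepsilon_0^{1/2}M_1+M_1/t$ is simply false for $J_{11}^T$ (and similarly for $J_{22}$, $J_{33}$).

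What the paper actually does for $J_{11}^T$ is use the reality of the transport vector field, encoded in the anti-symmetry $r_j(-m,-\zeta)=-\overline{r_j(m,\zeta)}$ of the Fourier kernel in \eqref{transform}, to symmetrize the bilinear form in $(k,\xi)\leftrightarrow(l,\eta)$. This produces \emph{differences} of weights, $l\,A_k(t,\xi)-k\,A_l(t,\eta)$ and $\eta\,A_k(t,\xi)-\xi\,A_l(t,\eta)$, divided by $\sqrt{A_k A_l}$ and multiplied by $\varepsilon_0/t^2$; the unbounded factor $|\xi|$ is then offset by the cancellation in the numerator, and after a case split in $|\xi-tk|$ vs.\ $t$ one finds $|q_{11}^k|+|q_{11}^\xi|\lesssim\varepsilon_0^{1/2}$. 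Without this commutator structure there is no way to close the estimate. Incidentally, your identification of the off-diagonal reaction term as ``the principal obstacle'' is misplaced: that piece \emph{is} handled by essentially the argument you describe (the paper's \eqref{simple} and \eqref{extra} are exactly the off-diagonal decoupling via $(\xi-kt)^2+(\xi-lt)^2\geq\tfrac12(k-l)^2t^2$), and it is common to Propositions \ref{energy1} and \ref{gevrey2}. The new difficulty that Proposition \ref{gevrey2} introduces is precisely the transport commutator, and your proposal omits it.
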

The third estimate also provides $L^2$ bounds, but is designed for more specific situations and are thus more precise.
\begin{proposition}\label{lastone} Recall the definition of $T_2$ and $T_3$ in (\ref{defineint}). Suppose $J=[T_2,T_1]$, assume that
\[\mathrm{supp}\big(\widehat{\rho'(t)}\big)\subset\big\{(k,\xi):|k|+|\xi|\leq T_0^{21/20}\big\}
\] for all $t\in J$, and that $g'(T_0)=0$. Define
\[
\widetilde{A_k}(t,\xi)=\frac{1}{(\xi-kt)^2+Q^2k^2},\,k\neq 0,\quad \widetilde{A_0}(t,\xi)=\frac{1}{\xi^2+Q^2};\quad \widetilde{A_*}(t,\xi)=\sum_{k}e^{-2|k|}A_{k}(t,\xi),
\] where $Q=\varepsilon_0^{-1}(\log k_0)^{-3}$, and 
\begin{equation}\label{defm4} M_2(t)=\sum_{k}\int_{\mathbb{R}}\widetilde{A_k}(t,\xi)\big|\widehat{g_1'}(t,k,\xi)\big|^2\,\mathrm{d}\xi+\sum_{j=2}^3\int_{\mathbb{R}}\widetilde{A_*}(t,\xi)\big|\widehat{g_j'}(t,0,\xi)\big|^2\,\mathrm{d}\xi.
\end{equation} Suppose $t',t''\in J$; if $t'$ is between $T_0$ and $t''$, and we have
 \begin{equation}\sum_{j=1}^3\sum_{k}\int_{\mathbb{R}}\big|\widehat{\rho_j'}(t,k,\xi)\big|^2\,\mathrm{d}\xi\leq (Z(t))^2\quad \mathrm{and}\quad |\partial_t\log Z(t)|\gg \varepsilon_0(\log k_0)^3.
\end{equation} for all $t$ in the interval between $t'$ and $t''$, then we have 
\begin{equation}\label{improvedgrowth}
M_2(t'')\lesssim  \max\big( e^{C\varepsilon_0(\log k_0)^3|t'-t''|}M_2(t'),(T_0Z(t''))^2\big).
\end{equation}

% The Old M_4 is now M_2!!!

Moreover, if we assume, for all $t\in[T_3,T_1]$, that 
\begin{equation}\mathrm{supp}\big(\widehat{\rho'(t)}\big)\subset\big\{(k,\xi):|k|+|\xi|\leq \varepsilon_0^{1/30}T_0\big\};
\end{equation}
 \begin{equation}\sum_{j=1}^3\sum_{k}\int_{\mathbb{R}}\big|\widehat{\rho_j'}(t,k,\xi)\big|^2\,\mathrm{d}\xi\leq (Z(t))^2\quad \mathrm{and}\quad |\partial_t\log Z(t)|\gg  T_0^{-1/4},
\end{equation}then for $t\in[T_3,T_1]$ we have
\begin{equation}\label{improve}\sum_{j=1}^3\sum_{k}\int_{\mathbb{R}}|\widehat{g_j'}(t,k,\xi)|^2\,\mathrm{d}\xi\lesssim T_0^{1/2}(Z(t))^2.\end{equation}
\end{proposition}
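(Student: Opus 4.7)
The plan is to prove both (\ref{improvedgrowth}) and (\ref{improve}) via weighted (resp.\ unweighted) $L^2$ energy estimates for the inhomogeneous system (\ref{inhomo}), derived from the symbol bounds of Proposition \ref{systemrough}, and then convert the resulting differential inequalities into the desired integral bounds using the hypothesis on $|\partial_t\log Z|$. The strategy is parallel to that of Propositions \ref{energy1} and \ref{gevrey2}, but exploits two new features: the enlarged denominator parameter $Q=\varepsilon_0^{-1}(\log k_0)^{-3}$ in $\widetilde{A_k}$, and the sharp Fourier-support restriction on $\rho'$.

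For (\ref{improvedgrowth}), I would first differentiate $M_2(t)$. The result splits into (i) a ``CK/ghost'' term from $\partial_t\widetilde{A_k}$, (ii) bilinear contributions $2\,\mathrm{Re}\,\langle\widetilde{A}g',\mathcal{L}g'\rangle$ driven by the symbols $q_{ij}$, and (iii) a source piece $2\,\mathrm{Re}\,\langle\widetilde{A}g',\rho'\rangle$. Using the decomposition $\mathcal{L}_1=\mathcal{L}_1^T+\mathcal{L}_1^R$ from (\ref{decompose}) and the bounds (\ref{symbolbd1})$\sim$(\ref{symbolbd7}), all transport contributions, the subleading reaction symbols $q_{11}''$, $q_{12}$, $q_{13}$, etc., together with the ghost piece, are controlled by a routine Cauchy-Schwarz/Schur-test computation at level $C(\varepsilon_0^{1/2}M_2+M_2/t)$. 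The key new term is $q_{11}'$ from (\ref{defq11'}): the enlarged weight $\widetilde{A_k}$ with $Q\gg\varepsilon_0^{-1/2}$ damps the Orr critical-time ratio $\sqrt{\widetilde{A_k}(\xi)/\widetilde{A_l}(\eta)}\big|_{t=\eta/l,\,|k-l|=1}$ from $\sim\varepsilon_0^{-1/2}|l|/t$ (as in Proposition \ref{energy1}) down to $\sim Q|l|/t$; combined with the amplification $|\eta|/((\eta-tl)^2+l^2)\lesssim|\eta|/l^2$, a careful Schur estimate gives a growth rate of at most $C\varepsilon_0(\log k_0)^3$. The source term satisfies $|\langle\widetilde{A}g',\rho'\rangle|\lesssim\sqrt{M_2}\cdot Z(t)$ since $\widetilde{A_k}\leq 1$ (as $Q\geq 1$).

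Collecting the above, I would obtain $|\partial_t\sqrt{M_2}|\leq C\varepsilon_0(\log k_0)^3\sqrt{M_2}+CZ(t)$, and Duhamel yields
\[
\sqrt{M_2(t'')}\leq e^{C\varepsilon_0(\log k_0)^3|t'-t''|}\sqrt{M_2(t')}+\int_{t'}^{t''}e^{C\varepsilon_0(\log k_0)^3|t''-s|}Z(s)\,ds.
\]
The hypothesis $|\partial_t\log Z|\gg\varepsilon_0(\log k_0)^3$ then forces the integral to be absorbed to an endpoint value: $Z$ varies strictly faster than the exponential weight, so the integral is bounded by $Z(t'')/|\partial_t\log Z|\lesssim T_0 Z(t'')$, using that $1/(\varepsilon_0(\log k_0)^3)\ll T_0$ for large $k_0$; when $Z$ is instead decreasing through the interval, the bound is absorbed into the $M_2(t')$ piece. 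Squaring yields (\ref{improvedgrowth}), with the max accounting for the two cases.

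For (\ref{improve}), the stronger support restriction $|k|+|\xi|\leq\varepsilon_0^{1/30}T_0$ plays a decisive role. Since the linear operator $\mathcal{L}$ only convolves against $\widehat{\underline{g}}$, which is concentrated near $|k|+|\xi|\lesssim 1$, the support of $\widehat{g'(t)}$ remains within essentially the same scale throughout $[T_3,T_1]$. Any Orr critical time $t=\eta/l$ with $|l|\geq 1$ lying inside $[T_3,T_1]$ would then demand $|l|\leq\varepsilon_0^{1/30}T_0/T_3=\varepsilon_0^{1/120}<1$ by (\ref{defineint}), which is impossible. Hence the reaction mechanism is entirely inactive and only the transport piece contributes. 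An unweighted $L^2$ energy estimate with $|q_{11}^T|\lesssim\varepsilon_0(|k|+|\xi|+1)/t^2$ (from (\ref{symbolbd1})) then gives
\[
\bigl|\partial_t\|g'\|_{L^2}^2\bigr|\lesssim\varepsilon_0\cdot\frac{\varepsilon_0^{1/30}T_0}{t^2}\|g'\|_{L^2}^2+\|g'\|_{L^2}\|\rho'\|_{L^2},
\]
whose coefficient integrates over $[T_3,T_1]$ to $O(\varepsilon_0^{1-1/60})\ll 1$. Using $g'(T_0)=0$, Duhamel gives $\|g'(t)\|_{L^2}\lesssim\bigl|\int_{T_0}^t Z(s)\,ds\bigr|$, and applying $|\partial_t\log Z|\gg T_0^{-1/4}$ (noting $T_0^{-1/4}\gg\varepsilon_0^{1-1/60}/$ any fixed scale) bounds this by $T_0^{1/4}Z(t)$; squaring delivers (\ref{improve}).

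The main technical obstacle is verifying the improved growth rate $C\varepsilon_0(\log k_0)^3$ of the $q_{11}'$ reaction contribution to $\partial_tM_2$ through a careful matching of the Lorentzian profiles of $\widetilde{A_k}$, $\widetilde{A_l}$ against the critical-time amplification $1/((\eta-tl)^2+l^2)$: one must track both the peak at $t=\eta/l$ and the off-peak tails to obtain a \emph{uniform} Schur bound, which is precisely what licenses the Duhamel absorption against the rapidly-varying source. A secondary subtlety in (\ref{improve}) is justifying the approximate preservation of the support condition under the linearized evolution despite the convolutions with $\widehat{\underline{g}}$, but this only broadens the support by $O(1)$ and is handled by a small adjustment of constants.
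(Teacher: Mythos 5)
Your overall framework (weighted energy estimates, Duhamel, absorption against the rapidly-varying source) is the right shape, but both halves of the argument have substantive gaps that prevent it from closing.

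For \eqref{improvedgrowth}, the claimed differential inequality $|\partial_t\sqrt{M_2}|\le C\varepsilon_0(\log k_0)^3\sqrt{M_2}+CZ$ is \emph{not} achievable by an instantaneous Cauchy--Schwarz/Schur estimate on the reaction kernel. At a critical time $t=\eta/l$ with $\xi\approx\eta$ and $|k-l|=1$, one has $\widetilde{A_l}(t,\eta)\sim (Ql)^{-2}$ and $\widetilde{A_k}(t,\xi)\sim t^{-2}$, so the Schur ratio
$\sqrt{\widetilde{A_k}/\widetilde{A_l}}\cdot\varepsilon_0|\eta|/((\eta-tl)^2+l^2)$
peaks at $\sim\varepsilon_0\cdot(Q|l|/t)\cdot(t/|l|)=\varepsilon_0 Q=(\log k_0)^{-3}$, not $Q^{-1}=\varepsilon_0(\log k_0)^3$. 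The two differ by a factor $\varepsilon_0^{-1}(\log k_0)^{-6}=(\log k_0)^{N_3-6}$, which is astronomical. Over an interval of length $\sim T_0$ this instantaneous rate would accumulate to $\varepsilon_0 Q\cdot T_0\sim Q\sqrt{\varepsilon_0\eta_0}$, which ruins the estimate. What actually saves the day — and what your proposal does not use — is that the reaction peak is \emph{sharply localized in time}: the paper passes to the running envelope $B_k(t,\xi)=\sup_{s}|\widetilde{\beta_k}(s,\xi)|$, gets a pointwise-in-$(k,\xi)$ bound on $\partial_t\widetilde{\beta_k}$ containing the factor $\varepsilon_0 Q\,(F_k(s,\xi)+F_l(s,\xi))$ with $F_k(s,\xi)=|k|/\sqrt{(\xi-sk)^2+k^2}$, and then integrates \emph{in time first}, exploiting $\sup_{|k|+|\xi|\lesssim T_0^{9/8}}\int_1^{T_1}F_k(s,\xi)\,\mathrm{d}s\lesssim\log k_0$. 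This converts the sharp peak into a total contribution $\varepsilon_0 Q\log k_0\ll 1$, which can be absorbed. This is the key structural idea; without it the constant in the exponential of \eqref{improvedgrowth} is wrong by many powers of $\log k_0$.

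For \eqref{improve}, the unweighted $L^2$ energy estimate you propose misses the contribution of $q_{13}$, coming from the term $-\theta'\partial_v\underline{f}$. By \eqref{symbolbd3}, $|q_{13}|\lesssim\varepsilon_0\mathbf{1}_{l=0}e^{-(C_0/4)(|k|+|\xi-\eta|)}$ with \emph{no} $t^{-2}$ decay, so it contributes at instantaneous rate $\varepsilon_0\|g'\|_{L^2}^2$ to $\partial_t\|g'\|_{L^2}^2$. Over $[T_3,T_1]$ (length $\sim T_0\sim\sqrt{\eta_0/\varepsilon_0}$) this integrates to $\varepsilon_0 T_0\sim\sqrt{\varepsilon_0\eta_0}$, a huge Gronwall factor, not $O(\varepsilon_0^{1-1/60})$ as you claim. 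Your observation that critical times fall off the support, and hence the reaction denominator gains $1/(t^2l^2)$, is correct and is exactly the paper's observation — but it only handles the $q_{11}$ and $q_{31}$ symbols. To control $q_{13}$ the paper exploits the additional $1/t^2$ decay of $\theta'$ itself (from the friction $-2\theta'/t$) by working with the \emph{weighted} energy $\widetilde{L}(t)=\sum_{j=1,2}\|g_j'\|_{L^2}^2+T_0^{1/2}\|g_3'\|_{L^2}^2$, which brings the $q_{13}$-rate down to $\lesssim T_0^{-1/4}$ and the source rate up to $T_0^{1/4}$. Without this asymmetric weighting, the bound \eqref{improve} does not follow. (A minor note: a preliminary step is also needed to justify restricting the frequency support of $g'$ itself, not just $\rho'$, before applying the symbol bounds; the paper does this via \eqref{diffineq2} and Proposition \ref{energy1} first.)
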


 \section{Linear analysis I: the homogeneous case}\label{for} 
 In this and the next section we will perform detailed analysis of the system (\ref{inhomo}) on the time interval $[T_2,T_1]$. Two cases of (\ref{inhomo}) will be studied:
 \begin{enumerate}[(a)]
 \item when $\rho'(t)\equiv 0$ and
 \[g'(T_0)=(\varepsilon_1e^{ i(k_0z+\eta_0v)}\varphi_p(k_0\sqrt{\sigma}v)/2,0,0);
 \]
 \item when $g'(T_0)=0$.
 \end{enumerate} Note that, since
  \begin{equation}\label{deccos}\cos(k_0z+\eta_0v)=1/2\big(e^{ i(k_0z+\eta_0v)}+e^{ -i(k_0z+\eta_0v)}\big),
 \end{equation}if $g'$ is the solution in case (a), then $g'+\overline{g'}$ is precisely the solution to the linearization of (\ref{eulernew})$\sim$(\ref{eulersup}) at $\underline{g}$, with data \[((\varepsilon_1\cos(k_0z+\eta_0v)\varphi_p(k_0\sqrt{\sigma}v)/2,0,0))\] at time $T_0$.
 
 In this section we will study case (a); case (b) will be discussed in Section \ref{largeeta}.
 \subsection{Localization in Fourier space} The first step is to localize $g'$ in Fourier space; this will also allow us to essentially get rid of $h'$ and $\theta'$ on $[T_0,T_1]$. Such information can be provided by Proposition \ref{energy1}, but here more precise control is needed.
\begin{proposition}\label{movexi} For $t\in[T_2,T_0]$ we have
\begin{equation}\label{farsmall2}
\sum_{j=1}^3\sum_k\int_{\mathbb{R}}(\mathbf{1}_{|k-k_0|\geq D k_0}+\mathbf{1}_{|\xi-\eta_0|\geq Dk_0})e^{|k-k_0|+|\xi-\eta_0|}|\widehat{g_j'}(t,k,\xi)|^2\,\mathrm{d}\xi\lesssim \varepsilon_1^2e^{-Dk_0/2}.\end{equation}
For $t\in [T_0,T_1]$ we have
\begin{equation}\label{farsmall}
\begin{aligned}
\sum_k\int_{\mathbb{R}}(\mathbf{1}_{|k-k_0|\geq D\sigma k_0}+\mathbf{1}_{|\xi-\eta_0|\geq k_0/10})e^{(|k-k_0|+\sqrt{\sigma}|\xi-\eta_0|)/2}|\widehat{f'}(t,k,\xi)|^2\,\mathrm{d}\xi&\lesssim \varepsilon_1^2e^{-D\sigma k_0/2},\\
\int_{\mathbb{R}}e^{\sqrt{\sigma}|\xi-\eta_0|/2}(|\widehat{h'}(t,\xi)|^2+|\widehat{\theta'}(t,\xi)|^2)\,\mathrm{d}\xi&\lesssim \varepsilon_1^2e^{-k_0/2}.
\end{aligned}
\end{equation}
\end{proposition}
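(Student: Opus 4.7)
The plan is to bound the Fourier localization of $g'$ via the shifted analytic energy inequality of Proposition \ref{energy1}, applied with shift $(k_*, \eta_*) = (k_0, \eta_0)$ matching the initial data. Since $\widehat{\varphi_p}$ is compactly supported in $[-1,1]$, the datum at $T_0$ has Fourier transform concentrated on $\{k_0\} \times [\eta_0 - k_0\sqrt{\sigma}, \eta_0 + k_0\sqrt{\sigma}]$, and $g_2'(T_0) = g_3'(T_0) = 0$. So the initial value of the weighted energy (\ref{diffineq}) satisfies $M_0(T_0) \lesssim \varepsilon_1^2 A_{k_0}(T_0, \eta_0)\, e^{\tau k_0\sqrt{\sigma}}$. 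Choosing $\tau = 3/2$, the exponential weight gain between the norm weight $e^{\tau \lambda_0(t)(|k-k_0|+|\xi-\eta_0|)}$ and the target tail weight $e^{|k-k_0|+|\xi-\eta_0|}$ on the set $\{|k-k_0|+|\xi-\eta_0|\geq Dk_0\}$ yields the factor $e^{-Dk_0/2}$, modulo polynomial factors in $k,\xi,t$ from stripping off $A_k^{-1}$, which are easily absorbed by a fraction of the exponential.

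For (\ref{farsmall2}) on $[T_2, T_0]$, I would integrate (\ref{diffineq2}) backward from $T_0$ down to $T_2$ with $Z\equiv 0$. The $M_0/t$ term contributes only $(T_0/T_2)^{C}$, and the term $\varepsilon_0(|k_*|+|\eta_*|)/t^2 \cdot M_0$ integrates to $\exp(Ck_0^2/T_2)$, both controllable using the parameter relations in (\ref{defpara})--(\ref{defineint}) (in particular $k_0^2/T_2 = O(k_0\varepsilon_0^{19/40})$). For (\ref{farsmall}) on $[T_0, T_1]$, I would use an anisotropic variant of Proposition \ref{energy1} whose weight is $e^{\tau(|k-k_0|+\sqrt{\sigma}|\xi-\eta_0|)}$ matching the structure demanded by the statement; the tighter $k$-scale $D\sigma k_0$ then emerges from the shorter time interval $[T_0, T_1]$ of length $\sim \sigma T_0$. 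The separate bounds on $h'$ and $\theta'$ in (\ref{farsmall}) follow from $h'(T_0) = \theta'(T_0) = 0$ and the damping factors $-1/t$, $-2/t$ in the $h'$ and $\theta'$ equations, together with source terms generated from $f'$ carrying small $\varepsilon_0$-prefactors, see (\ref{symbolbd5})--(\ref{symbolbd7}).

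The main obstacle is the $\varepsilon_0^{1/2} M_0$ term in (\ref{diffineq2}): a priori it gives growth $\exp(C\varepsilon_0^{1/2}(T_0 - T_2))$ comparable to $\exp(Ck_0/\sqrt{\varepsilon_0})$, reflecting the $L^2$-level saturation $e^{c\sqrt{\eta_0}}$ of the Orr cascade, which is far too large to be beaten directly by the tail factor $e^{-Dk_0/2}$ or $e^{-D\sigma k_0/2}$. The remedy is to exploit the structural decomposition $q_{11} = q_{11}^T + q_{11}^R$ from Proposition \ref{systemrough}: the reaction part $q_{11}^R$, bounded by (\ref{reactonly}), is only active near critical times $t = \xi/l$ with $|k-l|=1$, and for $(k,\xi)$ deep in the tail region (well outside a $Dk_0$-ball around $(k_0,\eta_0)$) the relevant critical times fall outside $[T_2, T_1]$. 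After performing a Littlewood--Paley-type split of $g'$ into a near piece concentrated around $(k_0,\eta_0)$ and a far tail, the effective growth rate for the tail piece becomes much smaller than $\varepsilon_0^{1/2}$, and a bootstrap run in the shifted norm closes the estimate.
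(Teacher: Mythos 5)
You identify the correct obstacle — the $\varepsilon_0^{1/2}M_0$ term in \eqref{diffineq2} produces growth $\exp(C\varepsilon_0^{1/2}T_0)\sim e^{c\sqrt{\eta_0}}$ that overwhelms any fixed tail threshold — but the proposed remedy is not sound. Your key claim is that ``for $(k,\xi)$ deep in the tail region the relevant critical times fall outside $[T_2,T_1]$.'' This is false in both relevant regimes. If $|\xi-\eta_0|\geq Dk_0$ but $k$ is arbitrary, the critical times in the reaction kernel are $t=\xi/l$; since $Dk_0\ll\eta_0$, shifting $\xi$ by $Dk_0$ moves $\xi/l$ by an amount $\lesssim Dk_0/l\lesssim D\varepsilon_0 T_0$, which is negligible against the interval length $T_0-T_2\sim T_0$, so the critical times remain squarely inside $[T_2,T_1]$. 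If instead $|k-k_0|\geq Dk_0$ with $\xi$ close to $\eta_0$, the mode sum in $q_{11}^R$ weighted by $e^{-c|k-l|}$ couples $k$ to nearby $l$, and the active resonance range for $l$ on $[T_2,T_0]$ is roughly $[k_0,k_2]$ with $k_2=\varepsilon_0^{-1/40}k_0\gg Dk_0$; so for $k\in[Dk_0,k_2]$ the resonances are not avoided either. A near/far Littlewood--Paley split with the homogeneous evolution on each piece therefore does not, by itself, reduce the effective growth rate below $\varepsilon_0^{1/2}$.

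The paper's proof uses a different mechanism, and it is the part you are missing. Rather than invoke Proposition \ref{energy1} and bootstrap, the paper builds from scratch a \emph{monotone} weighted energy
\[
M(t)=\sum_{j,k}\int e^{\mu(t,\xi)+\lambda(t)(|k-k_0|+|\xi-\eta_0|)}\big|\widehat{g_j'}(t,k,\xi)\big|^2\,\mathrm{d}\xi,
\]
where the extra Fourier multiplier $\mu(t,\xi)=-\int_{t}^{T_0}H(t',\xi)\,\mathrm{d}t'$ (or the corresponding version on $[T_0,T_1]$) is constructed so that $\partial_t\mu$ \emph{is} the resonance profile $H(t,\xi)$, a mode-summed version of $\varepsilon_0\eta_0/((\xi-tq)^2+q^2)$. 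The point is a pointwise comparison of symbols: the reaction part $q_{11}'$ is bounded, after rescaling by $\sqrt{A_kA_l}$, by $H(t,k,\xi)$ with a constant that is a \emph{fixed multiple of $C_0$} (recorded in \eqref{qijest202}), whereas the derivative of the weight supplies $C_0^{1/3}\cdot H$ (or $E\cdot H$ on $[T_0,T_1]$). Since $C_0^{1/3}\gg C_0\cdot C_0^{-1/3}$ is the spare factor, the ``reaction'' contribution $J_{11}$ is absorbed into $|I_1|$ with room to spare, yielding $\mathrm{sgn}(t-T_0)\cdot\partial_tM\geq 0$ exactly. This monotonicity, not a bootstrap, is what circumvents the $\varepsilon_0^{1/2}$ growth. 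The final step is then a bookkeeping of the total multiplier: $|\mu(t,\xi)|\lesssim Ek_0$ on $[T_2,T_0]$ (respectively $\lesssim E\sigma k_0$ on $[T_0,T_1]$), so since $D\gg E$ the tail weight $e^{2(|k-k_0|+|\xi-\eta_0|)}\geq e^{2Dk_0}$ still wins and gives \eqref{farsmall2}; the case $[T_0,T_1]$ is the same argument with $\lambda(t)=1-E(t-T_0)/(\sqrt\sigma T_0)$ and the anisotropic weight you anticipated, which is why the tighter scale $D\sigma k_0$ appears. In short, your decomposition of $q_{11}$ is indeed an ingredient, but it must be turned into a precise choice of Lyapunov multiplier $\mu$ rather than a support argument about where critical times lie.
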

\begin{proof}
Recall that $E$ is an absolute constant such that $D\gg E\gg C$. We consider the time intervals $[T_2,T_0]$ and $[T_0,T_1]$ separately, with different definition of energy functionals but similar proofs.
 
\emph{Case 1: on $[T_2,T_0]$.} In this part we will use implicit constants that are \emph{independent of} $C_0$. We will use $\lesssim$ and $O(1)$ in situations where the constants depend on $C_0$, and $\lesssim'$ and $\widetilde{O}(1)$ where the constants do not depend on $C_0$.

Define the function $\lambda(t)=C_0^{2/3}-C_0^{1/3}|t-T_0|/T_0$, and
\begin{equation}\label{defh30}
    H(t,\xi)=E\bigg(\varepsilon_0+\frac{\varepsilon_0\eta_0}{t^2}\bigg)+C_0^{1/3}\sum_{|k|> k_0/2}H(t,k,\xi),\quad H(t,k,\xi)=\sum_{q\neq 0}e^{-2|k-q|}\frac{\varepsilon_0\eta_0}{(\xi-tq)^2+q^2},
    \end{equation}and define the energy\begin{equation}\label{defmod0}
 M(t)=\sum_{j=1}^3\sum_k\int_{\mathbb{R}}e^{\mu(t,\xi)+\lambda(t)(|k-k_0|+|\xi-\eta_0|)}|\widehat{g_j'}(t,k,\xi)|^2\,\mathrm{d}\xi,\quad \mu(t,\xi)=-\int_{t}^{T_0} H(t',\xi)\,\mathrm{d}t'.
 \end{equation} We will show that $M(t)$ is nondecreasing on $[T_2,T_0]$.

 First, decompose $q_{11}=q_{11}'+q_{11}''$ as in Proposition \ref{systemrough}; by \eqref{symbolbd1}$\sim$\eqref{symbolbd7} and (\ref{defq11''}), and using elementary algebra, we obtain the following estimates:
\begin{equation}\label{qijest201} |q_{11}''|+|q_{ij}|\lesssim e^{-(C_0/5)(|k-l|+|\xi-\eta|)}\cdot\left\{
 \begin{split}
&\bigg(\varepsilon_0+\varepsilon_0\frac{|k-k_0|+|\xi-\eta_0|}{T_0}\bigg),&&\textrm{if $|\xi-\eta_0|\geq |\eta_0|/6$, or $|k|\leq k_0/2$};\\
&\bigg(\varepsilon_0+\frac{\varepsilon_0\eta_0}{t^2}+\varepsilon_0H(t,k,\xi)\bigg),&&\textrm{if $|\xi-\eta_0|<|\eta_0|/6$, and $|k|>|k_0|/2$.}
 \end{split}\right.
\end{equation} For $q_{11}'$, using the definition of $\varphi_b$, we know that
 \begin{equation}\label{qijest202}
|q_{11}'|\lesssim' \mathbf{1}_{|k-l|=1}\cdot C_0 e^{-C_0|\xi-\eta|}\cdot\left\{
\begin{split}
&\frac{|k-k_0|+|\xi-\eta_0|}{T_0},&&\textrm{if $|\xi-\eta_0|\geq |\eta_0|/6$, or $|k|\leq k_0/2$};\\
&H(t,k,\xi),&&\textrm{if $|\xi-\eta_0|<|\eta_0|/6$, and $|k|>k_0/2$.}
\end{split}
\right.
 \end{equation}Note that the implicit constant in this inequality does \emph{not} depend on $C_0$. Now we may use \eqref{simpsys} to compute
 \[\partial_tM(t)=I_1(t)+I_2(t)+\sum_{i,j=1}^3J_{ij}(t),
 \] where 
\begin{align}\label{i1}I_1(t)&=\sum_{j=1}^3\sum_{k}\int_{\mathbb{R}}\bigg(H(t,\xi)+C_0^{1/3}\frac{|k- k_0|+|\xi- \eta_0|}{T_0}\bigg)e^{\mu(t,\xi)+\lambda(t)(|k- k_0|+|\xi- \eta_0|)}\big|\widehat{g_j'}(t,k,\xi)\big|^2\,\mathrm{d}\xi,\\
\label{i2}I_2(t)&=-2t^{-1}\Re\sum_{k}\int_{\mathbb{R}}e^{\mu(t,\xi)+\lambda(t)(|k- k_0|+|\xi- \eta_0|)}\big(\big|\widehat{g_2'}(t,k,\xi)\big|^2+2\big|\widehat{g_3'}(t,k,\xi)\big|^2+\overline{\widehat{g_2'}(t,k,\xi)}\cdot\widehat{g_1'}(t,k,\xi)\big)\,\mathrm{d}\xi,\\
\label{jij}J_{ij}(t)&=2\Re\sum_{k,l}\int_{\mathbb{R}^2}e^{\mu(t,\xi)+\lambda(t)(|k- k_0|+|\xi- \eta_0|)}q_{ij}(t,k,l,\xi,\eta)\cdot\overline{\widehat{g_i'}(t,k,\xi)}\cdot \widehat{g_j'}(t,l,\eta)\,\mathrm{d}\xi\mathrm{d}\eta.\end{align}
On the time interval $[T_0,T_1]$, we clearly have $|I_2(t)|\leq C\varepsilon_0M(t)$ and $I_1(t)\geq E\varepsilon_0M(t)$. Moreover, by (\ref{qijest201}) and (\ref{qijest202}) we have that
 \begin{multline*}
 |J_{ij}(t)|\lesssim\sum_{k,l}\int_{\mathbb{R}^2}e^{\mu(t,\xi)+\lambda(t)(|k- k_0|+\sqrt{\sigma}|\xi- \eta_0|)} e^{-(C_0/5)(|k-l|+|\xi-\eta|)}\times\bigg[O(1)\varepsilon_0+O(1)\frac{\varepsilon_0\eta_0}{t^2}\\
+(\widetilde{O}(1)C_0+O(1)\varepsilon_0)\cdot\bigg(\frac{|k-k_0|+|\xi-\eta_0|}{T_0}+\mathbf{1}_{|\xi-\eta_0|<\eta_0/6}\mathbf{1}_{|k|>k_0/2}H(t,k,\xi)\bigg)\bigg]\big|\widehat{g_i'}(t,k,\xi)\big|\cdot\big|\widehat{g_j'}(t,l,\eta)\big|\,\mathrm{d}\xi\mathrm{d}\eta,
 \end{multline*} where recall that $\widetilde{O}(1)$ represent constants independent of $C_0$, and $O(1)$ represent constants depending on $C_0$. Notice that 
 \[\mathbf{1}_{|\xi-\eta_0|<\eta_0/6}\mathbf{1}_{|k|>k_0/2}H(t,k,\xi)\leq H(t,\xi), \quad H(t,\xi)\lesssim e^{|\xi-\eta|}H(t,\eta);
 \]using Cauchy-Schwartz, the H\"{o}lder estimate $|\mu(t,\xi)-\mu(t,\eta)|\leq\widetilde{O}(1) C_0^{1/3}|\xi-\eta|$ which follows from\begin{multline}\label{estdevmu}|\partial_{\xi}\mu(t,\xi)|\leq \widetilde{O}(1)\frac{\varepsilon_0\eta_0}{t^2} C_0^{1/3}\sum_{|k|> |\eta_*|/(2T_0)}\int_{\mathbb{R}}|\partial_{\xi}H_3(t,k,\xi)|\,\mathrm{d}t\\\leq \widetilde{O}(1) C_0^{1/3}\sum_{|k|> |\eta_*|/(2T_0)}\varepsilon_0\eta_*\sup_{|q|\sim k_0;\xi}\int_{\mathbb{R}}\frac{|\xi-tq|}{((\xi-tq)^2+q^2)^2}\,\mathrm{d}t\leq \widetilde{O}(1) C_0^{1/3},\end{multline} and the fact that
\[\int_{\mathbb{R}}C_0e^{-(C_0/10)|\zeta|}\,\mathrm{d}\zeta\leq \widetilde{O}(1),
\] we obtain that
\[
|J_{ij}(t)|\leq\big(\widetilde{O}(1)C_0^{-1/3}+O(1)\varepsilon_0\big)|I_1(t)|,
\]  which then implies $\partial_tM(t)\geq 0$.

Now that $M(t)$ is nondecreasing on $[T_2,T_0]$, we know for $t\in[T_2,T_0]$ that
\[M(t)\leq M(T_0)=\frac{1}{4}\varepsilon_1^2(k_0\sqrt{\sigma})^{-1}\int_{\mathbb{R}}\widehat{\varphi_p}\bigg(\frac{\xi-\eta_0}{k_0\sqrt{\sigma}}\bigg)^2e^{C_0^{2/3}|\xi-\eta_0|}\,\mathrm{d}\xi\lesssim\varepsilon_1^2 e^{2k_0}.
\] Since $\lambda(t)\geq 2$ on $[T_2,T_0]$ and \[|\mu(t,\xi)|\lesssim E\varepsilon_0T_0+ \sum_{|k|>k_0/2}\int_{\mathbb{R}}H(t,k,\xi)\,\mathrm{d}t+E\int_{T_2}^{T_0}\frac{\varepsilon_0|\eta_*|}{t^2}\,\mathrm{d}t\lesssim Ek_0+\frac{\varepsilon_0\eta_0}{T_2}\lesssim Ek_0,\] we get that
\[\sum_{j=1}^3\sum_k\int_{\mathbb{R}}e^{2(|k-k_0|+|\xi-\eta_0|)}|\widehat{g_j'}(t,k,\xi)|^2\,\mathrm{d}\xi\lesssim \varepsilon_1^2e^{CEk_0},
\] which implies (\ref{farsmall2}).

\emph{Case 2: on $[T_0,T_1]$.} Here all constants are allowed to depend on $C_0$. Define the function $\lambda(t)=1-E(t-T_0)/(\sqrt{\sigma} T_0)$, and (note that they are different from the corresponding functions defined on $[T_0,T_1]$)
\[
\]\begin{equation}\label{defh}H(t,k,\xi)=\sum_{q\neq 0}e^{-2|k-q|}\frac{\varepsilon_0\eta_0}{(\xi-tq)^2+q^2},\quad H(t,\xi)=\varepsilon_0+\sum_{|k-k_0|\leq 2\sigma k_0}H(t,k,\xi),\end{equation} and define the energy
\begin{equation}\label{defenergy}M(t)=\sum_{j=1}^3\sum_{k}\int_{\mathbb{R}}e^{\mu(t,\xi)+\lambda(t)(|k- k_0|+\sqrt{\sigma}|\xi- \eta_0|)}\big|\widehat{g_j'}(t,k,\xi)\big|^2\,\mathrm{d}\xi,\quad \mu(t,\xi)=-E\int_{T_0}^t H(t',\xi)\,\mathrm{d}t',\end{equation}We will show that $M(t)$ is nonincreasing on $[T_0,T_1]$.

 We argue in the same way as in \emph{Case 1}. First we have the following estimates, again by using (\ref{symbolbd1})$\sim$(\ref{symbolbd7}):
 \begin{equation}\label{qijest} |q_{ij}|\lesssim e^{-(C_0/5)(|k-l|+|\xi-\eta|)}\cdot\left\{
 \begin{split}
&\bigg(\varepsilon_0+\frac{|k-k_0|+\sqrt{\sigma}|\xi-\eta_0|}{\sqrt{\sigma} T_0}\bigg),&&\textrm{if $|\xi-\eta_0|\geq \sigma\eta_0$, or $|k-k_0|\geq 2\sigma k_0$};\\
&\big(\varepsilon_0+H(t,k,\xi)\big),&&\textrm{if $|\xi-\eta_0|<\sigma\eta_0$, and $|k-k_0|<2\sigma k_0$.}
 \end{split}\right.
 \end{equation} Now by (\ref{simpsys}) we have\[\partial_tM(t)=I_1(t)+I_2(t)+\sum_{i,j=1}^3J_{ij}(t),\] where 
\begin{align*}I_1(t)&=-E\sum_{j=1}^3\sum_{k}\int_{\mathbb{R}}\bigg(H(t,\xi)+\frac{|k- k_0|+\sqrt{\sigma}|\xi- \eta_0|}{\sqrt{\sigma}T_0}\bigg)e^{\mu(t,\xi)+\lambda(t)(|k- k_0|+\sqrt{\sigma}|\xi- \eta_0|)}\big|\widehat{g_j'}(t,k,\xi)\big|^2\,\mathrm{d}\xi,\\I_2(t)&=-2t^{-1}\Re\sum_{k}\int_{\mathbb{R}}e^{\mu(t,\xi)+\lambda(t)(|k- k_0|+\sqrt{\sigma}|\xi- \eta_0|)}\big(\big|\widehat{g_2'}(t,k,\xi)\big|^2+2\big|\widehat{g_3'}(t,k,\xi)\big|^2+\overline{\widehat{g_2'}(t,k,\xi)}\cdot\widehat{g_1'}(t,k,\xi)\big)\,\mathrm{d}\xi,\\J_{ij}(t)&=2\Re\sum_{k,l}\int_{\mathbb{R}^2}e^{\mu(t,\xi)+\lambda(t)(|k- k_0|+\sqrt{\sigma}|\xi- \eta_0|)}q_{ij}(t,k,l,\xi,\eta)\cdot\overline{\widehat{g_i'}(t,k,\xi)}\cdot \widehat{g_j'}(t,l,\eta)\,\mathrm{d}\xi\mathrm{d}\eta.\end{align*} We then have $|I_2(t)|\lesssim\varepsilon_0M(t)$, $I_1(t)\leq -E\varepsilon_0M(t)$, and $|J_{ij}(t)|\lesssim E^{-1}|I_1(t)|$, by Following the same lines as in \emph{Case 1}, using Cauchy-Schwartz and the estimate
\[|\partial_{\xi}\mu(t,\xi)|\lesssim E\sum_{|k-k_0|\leq 2\sigma k_0}\int_{\mathbb{R}}|\partial_{\xi}H(t,k,\xi)|\,\mathrm{d}t\lesssim E\sigma k_0\varepsilon_0\eta_0\sup_{|q|\sim k_0;\xi}\int_{\mathbb{R}}\frac{|\xi-tq|}{((\xi-tq)^2+q^2)^2}\,\mathrm{d}t\lesssim E\sigma,\] which implies $|\mu(t,\xi)-\mu(t,\eta)|\lesssim E\sigma|\xi-\eta|$.
This proves that $\partial_tM(t)\leq 0$.
 
 Now, in the same way as in \emph{Case 1}, we can compute $M(t)\leq M(T_0)\lesssim\varepsilon_1^2 e^{2\sigma k_0}$ for $t\in[T_0,T_1]$, and use that \[|\mu(t,\xi)|\lesssim E\sum_{|k-k_0|\lesssim 2\sigma k_0}\int_{\mathbb{R}}H(t,k,\xi)\,\mathrm{d}t\lesssim E\sigma k_0\] to conclude that \[\int_{\mathbb{R}}e^{(3/4)\sqrt{\sigma}|\xi-\eta_0|}(|\widehat{h'}(t,\xi)|^2+|\widehat{\theta'}(t,\xi)|^2)\,\mathrm{d}\xi\lesssim \varepsilon_1^2e^{-3k_0/4}e^{CE\sigma k_0},\] for $t\in[T_0,T_1]$, which implies \eqref{farsmall}.
\end{proof}

 \subsection{Approximation by a recurrence relation} With the good localization properties provided by Proposition \ref{movexi}, we can then approximate the system (\ref{inhomo}) by a system of recurrence relations, by approximately solving it on each critical interval $[t_m,t_{m-1}]$ (recall the definitions in (\ref{defineint})). This will be done on $[T_2,T_0]$ and on $[T_0,T_1]$ separately.
 \subsubsection{Approximation on $[T_2,T_0]$} Here, only an upper bound is needed. 
 \begin{proposition}\label{newtoyback} For $k\in\mathbb{Z}$, define $\beta_k(t,v)=\mathscr{F}_zf'(t,k,v)$; for simplicity we also write $\beta_h=h'$ and $\beta_\theta=\theta'$. Define, for $k_0\leq m\leq k_2$, that
  \[F_m=\sup_{k}\|\beta_k(t_m)\|_{L^2};
 \] then for $k_0+1\leq m\leq k_2$, we have
 \begin{equation}\label{int}\sup_{t\in[t_m,t_{m-1}]}\sup_{k}\|\beta_k(t_m)\|_{L^2}\lesssim DF_{m-1}+\varepsilon_1e^{-k_0}.
 \end{equation} Moreover, we have the following approximate recurrence relation:
 \begin{equation}\label{recur0}\beta_k(t_m,v)=\beta_{k}(t_{m-1},v)+\mathcal{R}_{m,k}(v)+\left\{
 \begin{aligned}&\qquad\qquad\quad0,&k&\neq m\pm 1\textnormal{ or }\pm 1;\\
 &\pm\frac{\alpha k_0^2}{m^2}\varphi_b(v)\cdot\beta_m(t_{m-1},v),&k&=m\pm 1;\\
 &+\frac{\alpha k_0^2}{\pi m^2}\partial_v\varphi_b(v)\cdot \beta_{\theta}(t_{m-1},v),&k&=\pm 1,
 \end{aligned}
 \right.
 \end{equation} where the error term satisfies that 
 \begin{equation}\label{errest00}\sup_{k}\|\mathcal{R}_{m,k}\|_{L^2}\lesssim\varepsilon_0D F_{m-1}+\varepsilon_1e^{-k_0}.
 \end{equation}
 \end{proposition}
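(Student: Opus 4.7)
The plan is to exploit the structure of the linearized system in Fourier-in-$z$, physical-in-$v$ representation: on each critical interval $I_m=[t_m,t_{m-1}]$, only the single ``resonant'' mode $l=m$ contributes significantly to the reaction term $q_{11}'$, since for every other $l$ the Orr denominator $(\eta-tl)^2+l^2$ stays bounded away from zero throughout $I_m$. Combined with the sharp Fourier localization of $g'$ near $(\pm k_0,\pm\eta_0)$ from Proposition \ref{movexi}, this reduces \eqref{simpsys} on $I_m$ to an explicit update affecting only the modes $k=m\pm 1$ (from $q_{11}'$) and $k=\pm 1$ (from $q_{13}'$), which is exactly \eqref{recur0}.

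I would first establish the interval bound \eqref{int} by a short energy argument on $I_m$, tracking how the weighted $L^2$ norm can change in a single interval. The worst-case reaction gain per interval is controlled by $\int_{I_m}\sup_l(\varepsilon_0\eta_0)/((\eta-tl)^2+l^2)\,dt\lesssim \varepsilon_0\eta_0/m^2\lesssim \alpha\lesssim D$ in the regime $k_0\leq m\leq k_2$; contributions from modes outside the localization region \eqref{farsmall2} are exponentially suppressed, accounting for the additive $\varepsilon_1 e^{-k_0}$. This is in effect the one-interval version of the iterative argument behind \eqref{backgrow}.

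The heart of the proof is to integrate \eqref{simpsys} from $t_{m-1}$ down to $t_m$. By the splittings $q_{11}=q_{11}'+q_{11}''$ and $q_{13}=q_{13}'+q_{13}''$ from Proposition \ref{systemrough}, only $q_{11}'$ (with $l=m$, forcing $k=m\pm 1$ by \eqref{defq11'}) and $q_{13}'$ (with $(k,l)=(\pm 1,0)$) contribute at leading order. For the first, I would replace $\widehat{g_1'}(t,m,\eta)$ by its value at $t_{m-1}$ and $\eta$ in the prefactor by $\eta_0$, reducing the $t$-integral to the exact identity
\[
\int_{t_m}^{t_{m-1}}\frac{dt}{(\eta-tm)^2+m^2}=\frac{1}{m^2}\Big[\arctan\tfrac{\eta-t_m m}{m}-\arctan\tfrac{\eta-t_{m-1}m}{m}\Big]\approx\frac{\pi}{m^2},
\]
where the arctan approximation is valid in the range $m\lesssim k_0/\sqrt{\varepsilon_0}$ for which $\eta_0/m^2\gtrsim 1/\varepsilon_0\gg 1$ (outside this range, both the claimed main term and the residual discrepancy are already majorized by $\varepsilon_0 D F_{m-1}$ and absorbed into $\mathcal{R}_{m,k}$). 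Using $\varepsilon_0\eta_0/(2m^2)=\alpha k_0^2/(\pi m^2)$ and inverse Fourier transforming in $\xi$ turns the $\widehat{\varphi_b}(\xi-\eta)$ kernel into the multiplier $\varphi_b(v)$, producing the main term $\pm\alpha k_0^2\varphi_b(v)\beta_m(t_{m-1},v)/m^2$. The parallel computation for $q_{13}'$, where there is no Orr denominator but the interval has length $t_{m-1}-t_m\approx \eta_0/m^2$ and $\widehat{\partial_v\varphi_b}(\xi-\eta)=i(\xi-\eta)\widehat{\varphi_b}(\xi-\eta)$, yields the $\partial_v\varphi_b\cdot\beta_\theta$ contribution for $k=\pm 1$.

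Everything else must be absorbed into $\mathcal{R}_{m,k}$. The nonresonant part of $q_{11}'$ (with $l\neq m$), together with $q_{11}''$, $q_{12}$, $q_{13}''$ and the entries $q_{2j}, q_{3j}$, all carry either a small Orr gain $\lesssim\varepsilon_0/m^2$ or an additional prefactor of $\varepsilon_0$; combined with \eqref{int} and the exponential localization \eqref{farsmall2}--\eqref{farsmall} (the latter handling the $h'$, $\theta'$ pieces), each contributes at most $\varepsilon_0 DF_{m-1}+\varepsilon_1 e^{-k_0}$ in $L^2_v$ after a Cauchy--Schwarz. I expect the main obstacle to lie in the careful control of the two residual errors inside the leading-order computation: first, replacing $\widehat{g_1'}(t,m,\eta)$ by $\widehat{g_1'}(t_{m-1},m,\eta)$, which requires returning to the equation for this specific mode and exploiting that its own reaction on $I_m$ is off-resonance (driven by $l=m\pm 1$) and hence controlled by \eqref{int} with a small factor; and second, replacing $\eta$ by $\eta_0$ in the numerator of $q_{11}'$, which requires trading the polynomial weight $|\eta-\eta_0|$ against the exponential concentration from \eqref{farsmall2} and the Schwartz decay of $\widehat{\varphi_b}(\xi-\eta)$ after integrating against $\widehat{g_1'}(t_{m-1},m,\eta)$.
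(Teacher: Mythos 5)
Your proposal is correct and follows essentially the same route as the paper: insert cutoffs using the Fourier localization from Proposition \ref{movexi}, decompose $q_{11}=q_{11}'+q_{11}''$ and $q_{13}=q_{13}'+q_{13}''$, isolate the resonant mode $l=m$ on $I_m$ via the Orr denominator, and recover the factor $\pi/m^2$ from the time integral (the paper phrases this as replacing $[t_m,t_{m-1}]$ by $\mathbb R$, which is the same arctan computation). Your identification of the two delicate replacements ($\widehat{\beta_m}(t)\to\widehat{\beta_m}(t_{m-1})$ and $\eta\to\eta_0$ in the numerator of $q_{11}'$) also matches; the paper controls the first via the bootstrap inequality (\ref{boot020}) and the second via the restriction $|\eta-\eta_0|\lesssim Dk_0\ll\eta_0$. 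One small numerical slip: at the threshold $m\sim k_0/\sqrt{\varepsilon_0}$ you claim $\eta_0/m^2\gtrsim 1/\varepsilon_0$, but in fact $\eta_0/m^2\sim 1$ there (it is $\gtrsim 1/\varepsilon_0$ only for $m\lesssim k_0$); the conclusion is unaffected because the arctan defect, multiplied by the prefactor $\varepsilon_0\eta_0/(2m^2)$, is always $\lesssim\varepsilon_0$ regardless of $m$, and the main term itself is $\lesssim\varepsilon_0$ once $m\gtrsim k_0/\sqrt{\varepsilon_0}$.
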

 \begin{proof} Consider the system (\ref{inhomo}), with $\rho'\equiv0$, on $[t_{m},t_{m-1}]$, where $k_0+1\leq m\leq k_2$. Recall the expression (\ref{simpsys}); using (\ref{farsmall2}) and the fact that $q_{ij}$ decays exponentially in $|k-l|$ and $|\xi-\eta|$, see (\ref{symbolbd1})$\sim$(\ref{symbolbd7}), we can freely insert cutoffs on the right hand side of (\ref{simpsys}) that restricts the sum-integral to the region \begin{equation}\label{restriction}\max(|k|,|l|)\leq Dk_0,\quad \max(|\xi-\eta_0|,|\eta-\eta_0|)\leq Dk_0,\end{equation} up to an error $R=R(t,k,\xi)$ that satisfies
 \[\|R(t)\|_{L^2}\lesssim\varepsilon_1e^{-4k_0}.
 \] 
 
  In the region defined by (\ref{restriction}), we can use (\ref{symbolbd2})$\sim$(\ref{symbolbd7}) to control $q_{ij}$ for $(i,j)\not\in \{(1,1),(1,3)\}$; for $(i,j)\in\{(1,1),(1,3)\}$ we decompose $q_{ij}=q_{ij}'+q_{ij}''$ as in the proof of Proposition \ref{movexi}. Note also that
  \[|\eta-\eta_*|\lesssim Dk_0\ll T_2\lesssim t,\quad\textnormal{which implies that}\quad \frac{1}{(\eta-lt)^2+l^2}\lesssim\frac{1}{t^2}\textnormal{ if $l\neq m$,}
\] we conclude that
\begin{equation}\label{newtoy0}
\left\{\begin{aligned}\partial_t\widehat{\beta_k}(t,\xi)&=\widehat{\mathcal{R}_k}(t,\xi)-\varepsilon_0\mathbf{1}_{k=\pm 1}\int_{\mathbb{R}}\frac{\widehat{\partial_v\varphi_b}(\xi-\eta)}{2}\widehat{\beta_{\theta}}(t,\eta)\,\mathrm{d}\eta,&k&\neq m\pm 1,\\
\partial_t\widehat{\beta_{k}}(t,\xi)&=\widehat{\mathcal{R}_{k}}(t,\xi)\mp\int_{\mathbb{R}}\frac{\widehat{\varphi_b}(\xi-\eta)}{2}\frac{\varepsilon_0\eta_0}{(\eta-tm)^2+m^2}\widehat{\beta_m}(t,\eta)\,\mathrm{d}\eta,&k&=m\pm 1,\\
\partial_t\widehat{\beta_{k}}(t,\xi)&=\widehat{\mathcal{R}_k}(t,\xi),&k&=h,\theta.
\end{aligned}\right.
\end{equation}
and each of the error terms $\mathcal{R}_k$ satisfies either
\begin{equation}\label{errorest20}
|\widehat{\mathcal{R}_k}(t,\xi)|\lesssim D^2\sum_{l}\int_{\mathbb{R}}\bigg(\varepsilon_0^2+\frac{\varepsilon_0\eta_0}{t^2}+\varepsilon_0^2\frac{\eta_0}{(\eta_0-tl)^2+l^2}\bigg)e^{-(C_0/8)(|k-l|+|\xi-\eta|)}|\widehat{\beta}_l(t,\eta)|\,\mathrm{d}\eta
\end{equation} (with suitable changes when $l\in\{h,\theta\}$), or
\begin{equation}\label{errorest2+0}
\|\widehat{\mathcal{R}_k}(t,\xi)\|_{L_{\xi}^2}\lesssim\varepsilon_1e^{-4k_0}.
\end{equation} Here we have also used the fact that 
\[\frac{1}{(\eta-tl)^2+l^2}\lesssim D^2 \frac{1}{(\eta_0-tl)^2+l^2},
\] which also follows from the inequality $|\eta-\eta_0|\lesssim Dk_0$.

Recall that at time $t_{m-1}$ we have 
\[\sup_{k}\|\widehat{\beta_k}(t_{m-1},\xi)\|_{L_{\xi}^2}=F_{m-1};
\]we shall use a bootstrap argument to prove, for $t\in[t_m,t_{m-1}]$, that
\begin{equation}\label{boot020}
\sup_k\bigg\|\sup_{ t\leq t'\leq t_{m-1}}|\widehat{\beta}_k(t',\xi)|\bigg\|_{L_{\xi}^2}\leq
\left\{\begin{split}
&2(F_{m-1}+\varepsilon_1e^{-2k_0}), &k&\in\{m,\theta\};\\
&D(F_{m-1}+\varepsilon_1e^{-2k_0}), &k&\not\in\{m,\theta\}.
\end{split}\right.
\end{equation} In fact, suppose \eqref{boot020} is true for some $t$, then by \eqref{errorest20} and \eqref{errorest2+0} we have either
\begin{multline*}
\sup_k\bigg\|\int_{t}^{t_{m-1}}|\widehat{\mathcal{R}_k}(t',\xi)|\,\mathrm{d}t'\bigg\|_{L_{\xi}^2}\lesssim\sup_k \sum_{l}e^{-|k-l|}\int_{t}^{t_{m-1}}\bigg(\varepsilon_0^2+\frac{\varepsilon_0\eta_0}{{t'}^2}+\varepsilon_0^{2}\frac{\eta_0}{(\eta_0-t'l)^2+l^2}\bigg)\\\times\bigg\|\sup_{t\leq t'\leq t_{m-1}}|\widehat{\beta_l}(t',\eta)|\bigg\|_{L_{\eta}^2}\,\mathrm{d}t'
\lesssim\varepsilon_0D(F_{m-1}+\varepsilon_1e^{-2k_0}),
\end{multline*} 
or
\[\sup_k\bigg\|\int_{t}^{t_{m-1}}|\widehat{\mathcal{R}_k}(t',\xi)|\,\mathrm{d}t'\bigg\|_{L_{\xi}^2}\lesssim \varepsilon_1 e^{-3k_0};
\]
moreover we have
\[\bigg\|\int_{t}^{t_{m-1}}\int_{\mathbb{R}}\bigg|\varepsilon_0\frac{\widehat{\partial_v\varphi_b}(\xi-\eta)}{2}\widehat{\beta_{\theta}}(t',\eta)\bigg|\,\mathrm{d}\eta\mathrm{d}t'\bigg\|_{L_\xi^2}\lesssim\frac{\varepsilon_0\eta_0}{m^2} \bigg\|\sup_{t\leq t'\leq t_{m-1}}|\widehat{\beta_{\theta}}(t',\eta)|\bigg\|_{L_{\eta}^2}\lesssim F_{m-1}+\varepsilon_1e^{-2k_0};
\]
\[\bigg\|\int_{t}^{t_{m-1}}\int_{\mathbb{R}}\bigg|\frac{\widehat{\varphi_b}(\xi-\eta)}{2}\frac{\varepsilon_0\eta_0}{(\eta-t'm)^2+m^2}\widehat{\beta_m}(t',\eta)\bigg|\,\mathrm{d}\eta\mathrm{d}t'\bigg\|_{L_\xi^2}\lesssim\frac{\varepsilon_0\eta_0}{m^2} \bigg\|\sup_{t'\leq t\leq t_{m-1}}|\widehat{\beta_{m}}(t',\eta)|\bigg\|_{L_{\eta}^2}\lesssim F_{m-1}+\varepsilon_1e^{-2k_0}.
\] 
plugging into \eqref{newtoy0}, we get that
\begin{equation}\label{boot030}
\sup_k\bigg\|\sup_{t\leq t'\leq t_{m-1}}|\widehat{\beta_k}(t',\xi)|\bigg\|_{L_{\xi}^2}\leq
\left\{\begin{split}
&F_{m-1}+\varepsilon_1e^{-2k_0}+O(\varepsilon_0)D(F_{m-1}+\varepsilon_1e^{-2k_0}), &k&\in\{m,\theta\};\\
&C(F_{m-1}+\varepsilon_1e^{-2k_0})+O(\varepsilon_0)D(F_{m-1}+\varepsilon_1e^{-2k_0}),&k&\not\in\{m,\theta\},
\end{split}\right.
\end{equation} which proves \eqref{boot020} and hence \eqref{int}.

Next, by \eqref{boot020} and \eqref{newtoy0} we can see that
\[\bigg\|\sup_{t\leq t'\leq t_{m-1}}|\widehat{\beta_k}(t,\xi)-\widehat{\beta_k}(t_m,\xi)|\bigg\|_{L_{\xi}^2}\lesssim \varepsilon_0D F_{m-1} +\varepsilon_1e^{-2k_0}
\] for $k\not\in \{m\pm 1,\pm 1\}$ (in particular for $k\in\{m,\theta\}$); for $k=\pm 1$, using \eqref{newtoy} again we get that
\[\widehat{\beta_k}(t_m,\xi)=\widehat{\beta_k}(t_{m-1},\xi)+\varepsilon_0(t_{m-1}-t_m)\int_{\mathbb{R}}\frac{\widehat{\partial_v\varphi_b}(\xi-\eta)}{2}\widehat{\beta_{\theta}}(t,\eta)\,\mathrm{d}\eta+\widehat{\mathcal{R}_{m,k}}(t,\xi),
\] with $\|\mathcal{R}_{m,k}\|_{L^2}\lesssim \varepsilon_0DF_{m-1}+\varepsilon_1e^{-2k_0}$, which implies (\ref{recur0}) since
\[t_{m-1}-t_m=\frac{\varepsilon_0\eta_0}{m^2}+O(m^{-1})=\frac{2\alpha k_0^2}{\pi m^2}+O(m^{-1})
\] in view of (\ref{defpara}).
For $k=m\pm 1$ we argue similarly, replacing in the formula
\[\widehat{\beta_k}(t_{m},\xi)=\widehat{\mathcal{R}_k}(t_m,\xi)+\widehat{\beta_k}(t_{m-1},\xi)\pm\int_{t_m}^{t_{m-1}}\int_{\mathbb{R}}\frac{\widehat{\varphi_b}(\xi-\eta)}{2}\frac{\varepsilon_0\eta_0}{(\eta-tm)^2+m^2}\widehat{\beta_m}(t,\eta)\,\mathrm{d}\eta\mathrm{d}t
\] the function $\widehat{\beta_m}(t,\eta)$ by $\widehat{\beta_m}(t_{m-1},\eta)$ and the interval $[t_m,t_{m-1}]$ of time integration by $\mathbb{R}$ (notice the estimate $|\xi-\eta_0|\lesssim Dk_0\ll t$ and the definition of $[t_m,t_{m-1}]$), we get that
\[
\widehat{\beta_k}(t_{m},\xi)=\widehat{\beta_k}(t_{m-1},\xi)\pm\frac{\pi\varepsilon_0\eta_*}{2m^2}\mathscr{F}_v(\varphi_b\cdot\beta_m)(t_{m-1},\xi)+\widehat{\mathcal{R}_{m,k}}(\xi),
\] where $\|\widehat{\mathcal{R}_{m,k}}(\xi)\|_{L_{\xi}^2}\lesssim \varepsilon_0D F_{m-1} +\varepsilon_1e^{-2k_0}$. This completes the proof.
\end{proof}

 \subsubsection{Approximation on $[T_0,T_1]$}
 \begin{proposition}\label{errorsmallprop} Define $f''$ by\begin{equation}f''(T_0)=f'(T_0),\quad \partial_tf''=\mathcal{L}_*f'',\end{equation}\begin{equation}\label{deflstar}\widehat{\mathcal{L}_*F}(t,k,\xi)=\sum_{l}\mathbf{1}_{|k-k_0|\leq D\sigma k_0}\mathbf{1}_{|l-k_0|\leq D\sigma k_0}\int_{\mathbb{R}}q_{11}(t,k,l,\xi,\eta)\psi\bigg(\frac{\xi-\eta_0}{k_0}\bigg)\psi\bigg(\frac{\eta-\eta_0}{k_0}\bigg)\widehat{F}(t,l,\eta)\,\mathrm{d}\eta,\end{equation} where $\psi(\zeta)\in\mathcal{E}$ is a cutoff function that equals $1$ for $|\zeta|\leq 1/6$ and equals $0$ for $|\zeta|\geq 1/4$. Then, for $t\in[T_0,T_1]$, we have\begin{equation}\label{errorsmall0}\sum_{k}\int_{\mathbb{R}}e^{(|k-k_0|+\sqrt{\sigma}|\xi-\eta_0|)/4}|\big(\widehat{f'}-\widehat{f''}\big)(t,k,\xi)|^2\,\mathrm{d}\xi\lesssim\varepsilon_1^2 e^{-D\sigma k_0/4}.\end{equation}
 
 Moreover, let $\beta_k(t,v)=\mathscr{F}_zf''(t,k,v)$, and define for $k_0\geq m\geq k_1$ that
 \[F_m=\sup_{k}\|\beta_k(t_m)\|_{L^2},
 \] as in Proposition \ref{newtoyback}. Then, for $k_0\geq m\geq k_1+1$ we have
  \begin{equation}\label{int2}\sup_{t\in[t_m,t_{m-1}]}\sup_{k}\|\beta_k(t_m)\|_{L^2}\lesssim DF_m+\varepsilon_1e^{-k_0}.
 \end{equation} Moreover, we have the following approximate recurrence relation:
 \begin{equation}\label{recur02}\beta_k(t_{m-1},v)=\beta_{k}(t_m,v)+\mathcal{R}_{m,k}(v)+\left\{
 \begin{aligned}&\qquad\qquad\quad0,&k&\neq m\pm 1;\\
 &\mp\frac{\alpha k_0^2}{m^2}\varphi_b(v)\cdot\beta_m(t_{m},v),&k&=m\pm 1.
 \end{aligned}
 \right.
 \end{equation} where the error term satisfies that 
 \[\sup_{k}\|\mathcal{R}_{m,k}\|_{L^2}\lesssim\varepsilon_0 DF_m+\varepsilon_1e^{-k_0}.
 \]

 \end{proposition}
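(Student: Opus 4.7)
\textbf{Proof proposal for Proposition \ref{errorsmallprop}.} The argument naturally splits into two parts, mirroring the structure of the statement.

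\textbf{Step 1: comparing $f'$ and $f''$.} Set $w = f' - f''$, which satisfies $w(T_0)=0$ and $\partial_t w = \mathcal{L}_* w + \mathcal{S}$, where the source is
$\mathcal{S} = \mathcal{L}_1 g' - \mathcal{L}_* f'$.
Splitting $\mathcal{L}_1 g' = \mathcal{L}_{11} f' + \mathcal{L}_{12} h' + \mathcal{L}_{13}\theta'$, the source $\mathcal{S}$ contains three types of contributions: (i) the terms $\mathcal{L}_{12} h'$ and $\mathcal{L}_{13}\theta'$, which are negligible because the second line of \eqref{farsmall} forces $h',\theta'$ to be exponentially small on the relevant Fourier support, while $q_{12}, q_{13}$ decay like $e^{-(C_0/4)|\xi-\eta|}$; (ii) the part of $\mathcal{L}_{11} f'$ coming from the cutoff complement $\mathbf{1}_{|k-k_0|> D\sigma k_0} + \mathbf{1}_{|l-k_0|> D\sigma k_0}$, which is exponentially small by the first line of \eqref{farsmall}; (iii) the part coming from the $\psi$ cutoffs in the $\xi, \eta$ variables, which by the support property $\psi \in \mathcal{E}$ is again controlled by the same Fourier localization of $f'$, producing an overall gain of $e^{-D\sigma k_0}$ on a weighted norm. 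The plan is to run an energy estimate on the weighted quantity
\[
\mathcal{M}(t) = \sum_k \int_{\mathbb{R}} e^{\lambda(t)(|k-k_0|+\sqrt{\sigma}|\xi-\eta_0|)/2}|\widehat w(t,k,\xi)|^2\,\mathrm{d}\xi,\qquad \lambda(t) = 1 - E(t-T_0)/(\sqrt{\sigma}T_0),
\]
in the spirit of \emph{Case 2} of the proof of Proposition \ref{movexi}. The operator $\mathcal{L}_*$ contributes in exactly the way that the dominant part of $\mathcal{L}_{11}$ did there, so the $J_{11}$-type term is absorbed by the decreasing-weight dissipation $I_1$; the source $\mathcal{S}$ gives a driving term bounded by $\varepsilon_1 e^{-D\sigma k_0/2}$ in the corresponding norm, and Gr\"onwall yields \eqref{errorsmall0}.

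\textbf{Step 2: recurrence relation for $f''$.} With $f''$ in hand, the analysis on each critical subinterval $[t_m, t_{m-1}]$ is much cleaner than for $f'$, because $\mathcal{L}_*$ involves only the symbol $q_{11}$ restricted by $\mathbf{1}_{|k-k_0|\le D\sigma k_0}$ and $\psi$ cutoffs in $\xi, \eta$. Decompose $q_{11} = q_{11}' + q_{11}''$ as in Proposition \ref{systemrough}: the main term $q_{11}'$ is supported on $|k-l|=1$ and, on $[t_m,t_{m-1}]$, the denominator $(\eta-tl)^2+l^2$ is comparable to $m^2$ only when $l=m$ (for any other $l$ with $|l-k_0|\le D\sigma k_0$ one has $|\eta-tl|\gtrsim t\gtrsim m$, so the contribution is of size $O(m^{-1})$). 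Hence on this subinterval the only effective interaction is $l=m\mapsto k=m\pm 1$, driven by $q_{11}'$ with a time-integrated weight
\[
\pm\int_{t_m}^{t_{m-1}}\frac{\varepsilon_0\eta_0}{(\eta-tm)^2+m^2}\,\mathrm{d}t = \pm\frac{\pi\varepsilon_0\eta_0}{m^2} + O(m^{-1}) = \pm\frac{2\alpha k_0^2}{m^2} + O(m^{-1}),
\]
exactly as in Proposition \ref{newtoyback}. Everything else---contributions of $q_{11}''$, of the non-dominant $l$ in $q_{11}'$, of the cutoff tails---is controlled by the pointwise bound \eqref{defq11''} combined with the Fourier localization $|\xi-\eta_0|\lesssim k_0\ll t$, and is absorbed into $\mathcal{R}_{m,k}$. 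A bootstrap on
\[
\sup_{k}\Bigl\|\sup_{t\in[t_m,t_{m-1}]}|\widehat{\beta_k}(t,\xi)|\Bigr\|_{L^2_\xi}
\]
identical to \eqref{boot020} yields \eqref{int2} and, after replacing $\widehat{\beta_m}(t,\eta)$ by $\widehat{\beta_m}(t_m,\eta)$ and extending the time integral to $\mathbb{R}$ (paying an acceptable $O(m^{-1})$ error since $|\eta-\eta_0|\lesssim k_0\ll t$), gives \eqref{recur02} with the advertised error bound $\|\mathcal{R}_{m,k}\|_{L^2}\lesssim \varepsilon_0 D F_m + \varepsilon_1 e^{-k_0}$.

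\textbf{Main obstacle.} The delicate step is Step 1: one must verify that the $\lambda(t)$-weight dissipation of $\mathcal{M}(t)$ indeed dominates the $\mathcal{L}_*$ contribution after stripping away both the cutoffs and the non-$q_{11}$ parts. The computation parallels \emph{Case 2} in the proof of Proposition \ref{movexi}, but requires paying attention to the fact that the Fourier-localization bounds \eqref{farsmall} used to control $\mathcal{S}$ have weights $e^{(|k-k_0|+\sqrt\sigma|\xi-\eta_0|)/2}$ whereas $\mathcal{M}$ uses weights $e^{\lambda(t)(|k-k_0|+\sqrt\sigma|\xi-\eta_0|)/2}$ with $\lambda(t)\le 1$; the trade-off $1 - \lambda(t) \lesssim E/\sqrt\sigma$ must be small enough that the residual $\mathcal{S}$ still satisfies the Gr\"onwall input at scale $e^{-D\sigma k_0/4}$, for which the largeness of $D$ relative to $E$ is crucial.
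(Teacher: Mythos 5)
Your overall strategy matches the paper's: for \eqref{errorsmall0}, write $\partial_t f' = \mathcal{L}_* f' + \mathcal{R}$, show $\mathcal{R}$ is exponentially small using \eqref{farsmall}, and run an energy/Gr\"onwall argument on the difference $\widetilde f = f'-f''$ starting from $\widetilde f(T_0)=0$; for \eqref{int2} and \eqref{recur02}, rerun the bootstrap of Proposition \ref{newtoyback} forward in time, using the frequency localization built into $\mathcal{L}_*$ (and the absence of $h',\theta'$) to simplify the argument. Your Step 2 and the residual-integral computation giving the coefficient in \eqref{recur02} are essentially identical to the paper's.

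There is, however, a genuine gap in the energy you wrote down in Step 1. Your
\[
\mathcal{M}(t)=\sum_k\int_{\mathbb{R}} e^{\lambda(t)(|k-k_0|+\sqrt\sigma|\xi-\eta_0|)/2}\,|\widehat w(t,k,\xi)|^2\,\mathrm{d}\xi
\]
is missing the Fourier multiplier $e^{\mu(t,\xi)}$ from Case 2 of Proposition \ref{movexi}, where $\mu(t,\xi)=-E\int_{T_0}^t H(t',\xi)\,\mathrm{d}t'$ and $H$ is as in \eqref{defh}. This is not cosmetic. On a critical interval, i.e.\ when $t$ is close to $\xi/l$ with $|l-k_0|\lesssim\sigma k_0$, the reaction part of $q_{11}$ satisfies $|q_{11}|\gtrsim H(t,l,\xi)\sim\varepsilon_0\eta_0/l^2\sim 1$, so the $J_{11}$-type contribution to $\partial_t\mathcal{M}$ is of order $\mathcal{M}(t)$. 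The dissipation produced by $\partial_t\lambda$ alone is $\sim \frac{|k-k_0|+\sqrt\sigma|\xi-\eta_0|}{\sqrt\sigma T_0}\,\mathcal{M}(t)$, which vanishes as $(k,\xi)\to(k_0,\eta_0)$ and therefore cannot absorb the $O(1)$ resonant contribution; what does the absorbing is the $H(t,\xi)$-weighted dissipation coming from $\partial_t\mu$. Your "Main obstacle" paragraph even frames the issue as a competition between $\lambda(t)$ alone and the $\mathcal{L}_*$ kernel, which would not close. The paper's $\widetilde M(t)$ carries exactly this $\mu$; at the end it is removed using $|\mu(t,\xi)|\lesssim E\sigma k_0\ll D\sigma k_0$ so that the final bound \eqref{errorsmall0} still has the clean exponent $1/4$. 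Once $\mu$ is restored to $\mathcal{M}(t)$, the rest of your Step 1 is exactly the paper's argument.

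(Two smaller slips: the trade-off you quote should be $1-\lambda(t)\lesssim E\sqrt\sigma$, not $E/\sqrt\sigma$; and the time-integrated kernel weight $\frac{2\alpha k_0^2}{m^2}$ still has the $\tfrac12$ factor from $\widehat{\varphi_b}/2$ to be applied, which produces the $\frac{\alpha k_0^2}{m^2}$ appearing in \eqref{recur02}.)
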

 \begin{remark} The truncated solution $f''$ plays the role of the cutoff (\ref{restriction}) in the proof of Proposition \ref{newtoyback}. It is done in this way (compared to directly estimating $f'$ as in proposition \ref{newtoyback}) here, because later we will need this $f''$ in the space localization estimate (Proposition \ref{phys}),  we do not know how to prove for $f'$.
 \end{remark}
 \begin{proof} We first prove (\ref{errorsmall0}). Write $\partial_tf'=\mathcal{L}_*f'+\mathcal{R},$ where \begin{equation*}\begin{split}\widehat{\mathcal{R}}(t,k,\xi)&=\sum_{l}\int_{\mathbb{R}}\big(q_{12}(t,k,l,\xi,\eta)\widehat{h'}(t,l,\eta)+q_{13}(t,k,l,\xi,\eta)\widehat{\theta'}(t,l,\eta)\big)\,\mathrm{d}\eta\\
 &+\sum_{l}\int_{\mathbb{R}}\bigg[1-\mathbf{1}_{|k-k_0|\leq D\sigma k_0}\mathbf{1}_{|l-k_0|\leq D\sigma k_0}\psi\bigg(\frac{\xi-\eta_0}{k_0}\bigg)\psi\bigg(\frac{\eta-\eta_0}{k_0}\bigg)\bigg]q_{11}(t,k,l,\xi,\eta)\widehat{f'}(t,l,\eta)\,\mathrm{d}\eta.\end{split}\end{equation*} By \eqref{farsmall} we have that
 \begin{equation}\label{bdofr}\sum_{k}\int_{\mathbb{R}}e^{(|k-k_0|+\sqrt{\sigma}|\xi-\eta_0|)/2}|\widehat{\mathcal{R}}(t,k,\xi)|^2\,\mathrm{d}\xi\lesssim\varepsilon_1^2 e^{-D\sigma k_0/4}\end{equation} for $t\in[T_0,T_1]$. Let $f'-f''=\widetilde{f}$, then we have $\widetilde{f}(T_0)=0$ and $\partial_t\widetilde{f}=\mathcal{L'}\widetilde{f}+\mathcal{R}$. If we define\[\widetilde{M}(t)=\sum_{k}\int_{\mathbb{R}}e^{\mu(t,\xi)+\lambda(t)(|k- k_0|+\sqrt{\sigma}|\xi- \eta_0|)/2}\big|\mathscr{F}\widetilde{f}(t,k,\xi)\big|^2\,\mathrm{d}\xi,\] where the functions $\lambda$ and $\mu$ are the same as in Proposition \ref{movexi}, then by repeating the proof of Proposition \ref{movexi}, and using also \eqref{bdofr} we deduce that\[\partial_t\widetilde{M}(t)\leq C\sqrt{\widetilde{M}(t)}\cdot \varepsilon_1 e^{-D\sigma k_0/6}.\] Since $\widetilde{M}(T_0)=0$ we obtain for $t\in[T_0,T_1]$ that $\widetilde{M}(t)\lesssim   \varepsilon_1  e^{-D\sigma k_0/3}$. Using the fact that $\lambda(t)\geq 1/2$ and $|\mu(t,\xi)|\ll D\sigma k_0$, we obtain \eqref{errorsmall0}.
 
 The proof of (\ref{int2}) and (\ref{recur02}) is done in the same way as in the proof of Proposition \ref{newtoyback}; instead of going backwards from $t_{m-1}$ to $t_m$, we go from $t_m$ to $t_{m-1}$. The only differences are the absence of $\beta_h$ and $\beta_{\theta}$, which makes the proof easier, and the presence of the cutoff factors $\psi$ in (\ref{deflstar}), which is in any case harmless.
 \end{proof}
\subsection{Growth of the recurrence relation} Now we proceed to analyze the systems (\ref{recur0}) and (\ref{recur02}). We will obtain an \emph{upper} bound for solutions to (\ref{recur0}) on $[T_2,T_0]$, and \emph{upper and lower} bounds for solutions to (\ref{recur02}) on $[T_0,T_1]$.
\subsubsection{Upper bound of growth on $[T_2,T_0]$} We start with the upper bound of growth on $[T_2,T_0]$, which is a direct consequence of Proposition \ref{newtoyback}. 
\begin{proposition}\label{interval30} Recall the definition of $\beta_k$ in Proposition \ref{newtoyback}. For each $m\in[k_0+1,k_2]$ and $t\in[t_{m},t_{m-1}]$, we have
\begin{equation}\label{nextest10}
\sup_k\|\beta_k(t)\|_{L^2}\lesssim\varepsilon_1\cdot\prod_{j=k_0+1}^m \nu(\alpha_j),
\end{equation} where $\alpha_m=(\alpha k_0^2)/(m^2)$, and $\nu$ is a fixed increasing function on $[0,2]$ satisfying
\begin{equation}\label{defnu0}
\nu(\beta)=\left\{
\begin{split}&(1+\varepsilon_0^{2/5})\max(1,\beta),&\beta\geq 2\varepsilon_0^{3/5};\\
&1+4\varepsilon_0^{3/5},&\beta\leq \varepsilon_0^{3/5},
\end{split}
\right.
\quad \mathrm{and}\quad \partial_{\beta}\nu(\beta)\leq \varepsilon_0^{-1}.
\end{equation}
\end{proposition}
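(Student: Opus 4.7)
The strategy is to iterate the recurrence \eqref{recur0} from Proposition \ref{newtoyback} step by step from $m=k_0$ up to $m=k_2$, proving the bound $F_m \lesssim \varepsilon_1\prod_{j=k_0+1}^m \nu(\alpha_j)$ by induction on $m$, and then handling intermediate times via \eqref{int}. The base case at $m=k_0$ is immediate from the initial data: $\|\beta_{k_0}(T_0)\|_{L^2} \lesssim \varepsilon_1$ while all other modes (including $\beta_h,\beta_\theta$) vanish at $T_0$. For the inductive step, observe that \eqref{recur0} implies, for each $k$,
\[
\|\beta_k(t_m)\|_{L^2} \leq \|\beta_k(t_{m-1})\|_{L^2} + \alpha_m\|\varphi_b\|_{L^\infty}\|\beta_m(t_{m-1})\|_{L^2}\,\mathbf{1}_{k=m\pm 1} + (\text{analogous }\pm 1\text{ term}) + \|\mathcal{R}_{m,k}\|_{L^2},
\]
where by \eqref{errest00} the error is bounded by $\varepsilon_0 DF_{m-1} + \varepsilon_1 e^{-k_0}$. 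Since $\|\varphi_b\|_{L^\infty}\leq 1$, the crucial quantity is $\|\beta_m(t_{m-1})\|_{L^2}$: the ``active'' mode at step $m$, which controls the transfer to the adjacent modes $m\pm 1$.

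The function $\nu$ is precisely tailored to absorb the per-step growth. In the regime $\alpha_m \gtrsim \varepsilon_0^{3/5}$, $\nu(\alpha_m) = (1+\varepsilon_0^{2/5})\max(1,\alpha_m)$ is designed to capture either the transfer factor $\alpha_m$ (when this dominates) or the accumulated error $O(\varepsilon_0 D) \ll \varepsilon_0^{2/5}$ (otherwise), and in the tail regime $\alpha_m\leq \varepsilon_0^{3/5}$ the term $1+4\varepsilon_0^{3/5}$ covers the error contribution alone. Monotonicity and smoothness ($\partial_\beta\nu\leq \varepsilon_0^{-1}$) guarantee that $\nu$ interpolates sensibly between these regimes and that the product $\prod_{j=k_0+1}^m\nu(\alpha_j)$ is itself a reasonable upper envelope. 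With $\nu$ in hand, one verifies the step inequality $F_m \leq \nu(\alpha_m)F_{m-1} + C\varepsilon_1 e^{-k_0}$ and then iterates, using that $\sum_j (j-k_0)/j^2$ converges and that the exponentially small error $\varepsilon_1 e^{-k_0}$ is summable and negligible compared with the product.

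The main obstacle is that the naive bound $\|\beta_m(t_{m-1})\|_{L^2}\leq F_{m-1}$ would yield a per-step factor of $(1+\alpha_m)$, which exceeds $\nu(\alpha_m)\sim(1+\varepsilon_0^{2/5})$ whenever $\alpha_m$ is not already small, and this loss compounds unacceptably over the $\sim k_2-k_0 \sim \varepsilon_0^{-1/40}\sqrt{\eta_0}$ iterations. The sharper estimate exploits the structure of the recurrence: since mode $m$ (with $m>k_0$) can receive a nontrivial contribution only via the two adjacent steps $j=m-1$ and $j=m+1$, and only the step $j=m-1$ has been applied by the time we reach $t_{m-1}$, the size of $\beta_m(t_{m-1})$ is obtained by chasing the chain $k_0\to k_0+1\to\cdots\to m$ backward from the zero initial data at modes $k_0+1,k_0+2,\ldots$. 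This yields $\|\beta_m(t_{m-1})\|_{L^2}\lesssim \prod_{j=k_0+1}^{m-1}\alpha_j\cdot\varepsilon_1$ plus error accumulations, which combined with the multiplier $\alpha_m$ produces the envelope captured by $\nu$. The auxiliary modes $k=\pm 1$ and the $\beta_\theta$ contribution are handled analogously, using the rapid ($t^{-2}$) decay of $\underline{\theta}$ via \eqref{useass4} to control the extra term $\alpha_m/(\pi m^2)\cdot\partial_v\varphi_b\cdot\beta_\theta$. Finally, the intermediate-time bound \eqref{int2}, applied with $F_{m-1}$ replaced by the inductive estimate, gives the uniform control over $[t_m,t_{m-1}]$ with only the harmless factor $D$ absorbed into $\lesssim$.
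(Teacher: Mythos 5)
Your overall strategy — iterate the recurrence \eqref{recur0} step by step, separate the leading-edge mode from the rest, absorb per-step losses and errors into the envelope $\nu$, and then handle intermediate times — matches the paper's proof. You also correctly identify the central obstacle: a naive bound $\|\beta_m(t_{m-1})\|_{L^2}\leq F_{m-1}$ compounds to $\prod(1+\alpha_j)$, which is too lossy. That much is right.

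However, there are two genuine gaps in your execution. First, the ``chain'' heuristic is off. You suggest that $\|\beta_m(t_{m-1})\|_{L^2}\lesssim\prod_{j=k_0+1}^{m-1}\alpha_j\cdot\varepsilon_1$ by chasing the chain $k_0\to k_0+1\to\cdots\to m$, implicitly feeding $\varepsilon_1$ from mode $k_0$ into the chain. But going backward in time (increasing $m$ from $k_0$) mode $k_0$ never drives mode $k_0+1$: at step $m$ the jump to modes $m\pm1$ is proportional to $\beta_m(t_{m-1})$, so mode $k_0+1$ could only be fed at step $k_0+2$ (by $\beta_{k_0+2}$, which is zero) or at step $k_0$ (which does not occur, since the evolution starts at $t_{k_0}=T_0$). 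Consequently $E_{k_0}:=\|\beta_{k_0+1}(t_{k_0})\|_{L^2}=0$ and the recurrence $E_m\leq\alpha_mE_{m-1}+L_m$ is entirely error-driven; $\beta_m(t_{m-1})$ for $m>k_0$ is controlled by the accumulated errors $L_m$, not by a product of $\alpha_j$'s acting on $\varepsilon_1$. The factor $\prod_{j}\nu(\alpha_j)$ in the final bound comes from the $F$-side of the estimate and the error accumulation, not from a coherent backward transfer.

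Second, and more importantly, the single-scalar step inequality $F_m\leq\nu(\alpha_m)F_{m-1}+C\varepsilon_1e^{-k_0}$ that you propose to iterate does not follow from the recurrence. From \eqref{recur0} one gets $F_m\leq\max\bigl(F_{m-1},\alpha_mF_{m-1},\alpha_mE_{m-1}+E_{m-2}\bigr)+L_m$, where $E_m:=\|\beta_{m+1}(t_m)\|_{L^2}$. The troublesome option $\alpha_mE_{m-1}+E_{m-2}$ cannot be controlled by $\nu(\alpha_m)F_{m-1}$ without knowing something about the ratio $E/F$, and one does not a priori have $E_{m-1}\ll F_{m-1}$. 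What actually closes is a \emph{coupled} induction: defining $G_m=\varepsilon_0^{-1}F_{k_0}\prod_{j=k_0+1}^m\nu(\alpha_j)$ and proving simultaneously $E_m\leq G_m$ \emph{and} $F_m\leq 4G_m$ (the constant $4$ is precisely what absorbs the extra $E_{m-2}$ term). You must track $(E_m,F_m)$ as a pair; the sharper statement $F_m\leq\nu(\alpha_m)F_{m-1}$ is not what is proved and is not needed. Finally, a minor citation slip: the intermediate-time bound on $[t_m,t_{m-1}]$ should be \eqref{int} from Proposition \ref{newtoyback} (which covers $[T_2,T_0]$), not \eqref{int2} from Proposition \ref{errorsmallprop} (which covers $[T_0,T_1]$).
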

\begin{proof} By \eqref{int}, we only need to consider $t=t_m$. Let $B_{k,m}=\|\beta_k(t_m)\|_{L^2}$ (where $k$ could be $h$ or $\theta$), recall that $F_m=\sup_k B_{k,m}$, and define $E_m=B_{m+1,m}$. Without loss of generality we will also assume $F_m\geq \varepsilon_1$, so the term $\varepsilon_1e^{-k_0}$ can be dropped from (\ref{errest00}). We then have
\begin{equation} \label{ineq0}
\left\{\begin{split}
B_{k,m}&\leq B_{k,m-1}+D\varepsilon_0F_{m-1},&k&\not\in\{m\pm 1,\pm1\};\\
B_{k,m}&\leq B_{k,m-1}+\alpha_mB_{m,m-1}+D\varepsilon_0F_{m-1},&k&=m\pm 1;\\
B_{k,m}&\leq B_{k,m-1}+\alpha_mB_{\theta,m-1}+D\varepsilon_0F_{m-1},&k&=\pm1,
\end{split}\right.
\end{equation} by using (\ref{recur0}), (\ref{errest00}) and the fact that $\|\varphi_b\|_{L^\infty}\leq 1$. By iteration of \eqref{ineq0}, we easily see that
\[B_{k,m}\leq D\varepsilon_0(F_{m-1}+F_{m-2}+\cdots+F_{k_0})+F_{k_0}:=L_m,\quad \textnormal{if $k\geq m+2$ or $k\in\{h,\theta\}$},
\]and moreover that
\[E_{m}\leq\alpha_m E_{m-1}+L_m,\quad F_{m}\leq \max\bigg(F_{m-1},\alpha_mF_{m-1},\alpha_mE_{m-1}+E_{m-2}\bigg)+L_m.
\] Define 
\[G_m=\varepsilon_0^{-1}F_{k_0}\prod_{j=k_0+1}^m \nu(\alpha_j)
\] such that $G_{m}=G_{m-1}\cdot \nu(\alpha_m)$. Since $\varepsilon_0^{-1}F_{k_0}\lesssim\varepsilon_1$, it then suffices to prove that $E_{m}\leq G_{m}$ and $F_m\leq 4G_m$ for all $m$. This is certainly true for $m=k_0$; assume that it holds for all $j\leq m-1$, then, if $\alpha_m\geq 2\varepsilon_0^{3/5}$ we have
\[L_m\leq 4D\varepsilon_0\sum_{j<m}G_j+F_{k_0}\leq \varepsilon_0^{1/2}G_{m-1},
\] hence 
\[E_m\leq \alpha_m G_{m-1}+ \varepsilon_0^{1/2}G_{m-1}\leq G_m
\] and 
\[F_{m}\leq\max\bigg(4G_{m-1},4\alpha_mG_{m-1},\alpha_mG_{m-1}+G_{m-2}\bigg)+ \varepsilon_0^{1/2}G_{m-1}\leq 4G_m;
\]
if $\alpha_m\leq 2\varepsilon_0^{3/5}$ we similarly have that $L_m\leq \varepsilon_0^{1/3}G_{m-1}$, and
\[E_{m}\leq \alpha_m G_{m-1}+\varepsilon_0^{1/3}G_{m-1}\leq G_m,\quad F_{m}\leq (1+\alpha_m+D\varepsilon_0)F_{m-1}\leq 4G_m,
\]where the last inequality follows directly from (\ref{recur0}). This proves \eqref{nextest10} by induction.
\end{proof}
\subsubsection{Upper and lower bounds on $[T_0,T_1]$} To obtain upper and lower bounds for the growth of $f''$ on $[T_0,T_1]$, we need to combine (\ref{recur02}) with an estimate localizing $f''$ in physical space, which is provided in the following proposition.
\begin{proposition}\label{phys} Recall the definition of $f''$ and $\beta_k$ in Proposition \ref{errorsmallprop}. Then for each $t\in[T_0,T_1]$ we have that
\begin{equation}\label{phys3}\sum_{k}\int_{|v|\geq \sigma^{1/3}}|\beta_k(t,v)|^2\,\mathrm{d}v\lesssim \varepsilon_1^2e^{-D\sigma k_0}.\end{equation}
\end{proposition}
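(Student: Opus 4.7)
The plan is to establish \eqref{phys3} via a weighted $L^2$ energy estimate for the truncated equation $\partial_tf''=\mathcal{L}_*f''$ propagated forward from $t=T_0$. The guiding observation is that at $T_0$ the data $\beta_{k_0}(T_0,v)=(\varepsilon_1/2)e^{i\eta_0 v}\varphi_p(k_0\sqrt\sigma v)$ is concentrated at spatial scale $(k_0\sqrt\sigma)^{-1}\ll\sigma^{1/3}$, while the Fourier cutoffs $\psi((\xi-\eta_0)/k_0)$, $\psi((\eta-\eta_0)/k_0)$ and the decay factor $\widehat{\varphi_b}(\xi-\eta)$ appearing in the symbol of $\mathcal{L}_*$ guarantee that its physical-space action is, up to an exponentially small error, essentially local at scale $O(1/k_1)$, which is again much smaller than $\sigma^{1/3}$. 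Hence the hope is that physical-space localization persists throughout $[T_0,T_1]$, and that the mass outside $|v|\geq\sigma^{1/3}$ remains comparable to its initial value.

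To make this quantitative, I would introduce a weighted energy of the form
\[
N(t)=\sum_{k}\int_{\mathbb{R}}e^{2\lambda\,\varpi(v)}|\beta_k(t,v)|^2\,\mathrm{d}v,
\]
where $\varpi(v)$ is a smooth convex weight with $\varpi(v)\equiv0$ for $|v|\leq\sigma^{1/3}/2$, growing linearly afterwards, and $\lambda\sim D\sigma^{2/3}k_0$ is tuned so that $e^{-2\lambda\varpi(\sigma^{1/3})}\leq e^{-D\sigma k_0}$. The initial bound $N(T_0)\lesssim\varepsilon_1^2$ follows directly from $\varphi_p\in\mathcal{E}$: since on the effective support of $\widehat{\beta_{k_0}(T_0)}$ we have $|v|\lesssim(k_0\sqrt\sigma)^{-1}(\log k_0)^{2}$, the combination $\lambda\varpi(v)$ remains small there, while the $\mathcal{E}$-decay of $\varphi_p$ dominates the weight outside. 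The choice $N_2=30$ in \eqref{defpara} is exactly what makes this computation close.

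Next I would differentiate $N(t)$ and use the decomposition $q_{11}=q_{11}'+q_{11}''$ from Proposition~\ref{systemrough}, together with the structure of $\mathcal{L}_*$. The principal piece $q_{11}'$ becomes multiplication by $\varepsilon_0\varphi_b(v)/2$ (from $\widehat{\varphi_b}(\xi-\eta)$) composed with the Fourier multiplier $P_l$ of symbol $\eta/((\eta-tl)^2+l^2)$; because $|\varphi_b|\leq1$ pointwise and the convolution kernel of $P_l$ decays exponentially at rate $l\geq k_1=(1-\sigma)k_0$, conjugation of $P_l$ by $e^{\lambda\varpi(v)}$ shifts the kernel by only $e^{\lambda|\varpi(v)-\varpi(w)|}\leq e^{\lambda|v-w|}$, which is absorbed by the rate-$l$ decay as long as $\lambda\ll k_1$. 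This is precisely the regime we are in. The contribution of $q_{11}''$ and of $q_{12},q_{13}$ is handled by the symbol bounds \eqref{defq11''}, \eqref{defq13'} and \eqref{symbolbd2}--\eqref{symbolbd7}, producing a tame perturbation in view of \eqref{farsmall}. One therefore obtains, on each critical interval $I_m$ with $k_1\leq m\leq k_0$, an inequality of the form
\[
\partial_tN(t)\leq C\bigg(\frac{\varepsilon_0\eta_0}{(\eta-tm)^2+m^2}+\varepsilon_0^{1/2}\bigg)N(t)+\varepsilon_1^2e^{-2Dk_0},
\]
paralleling the $L^2$ bound implicit in Proposition~\ref{errorsmallprop}. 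Iterating the resulting multiplicative factor $\prod_{m=k_1}^{k_0}\max(1,\alpha_m)\sim e^{c\sqrt{\eta_0}}$ against the suppression factor $e^{-2\lambda\varpi(\sigma^{1/3})}$ from Chebyshev on $\{|v|\geq\sigma^{1/3}\}$ yields \eqref{phys3} after using the relations $\varepsilon_0=\sigma^{1000}$ and $\sigma=(\log k_0)^{-N_2}$.

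The main obstacle, and the delicate point of the argument, is the balance in the choice of $\lambda$: it must be large enough that the Chebyshev factor $e^{-2\lambda\varpi(\sigma^{1/3})}$ beats the unavoidable growth $e^{c\sqrt{\eta_0}}$ of $N(t)$ driven by the reaction, and yet small enough (in particular $\lambda\ll k_1$) that the conjugated reaction operator still behaves like its unconjugated counterpart. Making this compatible seems to force $\varpi$ to vanish on $|v|\leq\sigma^{1/3}/2$ and to use the detailed parameter bookkeeping from \eqref{defpara}--\eqref{defineint}; it is precisely here that the numerology $N_1=2N_3=2000N_2$ with $N_2=30$, along with the gap $\varepsilon_0=\sigma^{1000}$, is used in its strongest form. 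Once this balance is achieved, the non-local error terms $\mathcal{R}_{m,k}$ from \eqref{recur02} contribute only lower-order perturbations, being already of size $\varepsilon_0DF_m+\varepsilon_1 e^{-k_0}$ in unweighted $L^2$, and the proof closes by Gr\"onwall.
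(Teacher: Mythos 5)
Your overall strategy---a weighted physical-space $L^2$ energy estimate for the truncated evolution $\partial_tf''=\mathcal{L}_*f''$, followed by Gr\"onwall and Chebyshev---is essentially the paper's, which propagates the weight $\zeta(k_0\sqrt\sigma v)=\exp\bigl(|k_0\sqrt\sigma v|/(\log^+(k_0\sqrt\sigma|v|))^{3}\bigr)$ and compares the resulting growth $e^{CD\sigma k_0}$ against $\log\zeta\bigl(k_0\sqrt\sigma\cdot\sigma^{1/3}\bigr)\geq k_0\sigma^{5/6}/(2(\log k_0)^3)$. Your identification of the kernel structure of $\mathcal{L}_*$, of the flat-near-origin weight $\varpi$, and of the conjugation constraint $\lambda\ll k_1$ are all sound. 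However, there are two arithmetic problems in how you close the estimate.

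First, the growth factor that Gr\"onwall produces on $[T_0,T_1]$ is $\exp\bigl(\int_{T_0}^{T_1}K(t)\,\mathrm{d}t\bigr)\lesssim e^{CD\sigma k_0}$, where $K(t)$ carries a sum over $|k-k_0|\leq D\sigma k_0$ each contributing $O(1)$ through its resonant integral---\emph{not} $e^{c\sqrt{\eta_0}}$. The quantity $e^{c\sqrt{\eta_0}}$ from the introduction is the cumulative Orr amplification over \emph{all} critical times, not the sub-range $m\in[k_1,k_0]$ of length $\sigma k_0$ that is visited between $T_0$ and $T_1$. This matters a great deal: if the growth really were $e^{c\sqrt{\eta_0}}\sim e^{k_0\sigma^{-500}}$, your own balance considerations show the method cannot close, since one would need $\lambda\varpi(\sigma^{1/3})\gg\sqrt{\eta_0}$, hence $\lambda\gg\sqrt{\eta_0}\sigma^{-1/3}\gg k_0$, which is incompatible with $\lambda\ll k_1$. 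The whole point of localizing to $[T_0,T_1]$ is that the growth there is of order $e^{O(\sigma k_0)}$, which a conjugatable weight can dominate.

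Second, once the growth is correctly identified as $e^{CD\sigma k_0}$, your tuning $\lambda\sim D\sigma^{2/3}k_0$ yields Chebyshev suppression $e^{-2\lambda\varpi(\sigma^{1/3})}\sim e^{-D\sigma k_0}$, which is of the \emph{same order} as the growth; the product $e^{-D\sigma k_0}\cdot e^{CD\sigma k_0}$ is $e^{(C-1)D\sigma k_0}$, which does not give \eqref{phys3} unless $C<0$. The paper avoids this by making the suppression strictly dominant: at $|v|=\sigma^{1/3}$ its weight has logarithm $k_0\sigma^{5/6}(\log k_0)^{-3}=(\log k_0)^2\cdot\sigma k_0$, which exceeds $CD\sigma k_0$ by the unbounded factor $(\log k_0)^2$ (here $\sigma=(\log k_0)^{-N_2}$ with $N_2=30$ is what makes $\sigma^{5/6}(\log k_0)^{-3}\gg\sigma$). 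Your argument closes if you simply take $\lambda$ larger, say $\lambda\sim k_0\sqrt{\sigma}(\log k_0)^{-3}$, which still satisfies the conjugation ceiling $\lambda\ll k_0(\log k_0)^{-2}$; with that choice $2\lambda\varpi(\sigma^{1/3})\sim k_0\sigma^{5/6}(\log k_0)^{-3}\gg CD\sigma k_0$, matching the paper, and the rest of your scheme goes through.
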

\begin{proof}Recall that $\beta_k(t,v)$ is supported in $|k-k_0|\leq D\sigma k_0$, by the definition of $f''$. Define \[M'(t)=\sum_{|k-k_0|\leq D\sigma k_0}\int_{\mathbb{R}}\zeta(k_0\sqrt{\sigma}v)|\beta_k(t,v)|^2\,\mathrm{d}v,\] with the function $\zeta(v)=e^{|v|(\log^+(|v|))^{-3}}$. We will prove that \begin{equation}\label{phys1}\partial_tM'(t)\lesssim K(t)M'(t)\end{equation}for $t\in[T_0,T_1]$, where \[K(t)=\frac{\varepsilon_0\eta_0}{t^2}+\sum_{|k-k_0|\leq D\sigma k_0}\frac{\varepsilon_0\eta_0}{(\eta_0-tk)^2+k_0^2}.\] 

Recall the definition (\ref{deflstar}) of $\mathcal{L}_*$; let the operators $\mathcal{P}$ and $\mathcal{Q}$ be defined by
\[\widehat{\mathcal{P}g}(\xi)=\psi\bigg(\frac{\xi-\eta_0}{k_0}\bigg)\widehat{g}(\xi),\quad \widehat{\mathcal{Q}g}(k,\xi)=\mathbf{1}_{|k-k_0|\leq D\sigma k_0}\psi\bigg(\frac{\xi-\eta_0}{k_0}\bigg)\widehat{g}(k,\xi),\] then we have $\mathcal{L}_*=\mathcal{Q}\mathcal{L}_{11}\mathcal{Q}$, see \eqref{matrix}. This in particular gives
\begin{equation}\label{cutform}\partial_tf''=\mathcal{L}_*f''=-\mathcal{Q}(\underline{\theta}\cdot\partial_v \mathcal{Q}f'')-\mathcal{Q}\big((\underline{h}+1)\nabla^{\perp}\underline{\phi}\cdot\nabla \mathcal{Q}f''\big)-\mathcal{Q}\big((\underline{h}+1)(\partial_v\underline{f}\cdot\partial_z-\partial_z\underline{f}\cdot\partial_v)\underline{\Delta_t}^{-1}\mathcal{Q}f''\big).\end{equation} Let $K(t,k,v,w)$ be defined as in Proposition \ref{linstep0}, and write \begin{equation}\label{defrhoj}\begin{aligned}Y_1(t,k,v)&=t^2\mathscr{F}_z((\underline{h}+1)\partial_v\underline{\phi})(t,k,v),& Y_2(t,k,v)&=-t^2(\mathbf{1}_{k=0}\underline{\theta}(v)+\mathscr{F}_z((\underline{h}+1)\partial_z\underline{\phi})(t,k,v));\\
Y_3(t,k,v)&=-\mathscr{F}_z((\underline{h}+1)\partial_v\underline{f})(t,k,v),&Y_4(t,k,v)&=\mathscr{F}_z((\underline{h}+1)\partial_z\underline{f})(t,k,v),\end{aligned}\end{equation} then we have, by (\ref{useass1})$\sim$(\ref{useass4}), that \begin{equation}\label{physest}\sup_{|\alpha|\leq 2}\|\partial_v^{\alpha}Y_j(t,k,v)\|_{L^{\infty}}\lesssim \varepsilon_0 e^{-(C_0/4)|k|}.\end{equation}Using \eqref{kernel3} we can write that
\begin{equation}\label{weight}\partial_tM'(t)=\sum_{\substack{|k-k_0|\leq D\sigma k_0\\|l-k_0|\leq D\sigma k_0}}\int_{\mathbb{R}^2}\zeta(k_0\sqrt{\sigma}v)\mu(t,k,l,v,w)\overline{\beta_k(t,v)}\beta_l(t,w)\, \mathrm{d}v \mathrm{d}w,
\end{equation} where 
\allowdisplaybreaks
\begin{align*}\mu(t,k,l,v,w)&=\frac{il}{t^2}\int_{\mathbb{R}}e^{i\eta_0(v-r)}k_0(\mathscr{F}^{-1}\psi)(k_0(v-r))Y_1(t,k-l,r)e^{i\eta_0(r-w)}k_0(\mathscr{F}^{-1}\psi)(k_0(r-w))\,\mathrm{d}r\\
&+\frac{1}{t^2}\int_{\mathbb{R}}e^{i\eta_0(v-r)}k_0(\mathscr{F}^{-1}\psi)(k_0(v-r))Y_2(t,k-l,r)\partial_r\big(e^{i\eta_0(r-w)}k_0(\mathscr{F}^{-1}\psi)(k_0(r-w))\big)\,\mathrm{d}r\\
&+il\int_{\mathbb{R}^2}e^{i\eta_0(v-r)}k_0(\mathscr{F}^{-1}\psi)(k_0(v-r))Y_3(t,k-l,r)e^{itl(r-s)}K(t,l,r,s)\\
&\qquad\times e^{i\eta_0(s-w)}k_0(\mathscr{F}^{-1}\psi)(k_0(s-w))\,\mathrm{d}r\mathrm{d}s\\
&+\int_{\mathbb{R}^2}e^{i\eta_0(v-r)}k_0(\mathscr{F}^{-1}\psi)(k_0(v-r))Y_4(t,k-l,r)\partial_r\big(e^{itl(r-s)}K(t,l,r,s)\\
&\qquad\times e^{i\eta_0(s-w)}k_0(\mathscr{F}^{-1}\psi)(k_0(s-w))\big)\,\mathrm{d}r\mathrm{d}s\end{align*} In the first three terms we will estimate the integral directly, using \eqref{kernel4}, \eqref{physest}, and the fact that $\psi\in\mathcal{E}$; in the last term we will integrate by parts in $s$ and use in addition \eqref{kernel5} and \eqref{kernel6}. In either case, we can compute that \[|\mu(t,k,l,v,w)|\lesssim\bigg(\frac{\varepsilon_0\eta_0}{t^2}+\frac{\varepsilon_0\eta_0}{(\eta_0-tl)^2+k_0^2}\bigg)e^{-(C_0/4)|k-l|}k_0\exp\bigg(-\frac{k_0|v-w|}{(\log^+(k_0|v-w|))^2}\bigg).\] Using Cauchy-Schwarz and the fact that \[|\log\zeta(k_0\sqrt{\sigma}v)-\log\zeta(k_0\sqrt{\sigma}w)|\lesssim \frac{k_0|v-w|}{D(\log^+(k_0|v-w|))^2}+O(1),\] which follows from the definition of $\zeta$, we get \eqref{phys1}.

 Now, using \eqref{phys1} and the estimate\[\int_{T_0}^{T_1}K(t)\,\mathrm{d}t\lesssim D\sigma k_0\] which is obvious by the definition of $K(t)$, and the bound\[N(T_0)=\frac{1}{4}\int_{\mathbb{R}}\varepsilon_1^2\zeta(k_0\sqrt{\sigma}v)\varphi_p^2(k_0\sqrt{\sigma}v)\,\mathrm{d}v\lesssim \varepsilon_1^2(k_0\sqrt{\sigma})^{-1}\lesssim \varepsilon_1^2\] which follows from the fact that $\widehat{\varphi_p}\in\mathcal{E}$, we obtain that
 \begin{equation}\label{phys2}\sum_{|k-k_0|\leq D\sigma k_0}\int_{\mathbb{R}}\zeta(k_0\sqrt{\sigma}v)|\mathscr{F}_zf''(t,k,v)|^2\,\mathrm{d}v\lesssim \varepsilon_1^2e^{CD\sigma k_0}\end{equation}
 for $t\in[T_0,T_1]$. Since we have \[\log\zeta(k_0\sqrt{\sigma}v)\geq \frac{k_0\sigma^{5/6}}{2(\log k_0)^3}\gg D\sigma k_0\] when $|v|\geq \sigma^{1/3}$, due to our choice $\sigma=(\log k_0)^{-30}$, \eqref{phys3} is then a simple consequence of \eqref{phys2}.
\end{proof} 
\begin{proposition}\label{grow0} Let $L=\sigma^{-3},\delta=\sigma^6$, and recall the definition of $\beta_k$ and $F_m$ in Proposition \ref{errorsmallprop}. Let $B_m=\|\beta_m(t_m)\|_{L^2}$, then for each $k_0\geq m\geq k_1+1$ we have $B_m\leq F_m\leq LB_m$, and \begin{equation}\label{grow}\bigg(\frac{\alpha k_0^2}{m^2}\bigg)(1+2\sigma^6) \geq \frac{B_{m-1}}{B_m}\geq \bigg(\frac{\alpha k_0^2}{m^2}\bigg)(1-2\sigma^6).\end{equation}
\end{proposition}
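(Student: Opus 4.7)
The plan is to prove Proposition~\ref{grow0} by downward induction on $m$ from $k_0$ to $k_1 + 1$, with the base case $m = k_0$ being immediate since $f''(T_0) = f'(T_0)$ is supported purely in the $k_0$-mode, so $F_{k_0} = B_{k_0}$. For the inductive step, assume $F_m \leq L B_m$ and that the growth estimate~\eqref{grow} has been verified for all previous steps down to $m$; I will deduce both conclusions for $m-1$. The crux is the recurrence~\eqref{recur02} at $k = m-1$,
\[
\beta_{m-1}(t_{m-1}, v) = \frac{\alpha k_0^2}{m^2}\,\varphi_b(v)\,\beta_m(t_m, v) + \beta_{m-1}(t_m, v) + \mathcal{R}_{m, m-1}(v),
\]
so both conclusions follow once I show the main term, of size $\sim (\alpha k_0^2/m^2)B_m$, dominates the residual with relative error $O(\sigma^6)$ in $L^2$.

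For the main term, Proposition~\ref{phys} localizes $\beta_m(t_m)$ to $|v| \leq \sigma^{1/3}$ up to $L^2$-mass $\lesssim \varepsilon_1 e^{-D\sigma k_0/2}$, which is exponentially small compared to $B_m \geq B_{k_0} \sim \varepsilon_1 (k_0\sqrt{\sigma})^{-1/2}$. On this region, the explicit form $\varphi_b(v) = \exp(-(C_0^{-1}v)^{18})$ gives $\varphi_b(v) \in [1 - C\sigma^6, 1]$, hence $\|\varphi_b \beta_m(t_m)\|_{L^2} = (1 + O(\sigma^6))\,B_m$. For the residual the crucial observation is that for every $j \in [m+1, k_0]$ the index $m-1$ is never adjacent to the source mode $j$ in the recurrence (this would require $j \in \{m, m-2\}$, neither in the range), so $\beta_{m-1}(t_m)$ is pure accumulated error: $\beta_{m-1}(t_m) = \sum_{j=m+1}^{k_0} \mathcal{R}_{j, m-1}$. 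Combining the bound $\|\mathcal{R}_{j,k}\|_{L^2} \lesssim \varepsilon_0 D F_j + \varepsilon_1 e^{-k_0}$ with the inductive hypotheses $F_j \leq L B_j$ and $B_j \lesssim B_m/(1+\sigma^2/2)^{j-m}$, which yields $\sum_j B_j \lesssim \sigma^{-2} B_m$, and using $\varepsilon_0 = \sigma^{1000}$ and $L = \sigma^{-3}$, I obtain
\[
\|\beta_{m-1}(t_m)\|_{L^2} + \|\mathcal{R}_{m,m-1}\|_{L^2} \lesssim \sigma^{995} B_m,
\]
negligible compared to the tolerated main-term error $\sim (\alpha k_0^2/m^2) \sigma^6 B_m$. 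This establishes~\eqref{grow}.

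The bound $F_{m-1} \leq L B_{m-1}$ then follows by a mode-by-mode check. For $k \notin \{m-1, m+1\}$ the recurrence gives $B_{k,m-1} \leq B_{k,m} + O(\sigma^{995} B_m) \leq L B_m(1 + o(1))$, which fits inside $L B_{m-1}$ since the newly established growth $B_{m-1} \geq (\alpha k_0^2/m^2)(1 - 2\sigma^6) B_m \geq (1 + \sigma^2/2) B_m$ provides slack. The delicate case is $k = m + 1$, where a new contribution $-(\alpha k_0^2/m^2)\varphi_b \beta_m(t_m)$ of size $\sim B_{m-1}$ enters; combining with $B_{m+1, m} \leq L B_m$ gives $B_{m+1, m-1} \leq L B_m + (\alpha k_0^2/m^2) B_m + O(\sigma^{995} B_m)$, and the inequality $L m^2/(\alpha k_0^2) + 1 \leq L(1 - \sigma^2/3)$, which holds because $L\sigma^2 = \sigma^{-1} \gg 1$, lets the ``$+1$'' be absorbed into $L$.

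The main obstacle is the physical-space localization of Proposition~\ref{phys}: without it one cannot replace $\varphi_b(v)$ by $1$ to the required precision $\sigma^6$, and the growth ratio would only be accurate up to constant factors, far too crude for the overall strategy, where~\eqref{grow} must be saturated to match the upper bound in Proposition~\ref{interval30}. Once the localization is granted, everything else is careful bookkeeping against the parameter hierarchy $\varepsilon_0 = \sigma^{1000} \ll L^{-1} = \sigma^3 \ll \sigma^6 \ll \sigma^2 = \alpha - 1$.
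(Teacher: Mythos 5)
Your proof is correct and follows the same overall strategy as the paper: downward induction in $m$, use of Proposition~\ref{phys} to localize $\beta_m(t_m)$ in physical space so that $\varphi_b$ can be replaced by $1$ up to relative error $O(\sigma^6)$, and comparison of the main term $(\alpha k_0^2/m^2)\varphi_b\beta_m(t_m)$ against the residual in the recurrence~\eqref{recur02}. The one genuine structural difference lies in how the smallness of $\beta_{m-1}(t_m)$ (and more generally $\beta_k(t_m)$ for $k<m$) is obtained. The paper carries this as an explicit component of the induction hypothesis~\eqref{induct}, namely $\|\beta_k(t_m)\|_{L^2}\leq\delta B_m$ for all $k<m$ with $\delta=\sigma^6$, and then re-verifies $\delta+DL\varepsilon_0\leq\delta(\alpha-\alpha\sigma^6-\delta-DL\varepsilon_0)$ to propagate it. You instead observe the structural fact that, since the step-$j$ recurrence only feeds the modes $j\pm1$ and $j>m$ throughout the backward pass, the mode $m-1$ never receives a main-term contribution on $[T_0,t_m]$, so $\beta_{m-1}(t_m)=\sum_{j=m+1}^{k_0}\mathcal{R}_{j,m-1}$ is pure accumulated error; you then close the telescoping using $F_j\leq LB_j$ and the geometric decay $B_j\lesssim(1+\sigma^2/2)^{-(j-m)}B_m$, arriving at the sharper bound $\sim\sigma^{995}B_m$. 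Both routes deliver the same conclusion; yours is slightly more economical in what the induction hypothesis must carry, at the price of needing the growth estimate~\eqref{grow} at earlier steps inside the hypothesis (which you correctly do include), while the paper's $\delta$-bookkeeping is a little more robust and avoids relying on the combinatorics of which modes are excited when. One small caution: in your mode-by-mode verification of $F_{m-1}\leq LB_{m-1}$, the quantity you label $O(\sigma^{995}B_m)$ should really be $\|\mathcal{R}_{m,k}\|_{L^2}\lesssim\varepsilon_0DF_m\lesssim\sigma^{997}B_m$ for generic $k$ (the $\sigma^{995}$ figure belongs to the accumulated error $\beta_{m-1}(t_m)$); this is harmless since the weaker exponent still yields the conclusion, but it is worth keeping the two error sources distinct.
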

\begin{proof} We will induct in $m$ (backwards from $k_0$ to $k_1+1$) to prove the following statement:
\begin{equation}\label{induct}B_m\geq k_0^{-1}\varepsilon_1,\quad \|\beta_k(t_m)\|_{L^2}\leq LB_m,\,\forall|k-k_0|\leq E\sigma k_0;\quad \|\beta_{k}(t_m)\|_{L^2}\leq \delta B_m,\,\forall k<m,\end{equation} and will prove (\ref{grow}) in this process.

When $m=k_0$, \eqref{induct} is true since $\beta_{k_0}(T_0,v)=\varepsilon_1e^{i\eta_0v}\varphi_p(k_0\sqrt{\sigma}v)$ and $\beta_{k}(T_0,v)=0$ for $k\neq k_0$; suppose \eqref{induct} is true for $m$, let us consider $m-1$. By \eqref{recur02} we have \[\sup_{k}\|\beta_k(t_{m-1})\|_{L^2}\leq (L+DL\varepsilon_0+C)B_m,\quad \sup_{k<m-1}\|\beta_k(t_{m-1})\|_{L^2}\leq (\delta+DL\varepsilon_0)B_m,\] and (since $\|\varphi_b\|_{L^\infty}\leq 1$)\begin{equation}\label{growpre}\bigg(\frac{\alpha k_0^2}{m^2}+\delta+DL\varepsilon_0\bigg)B_m\geq B_{m-1}\geq \frac{\alpha k_0^2}{m^2}\|\varphi_b(v)\cdot \beta_m(t_m,v)\|_{L^2}-(\delta+DL\varepsilon_0)B_m.\end{equation} Now, noticing that $\alpha-1=\sigma^2$, and that \[\|\varphi_b(v)\cdot \beta_m(t_m,v)\|_{L^2}\geq (1-\sigma^6)\|\beta_m(t_m,v)\cdot\mathbf{1}_{|v|\leq \sigma^{1/3}}\|_{L^2}\geq (1-\sigma^6)B_m-C\varepsilon_1e^{-D\sigma k_0}\] by the definition of $\varphi_b$ and \eqref{phys3}, we get that \begin{equation}\label{grow0}B_{m-1}\geq \frac{\alpha k_0^2}{m^2}(1-\sigma^6)B_m-(\delta +DL\varepsilon_0)B_m\geq (\alpha-\alpha\sigma^6-\delta-DL\varepsilon_0)B_m,\end{equation} since $k_0\geq m$. By elementary computations one can show 
 \begin{equation*}\left\{\begin{aligned}L+DL\varepsilon_0+C&\leq L(\alpha-\alpha\sigma^6-\delta-DL\varepsilon_0),\\
 \delta+DL\varepsilon_0&\leq \delta(\alpha-\alpha\sigma^6-\delta-DL\varepsilon_0)\end{aligned}\right.\end{equation*} due to our choice of parameters $\alpha=1+\sigma^2$, $L=\sigma^{-3}$, $\delta=\sigma^6$ and $\varepsilon_0=\sigma^{1000}$, so we know that \eqref{induct} is true for $m-1$, completing the inductive proof. By (\ref{growpre}) and \eqref{grow0}, this completes the proof of \eqref{grow} at the same time.
\end{proof}
\subsection{Summary} We summarize some properties of the solution $g'$ in the following proposition. These are immediate consequences of the estimates obtained in this section, so we omit the proof.
\begin{proposition}\label{summarya}
Consider case (a), we have the following estimates.
\begin{enumerate}
\item For $t\in[T_2,T_0]$ we have
\begin{equation}\label{farsmall4}
\sum_{j=1}^3\sum_k\int_{\mathbb{R}}(\mathbf{1}_{|k-k_0|\geq D k_0}+\mathbf{1}_{|\xi-\eta_0|\geq Dk_0})e^{(|k-k_0|+\sqrt{\sigma}|\xi-\eta_0|)/2}|\widehat{g_j'}(t,k,\xi)|^2\,\mathrm{d}\xi\lesssim \varepsilon_1^2e^{-Dk_0/2}.\end{equation}
For $t\in [T_0,T_1]$ we have
\begin{equation}\label{farsmall3}
\begin{aligned}
\sum_k\int_{\mathbb{R}}(\mathbf{1}_{|k-k_0|\geq D\sigma k_0}+\mathbf{1}_{|\xi-\eta_0|\geq k_0/10})e^{(|k-k_0|+\sqrt{\sigma}|\xi-\eta_0|)/2}|\widehat{f'}(t,k,\xi)|^2\,\mathrm{d}\xi&\lesssim \varepsilon_1^2e^{-D\sigma k_0/2},\\
\int_{\mathbb{R}}e^{\sqrt{\sigma}|\xi-\eta_0|/2}(|\widehat{h'}(t,\xi)|^2+|\widehat{\theta'}(t,\xi)|^2)\,\mathrm{d}\xi&\lesssim \varepsilon_1^2e^{-k_0/2}.
\end{aligned}
\end{equation}
\item There exists a sequence $\{B_m\}$, $k_0\geq m\geq k_1$, such that for all $k_0\geq m\geq k_1+1$, we have
\begin{equation}\label{summarya4}
\|g'(t)\|_{L^2}\leq \eta_0B_m,\forall t\in[t_m,t_{m-1}];\quad \|f'(T_1)\|_{L^2}\geq B_{k_1}/2,
\end{equation}
\begin{equation}\label{summarya5}
\varepsilon_1\geq B_{k_0}\geq \eta_0^{-1}\varepsilon_1,\quad \bigg(\frac{\alpha k_0^2}{m^2}\bigg)(1+2\sigma^6) \geq \frac{B_{m-1}}{B_m}\geq \bigg(\frac{\alpha k_0^2}{m^2}\bigg)(1-2\sigma^6).
\end{equation}
\item For all $k_2\geq m\geq k_0+1$ and $t\in[t_m,t_{m-1}]$, we have
\begin{equation}\label{summarya6}
\|g'(t)\|_{L^2}\leq \varepsilon_1\cdot\prod_{j=k_0+1}^m\nu(\alpha_j).
\end{equation}
\end{enumerate}
\end{proposition}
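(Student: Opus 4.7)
The plan is to verify that all three parts of Proposition \ref{summarya} are repackagings of the estimates already proved in Section \ref{for}, with only minor bookkeeping. For part (1), the two localization statements (\ref{farsmall4}) and (\ref{farsmall3}) are exactly the conclusions of Proposition \ref{movexi} applied to the homogeneous solution $g'=(f',h',\theta')$ of $\partial_t g'=\mathcal{L}g'$ with data at $t=T_0$ concentrated near the mode $(k_0,\eta_0)$, so I would simply quote that proposition.

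For part (2), following Proposition \ref{grow0} I would set $B_m=\|\beta_m(t_m)\|_{L^2}$, where $\beta_k=\mathscr{F}_z f''(t,k,\cdot)$ and $f''$ is the truncated solution of Proposition \ref{errorsmallprop}. The ratio bound in (\ref{summarya5}) is then literally (\ref{grow}). The initial bound $\varepsilon_1\geq B_{k_0}\geq \eta_0^{-1}\varepsilon_1$ follows from the direct computation $B_{k_0}=\tfrac{1}{2}\varepsilon_1(k_0\sqrt{\sigma})^{-1/2}\|\varphi_p\|_{L^2}$, together with the parameter relations in (\ref{defpara}) which place $(k_0\sqrt{\sigma})^{-1/2}$ comfortably between $\eta_0^{-1}$ and $1$.

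For the uniform upper bound $\|g'(t)\|_{L^2}\leq \eta_0 B_m$ on $[t_m,t_{m-1}]$, I would combine three ingredients: the localization (\ref{farsmall3}), which confines $\widehat{f'}$ to $|k-k_0|\lesssim D\sigma k_0$ and bounds $h',\theta'$ by $\varepsilon_1 e^{-k_0/4}$; the inner-interval estimate (\ref{int2}) together with $F_m\leq LB_m$ from Proposition \ref{grow0}, giving $\sup_k\|\beta_k(t)\|_{L^2}\lesssim LB_m$; and the closeness $\|f'-f''\|_{L^2}\lesssim \varepsilon_1 e^{-D\sigma k_0/8}$ from (\ref{errorsmall0}). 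Summing the $O(\sigma k_0)$ active modes yields a bound far smaller than $\eta_0 B_m$. For the lower bound $\|f'(T_1)\|_{L^2}\geq B_{k_1}/2$, I would use the triangle inequality $\|f'(T_1)\|_{L^2}\geq \|\beta_{k_1}(T_1)\|_{L^2}-\|f'-f''\|_{L^2}$; iterating (\ref{summarya5}) gives
\[
B_{k_1}\geq B_{k_0}\prod_{m=k_1+1}^{k_0}\frac{\alpha k_0^2}{m^2}(1-2\sigma^6)\gtrsim \varepsilon_1\eta_0^{-1} e^{c\sqrt{\eta_0}},
\]
and since $\sqrt{\eta_0}\gg \sigma k_0$ by the parameter choices, the $e^{-D\sigma k_0/8}$ error is negligible.

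For part (3), the sup-in-$k$ bound (\ref{nextest10}) of Proposition \ref{interval30} gives $\sup_k\|\beta_k(t)\|_{L^2}\lesssim\varepsilon_1\prod_{j=k_0+1}^m\nu(\alpha_j)$; using (\ref{farsmall4}) to restrict $k$ to $|k-k_0|\lesssim Dk_0$ keeps the $\ell^2$-sum to a polynomial factor in $k_0$, which is swallowed by the multiplicative slack built into $\nu$ (the $1+\varepsilon_0^{2/5}$ and $1+4\varepsilon_0^{3/5}$ factors in (\ref{defnu0})), yielding (\ref{summarya6}). The only point that needs a moment's care is this last absorption, which holds thanks to $\varepsilon_0=(\log k_0)^{-N_3}$ and is the one place where constants should be checked; everything else is a direct appeal to earlier results.
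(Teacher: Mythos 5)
Your approach matches the paper's intent exactly: the authors omit the proof, stating it is an "immediate consequence" of the estimates obtained in Section \ref{for}, and your proposal correctly identifies Proposition \ref{movexi} for part (1), Propositions \ref{errorsmallprop} and \ref{grow0} for part (2), and Proposition \ref{interval30} for part (3), with the $\ell^2_k$-summation and the $f'$--$f''$ comparison via \eqref{errorsmall0} as the only glue needed.

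One numerical claim is wrong, though harmless. You assert $B_{k_1}\geq B_{k_0}\prod_{m=k_1+1}^{k_0}\frac{\alpha k_0^2}{m^2}(1-2\sigma^6)\gtrsim \varepsilon_1\eta_0^{-1}e^{c\sqrt{\eta_0}}$, and then invoke $\sqrt{\eta_0}\gg\sigma k_0$. But the product runs over only $\sigma k_0$ terms, each of size $1+O(\sigma)$, so $\log\prod_{m=k_1+1}^{k_0}\frac{\alpha k_0^2}{m^2}\approx \sigma^2 k_0$ — this is exactly what the paper records in \eqref{endest} with $b(\sigma)=\sigma^2+\frac43\sigma^3+O(\sigma^4)$. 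Since $\sigma^2 k_0 = k_0(\log k_0)^{-2N_2}\ll \sqrt{\eta_0}\sim k_0(\log k_0)^{N_3/2}$, the bound $e^{c\sqrt{\eta_0}}$ is not attained and the inequality you wrote does not hold. The conclusion still goes through because the bound you actually need, $B_{k_1}\gg C\varepsilon_1 e^{-D\sigma k_0/8}$, already follows from the monotonicity $B_{k_1}\geq B_{k_0}\gtrsim\eta_0^{-1}\varepsilon_1$ (polynomial in $k_0$) against an exponentially small error; the $e^{c\sqrt{\eta_0}}$ claim is an overshoot, not a load-bearing step. Similarly, in part (3) the absorption of the $O(k_0^{1/2})$ factor from the $\ell^2_k$-sum comes not so much from the slack in $\nu$ (for $m$ close to $k_0$ the product has too few factors to help) as from the fact that $F_{k_0}=B_{k_0}\sim\varepsilon_1(k_0\sqrt{\sigma})^{-1/2}$ is itself smaller than $\varepsilon_1$ by a factor that cancels the $k_0^{1/2}$; what then remains is a polylogarithmic constant, and this is why the downstream application in Proposition \ref{choiceeps1} carries the explicit $\eta_0^{10}$ cushion and raises $\nu(\alpha_j)$ to the tenth power in the definition of $B_1(t)$. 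It would be cleaner to state \eqref{summarya6} with a $\lesssim$ (or a polylog constant) and let the $\eta_0^{10}$ in \eqref{funcbt3} absorb it, which is what the paper effectively does.
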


\section{Linear analysis II: the inhomogeneous case}\label{largeeta} In this subsection we study case (b) of the system (\ref{inhomo}), where $g'(T_0)=0$. We will moreover fix a function $Z(t)$, which is decreasing on $[T_2,T_0]$ and increasing on $[T_0,T_1]$, and assume that the forcing term $\rho'=(\rho_1',\rho_2',\rho_3')$ in (\ref{inhomo}) satisfies that
\begin{equation}\label{support}\mathrm{supp}(\widehat{\rho'(t)})\subset\{k_*\}\times[\eta_*-2,\eta_*+2],\quad \|\rho'(t)\|_{L^2}\leq Z(t),
\end{equation} where $(k_*,\eta_*)\in\mathbb{Z}\times\mathbb{R}$ is fixed, and satisfies that
\begin{equation}\label{straight}|k_*-qk_0|+|\eta_*-q\eta_0|\leq Dn\sqrt{\varepsilon_0}T_0,\quad 1\leq q\leq n\leq \varepsilon_0^{-21/40}.
\end{equation}

The arguments in this section are in general similar to those in Section \ref{for}, except for two major differences: (a) the presence of the forcing term $\rho'$, and (b) the fact that the solution $g'$ is localized in Fourier space around $(k_*,\eta_*)$, as opposed to $(k_0,\eta_0)$ in Section \ref{for}. Thus, below we will focus on how these two differences affect the proof, and will omit parts of the proof that are the same as in Section \ref{for}.
\subsection{Localization in Fourier space} As in Section \ref{for}, we start by localizing $g'$ in Fourier space.
\begin{proposition}\label{movexi2}
For each $t\in[T_2,T_1]$ we have
\begin{equation}\label{movexibd2}
\sum_{j=1}^3\sum_k\int_{\mathbb{R}}(\mathbf{1}_{|k-k_*|\geq D\varepsilon_0^{-1}k_0}+\mathbf{1}_{|\xi-\eta_*|\geq D\varepsilon_0^{-1/20}k_0})e^{|k-k_*|+|\xi-\eta_*|}\big|\widehat{g_j'}(t,k,\xi)\big|^2\,\mathrm{d}\xi\lesssim (Z(t))^2 e^{-D\varepsilon_0^{-1/20}k_0/2}.
\end{equation}
\end{proposition}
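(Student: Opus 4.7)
The plan is to adapt the two-interval weighted energy argument of Proposition \ref{movexi} to the inhomogeneous setting centered at $(k_*,\eta_*)$, treating $[T_2,T_0]$ and $[T_0,T_1]$ separately. For each interval I would construct a Fourier-weighted energy
\[
M(t)=\sum_{j=1}^{3}\sum_{k}\int_{\mathbb{R}} e^{\mu(t,\xi)+\lambda(t)(|k-k_*|+|\xi-\eta_*|)}\big|\widehat{g_j'}(t,k,\xi)\big|^2\,d\xi,
\]
with a piecewise-linear $\lambda(t)\geq 2$ (so that the weight $e^{|k-k_*|+|\xi-\eta_*|}$ in \eqref{movexibd2} is strictly dominated) and a resonance corrector $\mu(t,\xi)=-E\int_{T_0}^{t}H(t',\xi)\,dt'$ analogous to \eqref{defh30} and \eqref{defh}, in which $H$ aggregates reaction kernels $\varepsilon_0\eta_*/((\xi-tq)^2+q^2)$ over modes $q$ of order $|k_*|$. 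The initial condition is $M(T_0)=0$ since $g'(T_0)=0$.

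Differentiating $M(t)$ along $\partial_tg'=\mathcal{L}g'+\rho'$, the contributions of the linear operator would be controlled exactly as in the proof of Proposition \ref{movexi}: the symbol bounds \eqref{symbolbd1}--\eqref{symbolbd7} together with the refined decomposition \eqref{defq11'}--\eqref{defq11''} decompose each interaction term in \eqref{jij} into a transport-type piece of size $\varepsilon_0(|k-k_*|+|\xi-\eta_*|)/T_0$, absorbed by $-|\dot\lambda|(|k-k_*|+|\xi-\eta_*|)$, and a resonance piece absorbed by $-\partial_t\mu$. The new contribution from the forcing is controlled by Cauchy--Schwarz,
\[
\Big|\sum_{j,k}\int e^{\mu+\lambda(|k-k_*|+|\xi-\eta_*|)}\,\overline{\widehat{g_j'}}\,\widehat{\rho_j'}\,d\xi\Big|\leq e^{C}\sqrt{M(t)}\cdot Z(t),
\]
since $\widehat{\rho'(t)}$ is supported in $\{k_*\}\times[\eta_*-2,\eta_*+2]$, where the exponential weight is $O(1)$. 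Integrating the resulting inequality $|\partial_tM(t)|\leq C\sqrt{M(t)}\,Z(t)$ away from $T_0$, and using $M(T_0)=0$ together with the monotonicity of $Z$ on either side of $T_0$, gives $\sqrt{M(t)}\lesssim T_0 Z(t)$. On the far region $|k-k_*|+|\xi-\eta_*|\geq D\varepsilon_0^{-1/20}k_0$, the weight in $M$ beats the Fourier weight $e^{|k-k_*|+|\xi-\eta_*|}$ of \eqref{movexibd2} by a factor $\geq e^{D\varepsilon_0^{-1/20}k_0}$, so the restricted integral is bounded by $T_0^2(Z(t))^2 e^{-D\varepsilon_0^{-1/20}k_0}$; since $T_0^2\ll e^{D\varepsilon_0^{-1/20}k_0/2}$ by the choice of parameters in \eqref{defpara}, this yields \eqref{movexibd2}.

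The main obstacle is keeping $|\mu(t,\xi)|\ll D\varepsilon_0^{-1/20}k_0$ when $(k_*,\eta_*)$ can be as large as $(nk_0,n\eta_0)$ with $n$ up to $\varepsilon_0^{-21/40}$. The resonance times $\eta_*/q$ now span $[1,\eta_*]$ and can produce many critical intervals inside $[T_2,T_1]$. Here the constraint \eqref{straight} is essential: it forces the activated modes $q$ to cluster in windows of size $Dn\sqrt{\varepsilon_0}T_0$ around integer multiples of $k_0$, and summing the contributions $\int\varepsilon_0\eta_*/((\xi-tq)^2+q^2)\,dt\sim\varepsilon_0\eta_*/q^2$ over these modes---combined with $\eta_*/k_*\sim\eta_0/k_0$ and the constraint $n\leq\varepsilon_0^{-21/40}$---bounds $|\mu|$ by $Ek_0$ times a modest power of $\varepsilon_0^{-1}$, still well below the threshold $D\varepsilon_0^{-1/20}k_0$. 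Verifying this bookkeeping, and adjusting the thresholds in $\lambda(t)$ accordingly, is the only substantively new ingredient compared to Proposition \ref{movexi}.
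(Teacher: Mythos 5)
Your proposal follows the paper's strategy essentially verbatim: a Fourier-weighted energy $M(t)=\sum_{j,k}\int e^{\mu(t,\xi)+\lambda(t)(|k-k_*|+|\xi-\eta_*|)}|\widehat{g_j'}|^2\,\mathrm{d}\xi$ with $\lambda(t)\geq 2$ and a resonance corrector $\mu$; Cauchy--Schwarz on the forcing (using the compact support of $\widehat{\rho'}$ near $(k_*,\eta_*)$ and $\mu\leq 0$); integration of $|\partial_tM|\leq C\sqrt{M}\,Z$ from $M(T_0)=0$; and the final comparison using $|\mu(t,\xi)|\lesssim E\varepsilon_0^{-1/20}k_0$. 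The paper implements this with a single $\mathrm{sgn}(t-T_0)$ in $\mu$ rather than two separate intervals, but that is only a cosmetic difference.

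There is, however, a conceptual slip in your description of $H$ and of the role of \eqref{straight}. The weight $H$ cannot be built from modes "of order $|k_*|$" only, nor does \eqref{straight} "force the activated modes $q$ to cluster in windows around integer multiples of $k_0$" --- \eqref{straight} constrains only the location of $(k_*,\eta_*)$ and says nothing about which modes resonate as $t$ varies. In the paper, $H(t,\xi)=E(\varepsilon_0+\varepsilon_0\eta_*/t^2)+C_0^{1/3}\sum_{|k|>\eta_*/(2T_0)}\sum_{q\neq 0}e^{-2|k-q|}\frac{\varepsilon_0\eta_*}{(\xi-tq)^2+q^2}$: the inner sum runs over \emph{all} $q\neq 0$, with the exponential localizing $q\approx k$, and the outer sum runs over $|k|>\eta_*/(2T_0)\sim k_*/2$. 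This is necessary because, as $t$ sweeps $[T_2,T_1]$, the resonant mode $m\approx\eta_*/t$ ranges continuously from $\sim k_*$ up to $\sim k_*\varepsilon_0^{-21/40}$, not in clusters near multiples of $k_0$; restricting $H$ to $q$ near multiples of $k_0$ would leave the reaction contribution $q_{11}'$ unabsorbed for $k$ away from those multiples and the differential inequality would not close. The only place \eqref{straight} enters is to bound $\eta_*\lesssim n\eta_0\lesssim\varepsilon_0^{-21/40}\eta_0$, which in turn controls the term $E\varepsilon_0\eta_*/T_2\lesssim E\varepsilon_0^{-1/20}k_0$ in $|\mu(t,\xi)|$. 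With this correction, the rest of your argument (the symbol-absorption into $-\dot\lambda$ and $-\partial_t\mu$, the Cauchy--Schwarz for $\rho'$, the bound $\sqrt{M(t)}\lesssim T_0\,Z(t)$, and the final comparison using $T_0^2\ll e^{D\varepsilon_0^{-1/20}k_0/2}$) is correct and matches the paper.
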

\begin{proof}The proof is similar to the proof Proposition \ref{movexi}, Case 1. In particular we will use the notation of $\lesssim'$ and $\widetilde{O}(1)$ that denote constants not depending on $C_0$. Fix $D\gg E\gg C$, and define \begin{equation}\label{defmod}
 M(t)=\sum_{j=1}^3\sum_k\int_{\mathbb{R}}e^{\mu(t,\xi)+\lambda(t)(|k-k_*|+|\xi-\eta_*|)}|\widehat{g_j'}(t,k,\xi)|^2\,\mathrm{d}\xi,
 \end{equation}
  where 
  \begin{equation}
  \lambda(t)=C_0^{2/3}-C_0^{1/3}\frac{|t-T_0|}{T_0},
  \end{equation} and 
  \begin{equation}
  \mu(t,\xi)=-\mathrm{sgn}(t-T_0)\int_{T_0}^t H(t',\xi)\,\mathrm{d}t',
  \end{equation} 
    \begin{equation}\label{defh3}
    H(t,\xi)=E\bigg(\varepsilon_0+\frac{\varepsilon_0\eta_*}{t^2}\bigg)+C_0^{1/3}\sum_{|k|> \eta_*/(2T_0)}H(t,k,\xi),\quad H(t,k,\xi)=\sum_{q\neq 0}e^{-2|k-q|}\frac{\varepsilon_0\eta_*}{(\xi-tq)^2+q^2}.
    \end{equation}
Note that when $(k_*,\eta_*)=(k_0,\eta_0)$, this coincides with the definitions (\ref{defh30}) and (\ref{defmod0}). We will prove, for each $t\in[T_2,T_1]$, that
    \begin{equation}\label{diffineqnew0}
    \mathrm{sgn}(t-T_0)\cdot\partial_tM_3(t)\leq C\sqrt{M_3(t)}\cdot Z(t).
    \end{equation}

To prove (\ref{diffineqnew0}), first notice that, by (\ref{symbolbd1})$\sim$(\ref{symbolbd7}) and (\ref{defq11''}), the analog of (\ref{qijest201}) and (\ref{qijest202}) are still true here:
\begin{equation}\label{qijest301} |q_{11}''|+|q_{ij}|\lesssim e^{-(C_0/5)(|k-l|+|\xi-\eta|)}\cdot\left\{
 \begin{split}
&\bigg(\varepsilon_0+\varepsilon_0\frac{|k-k_*|+|\xi-\eta_*|}{T_0}\bigg),&&\textrm{if $|\xi-\eta_*|\geq \eta_*/6$, or $|k|\leq \eta_*/(2T_0)$};\\
&\bigg(\varepsilon_0+\frac{\varepsilon_0\eta_*}{t^2}+\varepsilon_0H(t,k,\xi)\bigg),&&\textrm{if $|\xi-\eta_*|<\eta_*/6$, and $|k|>\eta_*/(2T_0)$,}
 \end{split}\right.
\end{equation} 
 \begin{equation}\label{qijest302}
|q_{11}'|\lesssim' \mathbf{1}_{|k-l|=1}\cdot C_0 e^{-C_0|\xi-\eta|}\cdot\left\{
\begin{split}
&\frac{|k-k_*|+|\xi-\eta_*|}{T_0},&&\textrm{if $|\xi-\eta_*|\geq \eta_*/6$, or $|k|\leq \eta_*/(2T_0)$};\\
&H(t,k,\xi),&&\textrm{if $|\xi-\eta_0|<|\eta_0|/6$, and $|k|>\eta_*/(2T_0)$.}
\end{split}
\right.
 \end{equation} Next we compute the time derivative of $M(t)$,
 \[\partial_tM(t)=I_1(t)+I_2(t)+\sum_{i,j=1}^3J_{ij}(t)+K(t),
 \] where $I_j(t)$ and $J_{ij}(t)$ are the same as in (\ref{i1})$\sim$(\ref{jij}), and the new term $K(t)$ comes from the forcing $\rho'(t)$, which is
 \[K(t)=2\Re\sum_{j=1}^3\sum_k\int_{\mathbb{R}}e^{\mu(t,\xi)+\lambda(t)(|k-k_*|+|\xi-\eta_*|)}\overline{\widehat{g_j'}(t,k,\xi)}\cdot\widehat{\rho_j'}(t,k,\xi)\,\mathrm{d}\xi.
 \]
Notice that we have the same estimate for $\partial_{\xi}\mu(t,\xi)$ as (\ref{estdevmu}),
\begin{multline}|\partial_{\xi}\mu(t,\xi)|\leq \widetilde{O}(1) C_0^{1/3}\sum_{|k|> |\eta_*|/(2T_0)}\int_{\mathbb{R}}|\partial_{\xi}H(t,k,\xi)|\,\mathrm{d}t\\\lesssim \widetilde{O}(1) C_0^{1/3}\sum_{|k|> |\eta_*|/(2T_0)}\varepsilon_0\eta_*\sup_{|q|\sim |k|;\xi}\int_{\mathbb{R}}\frac{|\xi-tq|}{((\xi-tq)^2+q^2)^2}\,\mathrm{d}t\lesssim \widetilde{O}(1) C_0^{1/3},\end{multline}in the same way as the proof of Proposition \ref{movexi}, we obtain that 
 \[|I_2(t)|\leq O(1)\varepsilon_0M_3(t),\quad \mathrm{sgn}(t-T_0)\cdot I_1(t)\leq -D\varepsilon_0M_3(t),\quad |J_{ij}(t)|\leq\big(\widetilde{O}(1)C_0^{-1/3}+O(1)\varepsilon_0\big)|I_1(t)|.\] Finally for $K(t)$, by Cauchy-Schwartz we have In case (b), similarly we have that 
\[|K(t)|\lesssim \sqrt{M(t)}\bigg(\sum_{j=1}^3\sum_k\int_{\mathbb{R}}e^{\mu(t,\xi)+\lambda(t)(|k-k_*|+|\xi-\eta_*|)}|\widehat{\rho_j'}(t,k,\xi)|^2\bigg)^{1/2}\lesssim \sqrt{M(t)}Z(t).
\] (here the implicit constants may depend on $C_0$), using (\ref{support}) and the fact that $\mu(t,\xi)\leq 0$. This proves (\ref{diffineqnew0}).

Now, since $Z(t)$ is decreasing on $[T_2,T_0]$ and increasing on $[T_0,T_1]$, and $M(T_0)=0$, using (\ref{diffineqnew0}) we obtain that $M(t)\lesssim T_0^2(R(t))^2$ for $t\in[T_2,T_1]$. Since for $t\in[T_2,T_1]$ we have $\lambda(t)\geq 2$ and \[|\mu(t,\xi)|\lesssim E\varepsilon_0T_1+ \sum_{|k|> \eta_*/(2T_0)}\int_{\mathbb{R}}H(t,k,\xi)\,\mathrm{d}t+E\int_{T_2}^{T_1}\frac{\varepsilon_0\eta_*}{t^2}\,\mathrm{d}t\lesssim Ek_0+E\frac{\varepsilon_0\eta_*}{T_2}\lesssim E\varepsilon_0^{-1/20}k_0,\] where we have also used (\ref{straight}), we obtain
\[\sum_{j=1}^3\sum_k\int_{\mathbb{R}}e^{2(|k-k_*|+|\xi-\eta_*|)}|\widehat{g_j'}(t,k,\xi)|^2\,\mathrm{d}\xi\lesssim (Z(t))^2e^{CE\varepsilon_0^{-1/20}k_0},
\] which implies (\ref{movexibd2}).
\end{proof}
\subsection{Approximation by a recurrence relation} Next we approximate (\ref{inhomo}) by a recurrnce relation, as we have done in Section \ref{for}.
\begin{proposition}\label{interval2} Define as before $\beta_k(t,v)=\mathscr{F}f'(t,k,v)$, and define $(\beta_h,\beta_\theta)$ similarly. For $0\leq j\leq 2$ and each $m$ define $t_m^*$ and $k_j^*$ by
\[t_m^*=\frac{2\eta_*}{2m+1},\quad t_{k_j^*}^*\leq T_j<t_{k_j^*-1}^*.
\] Then for each $k_1^*+1\leq m\leq k_2^*$ and any $t,t'\in[t_m^*,t_{m-1}^*]$, we have
\begin{equation}\label{equiv}
\sup_k\|\beta_k(t)\|_{L^2}\leq D\big(\sup_k\|\beta_k(t')\|_{L^2}+\varepsilon_0^{-1}\max(Z(t),Z(t'))\big).
\end{equation} Moreover, for each $m\in[k_1^*+1,k_2^*]$ we have
 \begin{equation}\label{recur00}\beta_k(t_{m-1}^*,v)=\beta_{k}(t_{m}^*,v)+\mathcal{R}_{m,k}(v)+\left\{
 \begin{aligned}&\qquad\qquad\quad\,0,&k&\neq m\pm 1\textnormal{ or }\pm 1;\\
 &\mp\frac{\alpha k_0^2\eta_*}{m^2\eta_0}\varphi_b(v)\cdot\beta_m(t_{m},v),&k&=m\pm 1;\\
 &-\frac{\alpha k_0^2\eta_*}{\pi m^2\eta_0}\partial_v\varphi_b(v)\cdot \beta_{\theta}(t_{m},v),&k&=\pm 1,
 \end{aligned}
 \right.
 \end{equation} where the error term satisfies that 
 \begin{equation}\label{errest000}\sup_{k}\|\mathcal{R}_{m,k}\|_{L^2}\lesssim\varepsilon_0^{9/10}D\cdot \min\big(\sup_k\|\beta_k(t_m)\|_{L^2},\sup_k\|\beta_k(t_{m-1})\|_{L^2}\big)+\varepsilon_0^{-1}\max(Z(t_m),Z(t_{m-1})).
 \end{equation}
 \end{proposition}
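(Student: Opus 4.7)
The proof will mirror Proposition \ref{newtoyback}, adapted in two ways: to handle the inhomogeneous forcing $\rho'$, and to accommodate the shifted base frequency $(k_*,\eta_*)$ in place of $(k_0,\eta_0)$. As a first step, I will invoke Proposition \ref{movexi2} together with the rapid decay of each $q_{ij}$ in $|k-l|$ and $|\xi-\eta|$ from Proposition \ref{systemrough} to insert cutoffs restricting the right-hand side of \eqref{simpsys} to $|k|,|l|\lesssim \varepsilon_0^{-1}k_0$ and $|\xi-\eta_*|,|\eta-\eta_*|\lesssim D\varepsilon_0^{-1/20}k_0$, at the cost of an $L^2$ remainder bounded by $e^{-D\varepsilon_0^{-1/20}k_0/4}\sup_{t'\in J}Z(t')$, which is absorbable into the forcing.

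Inside this region I will decompose $q_{11}=q_{11}'+q_{11}''$ and $q_{13}=q_{13}'+q_{13}''$ as in Proposition \ref{systemrough}. The assumption \eqref{straight} ensures that the critical-time selection works as expected on each $[t_m^*,t_{m-1}^*]$: for the localized $\eta$ and $l\neq m$, the Orr denominator $(\eta-tl)^2+l^2$ is bounded below by a quantity of order $\eta_*^2/m^2+m^2$, which is enough to make the corresponding coupling negligible after integration over the short interval $[t_m^*,t_{m-1}^*]$. Consequently only two interactions carry genuine resonance: the coupling from mode $m$ into $k=m\pm 1$ via $q_{11}'$, and the coupling from mode $0$ into $k=\pm 1$ via $q_{13}'$. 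All other symbol contributions are controlled by the integrated bound $\varepsilon_0^{9/10}D\cdot\sup_k\|\beta_k\|_{L^2}$, the slight weakening from $\varepsilon_0$ to $\varepsilon_0^{9/10}$ being the price for the weaker off-resonance denominator bound compared to Proposition \ref{newtoyback}.

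A bootstrap argument paralleling the one yielding \eqref{boot020} then establishes \eqref{equiv}: the interval length satisfies $t_{m-1}^*-t_m^*\lesssim\eta_*/m^2$, which by \eqref{straight} and the definitions of $k_1^*,k_2^*$ is bounded uniformly by $O(\varepsilon_0^{-1})$ on the allowed range of $m$, producing the $\varepsilon_0^{-1}$ weight attached to the forcing. The recurrence \eqref{recur00} is then obtained by integrating the reduced equation explicitly on $[t_m^*,t_{m-1}^*]$: I replace $\widehat{\beta_m}(t,\eta)$ (resp.~$\widehat{\beta_\theta}(t,\eta)$) inside the time integral by its value at the endpoint where $\sup_k\|\beta_k\|_{L^2}$ is minimized (permissible by \eqref{equiv} together with a direct estimate on $\partial_t\widehat{\beta_m}$), and extend the $t$-integration from $[t_m^*,t_{m-1}^*]$ to $\mathbb{R}$ at the cost of an exponentially small tail from the Lorentzian decay. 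The identity $\int_\mathbb{R}\varepsilon_0\eta/((\eta-tm)^2+m^2)\,dt=\pi\varepsilon_0\eta/m$ combined with the relation $\alpha k_0^2=\pi\varepsilon_0\eta_0/2$ from \eqref{defpara} then produces the coefficient $\alpha k_0^2\eta_*/(m^2\eta_0)$ appearing in \eqref{recur00}. The principal obstacle is keeping every constant uniform in the shifted frequency $(k_*,\eta_*)$ subject to \eqref{straight}, in particular the denominator lower bound and the interval-length bound on $\eta_*/m^2$; a related subtlety is that intervals $[t_m^*,t_{m-1}^*]$ containing $T_0$ must be split at $T_0$ before applying the forcing estimate, since $Z(t)$ is only monotone on each side of $T_0$.
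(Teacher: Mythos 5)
Your proposal follows the paper's proof essentially exactly: it mirrors Proposition \ref{newtoyback}, inserts the widened frequency cutoffs via Proposition \ref{movexi2}, tracks the two modifications (the $\varepsilon_0\to\varepsilon_0^{9/10}$ loss from the weaker off-resonance denominator bound, i.e.~\eqref{losscome}, and the $\varepsilon_0^{-1}$ forcing weight coming from the interval length $\sim\eta_*/m^2\lesssim\varepsilon_0^{-1}$), and extracts the resonant coefficient by extending the Lorentzian time integral to $\mathbb{R}$. The only slip is the intermediate identity $\int_{\mathbb{R}}\varepsilon_0\eta\,dt/((\eta-tm)^2+m^2)=\pi\varepsilon_0\eta/m$, which should read $\pi\varepsilon_0\eta/m^2$; with that correction your bookkeeping yields the stated coefficient $\alpha k_0^2\eta_*/(m^2\eta_0)$.
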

\begin{proof}The proof is similar to Proposition \ref{newtoyback}. First, by using (\ref{movexibd2}) and the exponential decay of $q_{ij}$, we can freely insert cutoffs on the right hand side of (\ref{simpsys}), restricting to the region
\begin{equation}\label{restriction2}\max(|k|,|l|)\leq D\varepsilon_0^{-1}k_0,\quad \max(|\xi-\eta_0|,|\eta-\eta_0|)\leq D\varepsilon_0^{-1/20}k_0,\end{equation}up to an error term $R=R(t,k,\xi)$ that satisfies
 \[\|R(t)\|_{L^2}\lesssim Z(t)e^{-k_0}.
 \]
 
 As in the proof of Proposition \ref{newtoyback}, we will use (\ref{symbolbd2})$\sim$(\ref{symbolbd7}) to control $q_{ij}$ where $(i,j)\not\in\{(1,1),(1,3)\}$, and make the decomposition $q_{ij}=q_{ij}'+q_{ij}''$ for $(i,j)\in\{(1,1),(1,3)\}$; also notice that \[|\eta-\eta_*|\lesssim \varepsilon_0^{-1/20}k_0\ll T_2\lesssim t,\quad\textnormal{which implies that}\quad\frac{1}{(\eta-lt)^2+l^2}\lesssim\frac{1}{t^2}\textnormal{ if $l\neq m$,}
\] we conclude that (where $\rho_h'$ and $\rho_\theta'$ means $\rho_2'$ and $\rho_3'$ respectively)
\begin{equation}\label{newtoy}
\left\{\begin{aligned}\partial_t\widehat{\beta_k}(t,\xi)&=\widehat{\mathcal{R}_k}(t,\xi)+\varepsilon_0\mathbf{1}_{k=\pm 1}\int_{\mathbb{R}}\frac{\widehat{\partial_v\varphi_b}(\xi-\eta)}{2}\widehat{\beta_{\theta}}(t,\eta)\,\mathrm{d}\eta+\widehat{\rho_1'}(t,k,\xi),&k&\neq m\pm 1,\\
\partial_t\widehat{\beta_{k}}(t,\xi)&=\widehat{\mathcal{R}_{k}}(t,\xi)\mp\int_{\mathbb{R}}\frac{\widehat{\varphi_b}(\xi-\eta)}{2}\frac{\varepsilon_0\eta^*}{(\eta-tm)^2+m^2}\widehat{\beta_m}(t,\eta)\,\mathrm{d}\eta+\widehat{\rho_1'}(t,k,\xi),&k&=m\pm 1,\\
\partial_t\widehat{\beta_{k}}(t,\xi)&=\widehat{\mathcal{R}_k}(t,\xi)+\widehat{\rho_k'}(t,\xi),&k&=h,\theta,
\end{aligned}\right.
\end{equation}
and each of the error terms $\mathcal{R}_k$ satisfies either
\begin{equation}\label{errorest2}
|\widehat{\mathcal{R}_k}(t,\theta)|\lesssim\sum_{l}\int_{\mathbb{R}}\bigg(\varepsilon_0^2+\frac{\varepsilon_0\eta_*}{t^2}+\varepsilon_0^{19/10}\frac{\eta_*}{(\eta_*-tl)^2+l^2}\bigg)e^{-(C_0/8)(|k-l|+|\xi-\eta|)}|\widehat{\beta}_l(t,\eta)|\,\mathrm{d}\eta
\end{equation} (with suitable changes when $l\in\{h,\theta\}$), or
\begin{equation}\label{errorest2+}
\|\widehat{\mathcal{R}_k}(t,\xi)\|_{L_{\xi}^2}\lesssim Z(t)e^{-k_0}.
\end{equation} Here we have also used the fact that 
\begin{equation}\label{losscome}\frac{1}{(\eta-tl)^2+l^2}\lesssim\varepsilon_0^{-1/10} \frac{1}{(\eta_*-tl)^2+l^2},
\end{equation} which follows from the inequality $|\eta-\eta_*|\lesssim \varepsilon_0^{-1/20}k_0$.

We then proceed as in the proof of Proposition \ref{newtoyback}; first we prove the counterpart of (\ref{boot020}), which is
\begin{equation}\label{boot02}
\sup_k\bigg\|\sup_{t_m^*\leq t'\leq t}|\widehat{\beta}_k(t',\xi)|\bigg\|_{L_{\xi}^2}\leq
\left\{\begin{split}
&2(L+Z(t)\varepsilon_0^{-1}), &k&\in\{m,\theta\};\\
&D(L+Z(t)\varepsilon_0^{-1}), &k&\not\in\{m,\theta\},
\end{split}\right.\quad L:=\sup_{k}\|\widehat{\beta_k}(t_m^*,\xi)\|_{L_\xi^2},
\end{equation} for each $t\in[t_{m}^*,t_{m-1}^*]$, and then use this to deduce (\ref{equiv})$\sim$(\ref{recur00}). Note that the base point which is $t_m^*$ in (\ref{boot02}) can be replaced by any point in $[t_{m}^*,t_{m-1}^*]$; this allows us to cover all cases of (\ref{equiv})$\sim$(\ref{recur00}).

The proof of (\ref{boot02}) and subsequently (\ref{equiv})$\sim$(\ref{recur00}) are the same as in Proposition \ref{newtoyback}, with only two differences: (a) all gains of $O(\varepsilon_0)$ are replaced by $O(\varepsilon_0^{9/10})$ due to the power of $\varepsilon_0^{19/10}$ instead of $\varepsilon_0^2$ in (\ref{errorest2}), due to (\ref{losscome}); (b) the error term $\varepsilon_1e^{-k_0}$ is replaced by $Z(t)\varepsilon_0^{-1}$, due to the presence of the forcing term $\rho_j'$ satisfying (\ref{support}). We omit the details.
\end{proof}
\subsection{Growth of the recurrence relation} Finally we establish an upper bound on the growth of the recurrence relation (\ref{recur00}). This follows from the idea of Proposition \ref{interval30}; since the setting is a little different, we will present the full proof here.
\begin{proposition}\label{interval3} Let $\nu(\beta)$ be defined in (\ref{defnu0}). Suppose $Z(t)$ satisfies that
\begin{equation}\label{conditionr}
\left\{\begin{split}
&Z(t_m^*)\geq \nu(\alpha_m^*)\cdot Z(t_{m-1}^*),&&\textnormal{if $t_{m-1}^*\leq T_0$};
\\ &Z(t_{m-1}^*)\geq \nu(\alpha_m^*)\cdot Z(t_{m}^*),&&\textnormal{if $t_{m}^*\geq T_0$},
\end{split}\right.
\end{equation} where $\alpha_m^*=(\alpha k_0^2\eta_*)/(m^2\eta_0)$, then we have for each $m\in[k_1^*+1,k_2^*]$ and $t\in[t_{m-1}^*,t_m^*]$, that
\begin{equation}\label{nextest2}
\sup_k\|\beta_k(t)\|_{L^2}\leq\varepsilon_0^{-2}Z(t).
\end{equation}
\end{proposition}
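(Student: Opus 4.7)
The plan is to run an inductive argument modeled closely on Proposition \ref{interval30}, working outward from $T_0$ in both time directions separately and using the hypothesis \eqref{conditionr} to provide exactly the slack needed to absorb the $\alpha_m^*$ amplification at each step. Set $B_{k,m}:=\|\beta_k(t_m^*)\|_{L^2}$, $F_m:=\sup_k B_{k,m}$, and $E_m:=B_{m\mp 1,m}$ (the sign chosen so that $E_m$ tracks the mode freshly excited at step $m$ in the direction we are propagating). Put $G_m:=c\varepsilon_0^{-2}Z(t_m^*)$ for a small absolute constant $c$, so that the target bound reads $F_m\leq 4G_m$. For the base case, since $g'(T_0)=0$, an application of \eqref{equiv} with $t'=T_0$ on the interval containing $T_0$ gives $F_{k_0^*}\leq D\varepsilon_0^{-1}Z(t_{k_0^*}^*)$ and similarly $F_{k_0^*-1}\leq D\varepsilon_0^{-1}Z(t_{k_0^*-1}^*)$, which are comfortably below $4G_{k_0^*}$ and $4G_{k_0^*-1}$ (since $\varepsilon_0^{-1}\ll \varepsilon_0^{-2}$).

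For the inductive step I would combine \eqref{recur00} with \eqref{errest000} to derive, exactly as in the proof of Proposition \ref{interval30}, the effective scalar recurrences
\[
E_m\leq \alpha_m^*E_{m-1}+L_m,\qquad F_m\leq \max\!\bigl(F_{m-1},\,\alpha_m^*F_{m-1},\,\alpha_m^*E_{m-1}+E_{m-2}\bigr)+L_m,
\]
where $L_m:=\varepsilon_0^{9/10}D\sum_{j}F_j+\varepsilon_0^{-1}Z(t_m^*)$ collects all error contributions, now including the forcing. The justification of the $\max$-formula is identical in spirit to that in Proposition \ref{interval30}: the main term in \eqref{recur00} only touches modes indexed by $m\pm 1$ at step $m$, so a mode $k$ that has never been the target of a main term up to time $t_m^*$ is updated only by the errors $\mathcal{R}_{j,k}$, hence is bounded by $L_m$-type cumulative quantities.

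Assuming inductively $E_{m-1}\leq G_{m-1}$ and $F_{m-1}\leq 4G_{m-1}$, I would then close the induction by the same case split $\alpha_m^*\geq 2\varepsilon_0^{3/5}$ vs $\alpha_m^*\leq \varepsilon_0^{3/5}$ as in Proposition \ref{interval30}. Hypothesis \eqref{conditionr}, in whichever of the two directions we are propagating, gives $G_m/G_{m-1}=Z(t_m^*)/Z(t_{m-1}^*)\geq \nu(\alpha_m^*)$ (or the reverse ratio, matching the direction), which is exactly the growth factor needed to absorb $\alpha_m^*E_{m-1}$ into $G_m$. The forcing piece of $L_m$ equals $\varepsilon_0^{-1}Z(t_m^*)=c^{-1}\varepsilon_0 G_m$, and the remaining $\varepsilon_0^{9/10}D\sum F_j$ is bounded by $\varepsilon_0^{1/2}G_{m-1}$; both are comfortably absorbed by the $\varepsilon_0^{2/5}$ (or $4\varepsilon_0^{3/5}$) slack in $\nu(\alpha_m^*)$. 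Once $F_m\leq \varepsilon_0^{-2}Z(t_m^*)$ is established at the critical times $t_m^*$, \eqref{equiv} and the monotonicity of $Z$ away from $T_0$ extend the bound to any $t\in[t_{m-1}^*,t_m^*]$ at the cost of a harmless constant.

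The main obstacle I expect is verifying that the Proposition \ref{interval30} cascade bookkeeping survives in the inhomogeneous setting. Concretely one must check that the forcing, which continuously injects mass at the single mode $k_*$ (by \eqref{support}), does not spoil the property that modes never touched by a main term remain small; this follows because $\rho'$ is Fourier-localized at one value of $k$, so it only seeds one coordinate of the cascade, and the rest of the argument is structurally identical. A secondary subtlety is that the two induction directions meet at $T_0$, which is not exactly a critical time $t_m^*$; this is handled cleanly by choosing the base indices $k_0^*$ and $k_0^*-1$ and invoking \eqref{equiv} with $t'=T_0$ as described in the base case.
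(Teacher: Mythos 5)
Your proposal follows the paper's proof almost exactly — same quantities $B_{k,m}$, $E_m$, $F_m$, same passage through \eqref{recur00}/\eqref{errest000} to the scalar recurrences, same case split on $\alpha_m^*\gtrless 2\varepsilon_0^{3/5}$, same use of \eqref{conditionr} to compare $G_m$ and $G_{m-1}$ — and your handling of the base case via \eqref{equiv} at $t'=T_0$ and of the sign of $E_m$ in the two time directions are both correct and explicit where the paper is terse.

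There is, however, one quantitative gap that matters. Your $L_m$ writes the forcing contribution as a single term $\varepsilon_0^{-1}Z(t_m^*)$, but the off-diagonal modes $k\geq m+2$ (or $k\in\{h,\theta\}$) are never hit by a main term, so their bound accumulates the per-step errors from \eqref{errest000} over \emph{all} steps back to $k_0^*$; the correct forcing piece is $D\varepsilon_0^{-1}\sum_{j}Z(t_j^*)$. Hypothesis \eqref{conditionr} only gives $\nu(\alpha_j^*)\geq 1+4\varepsilon_0^{3/5}$ in the worst case, so this sum is bounded by $\varepsilon_0^{-3/5}Z$ of the most recent time, i.e.\ the forcing piece of $L_m$ is really $\sim D\varepsilon_0^{-8/5}Z(t_m^*)$, not $\varepsilon_0^{-1}Z(t_m^*)$. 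With your choice $G_m=c\varepsilon_0^{-2}Z(t_m^*)$ and $c$ small, this is $\sim D c^{-1}\varepsilon_0^{2/5}G_m$, which is the same order as the $\varepsilon_0^{2/5}$ slack in $\nu(\alpha_m^*)$ and cannot be made ``comfortably'' absorbable by shrinking $c$ (shrinking $c$ makes it worse). This is exactly why the paper takes $G_m=\varepsilon_0^{-3}Z(t_m^*)$: the extra factor of $\varepsilon_0$ turns the accumulated forcing into $\sim D\varepsilon_0^{7/5}G_{m-1}$, genuinely small relative to the slack. The conclusion of the proposition only needs $\sup_k\|\beta_k(t)\|\lesssim\varepsilon_0^{-O(1)}Z(t)$ for later use, so this is a harmless adjustment of the constant in $G_m$, but as written your induction as stated does not close. (The same summation phenomenon also affects the $\varepsilon_0^{9/10}D\sum_j F_j$ piece; your claimed $\varepsilon_0^{1/2}G_{m-1}$ should be $\sim D\varepsilon_0^{3/10}G_{m-1}$ after accounting for $\sum_j G_j\lesssim\varepsilon_0^{-3/5}G_{m-1}$, which is the same bookkeeping you should make explicit.)
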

\begin{proof} By \eqref{equiv}, we only need to consider the case $t=t_m^*$. Let $B_{k,m}=\|\beta_k(t_m^*)\|_{L^2}$, and define, as in the proof of Proposition \ref{interval30}, \[E_m=B_{m+1,m},\quad F_m=\sup_{k}B_{k,m}.\] Let us consider an interval on the left side of $T_0$, i.e., when $t_{m-1}^*\leq T_0$; the other side can be treated similarly. By (\ref{recur00}) we deduce that
\begin{equation} \label{ineq}
\left\{\begin{split}
B_{k,m}&\leq B_{k,m-1}+\varepsilon_0^{9/10}DF_{m-1}+D\varepsilon_0^{-1}Z(t_{m-1}^*),&k&\not\in\{m\pm 1,\pm1\};\\
B_{k,m}&\leq B_{k,m-1}+\alpha_m^*B_{m,m-1}+\varepsilon_0^{9/10}DF_{m-1}+D\varepsilon_0^{-1}Z(t_{m-1}^*),&k&=m\pm 1;\\
B_{k,m}&\leq B_{k,m-1}+(\alpha_m^*/\pi)B_{\theta,m-1}+\varepsilon_0^{9/10}DF_{m-1}+D\varepsilon_0^{-1}Z(t_{m-1}^*),&k&=\pm1.
\end{split}\right.
\end{equation} By iteration of \eqref{ineq}, we see that
\[B_{k,m}\leq \varepsilon_0^{9/10}D\sum_{j=k_0^*}^{m-1}F_j+D\varepsilon_0^{-1}\sum_{j=k_0^*}^{m-1}Z(t_{j}^*):=L_m,\quad \textnormal{if $k\geq m+2$ or $k\in\{h,\theta\}$};
\]notice also that due to (\ref{conditionr}),
\[D\varepsilon_0^{-1}\sum_{j=k_0^*}^{m-1}Z(t_{j}^*)\lesssim\varepsilon_0^{-2}Z(t_{m-1}^*).
\]Plugging into (\ref{ineq}) we obtain
\[E_{m}\leq\alpha_m^* E_{m-1}+L_m,\quad F_{m}\leq \max\bigg(F_{m-1},\alpha_m^*F_{m-1},\alpha_m^*E_{m-1}+E_{m-2}\bigg)+L_m.
\] Define $G_m=\varepsilon_0^{-3}Z(t_m^*)$ such that $G_{m}\geq G_{m-1}\cdot \nu(\alpha_m^*)$, it suffices to prove that $E_{m}\leq G_{m}$ and $F_m\leq 4G_m$ for all $m$. This inequality is trivially true for $m=k_0^*$; assume that it holds for all $j\leq m-1$, then if $\alpha_m\geq 2\varepsilon_0^{3/5}$ we have
\[L_m\leq 4D\varepsilon_0^{9/10}\sum_{j<m}G_j+\varepsilon_0G_{m-1}\leq \varepsilon_0^{4/9}G_{m-1},
\] hence 
\[E_m\leq \alpha_m^* G_{m-1}+ \varepsilon_0^{4/9}G_{m-1}\leq G_m
\] and 
\[F_{m}\leq\max\bigg(4G_{m-1},4\alpha_m^*G_{m-1},\alpha_m^*G_{m-1}+G_{m-2}\bigg)+ \varepsilon_0^{4/9}G_{m-1}\leq 4G_m;
\]
if $\alpha_m\leq 2\varepsilon_0^{3/5}$ we similarly have that $L_m\leq \varepsilon_0^{1/5}G_{m-1}$, and
\[E_{m}\leq \alpha_m^* G_{m-1}+\varepsilon_0^{1/5}G_{m-1}\leq G_m,\quad F_{m}\leq (1+\alpha_m^*+D\varepsilon_0)F_{m-1}+D\varepsilon_0G_{m-1}\leq 4G_m,
\]where the last inequality follows directly from (\ref{recur0}). This completes the proof.
\end{proof}
\section{Nonlinear analysis, and construction of approximate solution}\label{approximate}
Now we go back to the full nonlinear system \eqref{eulernew}$\sim$\eqref{eulersup}, and rewrite it here: 
\begin{equation}\label{eulernewrewrite}
\left\{\begin{aligned}
\partial_tf&=-\theta\cdot\partial_vf-(h+1)\nabla^{\perp}\phi\cdot\nabla f,\\
\partial_th&=-\theta\partial_vh-\frac{\mathbb{P}_0f+h}{t},\\
\partial_t\theta&=-\frac{2}{t}\theta-\theta\partial_v\theta+\frac{1}{t}\mathbb{P}_0(f\cdot\partial_z\phi).
\end{aligned}
\right.
\end{equation} In this section we will construct an approximate solution to (\ref{eulernewrewrite}) by performing the formal Taylor expansion, using the linear estimates proved in Sections \ref{for} and \ref{largeeta}.

Define $g=\underline{g}+g^*$, where recall $g=(f,h,\theta)$ and $\underline{g}=(\underline{f},\underline{h},\underline{\theta})$, then we also have $\phi=\underline{\phi}+\phi^*$, where $\underline{\phi}=\underline{\Delta_t}^{-1}\underline{f}$, and 
\begin{equation}\label{eulersuprewrite}\left\{\begin{split}
\phi^*&=\underline{\Delta_t}^{-1}f^*+\sum_{j=1}^{\infty}(-1)^j(\underline{\Delta_t}^{-1}\Delta_t^*)^j\underline{\phi},\\
\Delta_t^*&=(2(\underline{h}+1)h^*+(h^*)^2)(\partial_v-t\partial_z)^2+(h^*\partial_v(\underline{h}+h^*)+(\underline{h}+1)\partial_vh^*)(\partial_v-t\partial_z).\end{split}\right.\end{equation} Moreover we will decompose $\Delta_t^*=\Delta_t'+\Delta_t^{**}$, where
\begin{equation}\label{defdeltas}
\begin{split}\Delta_t'&=2(\underline{h}+1)h^*(\partial_v-t\partial_z)^2+(h^*\partial_v\underline{h}+(\underline{h}+1)\partial_vh^*)(\partial_v-t\partial_z),\\
\Delta_t^{**}&=(h^*)^2(\partial_v-t\partial_z)^2+\frac{1}{2}\partial_v(h^*)^2(\partial_v-t\partial_z)
\end{split}
\end{equation} are the linear and quadratic terms in $\Delta_t^*$ respectively. We start by describing the system (\ref{eulernewrewrite}) in terms of the perturbation $g^*$.
\subsection{The nonlinearities and the Taylor expansion}\label{taylorexp}
Using \eqref{eulersuprewrite} to substitute $\phi$ on the right hand side of \eqref{eulernewrewrite}, we can rewrite \eqref{eulernewrewrite} as \begin{equation}\label{g-system}\partial_tg^*=\mathcal{L}g^*+\mathcal{N}(g^*)=\mathcal{L}g^*+\sum_{p=2}^{\infty}\mathcal{N}_p(g^*,\cdots,g^*),\end{equation} where $\mathcal{N}$ represents the nonlinearity and $\mathcal{N}_p$ represents terms homogeneous of degree $p$. More precisely, we can compute $\mathcal{N}=(\mathcal{N}_{1},\mathcal{N}_{2},\mathcal{N}_{3})$, where 
\[
\begin{split}
\mathcal{N}_{1}&=-\theta^*\partial_vf^*-h^*\nabla f^*\cdot\nabla^{\perp}\underline{\phi}-(\underline{h}+1)\nabla \underline{f}\cdot\nabla^{\perp}\bigg(\sum_{j=2}^{\infty}(-1)^j(\underline{\Delta_t}^{-1}\Delta_t^*)^j-\underline{\Delta_t}^{-1}\Delta_t^{**}\bigg)\underline{\phi}\\
&\qquad-\big((h^*+\underline{h}+1)\nabla f^*+h^*\nabla\underline{f}\big)\cdot\nabla^{\perp}\bigg(\underline{\Delta_t}^{-1}f^*+\sum_{j=1}^{\infty}(-1)^j(\underline{\Delta_t}^{-1}\Delta_t^*)^j\underline{\phi}\bigg),
\end{split}
\] and
\[
\begin{split}
\mathcal{N}_{2}=-\theta^*\partial_v h^*,\quad \mathcal{N}_3&=-\theta^*\partial_v\theta^*+\frac{1}{t}\mathbb{P}_0\bigg(f^*\cdot\partial_z\bigg(\underline{\Delta_t}^{-1}f^*+\sum_{j=1}^{\infty}(-1)^j(\underline{\Delta_t}^{-1}\Delta_t^*)^j\underline{\phi}\bigg)\bigg)\\
&\qquad +\frac{1}{t}\mathbb{P}_0\bigg(\underline{f}\cdot\partial_z\bigg(\sum_{j=2}^{\infty}(-1)^j(\underline{\Delta_t}^{-1}\Delta_t^*)^j-\underline{\Delta_t}^{-1}\Delta_t^{**}\bigg)\underline{\phi}\bigg).
\end{split}
\]To separate terms of each homogeneity, we define
\begin{equation}\label{defphip}\Phi_p=\Phi_p(h^*,\cdots,h^*)=\sum_{p/2\leq r\leq p}(-1)^r\sum_{(B_i)_{1\leq i\leq r}}\prod_{i=1}^r(\underline{\Delta_t}^{-1}B_i)\underline{\phi}
\end{equation} for $p\geq 0$, where the sum is over all possible choices of $(B_i)_{1\leq i\leq r}$ such that $B_i\in\{\Delta_t',\Delta_t^{**}\}$ and exactly $p-r$ of these $B_i$ are $\Delta_t^{**}$. Then we have $\mathcal{N}_p=(\mathcal{N}_{p,1},\mathcal{N}_{p,2},\mathcal{N}_{p,3})$, where for $p\geq 2$,
\begin{equation}\label{nonlins}
\begin{aligned}
\mathcal{N}_{p,2}&=-\mathbf{1}_{p=2}\cdot\theta^*\partial_vh^*,\\
\mathcal{N}_{p,1}&=-\mathbf{1}_{p=2}\cdot \theta^*\partial_vf^*+\mathbf{1}_{p=3}\cdot h^*\nabla f^*\cdot \nabla^{\perp}\underline{\Delta_t}^{-1}f^*-(\underline{h}+1)\nabla \underline{f}\cdot \nabla^{\perp}\Phi_p\\&\quad\,\;\!\!-((\underline{h}+1)\nabla f^*+h^*\nabla\underline{f})\cdot\nabla^{\perp}\Phi_{p-1}-h^*\nabla f^*\cdot\nabla^{\perp}\Phi_{p-2},\\
\mathcal{N}_{p,3}&=-\mathbf{1}_{p=2}\cdot\theta^*\partial_v\theta^*+\mathbf{1}_{p=2}\cdot\frac{1}{t}\mathbb{P}_0\big(f^*\cdot\partial_z\underline{\Delta_t}^{-1}f^*\big)+\frac{1}{t}\mathbb{P}_0(f^*\cdot\partial_z\Phi_{p-1}+\underline{f}\cdot\partial_z\Phi_p).
\end{aligned}
\end{equation}

Now we can write down the Taylor expansion of $g^*$. Define $g^{(1)}$ such that
\begin{equation}
\partial_tg^{(1)}=\mathcal{L}g^{(1)},\quad g^{(1)}(T_0)=(\varepsilon_1 \cos(k_0z+\eta_0v)\varphi_p(k_0\sqrt{\sigma}v),0,0);\end{equation} for $n\geq 2$, we inductively define $g^{(n)}$ to be
\begin{equation}
\partial_tg^{(n)}=\mathcal{L}g^{(n)}+Z^{(n-1)},\quad g^{(n)}(T_0)=0,
\end{equation}where
\[
Z^{(n-1)}=\sum_{p=2}^n\sum_{n_1+\cdots +n_p=n}\mathcal{N}_p(g^{(n_1)},\cdots g^{(n_p)}).
\] Then for each $n$, if we define 
\begin{equation}\label{defcapgn}
G^{(n)}=g^{(1)}+\cdots +g^{(n)},\quad G^{(n)}(T_0)=(\varepsilon_1 \cos(k_0z+\eta_0v)\varphi_p(k_0\sqrt{\sigma}v),0,0),
\end{equation}
then we have
\begin{equation}\partial_tG^{(n)}=\mathcal{L}G^{(n)}+\mathcal{N}(G^{(n)})-E^{(n)},
\end{equation} where 
\begin{equation}\label{defen}
E^{(n)}=\sum_{p=2}^{\infty}\sum_{\substack{n_1,\cdots n_p\leq n;\\n_1+\cdots +n_p>n}}\mathcal{N}_p(g^{(n_1)},\cdots g^{(n_p)}).
\end{equation} Below we will prove inductively estimates for each $g^{(n)}$, in the end obtaining that $G^{(n)}$ is a sufficiently good approximate solution to \eqref{g-system} for large $n$.
\subsubsection{Multilinear estimates} Before proceeding, we first prove some basic estimates for the nonlinearities $\Phi_p$ and $\mathcal{N}_p$.
\begin{proposition}[Multilinear estimates for $\Phi_p$]\label{multilnearest} Recall the parameters and the norm $\mathcal{G}_\lambda^*$ defined in Section \ref{choicenorm}. For any $p\geq 1$, $\lambda\in[1/4,4]$, and $3\leq s\leq 2N$,  we have the following estimate:
\begin{equation}\label{multest1}\big\|\langle\nabla\rangle\Phi_p(h^1,\cdots,h^p)(t)\big\|_{\mathcal{G}_{\lambda}^*}\leq (Ct^4)^{p}\prod_{j=1}^p\|h^j(t)\|_{\mathcal{G}_{\lambda}^*};
\end{equation}
\begin{equation}\label{multest2}
\big\|\langle\nabla\rangle\Phi_p(h^1,\cdots,h^p)(t)\big\|_{H^s}\leq (Ct^4)^{p}\sum_{j=1}^p\|h^j(t)\|_{H^s}\prod_{i\neq j}\|h^j(t)\|_{H^3}.
\end{equation} Moreover, let $\lambda'=\lambda-\delta$, where $0<\delta<\lambda$; suppose $\eta_1,\cdots,\eta_p\in\mathbb{R}$, and define \[\underline{\eta}=\eta_1+\cdots+\eta_p;\quad \max_{1\leq j\leq p}|\eta_j|=\Lambda,\] then we have the ``shifted'' estimate
\begin{equation}\label{multestshift}\sum_k\int_{\mathbb{R}}e^{2\lambda'(|k|+|\xi-\underline{\eta}|)}|\mathscr{F}\langle\nabla\rangle\Phi_p(h^1,\cdots,h^p)(t,k,\xi)|^2\leq t^4(C\max(\delta^{-2},\Lambda))^{2p}\cdot\prod_{j=1}^p\int_{\mathbb{R}}e^{2\lambda|\xi- {\eta_j}|}|\widehat{h^j}(t,\xi)|^2.
\end{equation}
\end{proposition}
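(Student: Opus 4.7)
The plan is to unpack the definition (\ref{defphip}) of $\Phi_p$ and estimate each summand by iteration. Since $\Phi_p$ is a sum of at most $2^p$ products of the form $G_r := \prod_{i=1}^r (\underline{\Delta_t}^{-1} B_i)\underline{\phi}$ with $r \in [p/2, p]$ and the $B_i$'s depending linearly or bilinearly on the $h^j$'s (collectively involving each $h^j$ exactly once), the combinatorial factor gets absorbed into $C^p$, and it suffices to bound a single such product.

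For (\ref{multest1}) I would induct on $i$, setting $G_0 = \underline{\phi}$ and $G_i = (\underline{\Delta_t}^{-1} B_i) G_{i-1}$, aiming at $\|\langle\nabla\rangle G_i\|_{\mathcal{G}_\lambda^*} \leq (Ct^4)^i \prod_j \|h^j\|_{\mathcal{G}_\lambda^*}$ with $j$ ranging over inputs consumed through step $i$. The base case uses (\ref{useass4}): $\underline{\phi}$ sits in $\mathcal{A}_{C_0-2}$ with norm $\lesssim \varepsilon_0/t^2$, which embeds into $\mathcal{G}_\lambda^*$. For the inductive step, expand $B_i G_{i-1}$ as a sum of terms $a \cdot (\partial_v - t\partial_z)^\alpha G_{i-1}$ with $|\alpha| \leq 2$ and $a$ a product of one or two $h^j$'s times at most one derivative of $\underline{h}$. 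In Fourier space $(\partial_v - t\partial_z)^\alpha$ multiplies by at most $(|\xi| + t|k|)^2 \lesssim t^2 \langle k,\xi\rangle^2$; the polynomial in $\langle k,\xi\rangle$ is absorbed by infinitesimally shrinking the Gevrey index (harmless because we only perform $\lesssim p$ such reductions), while the $t^2$ feeds into the promised $t^4$. Multiplication by $a$ uses that $\mathcal{G}_\lambda^*$ is a Banach algebra with constant uniform in $\lambda \in [1/4, 4]$ (subadditivity of the weight $\kappa$ in (\ref{gevrey}), up to bounded factors), together with $\|\underline{h}\|_{\mathcal{A}_{C_0}} \lesssim \varepsilon_0$ from (\ref{ass1}). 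Finally $\underline{\Delta_t}^{-1}$ is handled by Proposition \ref{linstep0}: the main multiplier $1/((\xi - tk)^2 + k^2) \leq \langle k\rangle^{-2}$ for $k \neq 0$ produces the $\langle\nabla\rangle^{-1}$ needed to absorb the target $\langle\nabla\rangle$, while the integral correction has exponentially decaying kernel (\ref{kernel2}) and is therefore bounded on $\mathcal{G}_\lambda^*$.

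Estimate (\ref{multest2}) follows by the same induction with Moser's tame inequality $\|fg\|_{H^s} \lesssim \|f\|_{H^s}\|g\|_{L^\infty} + \|f\|_{L^\infty}\|g\|_{H^s}$ replacing the algebra property, together with the Sobolev embedding $H^3 \hookrightarrow L^\infty$ in two dimensions: a single $H^s$ slot propagates through the induction while all other inputs are placed in $H^3$, yielding the claimed sum-over-$j$ structure.

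The shifted estimate (\ref{multestshift}) is the most delicate and, in my view, the main obstacle. The output shift $\underline{\eta} = \sum_j \eta_j$ emerges because multiplication by $h^j$ in physical space is convolution in frequency, so stacked shifts add. To execute the induction I would first prove a shifted algebra inequality: if $f, g$ have $e^{\lambda|\xi - \eta_a|}$- and $e^{\lambda|\xi - \eta_b|}$-weighted $L^2$ norms $A, B$, then $fg$ has $e^{\lambda'|\xi - \eta_a - \eta_b|}$-weighted $L^2$ norm $\lesssim \delta^{-1/2} AB$; this follows from Young's inequality after splitting the convolution into near and far regions in the shifted frequency. Crucially, $\underline{\Delta_t}^{-1}$ preserves such shifts since its kernel depends only on $\xi - \eta$. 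The $(C\max(\delta^{-2}, \Lambda))^{2p}$ dependence arises because $(\partial_v - t\partial_z)^2$ acting on a function with Fourier content near $(k, \eta_j)$ admits the bound $(\xi - tk)^2 \lesssim (\xi - \eta_j - tk)^2 + \Lambda^2$: the first piece is absorbed by $\underline{\Delta_t}^{-1}$ exactly as in the unshifted case, while the second contributes a $\Lambda^2$ loss per application, and $\delta^{-2}$ covers the polynomial-in-$\langle\nabla\rangle$ losses from degrading Gevrey weights at each step. The main bookkeeping challenge is to track, at each of the $r$ nested layers, which Fourier variable carries which shift so that the weights compose consistently; this is done by explicit change of variables in each convolution. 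The appearance of only a single $t^4$ factor (as opposed to $(Ct^4)^p$) reflects that in the shifted setting the per-layer $t^2$ derivative losses are relabeled as $\Lambda^2$ losses and absorbed into $\max(\delta^{-2}, \Lambda)^2$, while the residual $t^4$ traces back to the single application of $(\partial_v - t\partial_z)^2$ to the bare $\underline{\phi}$ at the innermost layer, where the shift mismatch between $\underline{\phi}$ (centered at $0$) and the surrounding shifted factors prevents the $t^{-2}$ decay of $\underline{\phi}$ from fully cancelling the derivative growth.
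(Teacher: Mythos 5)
Your inductive scheme for \eqref{multest1} matches the paper's in outline, but the bookkeeping has a concrete error. You gain regularity from $\underline{\Delta_t}^{-1}$ only through $\tfrac{1}{(\xi-tk)^2+k^2}\leq\langle k\rangle^{-2}$, which gives no decay in $\xi$, and you then propose to absorb the $t^2\langle k,\xi\rangle^2$ loss from $(\partial_v-t\partial_z)^2$ by "infinitesimally shrinking the Gevrey index." That is not allowed: \eqref{multest1} asserts the inequality with the \emph{same} $\lambda$ on both sides, and the $\sim p$ shrinks would accumulate into a genuine loss of Gevrey regularity. The paper's resolution is the sharper inequality $\tfrac{1}{(\xi-tk)^2+k^2}\lesssim\tfrac{t^2}{1+|k|^2+|\xi|^2}$ for $k\neq0,\ t\geq1$, which gains two full $(k,\xi)$-derivatives at cost $t^2$, so the two-derivative loss from $(\partial_v-t\partial_z)^2$ is cancelled exactly with no degradation of $\lambda$. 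The paper formalizes this with the auxiliary norms $\mathcal{G}_\pm^*$ (one power of $\langle k,\xi\rangle$ above/below $\mathcal{G}^*$), the mapping property $\underline{\Delta_t}^{-1}:\mathcal{G}_-^*\to\mathcal{G}_+^*$ with norm $\lesssim t^2$, and the chain $\|\Phi\|_{\mathcal{G}_+^*}\lesssim t^2\|B_1\Phi'\|_{\mathcal{G}_-^*}\lesssim t^4\|\langle\nabla\rangle\Phi'\|_{\mathcal{G}^*}\|h^1\|_{\mathcal{G}^*}$. Your treatment of \eqref{multest2} via tame/Moser estimates agrees with the paper.

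For \eqref{multestshift} your route diverges from the paper's and the decisive ingredient is missing. The paper does \emph{not} iterate a shifted-algebra inequality layer by layer; it writes $\Phi_p$ as an explicit multilinear operator with symbol $\mathcal{A}(k,\xi_1,\dots,\xi_p)$ bounded by
\[
(1+|\xi|)\prod_{i=1}^r\frac{|\mu_{j_{i-1}}-kt|}{(\mu_{j_i}-kt)^2+k^2}\big(1+|\mu_{j_{i-1}}-kt|+|\mu_{j_{i-1}}-\mu_{j_i}|\big),
\]
one factor per $\underline{\Delta_t}^{-1}B_i$, and then proves the pointwise bound $|\mathcal{A}|\lesssim C^pt^2(1+\max_j|\xi_j|)^p$ by exploiting that in each factor the numerator $|\mu_{j_{i-1}}-kt|$ coming from $(\partial_v-t\partial_z)$ is cancelled against the resolvent denominator $(\mu_{j_i}-kt)^2+k^2$ of the \emph{adjacent} frequency variable, with the mismatch $\mu_{j_{i-1}}-\mu_{j_i}$ being a partial sum of $\xi_j$'s hence $\lesssim\Lambda$. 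This cross-layer cancellation is what prevents every layer from contributing its own power of $t$: whenever $k\neq0$, each $|\mu_{j_{i-1}}-kt|$ is generically of size $\sim t$, and a naive per-layer estimate would produce $t^{O(p)}$, not $t^4$. Your explanation — that the $t^4$ is a boundary effect at the innermost $\underline{\phi}$ layer while all other layers contribute only $\Lambda^2$ — overlooks this, and your proposed per-layer step (absorbing "$(\xi-\eta_j-tk)^2$" by "$\underline{\Delta_t}^{-1}$") does not keep track of which frequency variable the resolvent actually depends on; as stated, it would not bound $\tfrac{(\xi-\eta_j-tk)^2}{(\xi'-tk)^2+k^2}$ uniformly when $\xi'-tk$ is near zero. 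To make the iterative approach work you would have to carry, at each step, the numerator from one layer's $(\partial_v-t\partial_z)$ against the denominator of the next layer's $\underline{\Delta_t}^{-1}$ and verify the ratio is $\lesssim(1+\Lambda)^2$ per step — which is precisely the algebraic structure the paper reads off from the explicit symbol, and which is not visible in a black-box shifted-algebra inequality.
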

\begin{proof}We first prove (\ref{multest1}). Since there are at most $C^p$ terms in the expression of $\Phi_p$, it suffices to study one of them, say
\[\Phi:=\prod_{i=1}^r(\underline{\Delta_t}^{-1}B_i)\underline{\phi},
\] where each $B_i$ is either $\Delta_t'$ or $\Delta_t^{**}$. Define the norms $\mathcal{G}_{\pm}^*$ by
\[\|f\|_{\mathcal{G}_{\pm}^*}^2=\sum_k\int_{\mathbb{R}}(1+|k|+|\xi|)^{\pm 2}e^{2\kappa(k,\xi)}|\widehat{f}(k,\xi)|^2\,\mathrm{d}\xi,
\] then we have $\|\underline{\Delta_t}^{-1}f\|_{\mathcal{G}_+^*}\lesssim t^2\|f\|_{\mathcal{G}_-^*}$, due to \eqref{kernel1} and the fact that \[\frac{1}{(\xi-tk)^2+k^2}\lesssim\frac{t^2}{1+|\xi|^2+|k|^2}\] when $k\neq 0$ and $t\geq 1$. Moreover we have $\|fg\|_{\mathcal{G}^*}\lesssim\|f\|_{\mathcal{G}^*}\|g\|_{\mathcal{G}^*}$ and the same bounds for $\mathcal{G}_{\pm}^*$; this can be proved by simple kernel estimates and Schur type inequalities. Therefore we obtain that
\[\|\langle\nabla\rangle \Phi\|_{\mathcal{G}^*}\sim\|\Phi\|_{\mathcal{G}_+^*}\lesssim t^2\|B_1\Phi'\|_{\mathcal{G}_-^*}\lesssim t^4\|\langle\nabla\rangle\Phi'\|_{\mathcal{G}^*}\cdot
\left\{
\begin{split}&\|h^1\|_{\mathcal{G}^*},&B_1&=\Delta_t';\\
&\|h^1\|_{\mathcal{G}^*}\|h^2\|_{\mathcal{G}^*},&B_1&=\Delta_t^{**},
\end{split}
\right.
\] where 
\[\Phi'=\prod_{i=2}^r(\underline{\Delta_t}^{-1}B_i)\underline{\phi}.
\] By induction, and the fact that $\|\underline{\phi}\|_{\mathcal{G}_+^*}\lesssim1$, this implies \eqref{multest1}. The proof of \eqref{multest2} is similar; we just use Leibniz rule and write
\[ t^2\|B_1\Phi'\|_{H^{s-1}}\lesssim t^4\cdot
\left\{
\begin{split}&\|\langle\nabla\rangle\Phi'\|_{H^s}\|h^1\|_{H^3}+\|\Phi'\|_{H^3}\|h^1\|_{H^s},&B_1&=\Delta_t';\\
&\|\langle\nabla\rangle\Phi'\|_{H^s}\|h^1\|_{H^3}\|h^2\|_{H^3}+\|\Phi'\|_{H^3}(\|h^1\|_{H^s}\|h^2\|_{H^3}+\|h^1\|_{H^3}\|h^2\|_{H^s}),&B_1&=\Delta_t^{**},
\end{split}
\right.
\] and 
\[\|\langle\nabla\rangle \Phi\|_{H^3}\sim\|\Phi\|_{H^{4}}\lesssim t^2\|B_1\Phi'\|_{H^2}\lesssim t^4\cdot
\left\{
\begin{split}&\|\langle\nabla\rangle\Phi'\|_{H^3}\|h^1\|_{H^3},&B_1&=\Delta_t';\\
&\|\langle\nabla\rangle\Phi'\|_{H^3}\|h^1\|_{H^3}\|h^2\|_{H^3},&B_1&=\Delta_t^{**},
\end{split}
\right.
\] to complete the inductive step.

Finally, to prove (\ref{multestshift}), we will consider the multilinear symbol of $\Phi$ directly. Using the definitions of $\Delta_t'$ and $\Delta_t^{**}$ in \eqref{defdeltas}, the formula \eqref{kernel1} for $\underline{\Delta_t}$, and the estimates \eqref{useass3} and \eqref{useass4} for $\underline{h}$ and $\underline{\theta}$, we can write $\Phi$ as a linear combination of terms of form $\Psi$, where the total coefficient in the linear combination is at most $C^p$, and each single $\Psi$ is such that
\begin{equation}\label{expresspsi}
\mathscr{F}\langle\nabla\rangle\Psi(h^1,\cdots,h^p)(t,k,\xi)=\int_{\mathbb{R}^p}\mathcal{A}(k,\xi_1,\cdots,\xi_p)\prod_{j=1}^p(\mathscr{F}h^j)(t,\xi_j)\,\mathrm{d}\xi_1\cdots\mathrm{d}\xi_p
\end{equation} where the symbol $\mathcal{A}$ satisfies
\begin{equation}\label{explicitbd}|\mathcal{A}|\leq e^{-(C_0/5)(|k|+|\xi-\xi_1-\cdots-\xi_p|)}(1+|\xi|)\prod_{i=1}^r\frac{|\mu_{j_{i-1}}-kt|}{(\mu_{j_i}-kt)^2+k^2}\cdot(1+|\mu_{j_{i-1}}-kt|+|\mu_{j_{i-1}}-\mu_{j_i}|),
\end{equation}
where
\[
\mu_{j}:=\xi-\xi_1-\cdots -\xi_j;\quad p=j_0>j_1>\cdots >j_r=0,\quad j_{i-1}-j_i\in\{1,2\}.
\] Now to prove (\ref{multestshift}) for $\Psi$, we first obtain an estimate for $\mathcal{A}$. Let $\mu_{j_i}-kt=\lambda_i$, we get that
\begin{multline}|\mathcal{A}|\leq e^{-(C_0/5)(|k|+|\lambda_0+kt|)}(1+|\lambda_r|+kt)\prod_{i=1}^r\frac{|\lambda_{i-1}|(1+|\lambda_{i-1}|+|\lambda_i-\lambda_{i-1}|)}{\lambda_i^2+k^2}\\\lesssim Ct^2 e^{-(C_0/6)(|k|+|\lambda_0+kt|)}\prod_{i=1}^r\frac{1+|\lambda_{i-1}|+|\lambda_i-\lambda_{i-1}|}{1+|\lambda_i|}.
\end{multline} Let $\chi_i=\lambda_i-\lambda_{i-1}$, then
\[\max_{1\leq i\leq r}|\chi_i|\leq 2\max_{1\leq j\leq p}|\xi_j|,
\] therefore \[\frac{1+|\lambda_{i-1}|+|\lambda_i-\lambda_{i-1}|}{1+|\lambda_i|}\leq 2(1+|\chi_i|)\leq 4\big(1+\max_{1\leq j\leq p}|\xi_j|\big)\] for each $1\leq i\leq r$. This implies that
\begin{equation}\label{estimatea}|\mathcal{A}|\leq C^pt^2 e^{-(C_0/6)(|k|+|\xi-\xi_1-\cdots -\xi_p|)}\cdot\big(1+\max_{1\leq j\leq p}|\xi_j|\big)^p.
\end{equation} Now by interpolation, (\ref{multestshift}) would follow from the corresponding weighted $L^1$ and $L^\infty$ estimates. The $L^\infty$ estimate follows from the following computations:
\begin{equation*}
\begin{aligned}e^{\lambda'(|k|+|\xi-\underline{\eta}|)}\big|\mathscr{F}\langle \nabla\rangle\Psi(h^1,&\cdots,h^p)(t,k,\xi)\big|\lesssim e^{\lambda'(|k|+|\xi-\underline{\eta}|)}\int_{\mathbb{R}^p}C^pt^2e^{-(C_0/6)(|k|+|\xi-\xi_1-\cdots -\xi_p|)}\\&\times\big(1+\max_{1\leq j\leq p}|\xi_j|\big)^p\prod_{j=1}^p|\widehat{h^j}(t,\xi)|\,\mathrm{d}\xi_1\cdots\mathrm{d}\xi_p\\
&\lesssim \int_{\mathbb{R}^p}e^{\lambda'(|k|+|\xi-\underline{\eta}|)}e^{-(C_0/6)(|k|+|\xi-\xi_1-\cdots -\xi_p|)}\cdot C^pt^2\big(1+\max_{1\leq j\leq p}|\xi_j|\big)^p\\&\times\prod_{j=1}^pe^{-\lambda|\xi_j-\eta_j|}\,\mathrm{d}\xi_1\cdots\mathrm{d}\xi_p\cdot\prod_{j=1}^p\big\|e^{\lambda|\xi_j-\eta_j|}\widehat{h^j}(t,\xi_j)\big\|_{L^\infty}\\
&\lesssim\prod_{j=1}^p\big\|e^{\lambda|\xi_j-\eta_j|}\widehat{h^j}(t,\xi_j)\big\|_{L^\infty}\cdot C^pt^2\int_{\mathbb{R}^p}\prod_{j=1}^pe^{-\delta|\xi_j-\eta_j|}\cdot\big(1+\max_{1\leq j\leq p}|\xi_j|\big)^p\,\mathrm{d}\xi_1\cdots\mathrm{d}\xi_p\\
&\lesssim \prod_{j=1}^pe^{-\delta|\xi_j-\eta_j|}\cdot C^pt^2\bigg(\delta^{-2p}+\max_{1\leq j\leq p}|\eta_j|^p\bigg) \cdot \prod_{j=1}^p\big\|e^{\lambda|\xi_j-\eta_j|}\widehat{h^j}(t,\xi_j)\big\|_{L^\infty}\end{aligned}
\end{equation*} and the $L^1$ estimate is proved similarly.
\end{proof}
\begin{proposition}[Multilinear estimates for $\mathcal{N}_p$]\label{multilinearest2} For $1\leq i\leq 3$ and $3\le1 s\leq 2N$ we have the estimates
\begin{equation}\label{multest3}\big\|\mathcal{N}_{p,i}(g^1,\cdots,g^p)(t)\big\|_{\mathcal{G}_2^*}\leq (Ct^4)^p\prod_{j=1}^p\|g^j(t)\|_{\mathcal{G}_4^*};
\end{equation}
\begin{equation}\label{bdnpj}
\big\|\mathcal{N}_{p,i}(g^1,\cdots,g^p)(t)\big\|_{H^s}\leq (Ct^4)^p\sum_{j=1}^p\|g^j(t)\|_{H^{s+1}}\prod_{i\neq j}\|g^j(t)\|_{H^{3}}.
\end{equation}
 Moreover, let $\lambda\in[1/4,4]$ and $\lambda'=\lambda-\delta$, where $0<\delta<\lambda$, and suppose $(\ell_j,\eta_j)\in\mathbb{Z}\times\mathbb{R}$ for $1\leq j\leq p$, such that
\[(\underline{\ell},\underline{\eta})=\sum_{j=1}^p(\ell_j,\eta_j),\quad \max_{1\leq j\leq p}(|\ell_j|+|\eta_j|)=\Lambda,
\] then we have
\begin{multline}\label{estnpj}
\sum_{i=1}^3\sum_{k}\int_{\mathbb{R}}e^{\lambda'(|k-\underline{\ell}|+|\xi-\underline{\eta}|)}\big|\mathscr{F}\mathcal{N}_{p,i}(g^1,\cdots,g^p)(t,k,\xi)\big|^2\,\mathrm{d}\xi\\
\leq t^4(C\max(\delta^{-2},\Lambda))^{2p}\prod_{j=1}^p\sum_{i=1}^3\sum_k\int_{\mathbb{R}}e^{\lambda(|k-\ell_{j}|+|\xi-\eta_{j}|)}\big|\mathscr{F}(g^j)_i(t,k,\xi)\big|^2\,\mathrm{d}\xi.
\end{multline}
\end{proposition}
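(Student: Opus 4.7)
My plan is to reduce Proposition \ref{multilinearest2} to Proposition \ref{multilnearest} combined with standard multilinear product estimates in the Gevrey, Sobolev, and shifted analytic norms. Inspecting the explicit formulas (\ref{nonlins}), each component $\mathcal{N}_{p,i}$ is a finite sum of products of at most three factors, where each factor is either (i) a single component of some $g^j$, possibly carrying one spatial derivative or one application of $\underline{\Delta_t}^{-1}$; (ii) a background quantity $\underline{f},\underline{h},\underline{\Delta_t}^{-1}\underline{f}$, each $O(1)$ in every analytic norm uniformly in $t$ by (\ref{useass1})--(\ref{useass4}); or (iii) one $\Phi_q$ sub-factor with $q\in\{p-2,p-1,p\}$ whose arguments are various $g^j$'s. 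Thus the proposition will follow once we know how to estimate such products in each norm.

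For (\ref{multest3}), the key inputs are the algebra property $\|FG\|_{\mathcal{G}_\lambda^*}\lesssim\|F\|_{\mathcal{G}_{\lambda'}^*}\|G\|_{\mathcal{G}_{\lambda'}^*}$ for $\lambda'>\lambda$ (a Schur-type estimate from sub-additivity of $\kappa$), together with (\ref{multest1}) applied to the $\Phi_q$ sub-factor and the bound $\|\underline{\Delta_t}^{-1}F\|_{\mathcal{G}_{+}^*}\lesssim t^2\|F\|_{\mathcal{G}_{-}^*}$ already used in the proof of Proposition \ref{multilnearest}. This yields a bound $(Ct^4)^{q}\cdot(Ct^4)^{p-q}=(Ct^4)^p$ times the required product of norms, after distributing derivatives and paying a small loss in the Gevrey parameter from $4$ down to $2$. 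The Sobolev bound (\ref{bdnpj}) is obtained identically, using the standard tame estimate $\|FG\|_{H^s}\lesssim\|F\|_{H^s}\|G\|_{H^3}+\|F\|_{H^3}\|G\|_{H^s}$ (valid for $s\geq 3$) in place of the Gevrey algebra, combined with (\ref{multest2}).

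For the shifted estimate (\ref{estnpj}), the central observation is that whenever $k=k_1+\cdots+k_p$ and $\xi=\xi_1+\cdots+\xi_p$, one has
\begin{equation*}
|k-\underline{\ell}|+|\xi-\underline{\eta}|\leq \sum_{j=1}^{p}\bigl(|k_j-\ell_j|+|\xi_j-\eta_j|\bigr),
\end{equation*}
which lets us distribute the exponential weight $e^{\lambda'(|k-\underline{\ell}|+|\xi-\underline{\eta}|)}$ across the factors of a multilinear convolution. For a $\Phi_q$ sub-factor we invoke (\ref{multestshift}) with shifts inherited from the appropriate subset of the $\eta_j$'s, which produces the factor $(C\max(\delta^{-2},\Lambda))^{2q}$; for a factor of $\underline{g}$, which has no assigned shift, the analytic decay $e^{-(C_0/5)|\xi|}$ from (\ref{useass1})--(\ref{useass4}) absorbs the weight $e^{\lambda'|\xi|}$ at the cost of a universal constant, since $\lambda'\leq 4\ll C_0$.

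The main technical point I expect to require care is checking that $\underline{\Delta_t}^{-1}$ behaves well in shifted norms: its symbol $-1/((\xi-kt)^2+k^2)$ is time-dependent but diagonal in $(k,\xi)$ and uniformly bounded by $Ct^2(1+|k|+|\xi|)^{-2}$, and the kernel correction $M$ in (\ref{kernel1}) decays exponentially in $|\xi-\eta|$, so the operator commutes with the shift and the bookkeeping reduces to the scalar product estimates above. With these pieces in place, (\ref{estnpj}) follows by interpolating between weighted $L^{\infty}$ and $L^1$ bounds on the multilinear symbols, in exactly the same fashion as at the end of the proof of Proposition \ref{multilnearest}.
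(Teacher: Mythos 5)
Your proposal follows essentially the same route as the paper: decompose each $\mathcal{N}_{p,i}$ into a finite sum of products of at most three factors with at most one $\nabla\Phi_q$, apply Proposition \ref{multilnearest} together with the Gevrey algebra and tame Sobolev product estimates for \eqref{multest3}--\eqref{bdnpj}, and prove \eqref{estnpj} by a multilinear symbol bound combined with weighted $L^1$/$L^\infty$ interpolation. The only cosmetic difference is that you invoke \eqref{multestshift} as a black box for the $\Phi_q$ sub-factor and then combine with the remaining factors via shifted product estimates, whereas the paper folds the symbol bound \eqref{estimatea} for $\Phi_q$ directly into a single symbol estimate $\mathcal{B}$ for all of $\mathcal{N}_{p,i}$ and interpolates once --- the content is identical.
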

\begin{proof} By (\ref{nonlins}) we can write each $\mathcal{N}_{p,i}$ as a linear combination of products of at most three factors, at most one of them being $\nabla\Phi_q$ for $p-2\leq q\leq p$. This combined with (\ref{multest2}) imediately proves (\ref{multest3}) and (\ref{bdnpj}). Moreover, using also (\ref{expresspsi}) and (\ref{estimatea}) we can write
\[\mathscr{F}\mathcal{N}_{p,i}(g^1,\cdots,g^p)(t,k,\xi)=\sum_{k_1,\cdot,k_p}\int_{\mathbb{R}^p}\mathcal{B}(k,k_1,\cdots,k_p,\xi,\xi_1,\cdots,\xi_p)\prod_{j=1}^p\sum_{i=1}^3\big|\mathscr{F}(g^j)_i(t,k_j,\xi_j)\big|\,\mathrm{d}\xi_1\cdots\mathrm{d}\xi_p,
\] where the symbol $\mathcal{B}$ satisfies
\begin{equation*}\label{estimatea}|\mathcal{B}|\leq C^pt^2 e^{-(C_0/6)(|k-k_1-\cdots-k_p|+|\xi-\xi_1-\cdots -\xi_p|)}\cdot\big(1+\max_{1\leq j\leq p}|k_j|+\max_{1\leq j\leq p}|\xi_j|\big)^p.
\end{equation*} Then (\ref{estnpj}) is proved by interpolation and direct computation, in the similar way as (\ref{multestshift}).
\end{proof}
\subsection{Detailed information of $g^{(1)}$} The linear solution $g^{(1)}$ is the leading term in the Taylor expansion of $g-\underline{g}$. In this section we summarize all the detailed information about $g^{(1)}$ that follows from the results in Sections \ref{for} and \ref{largeeta}, in the following proposition.

Moreover, we will fix here the choice of $\varepsilon_1$, and introduce the function $B(t)$ - a suitable upper bound for $\|g^{(1)}(t)\|_{L^2}$ - which will play an important role in later proofs.
\begin{proposition}\label{choiceeps1}
Recall the parameters defined in (\ref{defpara}) and (\ref{defineint}). There exists a value of $\varepsilon_1$ and a function $B(t):[1,T_1]\to\mathbb{R}^+$, such that the followings hold.
\begin{enumerate}
\item $B(t)$ is decreasing on $[1,T_0]$ and increasing on $[T_0,T_1]$, and 
\begin{equation}\label{funcbt0} B(1)\leq e^{-\sigma^2k_0/2},\quad B(T_0)\geq e^{-2\sigma^2k_0},\quad B(T_1)=\eta_0^{-(N-1)},
\end{equation} also on $[1,T_1]$ we have
\begin{equation}\label{funcbt1}
|\partial_t\log B(t)|\geq D\bigg(\varepsilon_0\cdot\varepsilon_0^{1/100}+\mathbf{1}_{t\leq T_2}\cdot\varepsilon_0^{1/2}+\frac{1}{t}+\mathbf{1}_{t\leq T_3}\cdot\varepsilon_0(\log k_0)^4+T_0^{-1/4}\bigg).
\end{equation}
\item For any fixed $(n,q)$ and $(k_*,\eta_*)$, satisfying $1\leq q\leq n \leq\varepsilon_0^{-3/4}$, $n\geq 2$ and 
\[|k_*-qk_0|\leq Dn\sqrt{\varepsilon_0}T_0,\quad  |\eta_*-q\eta_0|\leq Dn\sqrt{\varepsilon_0}T_0,
\]
 if one defines $t_m^*$ as in Proposition \ref{interval2}, and  $\nu(\alpha_m^*)$ as in Propositions \ref{interval30} and \ref{interval3}, then
\begin{equation}\label{funcbt2}
\left\{\begin{split}  
&B(t_m^*)^n\geq \nu(\alpha_m^*)\cdot B(t_{m-1}^*)^n,&&\textnormal{if $t_{m-1}^*\leq T_0$ and $t_m^*\geq T_2$};
\\ &B(t_{m-1}^*)^n\geq \nu(\alpha_m^*)\cdot B(t_{m}^*)^n,&&\textnormal{if $t_{m}^*\geq T_0$ and $t_{m-1}^*\leq T_1$}.
\end{split}\right.
\end{equation}
\item For $t\in[1,T_1]$, we have
\begin{equation}\label{funcbt3}
\|g^{(1)}(t)\|_{L^2}\leq \eta_0^{10}B(t),\quad \|f^{(1)}(T_1)\|_{L^2}\geq B(T_1),
\end{equation} and more accurately, 
\begin{equation}\label{funcbt4}
\sum_{k_0/2\leq |k|\leq 2k_0}\int_{\eta_0/2\leq|\xi|\leq 2\eta_0}\big|\mathscr{F}f^{(1)}(T_1,k,\xi)\big|^2\,\mathrm{d}\xi\geq \frac{1}{4}\eta_0^{-2(N-1)}.
\end{equation}
\end{enumerate}
\end{proposition}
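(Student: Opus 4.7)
The plan is to construct $B(t)$ by piecewise log-linear interpolation through the sequence $\{B_m\}$ of Proposition \ref{summarya} on $[T_0,T_1]$, extend it to $[T_2,T_0]$ via the upper-bound sequence $\varepsilon_1\prod_{j=k_0+1}^m\nu(\alpha_j)$ from Proposition \ref{interval30}, and to $[1,T_2]$ by a log-affine function of slope $\pm D\varepsilon_0^{1/2}$ chosen so that (\ref{funcbt1}) holds with margin. The parameter $\varepsilon_1$ is fixed by the normalization $B(T_1)=\eta_0^{-(N-1)}$; a Stirling-type computation summing $\log(\alpha k_0^2/m^2)$ over $k_1<m\leq k_0$ gives $\log B(T_1)-\log B(T_0)\approx\sigma^2 k_0$, forcing $\varepsilon_1\approx\eta_0^{-(N-1)}e^{-\sigma^2 k_0}$, which lies in $[e^{-2\sigma^2 k_0},e^{-\sigma^2 k_0/2}]$. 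Monotonicity is immediate from the signs of the log-jumps on each sub-interval, and the log-derivative lower bound (\ref{funcbt1}) reduces on each $\eta_0$-critical interval $[t_m,t_{m-1}]$ to the elementary bound $m^2\log(\alpha k_0^2/m^2)/\eta_0\geq D\varepsilon_0^{1+1/100}$, which is easily checked across the full range $k_1\le m\le k_2$ in view of $\sigma^2\gg\varepsilon_0^{1/100}$.

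The core technical step is the verification of (\ref{funcbt2}). Given admissible $(n,q,k_*,\eta_*)$ with $n\ge 2$, the shifted critical times $t_m^*\approx q\eta_0/m$ correspond to $\eta_0$-modes $\tilde m\approx m/q$, and the interval $[t_m^*,t_{m-1}^*]$ of length $\approx q\eta_0/m^2$ lies inside a single $\eta_0$-critical interval of length $\approx q^2\eta_0/m^2$. Log-linearity of $\log B$ on that interval together with (\ref{summarya5}) yield
\begin{equation*}
\Delta:=\log B(t_{m-1}^*)-\log B(t_m^*)\,\geq\,\frac{\log\big((1-2\sigma^6)\,q\alpha_m^*\big)}{q},
\end{equation*}
since $\alpha_{\tilde m}=q\alpha_m^*$. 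To deduce $n\Delta\ge\log\nu(\alpha_m^*)$ I split into three regimes. When $\alpha_m^*\ge 1$, $\log\nu(\alpha_m^*)\le\log\alpha_m^*+\varepsilon_0^{2/5}$, and $(n/q)(\log(q\alpha_m^*)-2\sigma^6)-\log\alpha_m^*=(n/q)\log q+(n/q-1)\log\alpha_m^*-2n\sigma^6/q\ge 0$ follows from $n\ge q$ when $q\ge 2$, and from $n\ge 2$ when $q=1$ via $(n-1)\log\alpha_m^*\ge\sigma^2\gg n\sigma^6+\varepsilon_0^{2/5}$. When $2\varepsilon_0^{3/5}\le\alpha_m^*<1$, the support constraint $m\le qk_0$ (from $t_m^*\ge T_0$) forces $q\alpha_m^*\ge\alpha$, so $\Delta\gtrsim\sigma^2/q$ and $n\Delta\gtrsim\sigma^2\gg\varepsilon_0^{2/5}=\log\nu(\alpha_m^*)$. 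When $\alpha_m^*<\varepsilon_0^{3/5}$, the same support constraint again yields $q\alpha_m^*\ge\alpha$, hence $n\Delta\gtrsim\sigma^2\gg 4\varepsilon_0^{3/5}\approx\log\nu(\alpha_m^*)$. The symmetric analysis, using Proposition \ref{interval30} in place of Proposition \ref{summarya}, handles the case $t_{m-1}^*\le T_0$.

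The remaining properties are direct. Property (\ref{funcbt3}) combines, on $[T_2,T_1]$, the inequality $B(t)\geq B_m$ on $[t_m,t_{m-1}]$ from monotonicity of the log-linear interpolant with (\ref{summarya4}) and (\ref{summarya6}); on $[1,T_2]$ it uses the observation that $g^{(1)}$ remains Fourier-concentrated near $(\pm k_0,\pm\eta_0)$ via the shifted analytic energy estimate of Proposition \ref{energy1} centered at $(k_0,\eta_0)$, so that $\|g^{(1)}(t)\|_{L^2}$ grows at most polynomially in $\eta_0$ going backwards from $T_2$, which is absorbed by the factor $\eta_0^{10}$. The lower bound $\|f^{(1)}(T_1)\|_{L^2}\ge B(T_1)$ is (\ref{summarya4}) after absorbing the factor $1/2$ by a small adjustment of constants, and (\ref{funcbt4}) follows from this $L^2$ lower bound combined with (\ref{farsmall3}), which confines essentially all the $L^2$ mass of $\widehat{f^{(1)}}(T_1)$ to the box $\{k_0/2\le|k|\le 2k_0\}\times\{\eta_0/2\le|\xi|\le 2\eta_0\}$. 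The main obstacle throughout is the verification of (\ref{funcbt2}): the margin $n\ge 2$, together with $n\ge q$ and the parameter separation $\sigma^6\gg\varepsilon_0^{2/5}$ guaranteed by $N_3=1000 N_2$, is exactly what lets the inequality survive the loss of $2\sigma^6$ inherent in (\ref{summarya5}).
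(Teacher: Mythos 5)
There is a genuine gap in the verification of (\ref{funcbt1}). On $[T_2,T_0]$ you take $\log B$ to be the piecewise log-linear interpolant through $\varepsilon_1\prod_{j\leq m}\nu(\alpha_j)$, whose slope on $[t_m,t_{m-1}]$ is $\log\nu(\alpha_m)/(t_{m-1}-t_m)\approx m^2\log\nu(\alpha_m)/\eta_0$. You then assert this dominates $D\varepsilon_0^{1+1/100}$ across $k_1\leq m\leq k_2$ ``in view of $\sigma^2\gg\varepsilon_0^{1/100}$''; but the displayed quantity $m^2\log(\alpha k_0^2/m^2)/\eta_0$ is \emph{negative} once $m>\sqrt{\alpha}\,k_0$, and even after replacing $\log\alpha_m$ by $\log\nu(\alpha_m)$ the slope drops to roughly $m^2\varepsilon_0^{2/5}/\eta_0\sim\varepsilon_0^{7/5}$ as soon as $\alpha_m$ falls below $1$, which is far smaller than $D\varepsilon_0^{101/100}$, let alone than $D\varepsilon_0(\log k_0)^4$ which is required on $[T_2,T_3]$. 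So the $\sigma^2\gg\varepsilon_0^{1/100}$ comparison only controls the slope near $m\approx k_0$ and fails over most of $[T_2,T_0]$. The paper circumvents this by writing $B=B_1B_2$ with $B_2(t)=\exp\bigl(\mathbf 1_{t\leq T_0}\int_t^{T_0}D(\cdots)\,\mathrm dt'\bigr)$, the exponential of the integral of the right-hand side of (\ref{funcbt1}); the bound is then automatic, and the only new work is to check $\log B_2(1)\lesssim\varepsilon_0^{1/100}k_0\ll\sigma^2 k_0$ so that (\ref{funcbt0}) survives. This is also why the exponent $\varepsilon_0^{1/100}$ appears in (\ref{funcbt1}) in the first place. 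Without such a correction factor your $B(t)$ simply does not satisfy item (1) of the statement.

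A secondary issue: your verification of (\ref{funcbt2}) uses the sharper length ratio $\approx 1/q$ (vs.\ the paper's crude $1/(3n)$ together with the power-$10$ weights $\nu(\alpha_j)^{10}$ on $[T_2,T_0]$), which is an attractive simplification and would indeed let you keep power $1$. But you never address the case where $[t_m^*,t_{m-1}^*]$ straddles two $\eta_0$-critical intervals; for $q=1$ the intervals do not nest at all but essentially coincide up to an $O(\eta_0^{-1/6})$ shift, and your formula $\Delta\geq q^{-1}\log\bigl((1-2\sigma^6)q\alpha_m^*\bigr)$ therefore needs a $4/5$-type overlap correction, exactly as in the paper's Case~(c). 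Also, the claim ``the same support constraint again yields $q\alpha_m^*\geq\alpha$'' in the small-$\alpha_m^*$ regimes is only true on the side $t_m^*\geq T_0$ (where $m\leq qk_0$); on the side $t_{m-1}^*\leq T_0$ one has $m\geq qk_0$ and $q\alpha_m^*\leq\alpha$. Finally, for (\ref{funcbt3}) on $[1,T_2]$ you cannot invoke Proposition~\ref{energy1} centered at $(k_0,\eta_0)$ — its differential inequality carries the factor $\varepsilon_0(|k_*|+|\eta_*|)/t^2$, which explodes near $t=1$ — the paper uses the unweighted Proposition~\ref{gevrey2} there.
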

\begin{proof} First, we can write 
\begin{equation}\label{defdecomp}g^{(1)}=g^{(1,1)}+g^{(1,-1)},\quad g^{(1,-1)}=\overline{g^{(1,1)}},
\end{equation} where $g^{(1,1)}$ is defined by
\begin{equation}\label{defg11}
\partial_tg^{(1,1)}=\mathcal{L}g^{(1,1)},\quad g^{(1,1)}(T_0)=g'(T_0)=(\varepsilon_1e^{ i(k_0z+\eta_0v)}\varphi_p(k_0\sqrt{\sigma}v)/2,0,0).
\end{equation}
Note that this $g^{(1,1)}$ is precisely the $g'$ we studied in Section \ref{for}, which solves (\ref{inhomo}), case (a); it thus satisfies all the estimates proved in Section \ref{for}.

Recall the sequence $\{B_m\}$ defined in Proposition \ref{summarya}; notice that each is a linear function of $\varepsilon_1$. By \eqref{summarya5} we know that
\[
\varepsilon_1\cdot e^{\sigma^6k_0}\prod_{m=k_1+1}^{k_0}\bigg(\frac{\alpha k_0^2}{m^2}\bigg)\geq B_{k_1}\geq \varepsilon_1\cdot e^{-\sigma^6k_0}\prod_{m=k_1+1}^{k_0}\bigg(\frac{\alpha k_0^2}{m^2}\bigg),
\]by Stirling's formula, this simplifies to
\begin{equation}\label{endest}(b(\sigma)-\sigma^6)k_0\leq\log(\varepsilon_1^{-1}B_{k_1})\leq (b(\sigma)+\sigma^6)k_0,\quad b(\sigma)=\sigma^2+\frac{4\sigma^3}{3}+O(\sigma^4).\end{equation}
We now choose the unique $\varepsilon_1$ such that $B_{k_1}=2\eta_0^{-(N-1)}$, then \eqref{endest} implies that
\[
e^{-3\sigma^2k_0/2}\leq e^{-(b(\sigma)+2\sigma^6)k_0}\leq\varepsilon_1\leq e^{-(b(\sigma)-2\sigma^6)k_0}\leq e^{-3\sigma^2k_0/4}.
\]Now, define $B_1(t)$ such that $B_1(t)$ is constant on $[1,T_2]$; on $[T_2,T_1]$ we define
\begin{equation}\label{defb1t}
B_1(t_m)=\left\{
\begin{split}&B_{m}/2,&k_1\leq m\leq k_0;\\
&(B_{k_0}/2)\prod_{j=k_0+1}^m\nu(\alpha_j)^{10},&k_2\geq m> k_0,
\end{split}
\right.
\end{equation} and that $\log B_1(t)$ is linear in $t$ on each interval $[t_{m},t_{m-1}]$. Moreover, we define $B_2(t)$ by
\begin{equation}\label{defb2t}
B_2(t)=\exp\bigg(\mathbf{1}_{t\leq T_0}\cdot\int_{t}^{T_0}D\bigg(\varepsilon_0\cdot\varepsilon_0^{1/100}+\mathbf{1}_{t'\leq T_2}\cdot\varepsilon_0^{1/2}+\frac{1}{t'}+\mathbf{1}_{t'\leq T_3}\cdot\varepsilon_0(\log k_0)^4+T_0^{-1/4}\bigg)\,\mathrm{d}t'\bigg),
\end{equation}
and define $B(t)=B_1(t)B_2(t)$. It suffices to check that it satisfies all requirements.

For part (1), monotonicity and \eqref{funcbt1} are direct consequences of the definition and \eqref{summarya5},  moreover $B(T_1)=B_{k_1}/2=\eta_0^{-(N-1)}$, and $B(T_0)\geq (2\eta_0)^{-1}\varepsilon_1\geq e^{-2\sigma^2k_0}$. For $B(1)$ we compute
\[B_2(1)\leq \exp(D(\varepsilon_0^{101/100}T_0+\varepsilon_0^{1/2}T_2+\log T_0+\varepsilon_0(\log k_0)^4T_3+T_0^{3/4}))\leq\exp(\varepsilon_0^{1/100}k_0),
\] and moreover
\begin{multline}B_1(1)=B_1(T_2)\leq \varepsilon_1\cdot\prod_{j=k_0+1}^{k_2}\nu(\alpha_j)^{10}\leq\varepsilon_1\prod_{\alpha_j\geq\varepsilon_0^{3/5}/2}e^{10\varepsilon_0^{2/5}}\max(1,\alpha_j)\prod_{\alpha_j<\varepsilon_0^{3/5}/2}e^{40\varepsilon_0^{3/5}}\\
\leq \varepsilon_1   e^{40\varepsilon_0^{3/5}k_2+10\varepsilon_0^{2/5}\varepsilon_0^{-3/10}k_0}\prod_{j=k_0+1}^{k_2}\max\bigg(1,\frac{\alpha k_0^2}{j^2}\bigg)^{10}.
\end{multline} The last product can be bounded, using Stirling's formula, by
\[\prod_{j=k_0+1}^{k_2}\max\bigg(1,\frac{\alpha k_0^2}{j^2}\bigg)^{10}\leq e^{10(\alpha-1)^2k_0}= e^{10\sigma^4k_0},
\] In the end we obtain that 
\[B_1(1)\leq \exp(40\varepsilon_0^{3/5}k_2+10\varepsilon_0^{2/5}\varepsilon_0^{-3/10}k_0+\varepsilon_0^{1/100}k_0+10\sigma^4k_0)\cdot\varepsilon_1\leq e^{-\sigma^2k_0/2}.
\]

For part (2), fix $(k_*,\eta_*)$ and consider the following cases.

(a) Suppose $q\geq 2$, then we can check $\alpha_m^*\leq 7/8<1$. First assume $t_{m-1}^*\leq T_0$, we know that there exists $m'$ such that either $[t_{m}^*,t_{m-1}^*]\subset[t_{m'},t_{m'-1}]$, or $t_{m}^*\in[t_{m'+1},t_{m'}]$ and $t_{m-1}^*\in[t_{m'},t_{m'-1}]$. We may assume the first case, since the second case can be treated by dividing $[t_m^*,t_{m-1}^*]$ into two subintervals and considering the longer one. Now we compute
\[
\frac{t_{m-1}^*-t_{m}^*}{t_{m'-1}-t_{m'}}=\frac{\eta_*}{\eta_0}\cdot\frac{4(m')^2-1}{4m^2-1}\geq\frac{2}{3}\frac{\eta_*(m')^2}{\eta_0m^2}\geq\frac{1}{2}\frac{\eta_0}{\eta_*}\geq\frac{1}{3q}\geq\frac{1}{3n},
\] since $\log B_1(t)$ is linear on $[t_{m'},t_{m'-1}]$ and $B_2(t)$ is decreasing in that interval, we have that
\[\bigg(\frac{B(t_m^*)}{B(t_{m-1}^*)}\bigg)^n\geq\bigg(\frac{B_1(t_{m'})}{B_1(t_{m'-1})}\bigg)^{\frac{1}{3}}\geq(\nu(\alpha_{m'}))^{10/3}\geq \nu(\alpha_m^*),
\] the last inequality due to the fact that 
\[\frac{\alpha_m^*}{\alpha_{m'}}=\frac{\eta_*(m')^2}{\eta_0m^2}\leq \frac{10}{9}\cdot\frac{\eta_0}{\eta_*}\leq \frac{2}{3}<1.
\] When $t_{m}^*\geq T_0$ the proof is similar, using instead the inequality (which is a consequence of \eqref{summarya5},
\[
\frac{B(t_{m'-1})}{B(t_{m'})}=\frac{B_{m'-1}}{B_{m'}}\geq \alpha_{m'}(1-2\sigma^6)\geq(\nu(\alpha_m^*))^3,
\] the last inequality being true since $\alpha_m^*\leq (2/3)\alpha_{m'}<4/5$ and hence $\nu(\alpha_{m}^*)\leq 1+\varepsilon_0^{2/5}$.

(b) Suppose $q=1$, and $t_{m-1}^*\leq T_0$. Due to similar arguments as part (1), we may assume that there is $m'$ such that $[t_{m}^*,t_{m-1}^*]\subset[t_{m'},t_{m'-1}]$. Again we have
\[
\frac{t_{m-1}^*-t_{m}^*}{t_{m'-1}-t_{m'}}\geq\frac{1}{3n},
\] so we may reduce to proving $\nu(\alpha_{m'})^{10/3}\geq\nu(\alpha_m^*)$. Now since $n\leq\varepsilon_0^{-3/4}$, we know that $|\eta_0-\eta_*|\leq\varepsilon_0^{-2}T_0$, hence
\begin{equation}\label{argument}
\frac{\alpha_m^*}{\alpha_{m'}}=\frac{\eta_*(m')^2}{\eta_0m^2}\leq1+O(\eta_0^{-1/3}),
\end{equation} and therefore
$\nu(\alpha_m^*)\leq \nu(\alpha_{m'})+O(\eta_0^{-1/6})\leq \nu(\alpha_{m'})^{10/3}$, using the fact that $\partial_{\beta}\nu(\beta)\leq \varepsilon_0^{-1}$.

(c) Suppose $q=1$, and $t_{m}^*\geq T_0$. Notice that the length of the two intervals $[t_m^*,t_{m-1}^*]$ and $[t_m,t_{m-1}]$ differ by at most $O(\eta_0^{-1/6})$ due to the same argument as in \eqref{argument}, and that 
\[
\frac{\max_{t\in[T_0,T_1]}\partial_t\log B(t)}{\min_{t\in[T_0,T_1]}\partial_t\log B(t)}\leq\sigma^{-2}
\] due to \eqref{summarya5}, we know that \eqref{funcbt2} is true if $n\geq \sigma^{-6}$. Now if $n<\sigma^{-6}$, write $[t_m^*,t_{m-1}^*]\cap[t_m,t_{m-1}]=[t',t'']$, then we have $(t''-t')/(t_{m-1}-t_m)\geq 4/5$, since 
\[|t_{m}^*-t_m|+|t_{m-1}^*-t_{m-1}|\leq \frac{3|\eta-\eta_*|}{m}\leq \frac{\sigma^{6}\sqrt{\varepsilon_0}T_0}{m}\leq \varepsilon_0^{-2/3},
\] while $t_{m-1}-t_m\geq\varepsilon_0^{-1}/10$. Therefore, we have
\[
\bigg(\frac{B(t_m^*)}{B(t_{m-1}^*)}\bigg)^n\geq\bigg(\frac{B(t_{m})}{B(t_{m-1})}\bigg)^{2\cdot(4/5)}\geq \alpha_m^{3/2}\geq\nu(\alpha_m^*),
\] again because of $\alpha_m^*\leq (1+\eta_0^{-1/6})\alpha_m$.

For part (3), the upper bound in \eqref{funcbt3} on $[T_2,T_1]$ is an immediate consequence of \eqref{summarya4} and \eqref{summarya6}. To prove it on $[1,T_1]$, we simply use Proposition \ref{gevrey2} to get (under the notation in that proposition) that 
\[\sqrt{M_1(t)}\leq \sqrt{M_1(T_2)}\cdot \exp\bigg(C\int_{t}^{T_2}\bigg(\sqrt{\varepsilon_0}+\frac{1}{t'}\bigg)\,\mathrm{d}t'\bigg)\leq\eta_0^{2}B(t)
\]by using \eqref{funcbt1}, which implies the upper bound in \eqref{funcbt3}. Notice that on both intervals it is important that one can always make the restriction $|k|+|\xi|\leq\eta_0^2$ while estimating $\widehat{g'}(t,k,\xi)$, thanks to \eqref{farsmall4} and \eqref{farsmall3}, and the fact that $n\leq \varepsilon_0^{-3/4}$. For the lower bound, one simply uses that \[\|g^{(1)}(T_1)\|_{L^2}^2=2\|g^{(1,1)}(T_1)\|_{L^2}^2+2\Re\sum_k\int_{\mathbb{R}}\mathscr{F}g^{(1,1)}(T_1,k,\xi)\cdot\mathscr{F}g^{(1,1)}(T_1,-k,-\xi)\,\mathrm{d}\xi,\] and that the last term is small,
\[
\bigg\|\sum_k\int_{\mathbb{R}}\mathscr{F}g^{(1,1)}(T_1,k,\xi)\cdot\mathscr{F}g^{(1,1)}(T_1,-k,-\xi)\,\mathrm{d}\xi\bigg\|\leq e^{-\eta_0/4},
\] using Plancherel and (\ref{farsmall3}). Finally, \eqref{funcbt4} is a direct consequence of \eqref{funcbt3}, \eqref{funcbt0}, and \eqref{farsmall3}.
\end{proof}
\subsection{Frequency localization estimates for $g^{(n)}$} We now start to estimate $g^{(n)}$ for $n\geq 2$. We first prove Fourier localization estimates for these terms: Proposition \ref{roughgn}, which provides a rough estimate that holds on $[1,T_1]$, and Proposition \ref{bdgnq0} which provides more precise localization on $[T_2,T_1]$.
\begin{proposition}\label{roughgn} Recall the function $\lambda_0(t)$ defined in Proposition \ref{energy1}. Let
\begin{equation}D_n(t)=\bigg(\sum_{i=1}^3\sum_{k}\int_{\mathbb{R}}e^{\tau_n\lambda_0(t)(|k|+|\xi|)}\big|\mathscr{F}(g^{(n)})_i(t,k,\xi)\big|^2\,\mathrm{d}\xi\bigg)^{1/2},
\end{equation} where $\tau_n=1-n\cdot \eta_0^{-N'-1}$, then for all $1\leq n\leq \eta_0^{N'}$ and $t\in[1,T_1]$, we have
\begin{equation}\label{bddnt}
D_n(t)\leq e^{4(2n-1)\eta_0}.
\end{equation}
\end{proposition}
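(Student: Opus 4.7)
I intend to prove (\ref{bddnt}) by strong induction on $n$, viewing $g^{(n)}$ as a solution to the inhomogeneous linear system $\partial_t g^{(n)} = \mathcal{L} g^{(n)} + Z^{(n-1)}$ (with $g^{(n)}(T_0)=0$ for $n\geq 2$), and combining the multilinear estimate (\ref{estnpj}) for the source with a weighted energy bound based on Proposition~\ref{energy1}.

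For the base case $n=1$: the data $g^{(1)}(T_0)$ has Fourier support concentrated on $|k|=k_0$, $|\xi\mp\eta_0|\leq k_0\sqrt{\sigma}$, with $\|\mathscr{F}g^{(1)}(T_0)\|_{L^{\infty}_{\xi}}\lesssim \varepsilon_1(k_0\sqrt{\sigma})^{-1}$; a direct Fourier-side computation gives $D_1(T_0)\leq e^{(1+o(1))\eta_0}$, since $\tau_1\lambda_0(T_0)=\tau_1\leq 1$ and $k_0+\eta_0=(1+o(1))\eta_0$. To extend to $t\in[1,T_1]$, I combine Proposition~\ref{energy1} (on $[T_2,T_1]$, applied with $(k_*,\eta_*)=(0,0)$ and $\tau=\tau_1\in[1/2,2]$) with Proposition~\ref{gevrey2} (on $[1,T_2]$), which, after Gronwall and conversion of $M_0$ to $D_1$ via the Fourier-support information of Proposition~\ref{movexi}, yields $D_1(t)\leq e^{3\eta_0+C\sqrt{\eta_0}}\leq e^{4\eta_0}$.

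For the inductive step $n\geq 2$, apply (\ref{estnpj}) with all shifts $\ell_j=\eta_j=0$, output exponent $\lambda'=2\tau_n\lambda_0(t)$, and input exponent $\lambda=2\tau_{n-1}\lambda_0(t)$, so that $\delta=2\lambda_0(t)\eta_0^{-N'-1}$ and $(C\delta^{-2})^{2p}\leq \eta_0^{O(pN')}$. Since $\tau_{n_j}\geq\tau_{n-1}$ whenever $n_j\leq n-1$ (which holds for every composition with $p\geq 2$ parts summing to $n$), each input factor is dominated by the inductive hypothesis: $\prod_{j=1}^p D_{n_j}(t)^2\leq e^{8(2n-p)\eta_0}$. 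Summing over compositions (the count $\binom{n-1}{p-1}\leq 2^n$ is harmless) and over $p\in\{2,\dots,n\}$, the dominant contribution is from $p=2$:
\[
\|Z^{(n-1)}(t)\|_{2\tau_n\lambda_0}^2 \;\leq\; e^{\,8(2n-1)\eta_0\,-\,8\eta_0\,+\,O(nN'\log\eta_0)}.
\]
Applying Proposition~\ref{energy1} once more to $g^{(n)}$ with source $Z^{(n-1)}$ and zero data at $T_0$, still with $(k_*,\eta_*)=(0,0)$ and $\tau=\tau_n$, the Gronwall factor on $[1,T_1]$ is at most $e^{C(\varepsilon_0^{1/2}T_1+\log T_1)}\leq e^{C\sqrt{\eta_0}}$ since $T_1\lesssim\sqrt{\eta_0/\varepsilon_0}$; converting the resulting $M_0$-bound to $D_n$ gives $D_n(t)\leq e^{4(2n-1)\eta_0-4\eta_0+O(\sqrt{\eta_0})}\leq e^{4(2n-1)\eta_0}$, closing the induction.

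The main obstacle is that Proposition~\ref{energy1} controls the $A_k$-weighted energy $M_0$ rather than the unweighted $D_n$, and $A_k(t,\xi)$ can be as small as $\eta_0^{-2}T_1^{-2}$ on the relevant part of Fourier space. To invert this comparison, I plan to track simultaneously a rough Fourier-support estimate for $\mathscr{F}g^{(n)}$, showing effective support in $|k|+|\xi|\lesssim n\eta_0$, so that $A_k^{-1}$ is only polynomially large and $D_n^2\lesssim (n\eta_0 T_1)^2 M_0$. This support bound is maintained inductively via the Fourier-additivity of $\mathcal{N}_p$ on effective supports, together with the exponential off-diagonal decay of the symbols $q_{ij}$ in (\ref{symbolbd1})--(\ref{symbolbd7}), which ensures that $\mathcal{L}$ only widens effective supports by acceptable exponential tails already absorbed by the Gevrey weight.
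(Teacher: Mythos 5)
Your overall architecture mirrors the paper's: strong induction in $n$, Proposition~\ref{energy1} with $(k_*,\eta_*)=(0,0)$, the multilinear estimate \eqref{estnpj} with all shifts zero, and the observation that the sum over compositions is dominated by $p=2$ with a gain $e^{-4\eta_0}$ per extra factor. The bookkeeping in your inductive step (writing $\lambda'=2\tau_n\lambda_0$, $\lambda=2\tau_{n-1}\lambda_0$, $\delta\sim\lambda_0\eta_0^{-N'-1}$, and noting the polynomial cost $(C\delta^{-2})^{2p}$ is negligible against $e^{-4(p-1)\eta_0}$) is exactly right.

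However, you have correctly identified and then incompletely handled the real obstruction, which is passing from the $A_k$-weighted energy back to the unweighted Gevrey norm $D_n$. Your proposed fix---tracking an effective Fourier support $|k|+|\xi|\lesssim n\eta_0$ as a parallel inductive statement and then bounding $A_k^{-1}$ polynomially on that support---is either circular (the support bound of $g^{(n)}$ follows from the very $D_n$ bound you are trying to prove) or requires a genuinely separate induction that is not set up in your proposal. In particular, after the energy estimate you only control $A_k|\widehat{g^{(n)}}|^2 e^{\tau_n\lambda_0(|k|+|\xi|)}$, and dividing by $A_k$ on the tail $|k|+|\xi|>R$ is uncontrolled unless you already have a slightly stronger exponent in reserve. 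The paper resolves this with a clean algebraic device: it runs Proposition~\ref{energy1} with the intermediate exponent $\tau_n'=(\tau_n+\tau_{n-1})/2$ in the energy $M_n$, so that the residual weight $e^{(\tau_n'-\tau_n)\lambda_0(t)(|k|+|\xi|)}$ with $\tau_n'-\tau_n=\tfrac12\eta_0^{-N'-1}$ dominates $A_k(t,\xi)^{-1}\lesssim T_1^2(|k|+|\xi|)^2$ \emph{uniformly in $(k,\xi)$}, not just on a compact frequency region; this gives $D_n(t)\leq\eta_0^{4N'}\sqrt{M_n(t)}$ in one line, and the factor $\eta_0^{4N'}$ is harmlessly absorbed into the base $e^{4(2n-1)\eta_0}$. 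This half-exponent trick is the missing idea in your proposal, and it is what makes the induction close without any auxiliary support statement.

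A minor point: for the base case you invoke Proposition~\ref{gevrey2} on $[1,T_2]$, but that proposition yields an \emph{unweighted} $L^2$ bound and by itself does not control the exponentially weighted quantity $D_1$ for small $t$; moreover, the Fourier localization of Proposition~\ref{movexi} you plan to feed in is proved only for $t\in[T_2,T_1]$. This detour is unnecessary: Proposition~\ref{energy1} is already stated for $J=[1,T_1]$, and directly integrating \eqref{diffineq2} from $M_1(T_0)\leq e^{4\eta_0}$ gives a Gronwall factor $\exp\big(C(\varepsilon_0^{1/2}T_1+\log T_1)\big)\leq e^{C\sqrt{\eta_0}}$ over the whole interval, which suffices.
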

\begin{proof} Notice that $\tau_n\in[1/2,1]$. Recall the weights $A_k$ and $A_*$ in (\ref{defakwei}), Proposition \ref{energy1}. Define, as in (\ref{diffineq}), the energy
\begin{equation}\label{diffineqnew}M_n(t)=\sum_{k}\int_{\mathbb{R}}A_k(t,\xi)e^{\tau_n'\lambda_0(t)(|k|+|\xi|)}\big|\mathscr{F}(g^{(n)})_1(t,k,\xi)\big|^2\,\mathrm{d}\xi+\sum_{j=2}^3\int_{\mathbb{R}}A_*(t,\xi)e^{\tau_n'\lambda_0(t)|\xi|}\big|\mathscr{F}(g^{(n)})_j(t,0,\xi)\big|^2\,\mathrm{d}\xi,\end{equation} where $\tau_n'=(\tau_n+\tau_{n-1})/2$, then we have $D_n(t)\leq\eta_0^{4N'}\sqrt{M_n(t)}$, using the simple fact that
\[A_k(t,\xi)\geq\eta_0^{-4N'}\exp\big(\eta_0^{-N'-1}(|k|+|\xi|)\big),\quad A_*(t,\xi)\geq\eta_0^{-4N'}\exp\big(\eta_0^{-N'-1}|\xi|\big).
\] Moreover, by Proposition \ref{energy1} (with $k_*=\eta_*=0$) we have that
 \begin{equation}\label{largesup1new}
\mathrm{sgn}(t-T_0)\cdot\partial_tM_n(t)\leq C\bigg(\varepsilon_0^{1/2}M_n(t)+\frac{M_n(t)}{t}+\sqrt{M_n(t)}\cdot Z_n(t)\bigg),
\end{equation} where $R_1(t)=0$, and for $n\geq 2$ we have
\[Z_n(t)=\bigg(\sum_{i=1}^3\sum_{k}\int_{\mathbb{R}}e^{\tau_{n}'\lambda_0(t)(|k|+|\xi|)}\big|\mathscr{F}(Z^{(n-1)})_i(t,k,\xi)\big|^2\,\mathrm{d}\xi\bigg)^{1/2}.
\] Since $M_n(T_0)=0$ for $n\geq 2$, and $\sqrt{M_1(T_0)}\leq  e^{2\eta_0}$ due to the definition of $g^{(1)}$, by solving the differential inequality \eqref{largesup1new} we get that $\sqrt{M_1(t)}\leq e^{3\eta_0}$ and hence $D_1(t)\leq e^{4\eta_0}$ for all $t\in[1,T_1]$ . Now suppose \eqref{bddnt} is true for all $n'<n$, then, using \eqref{estnpj}, we get that for each $t\in[1,T_1]$,
\begin{equation*}
\begin{split}
Z_n(t)&\leq \sum_{p=2}^n\sum_{n_1+\cdots +n_p=n}\bigg(\sum_{i=1}^3\sum_{k}\int_{\mathbb{R}}e^{\tau_{n}'\lambda_0(t)(|k|+|\xi|)}\big|\mathscr{F}\mathcal{N}_{p,i}(g^{(n_1)},\cdots,g^{(n_p)})(t,k,\xi)\big|^2\,\mathrm{d}\xi\bigg)^{1/2}\\
&\leq \sum_{p=2}^n\sum_{n_1+\cdots +n_p=n} C^pt^2(\eta_0^{N'+1})^p\prod_{j=1}^p \bigg(\sum_{i=1}^3\sum_{k}\int_{\mathbb{R}}e^{\tau_{n-1}\lambda_0(t)(|k|+|\xi|)}\big|\mathscr{F}(g^{(n_j)})_i(t,k,\xi)\big|^2\,\mathrm{d}\xi\bigg)^{1/2}\\
&\leq  \sum_{p=2}^n\sum_{n_1+\cdots +n_p=n}C^pt^2(\eta_0^{N'+1})^{2p}\prod_{j=1}^p D_{n_j}(t)\leq \sum_{p=2}^n\sum_{n_1+\cdots +n_p=n} C^pt^2(\eta_0^{N'+1})^{2p}\prod_{j=1}^p e^{4(2n_j-1)\eta_0}\\
&\leq e^{4(2n-1)\eta_0}\cdot\sum_{p=2}^n (Cn)^pt^2(\eta_0^{N'+1})^{2p}e^{-4(p-1)\eta_0}\leq e^{4(2n-1)\eta_0}\cdot e^{-2\eta_0},
\end{split}
\end{equation*} since $p\leq n\leq \eta_0^{N'}$, and therefore $Cn\cdot t^2\eta_0^{N'+1}\ll e^{\eta_0}$. Solving the differential inequality \eqref{largesup1new}, we easily get that
\[
\sup_{t\in[1,T_1]}\sqrt{M_n(t)}\leq e^{\eta_0}\cdot\sup_{t\in[1,T_1]}R_n(t)\leq e^{4(2n-1)\eta_0}e^{-\eta_0},
\] and hence 
\[\sup_{t\in[1,T_1]}D_n(t)\leq \eta_0^{4N'}\cdot\sup_{t\in[1,T_1]}M_n(t)\leq e^{4(2n-1)\eta_0}.
\]This completes the inductive proof.
\end{proof}
\begin{proposition}\label{bdgnq0}For each $1\leq n\leq \eta_0^{N'}$, we can decompose
\begin{equation}
g^{(n)}=\sum_{\substack{|q|\leq n\\q\equiv n\!\!\!\!\!\pmod 2}}g^{(n,q)},
\end{equation} such that for each $q$ and all $t\in[T_2,T_1]$ we have
\begin{equation}\label{bdgnq}
\bigg(\sum_{i=1}^3\sum_{k}\int_{\mathbb{R}}e^{\tau_n\lambda_0(t)(|k-qk_0|+|\xi-q\eta_0|)}\big|\mathscr{F}(g^{(n,q)})_i(t,k,\xi)\big|^2\,\mathrm{d}\xi\bigg)^{1/2}\leq e^{D(2n-1)\sqrt{\varepsilon_0}T_0}\cdot e^{Dn\varepsilon_0\eta_0|\frac{1}{t}-\frac{1}{T_0}|}.
\end{equation}
\end{proposition}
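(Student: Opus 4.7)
The plan is to prove Proposition~\ref{bdgnq0} by induction on $n$, defining the components $g^{(n,q)}$ recursively. For $n=1$ take $g^{(1,\pm 1)}$ as in (\ref{defdecomp}). For $n\geq 2$ set $g^{(n,q)}$ to be the unique solution to
\[
\partial_t g^{(n,q)} = \mathcal{L}g^{(n,q)} + Z^{(n-1,q)},\qquad g^{(n,q)}(T_0)=0,
\]
where
\[
Z^{(n-1,q)} = \sum_{p=2}^{n}\sum_{n_1+\cdots+n_p=n}\sum_{q_1+\cdots+q_p=q} \mathcal{N}_p\big(g^{(n_1,q_1)},\dots,g^{(n_p,q_p)}\big).
\]
By multilinearity of $\mathcal{N}_p$ we have $\sum_q Z^{(n-1,q)}=Z^{(n-1)}$, so linearity of $\mathcal{L}$ gives $g^{(n)}=\sum_q g^{(n,q)}$. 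The constraint $|q|\leq n$ follows from $|q_j|\leq n_j$ inductively, and the parity $q\equiv n\pmod 2$ is inherited from each $q_j\equiv n_j\pmod 2$.

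For the base case $n=1$, apply Proposition~\ref{energy1} to $g^{(1,1)}$ with $(k_*,\eta_*)=(k_0,\eta_0)$, $\tau=\tau_1$, and $\rho'\equiv 0$. Since $\widehat{\varphi_p}$ is compactly supported in $[-1,1]$, the data's shifted energy $M_0(T_0)$ is bounded by $C\varepsilon_1^2 e^{2k_0\sqrt\sigma}$. Integrating (\ref{diffineq2}) over $[T_2,T_1]$ in the appropriate direction from $T_0$, the integrating factor is $\exp\!\big(C\sqrt{\varepsilon_0}|t-T_0|+C\log(t/T_0)+C\varepsilon_0\eta_0|1/t-1/T_0|\big)$; the first exponent is $\lesssim\sqrt{\varepsilon_0}T_0\sim\sqrt{\eta_0}\gg k_0$, which absorbs both the data term and the polynomial loss from converting the $A_k$-weighted $M_0$ back to the target norm $D_{1,1}(t)$, yielding (\ref{bdgnq}) with huge room. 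The case $q=-1$ is identical by conjugation.

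For the inductive step, apply Proposition~\ref{energy1} to $g^{(n,q)}$ with $(k_*,\eta_*)=(qk_0,q\eta_0)$, noting $|k_*|+|\eta_*|\leq 2n\eta_0$. The corresponding energy $M_{n,q}$ vanishes at $T_0$. To control the source, invoke the shifted multilinear estimate (\ref{estnpj}) with $(\ell_j,\eta_j)=(q_jk_0,q_j\eta_0)$ and $\Lambda\lesssim n\eta_0$; plugging in the inductive bounds gives
\[
\sqrt{Z_{n,q}(t)^2} \leq \sum_{p,(n_j),(q_j)} t^2(Cn\eta_0)^p\prod_{j=1}^p e^{D(2n_j-1)\sqrt{\varepsilon_0}T_0}\,e^{Dn_j\varepsilon_0\eta_0|1/t-1/T_0|}.
\]
Since $\sum n_j = n$, the product yields $e^{D(2n-p)\sqrt{\varepsilon_0}T_0}e^{Dn\varepsilon_0\eta_0|1/t-1/T_0|}$, so the net deficit relative to the target exponent in (\ref{bdgnq}) is $e^{-D(p-1)\sqrt{\varepsilon_0}T_0}\leq e^{-D(p-1)\sqrt{\eta_0}}$; for $p\geq 2$ and $n\leq\eta_0^{N'}$ this absorbs the combinatorial loss $t^2(Cn\eta_0)^p$ with large margin. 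Solving (\ref{diffineq2}) with vanishing initial data $g^{(n,q)}(T_0)=0$ produces one additional integrating-factor $\exp\!\big(C\sqrt{\varepsilon_0}|t-T_0|+C\log(t/T_0)+Cn\varepsilon_0\eta_0|1/t-1/T_0|\big)$, which promotes $e^{D(2n-2)\sqrt{\varepsilon_0}T_0}$ (worst case $p=2$) back up to the required $e^{D(2n-1)\sqrt{\varepsilon_0}T_0}$ and matches the $|1/t-1/T_0|$ exponent exactly. The main technical obstacle is the conversion between the $A_k$-weighted energy $M_{n,q}$ and the flat-weighted norm $D_{n,q}$ on the left of (\ref{bdgnq}); this loss is only polynomial in $\eta_0$, and since the target $e^{D\sqrt{\eta_0}}$ provides exponential slack, any such conversion is harmless — this is why the proposition only asserts the exponents with a large but unspecified constant $D$.
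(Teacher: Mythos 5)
Your proposal follows essentially the same route as the paper's proof: the same recursive definition of $g^{(n,q)}$ via the decomposition of $Z^{(n-1,q)}$ constrained by $|q_j|\le n_j$ and $q_j\equiv n_j\pmod 2$, the same use of Proposition~\ref{energy1} centered at $(qk_0,q\eta_0)$, the same invocation of the shifted multilinear estimate (\ref{estnpj}), and the same scheme of solving the resulting differential inequality via an integrating factor $e^{\gamma(t)}$ with $\gamma\sim C\sqrt{\varepsilon_0}|t-T_0|+Cn\varepsilon_0\eta_0|1/t-1/T_0|$ and converting the $A_k$-weighted energy back to the flat-weight norm at the cost of a polynomial factor in $\eta_0$. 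Your bookkeeping is a little looser than the paper's (e.g.\ (\ref{estnpj}) actually yields $(C\max(\delta^{-2},\Lambda))^{2p}\sim(\eta_0^{N'+1})^{2p}$ rather than $(Cn\eta_0)^p$, and the phrasing that the integrating factor ``promotes'' the source bound is better thought of as a controlled loss $e^{O(C\sqrt{\varepsilon_0}T_0)}$ absorbed by the $e^{-D(p-1)\sqrt{\varepsilon_0}T_0}$ slack with $D\gg C$), but these are matters of presentation, not of substance.
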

\begin{proof} Define
\[F_{n,q}(t)=\bigg(\sum_{i=1}^3\sum_{k}\int_{\mathbb{R}}e^{\tau_n\lambda_0(t)(|k-qk_0|+|\xi-q\eta_0|)}\big|\mathscr{F}(g^{(n,q)})_i(t,k,\xi)\big|^2\,\mathrm{d}\xi\bigg)^{1/2}
\] and, as in Proposition \ref{energy1},
\begin{multline}
M_{n,q}(t)=\sum_{k}\int_{\mathbb{R}}A_k(t,\xi)e^{\tau_n'\lambda_0(t)(|k-qk_0|+|\xi-q\eta_0|)}\big|\mathscr{F}(g^{(n,q)})_1(t,k,\xi)\big|^2\,\mathrm{d}\xi\\+\sum_{j=2}^3\int_{\mathbb{R}}A_*(t,\xi)e^{\tau_n'\lambda_0(t)(|qk_0|+|\xi-k\eta_0|)}\big|\mathscr{F}(g^{(n,q)})_j(t,0,\xi)\big|^2\,\mathrm{d}\xi,
\end{multline}where $\tau_n'=(\tau_n+\tau_{n-1})/2$. Again we have $F_{n,q}(t)\leq  \eta_0^{4N'}\sqrt{M_{n,q}(t)}$ as long as $n\leq \eta_0^{N'}$.

When $n=1$, we simply define $g^{(1,\pm1)}$ as in (\ref{defdecomp}) and (\ref{defg11}); by Proposition \ref{energy1} (with $(k_*,\eta_*)=\pm(k_0,\eta_0)$), we have that 
 \begin{equation}\label{largesup1new2}
\mathrm{sgn}(t-T_0)\cdot\partial_tM_{1,\pm 1}(t)\leq C\bigg(\varepsilon_0^{1/2}M_{1,\pm 1}(t)+\frac{M_{1,\pm 1}(t)}{t}\bigg);
\end{equation} since $\sqrt{M_{1,\pm 1}(T_0)}\leq e^{Ck_0}\leq e^{C\sqrt{\varepsilon_0}T_0}$, we can solve this differential inequality to get that
\[
\sqrt{M_{1,\pm 1}(t)}\leq e^{D\sqrt{\varepsilon_0}T_0},
\] which implies \eqref{bdgnq}. 

Now suppose the decomposition can be made, and \eqref{bdgnq} holds for all $n'<n$, then we can define, for $|q|\leq n$ and $q\equiv n\pmod 2$, that
\[\partial_t g^{(n,q)}=\mathcal{L}g^{(n,q)}+Z^{(n-1,q)},
\] where 
\begin{equation}\label{defrnq}
Z^{(n-1,q)}=\sum_{p=2}^n\sum_{n_1+\cdots +n_p=n}\sum_{\substack{q_1+\cdots +q_p=q\\|q_j|\leq n_j,\,q_j\equiv n_j\!\!\!\!\!\pmod 2}}\mathcal{N}_{p}(g^{(n_1,q_1)},\cdots,g^{(n_p,q_p)}).
\end{equation} Clearly these form a decomposition of $g^{(n)}$; to prove \eqref{bdgnq}, we use \eqref{estnpj} and argue in the same way as in the proof of Proposition \ref{roughgn} above, to deduce that
\begingroup
\allowdisplaybreaks
\begin{align*}
Z_{n,q}(t)&:=\bigg(\sum_{i=1}^3\sum_{k}\int_{\mathbb{R}}e^{\tau_{n}'\lambda_0(t)(|k-qk_0|+|\xi-q\eta_0|)}\big|\mathscr{F}(Z^{(n-1,q)})_i(t,k,\xi)\big|^2\,\mathrm{d}\xi\bigg)^{1/2}\\
&\leq \sum_{p=2}^n\sum_{\substack{n_1+\cdots +n_p=n\\q_1+\cdots +q_p=q}}\bigg(\sum_{i=1}^3\sum_{k}\int_{\mathbb{R}}e^{\tau_{n}'\lambda_0(t)(|k-qk_0|+|\xi-q\eta_0|)}\big|\mathscr{F}\mathcal{N}_{p,i}(g^{(n_1,q_1)},\cdots,g^{(n_p,q_p)})(t,k,\xi)\big|^2\,\mathrm{d}\xi\bigg)^{1/2}\\
&\leq \sum_{p=2}^n\sum_{\substack{n_1+\cdots +n_p=n\\q_1+\cdots +q_p=q}} C^pt^2(\eta_0^{N'+1})^{2p}\prod_{j=1}^p \bigg(\sum_{i=1}^3\sum_{k}\int_{\mathbb{R}}e^{\tau_{n-1}\lambda_0(t)(|k-q_jk_0|+|\xi-q_j\eta_0|)}\big|\mathscr{F}(g^{(n_j,q_j)})_i(t,k,\xi)\big|^2\,\mathrm{d}\xi\bigg)^{1/2}\\
&\leq  \sum_{p=2}^n\sum_{\substack{n_1+\cdots +n_p=n\\q_1+\cdots +q_p=q}} C^pt^2(\eta_0^{N'+1})^{2p}\prod_{j=1}^p F_{n_j,q_j}(t)
\\
&\leq \sum_{p=2}^n\sum_{\substack{n_1+\cdots +n_p=n\\q_1+\cdots +q_p=q}} C^pt^2(\eta_0^{N'+1})^{2p}\prod_{j=1}^p e^{D(2n_j-1)\sqrt{\varepsilon_0}T_0}\cdot e^{Dn_j\varepsilon_0\eta_0|\frac{1}{t}-\frac{1}{T_0}|}\\
&\leq e^{D(2n-1)\sqrt{\varepsilon_0}T_0}e^{Dn\varepsilon_0\eta_0|\frac{1}{t}-\frac{1}{T_0}|}\cdot\sum_{p=2}^n (Cn)^pt^2(\eta_0^{N'+1})^{2p}e^{-D(p-1)\sqrt{\varepsilon_0}T_0}\\
&\leq e^{D(2n-1)\sqrt{\varepsilon_0}T_0}e^{Dn\varepsilon_0\eta_0|\frac{1}{t}-\frac{1}{T_0}|}\cdot e^{-D\sqrt{\varepsilon_0}T_0/2}:=\widetilde{N}(t).
\end{align*} 
\endgroup Moreover, by Proposition \ref{energy1} (with $(k_*,\eta_*)=(qk_0,q\eta_0)$), we have, for $t\in[T_2,T_1]$, that
 \begin{equation}\label{largesup1new3}
\mathrm{sgn}(t-T_0)\cdot\partial_tM_{n,q}(t)\leq C\bigg(\varepsilon_0^{1/2}M_{n,q}(t)+\frac{n\varepsilon_0\eta_0}{t^2}M_{n,q}(t)+\sqrt{M_{n,q}(t)}\cdot \widetilde{N}(t)\bigg).
\end{equation} To solve this differential inequality, we define
\[\widetilde{M}(t)=e^{-\gamma(t)} M_{n,q}(t),\quad \gamma(t)=C\varepsilon_0^{1/2}|t-T_0|+Cn\varepsilon_0\eta_0\bigg|\frac{1}{t}-\frac{1}{T_0}\bigg|,
\] then we have
\[\mathrm{sgn}(t-T_0)\cdot\partial_t\widetilde{M}(t)\leq C e^{-\gamma(t)/2}\sqrt{\widetilde{M}(t)}\cdot \widetilde{R}(t),
\] and hence
\[\widetilde{M}(t)\leq C^2\bigg(\int_{T_0}^t e^{-\gamma(t')/2}\widetilde{R}(t')\,\mathrm{d}t'\bigg)^2\leq C^2\bigg(\int_{T_0}^t e^{-\frac{Cn\varepsilon_0\eta_0}{2}|\frac{1}{t'}-\frac{1}{T_0}|}\cdot\widetilde{R}(t')\,\mathrm{d}t'\bigg)^2
\] Since $D\gg C$, the function \[\exp\bigg(-\frac{Cn\varepsilon_0\eta_0}{2}\bigg|\frac{1}{t'}-\frac{1}{T_0}\bigg|\bigg)\cdot\widetilde{R}(t')\] is decreasing on $[T_2,T_0]$ and increasing on $[T_0,T_1]$, thus
\[\sqrt{\widetilde{M}(t)}\leq 2CT_0 e^{-\frac{Cn\varepsilon_0\eta_0}{2}|\frac{1}{t}-\frac{1}{T_0}|}\cdot\widetilde{R}(t)
\leq e^{2C\sqrt{\varepsilon_0}T_0}\cdot e^{-\gamma(t)/2}\widetilde{R}(t),\] and hence
\[\sqrt{M_{n,q}(t)}\leq e^{2C\sqrt{\varepsilon_0}T_0}\widetilde{R}(t);\quad F_{n,q}(t)\leq e^{4C\sqrt{\varepsilon_0}T_0}\widetilde{R}(t)\leq e^{D(2n-1)\sqrt{\varepsilon_0}T_0}e^{Dn\varepsilon_0\eta_0|\frac{1}{t}-\frac{1}{T_0}|}.
\] This completes the proof of \eqref{roughgn}.
\end{proof}
\subsection{$L^2$ estimates for $g^{(n)}$} We then proceed to bound the $L^2$ norm of $g^{(n)}$, essentially by $(B(t))^n$. Here we use crucially the upper bounds proved in Proposition \ref{interval3}.
\begin{proposition}\label{l2allgn}
For all $1\leq n\leq \eta_0^{N'}$, $0\leq |q|\leq n$, and all $t\in[T_2,T_1]$, we have
\begin{equation}
\label{l2allgn2}
\|g^{(n,q)}(t)\|_{L^2}\leq \eta_0^{20N'(2n-1)}(B(t))^n.
\end{equation} Moreover, for all $1\leq n\leq \eta_0^{N'}$ and all $t\in[1,T_1]$, we have
\begin{equation}
\label{l2allgn3}
\|g^{(n)}(t)\|_{L^2}\leq \eta_0^{20N'(2n-1)}(B(t))^n.
\end{equation}
\end{proposition}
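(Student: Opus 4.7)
The plan is to proceed by strong induction on $n$, combining the frequency localization already established in Proposition \ref{bdgnq0}, the multilinear estimates of Proposition \ref{multilinearest2}, and the sharp upper bound for the inhomogeneous linear system of Proposition \ref{interval3}. The key structural input is property (\ref{funcbt2}) of the function $B(t)$, which was tailored precisely so that $Z(t)=C^n\eta_0^{C(2n-1)}(B(t))^n$ satisfies the growth hypothesis (\ref{conditionr}) for the shifted parameters $(k_*,\eta_*)\approx(qk_0,q\eta_0)$.

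\emph{Base case.} For $n=1$, the first inequality follows from part (3) of Proposition \ref{choiceeps1} together with the decomposition (\ref{defdecomp}): each $g^{(1,\pm1)}$ satisfies $\|g^{(1,\pm1)}(t)\|_{L^2}\leq\eta_0^{10}B(t)\leq\eta_0^{20N'}B(t)$, on all of $[1,T_1]$.

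\emph{Inductive step on $[T_2,T_1]$.} Assume (\ref{l2allgn2}) for every $n'<n$ and all admissible $q'$. Fix $q$ with $|q|\leq n$ and $q\equiv n\pmod 2$. The equation satisfied by $g^{(n,q)}$ is $\partial_t g^{(n,q)}=\mathcal{L}g^{(n,q)}+Z^{(n-1,q)}$ with zero data at $T_0$. Applying the multilinear bound (\ref{estnpj}) (with weight $\lambda=0$) to the defining expression (\ref{defrnq}), and combining the inductive $L^2$ hypothesis with the Fourier localization (\ref{bdgnq}) of each factor around $(q_jk_0,q_j\eta_0)$, a direct count (as in the proof of Proposition \ref{bdgnq0}) yields
\[
\|Z^{(n-1,q)}(t)\|_{L^2}\leq C^n\eta_0^{10N'(2n-1)}(B(t))^n=:Z_n(t).
\]

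To apply Proposition \ref{interval3}, decompose $Z^{(n-1,q)}=\sum_j\rho_j'+\mathcal{E}$, where each $\rho_j'$ has Fourier support in $\{k_*^{(j)}\}\times[\eta_*^{(j)}-2,\eta_*^{(j)}+2]$ with $(k_*^{(j)},\eta_*^{(j)})$ satisfying (\ref{straight}) relative to $q\leq n$, and $\|\mathcal{E}(t)\|_{L^2}\lesssim e^{-D\sqrt{\varepsilon_0}T_0}Z_n(t)$ by the exponential tail in (\ref{bdgnq}). The number of pieces $\rho_j'$ is polynomial in $n\eta_0$, so absorbable into the prefactor $\eta_0^{20N'(2n-1)}$. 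Crucially, (\ref{funcbt2}) of Proposition \ref{choiceeps1} says precisely that $(B(t))^n$ satisfies the growth condition (\ref{conditionr}) for every such $(k_*^{(j)},\eta_*^{(j)})$, so $Z_n$ does too. Proposition \ref{interval3} applied to each piece then gives $\sup_k\|\mathscr{F}_z(g^{(n,q)}_j)(t,k,\cdot)\|_{L^2}\leq\varepsilon_0^{-2}Z_n(t)$, and after summing we obtain (\ref{l2allgn2}) on $[T_2,T_1]$, with the factor $\eta_0^{20N'(2n-1)}$ comfortably accommodating the loss $\varepsilon_0^{-2}$ and the polynomial number of pieces.

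\emph{Extension to $[1,T_2]$ for the $g^{(n)}$ bound (\ref{l2allgn3}).} On $[T_2,T_1]$ we sum (\ref{l2allgn2}) over $|q|\leq n$, losing only a factor $2n+1$ which is absorbed into the prefactor. To reach $[1,T_2]$, apply Proposition \ref{gevrey2} to $g^{(n)}$, with source term $Z^{(n-1)}$ bounded in $L^2$ by $Z_n(t)$ (by the same multilinear computation). The differential inequality (\ref{monotinicity2}) integrated backward from $T_2$ to $t\in[1,T_2]$ produces a multiplicative factor $\exp(C\int_t^{T_2}(\varepsilon_0^{1/2}+1/t')\,dt')$, which is dominated by $B(t)^n/B(T_2)^n$ thanks to the lower bound (\ref{funcbt1}) on $|\partial_t\log B|$. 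This yields (\ref{l2allgn3}) on all of $[1,T_1]$.

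\emph{Main obstacle.} The delicate point is the verification of the condition (\ref{conditionr}) for the shifted toy-model parameters $\alpha_m^*=(\alpha k_0^2\eta_*)/(m^2\eta_0)$ at every relevant $(k_*,\eta_*)$ produced by the frequency decomposition of the forcing, together with managing the growth of the prefactor $\eta_0^{20N'(2n-1)}$ as $n$ increases up to $\eta_0^{N'}$. The first is already built into Proposition \ref{choiceeps1}; the second is handled by the exponential gap between $\eta_0^{20N'}$ per step and the polynomial losses $(C\eta_0^{N'+1})^{2p}$ contributed by the multilinear estimate (\ref{estnpj}) at each recursion.
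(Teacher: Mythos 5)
Your proposal captures the correct overall framework (strong induction in $n$, multilinear estimates for the source, Proposition \ref{interval3} fed by the tailored lower bound (\ref{funcbt2}) on $\partial_t\log B$, and Proposition \ref{gevrey2} for the extension to $[1,T_2]$), but it applies Proposition \ref{interval3} uniformly across all $(n,q)$, which is not permissible. There are two concrete gaps.

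First, the entire analysis of Section \ref{largeeta} — in particular Proposition \ref{interval3} — is carried out under the standing assumption (\ref{straight}), which requires $1\leq q\leq n\leq\varepsilon_0^{-21/40}$. Since $\eta_0^{N'}\gg\varepsilon_0^{-21/40}$, for the large range of $n$ in the proposition your argument is simply not applicable. The paper treats the regime $n\geq\varepsilon_0^{-21/40}$ separately: there the forcing $Z^{(n-1,q)}$ already carries a big enough power of $B(t)$ that $|\partial_t\log(B(t))^n|\geq n\cdot\varepsilon_0^{101/100}\gg\varepsilon_0^{1/2}+1/t$, so the cruder energy inequality of Proposition \ref{gevrey2} closes the Gronwall argument directly, without any Orr-type recursion. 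You need this branch; Proposition \ref{interval3} cannot do the job.

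Second, (\ref{straight}) also excludes $q=0$, and for good reason: for $q=0$ the reference frequency $\eta_*$ is $O(n\sqrt{\varepsilon_0}\,T_0)\ll\eta_0$, so the whole critical-time machinery ($t_m^*=2\eta_*/(2m+1)$, $k_j^*$, $\alpha_m^*$) degenerates and the recurrence of Proposition \ref{interval2}/\ref{interval3} has no content. The paper handles $q=0$ (which has $|\eta_*|\lesssim T_0^{21/20}$) via Proposition \ref{lastone}, which gives an $L^2$ bound at the price of a factor $T_0^{1/2}$ and relies on the lower bounds $|\partial_t\log B|\gg\varepsilon_0(\log k_0)^4$ on $[T_2,T_3]$ and $|\partial_t\log B|\gg T_0^{-1/4}$ on $[T_3,T_1]$ built into (\ref{funcbt1}). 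Without this case your induction does not close, since $g^{(n,0)}$ occurs whenever $n$ is even, and its contribution enters the multilinear source of every later $g^{(n',q')}$.

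A further minor but necessary point your sketch omits: before decomposing $Z^{(n-1,q)}$ into the pieces $\rho_{(k_*,\eta_*)}$, one must first cut off to the region $|k-qk_0|+|\xi-q\eta_0|<Dn\sqrt{\varepsilon_0}T_0$; the remainder has $L^2$ norm $\lesssim e^{-cn\sqrt{\varepsilon_0}T_0}$ by (\ref{bdrnq}), and its contribution to $g^{(n,q)}$ must be bounded separately (via Proposition \ref{gevrey2}), using $B(t)\geq B(T_0)\geq e^{-2\sigma^2 k_0}$ to absorb it into $(B(t))^n$. You gesture at this with the term $\mathcal{E}$ but do not actually estimate its propagated effect.

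With those three items added — the high-$n$ branch via Proposition \ref{gevrey2}, the $q=0$ branch via Proposition \ref{lastone}, and the control of the cutoff remainder — your argument would match the paper's proof.
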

\begin{proof} First we prove \eqref{l2allgn2} by inducting in $n$. When $n=1$, \eqref{l2allgn2} is a consequence of \eqref{funcbt3}; suppose \eqref{l2allgn2} is true for all $n'<n$, then we have, for $Z^{(n-1,q)}$ defined as in \eqref{defrnq}, that
\begin{equation}\label{bdrnq}
\bigg(\sum_{i=1}^3\sum_{k}\int_{\mathbb{R}}e^{\lambda_0(t)(|k-qk_0|+|\xi-q\eta_0|)/2}\big|\mathscr{F}(Z^{(n-1,q)})_i(t,k,\xi)\big|^2\,\mathrm{d}\xi\bigg)^{1/2}\leq e^{Dn\sqrt{\varepsilon_0}T_0}
\end{equation}
as shown in the proof of Proposition \ref{bdgnq0}, and the fact that
\[Dn\varepsilon_0\eta_0\bigg|\frac{1}{t}-\frac{1}{T_0}\bigg|\leq Dn\varepsilon_0\eta_0T_2^{-1}\leq\varepsilon_0^{1/10}\cdot Dn\sqrt{\varepsilon_0}T_0
\] for $t\in[T_2,T_1]$. Moreover, by \eqref{bdnpj} we can estimate
\begin{equation}\label{controlrnq}
\begin{split}
\|Z^{(n-1,q)}(t)\|_{L^2}&\leq \sum_{p=2}^n\sum_{\substack{n_1+\cdots+n_p=n\\q_1+\cdots+q_p=q}}\sum_{i=1}^3\|\mathscr{F}\mathcal{N}_{p,i}(g^{(n_1,q_1)},\cdots,g^{(n_p,q_p)})(t)\|_{L^2}\\
&\leq \sum_{p=2}^n\sum_{\substack{n_1+\cdots+n_p=n\\q_1+\cdots+q_p=q}}\sum_{i=1}^3 C^pt^{4p}\prod_{j=1}^p\sum_{i=1}^3\big\|(|k_j|+|\xi_j|)^{3}\mathscr{F}g^{(n_j,q_j)}(t,k_j,\xi_j)\big\|_{L^2}\\
&\leq \sum_{p=2}^n C^pt^{4p}\eta_0^{4N'p}\prod_{j=1}^p\eta_0^{20N'(2n_j-1)}(B(t))^{n_j}\leq \eta_0^{20N'(2n-1)}(B(t))^n\cdot \eta_0^{-12N'}
\end{split}
\end{equation} using the induction hypothesis and \eqref{bdgnq}. We now proceed to the estimate of $g^{(n,q)}$, which satisfies the equation 
\[\partial_tg^{(n,q)}=\mathcal{L}g^{(n,q)}+Z^{(n-1,q)},
\] by dividing into several cases.

\underline{Case 1}: Suppose $n\geq \varepsilon_0^{-21/40}$. Define the energy $M_1(t)$ associated to the function $g^{(n,q)}$ as in Proposition \ref{gevrey2}, then we have
\begin{equation}\label{diffineqnew}\mathrm{sgn}(t-T_0)\cdot\partial_tM_1(t)\leq C\bigg(\varepsilon_0^{1/2}M_1(t)+\frac{M_1(t)}{t}+\sqrt{M_1(t)}\cdot \|Z^{(n-1,q)}(t)\|_{L^2}\bigg),
\end{equation} where
\[\|Z^{(n-1,q)}(t)\|_{L^2}\leq  \eta_0^{20N'(2n-1)}\eta_0^{-12N'}\cdot (B(t))^n.
\] We know $M_1(T_0)=0$; moreover since $D\gg C$, $\mathrm{sgn}(t-T_0)\cdot \partial_tB(t)\geq 0$, and
\[
|\partial_t\log (B(t)^n)|=n|\partial_t\log B(t)|\geq\varepsilon_0^{-21/40}\cdot\bigg(\varepsilon_0^{101/100}+\frac{1}{t}\bigg)\geq D\bigg(\varepsilon_0^{1/2}+\frac{1}{t}\bigg),\]we know that $(B(t))^ne^{-\gamma(t)/2}$ is decreasing on $[1,T_0]$ and decreasing on $[T_0,T_1]$, where
\[
\gamma(t)=\bigg|C\int_{T_0}^t\bigg(\varepsilon_0^{1/2}+\frac{1}{t'}\bigg)\,\mathrm{d}t'\bigg|.\] Let $\widetilde{M}(t)=M_1(t)e^{-\gamma(t)}$, then we have \[\mathrm{sgn}(t-T_0)\cdot\partial_t\widetilde{M}(t)\leq C e^{-\gamma(t)/2}\sqrt{\widetilde{M}(t)}\cdot\eta_0^{20N'(2n-1)}\eta_0^{-12N'}\cdot (B(t))^n.\] By the above monotonicity we get that
\[\sqrt{\widetilde{M}(t)}\leq \eta_0^{20N'(2n-1)}\eta_0^{-10N'}\cdot e^{-\rho(t)/2}(B(t))^n,
\]hence 
\[\|g^{(n,q)}\|_{L^2}\leq \eta_0^{10N'}\sqrt{M_1(t)}\leq \eta_0^{20N'(2n-1)}\eta_0^{-8N'}(B(t))^n,
\] where the first inequality following from the fact that $\mathscr{F}g^{(n,q)}(t)$ is essentially supported in $|k|+|\xi|\leq \eta_0^{4N'}$, see (\ref{bdgnq}).

\underline{Case 2}: Suppose $n<\varepsilon_0^{-21/40}$. If we restrict, by adding suitable cutoff functions to $\mathscr{F}Z^{(n-1,q)}(t,k,\xi)$, to the region where $|k-qk_0|+|\xi-q\eta_0|\geq 20Dn\sqrt{\varepsilon_0}T_0$. By \eqref{bdrnq}, this contribution will have $L^2$ norm bounded by $e^{-5nD\sqrt{\varepsilon_0}T_0}$; by Proposition \ref{gevrey2}, the corresponding contribution to $g^{(n,q)}$ satisfies a bound that is worse only by a factor of $e^{C\sqrt{\varepsilon_0}T_0}$. Therefore, the $L^2$ norm of this contribution to $g^{(n,q)}$ is bounded by
\[e^{-4nD\sqrt{\varepsilon_0}T_0}\leq \eta_0^{20N'(2n-1)}\eta_0^{-10N'}(B(t))^n,
\] since $B(t)\geq B(T_0)\geq e^{-2\sigma^2k_0}$.

Now we can focus on the case where $n<\varepsilon_0^{-21/40}$, and that we restrict to the region where $|k-qk_0|+|\xi-q\eta_0|\geq 20Dn\sqrt{\varepsilon_0}T_0$ in $\mathscr{F}Z^{(n-1,q)}(t,k,\xi)$. For simplicity we will denote $20D$ by $D$, and further decompose 
\[\mathscr{F}Z^{(n-1,q)}(t,k,\xi)=\sum_{|k_*-qk_0|+|\eta_*-q\eta_0|< Dn\sqrt{\varepsilon_0}T_0}\mathscr{F}\rho_{(k_*,\eta_*)}(t,k,\xi),
\] where $(k_*,\eta_*)\in\mathbb{Z}^2$, each $\rho_{(k_*,\eta_*)}$ satisfies that
\[\mathrm{supp}(\mathscr{F}\rho_{(k_*,\eta_*)})\subset\{k_*\}\times[\eta_*-2,\eta_*+2],\] \[\|\rho_{(k_*,\eta_*)}(t)\|_{L^2}\leq\|Z^{(n-1,q)}(t)\|_{L^2}\leq \eta_0^{20N'(2n-1)}(B(t))^n\cdot \eta_0^{-12N'}.
\] Since $n< \varepsilon_0^{-21/40}$, there are at most $O(\eta_0^{10})$ choices of $(k_*,\eta_*)$, so below we will fix one particular choice of $(k_*,\eta_*)$, and denote this $\rho_{(k_*,\eta_*)}$ by $\rho$.

\underline{Case 2.1}: Suppose $q=0$. Then we have $|\eta_*|\leq T_0^{21/20}$ and  $|k_*|\leq T_0^{21/20}$. Moreover we know that $|\partial_t\log Z(t)|\gg T_0^{-1/4}$ on $[T_3,T_1]$ and 
\begin{equation}\label{gap}|\partial_t\log Z(t)|\gg\varepsilon_0(\log k_0)^3
\end{equation} on $[T_2,T_3]$ due to \eqref{funcbt1}, where \[Z(t)= \eta_0^{90N'(2n-1)}(B(t))^n\cdot \eta_0^{-50N'},\] thus we can apply Proposition \ref{lastone}. Let $M_2$ associated to the function $g^{(n,q)}$ be defined in \eqref{defm4}, then by \eqref{improve} we have $\|g^{(n,q)}(t)\|_{L^2}\leq \eta_0^\cdot Z(t)$ on $[T_3,T_1]$ and in particular $M_2(T_3)\leq \|g^{(n,q)}(T_3)\|_{L^2}^2\leq \eta_0^2(Z(T_3))^2$; by \eqref{improvedgrowth} we then have
\[\sqrt{M_2(t)}\leq\max(\eta_0 R(t),e^{C\varepsilon_0(\log k_0)^3}(T_3-t)R(T_3))\leq \eta_0Z(t)
\] for all $t\in[T_2,T_3]$. By restricting to the range $|k|+|\xi|\leq \eta_0^{2N'}$ for $\mathscr{F}g^{(n,q)}(t,k,\xi)$ using (\ref{bdgnq}), we get that
\[\|g^{(n,q)}(t)\|_{L^2}\leq \eta_0^{6N'}Z(t)\leq \eta_0^{20N'(2n-1)}\eta_0^{-6N'}(B(t))^n.
\]

\underline{Case 2.2}: Suppose $|q|\geq 1$. Since $g^{(n,-q)}=\overline{g^{(n,q)}}$, we may assume $q>0$; moreover, by \eqref{funcbt2} we know that for \[Z(t)= \eta_0^{20N'(2n-1)}(B(t))^n\cdot \eta_0^{-12N'},\] the assumption (\ref{conditionr}) in Proposition \ref{interval3}, is satisfied, so we can directly use \eqref{nextest2}, and also by restricting to the range $|k|+|\xi|\leq \eta_0^{2N'}$, to deduce that
\[\|g^{(n,q)}(t)\|_{L^2}\leq\varepsilon_0^{-2}Z(t),\,\forall t\in[T_2,T_1].
\]This completes the proof of \eqref{l2allgn2}, which also implies \eqref{l2allgn3} on $[T_2,T_1]$.

Finally, the proof of \eqref{l2allgn3} on $[1,T_2]$ is similar. In the inductive step, we just use Proposition \ref{roughgn}, instead of Proposition \ref{bdgnq0}, to deduce that $\mathscr{F}g^{(n)}(t,k,\xi)$ is essentially supported in $|k|+|\xi|\leq \eta_0^{2N'}$, then control $\|Z^{(n-1,q)}(t)\|_{L^2}$ using \eqref{controlrnq}, and finally use Proposition \ref{gevrey2} to transform this bound into the bound for $g^{(n)}$. Here we have used the fact that 
\[|\partial_t(B(t))^n|\geq|\partial_tB(t)|\gg C\bigg(\varepsilon_0^{1/2}+\frac{1}{t}\bigg)
\] for all $t\in[1,T_2]$. We omit the details.
\end{proof}
\subsection{Bounds for the error terms} In this subsection we establish the bounds for the approximate solution $G^{(n)}$, and the error term $E^{(n)}$. They are essentially consequences of Proposition \ref{l2allgn}.
\begin{proposition}\label{approx} Recall the norms $\mathcal{G}_{\lambda}^*$ defined in \eqref{defglambda}. Let $G^{(n)}$ and $E^{(n)}$ be defined in \eqref{defcapgn} and \eqref{defen}. Then for $10\leq n\leq \eta_0^{N'}$ and $t\leq T_0$ we have that
\begin{equation}\label{multest5}
\|G^{(n)}\|_{\mathcal{G}_2^*}\leq e^{-\sigma^2k_0/4},\quad \|E^{(n)}\|_{\mathcal{G}_2^*}\leq e^{-n\sigma^2k_0/4};
\end{equation} for $10\leq n\leq \eta_0^{N'}$ and $t\geq T_0$ we have that
\begin{equation}\label{multest6}
\|G^{(n)}\|_{H^{N+1}}\leq\eta_0^{15},\quad \|G^{(n)}\|_{H^{20}}\leq\eta_0^{-N/2},\quad \|E^{(n)}\|_{H^{N+1}}\leq \eta_0^{-nN/8}.
\end{equation}
\end{proposition}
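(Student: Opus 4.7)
The plan is to combine the $L^2$ estimates on the $g^{(n,q)}$ from Proposition~\ref{l2allgn} with the Fourier-localization bounds from Proposition~\ref{bdgnq0} (on $[T_2,T_1]$) and Proposition~\ref{roughgn} (on $[1,T_2]$), and then use the multilinear estimates in Propositions~\ref{multilnearest} and~\ref{multilinearest2} to handle the error term $E^{(n)}$ coming from the Taylor expansion. The key observation is that each $g^{(n,q)}$ has Fourier mass essentially concentrated near $(qk_0,q\eta_0)$ with $|q|\le n$, so up to an exponentially decaying tail the whole analysis reduces to $L^2$ bounds times explicit polynomial/Gevrey weights.

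For the bound on $\|G^{(n)}\|_{\mathcal{G}_2^*}$ on $[1,T_0]$, I first convert the $L^2$ bound $\|g^{(n,q)}\|_{L^2}\leq\eta_0^{40N'n}B(t)^n\leq\eta_0^{40N'n}e^{-n\sigma^2k_0/2}$ into a Gevrey bound by splitting
\[
4\kappa(k,\xi)\leq 4\sqrt{|q|(k_0+\eta_0)}/(\log\eta_0)^{N_1}+\tfrac{\tau_n\lambda_0(t)}{2}(|k-qk_0|+|\xi-q\eta_0|)+O(1),
\]
via $\sqrt{x}\leq \gamma x+1/(4\gamma)$. The first term is bounded by $C\sqrt{n}\,k_0(\log k_0)^{-3N_3/2}\ll n\sigma^2k_0$ thanks to the parameter choice $N_1=2N_3\gg N_2$, so together with $\eta_0^{40N'n}\leq e^{n\sigma^2k_0/4}$ (since $\sigma^2k_0/\log\eta_0\sim k_0(\log k_0)^{-61}\to\infty$) this yields $\|g^{(n,q)}\|_{\mathcal{G}_\lambda^*}\leq e^{-n\sigma^2k_0/3}$ for $\lambda\in\{2,4\}$; the tail past the exponential decay in Proposition~\ref{bdgnq0} is killed by the same argument. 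Summing over $|q|\leq n$ and $n'=1,\dots,n$ gives a geometric series dominated by $n'=1$, so $\|G^{(n)}\|_{\mathcal{G}_2^*}\leq e^{-\sigma^2k_0/4}$.

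For the Sobolev bounds on $[T_0,T_1]$, Proposition~\ref{bdgnq0} combined with a tail cutoff at scale $L\approx n\sqrt{\varepsilon_0}T_0+C N\log\eta_0\ll\eta_0$ gives $\|g^{(n,q)}\|_{H^s}\lesssim (n\eta_0)^s\|g^{(n,q)}\|_{L^2}$ up to error $\eta_0^{-100}$ say. For $n=1$ I use the sharp bound $\|g^{(1)}\|_{L^2}\leq\eta_0^{10}B(T_1)=\eta_0^{-(N-11)}$ from Proposition~\ref{choiceeps1} to get $\|g^{(1)}\|_{H^{N+1}}\lesssim\eta_0^{12}$ and $\|g^{(1)}\|_{H^{20}}\lesssim\eta_0^{-(N-31)}\leq\eta_0^{-N/2}$. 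For $n\geq 2$, since $40N'-(N-1)=-1799$, I obtain $\|g^{(n)}\|_{H^{N+1}}\lesssim n^{N+2}\eta_0^{N+1-1799n}$, which is much smaller than $\eta_0^{15}$ once $k_0$ is large enough that $(N+1)\log n\leq (1799n-N-14)\log\eta_0$; summing these proves $\|G^{(n)}\|_{H^{N+1}}\leq\eta_0^{15}$ and $\|G^{(n)}\|_{H^{20}}\leq\eta_0^{-N/2}$.

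For the error term I apply~\eqref{multest3} on $[1,T_0]$ to get
$\|\mathcal{N}_p(g^{(n_1)},\dots,g^{(n_p)})\|_{\mathcal{G}_2^*}\leq (CT_0^4)^p\prod_j\|g^{(n_j)}\|_{\mathcal{G}_4^*}\leq (CT_0^4)^p\alpha^{\sum n_j}$,
with $\alpha\leq e^{-\sigma^2k_0/3}$; the constraint $\sum n_j>n$ together with $\sum n_j\geq p$ (so $\sum n_j\geq\max(n+1,p)$) lets me split the $p$-sum at $p=n+1$, bound the number of admissible tuples by $(n+1)^p$, and check that $C(n+1)T_0^4\alpha\ll 1$ for $k_0$ large, which gives $\|E^{(n)}\|_{\mathcal{G}_2^*}\leq e^{-n\sigma^2k_0/4}$. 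On $[T_0,T_1]$ I analogously apply~\eqref{bdnpj} to get
$\|\mathcal{N}_p(g^{(n_1)},\dots,g^{(n_p)})\|_{H^{N+1}}\leq (CT_1^4)^p p\,n^{N+3p-1}\eta_0^{N+7p-1-1799\sum n_j}$;
the dominant term $p=2$, $\sum n_j=n+1$ gives $n^{N+O(1)}\eta_0^{N+O(1)-1799(n+1)}$, and for $n\geq 10$ this is bounded by $\eta_0^{-nN/8}$ using $1799-N/8=1424>0$ and $(N+6)\log n\leq (1424n-1214)\log\eta_0$ (which holds for $n\geq 10$ once $\log\eta_0\geq 1$). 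The main obstacle is precisely this bookkeeping, balancing the combinatorial factors $(n+1)^p$ and polynomial factors $n^{N+3p-1}$ against the per-factor gain $\eta_0^{-1799}$; this works because the parameters were chosen so that $N-1-40N'=1799\gg N/8$ and $N_1=2N_3$ absorbs the Gevrey weight.
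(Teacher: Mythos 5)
Your proposal is correct and takes essentially the same approach as the paper: combine the $L^2$ bound on $g^{(n,q)}$ from Proposition~\ref{l2allgn} with the Fourier-localization bounds (Propositions~\ref{bdgnq0} and~\ref{roughgn}) to convert to $\mathcal{G}_4^*$ or Sobolev norms, and then feed the single-term bounds into the multilinear estimates~\eqref{multest3} and~\eqref{bdnpj} to control $E^{(n)}$ after the combinatorial sum over $p$ and the $n_j$. The only cosmetic differences are that you make the $\sqrt{a+b}\le\sqrt a+\sqrt b$ and $\sqrt x\le\gamma x+\tfrac1{4\gamma}$ splitting of $\kappa$ explicit, and you work per-$q$ with the $g^{(n,q)}$ decomposition where the paper aggregates directly on $g^{(m)}$; both yield the same parameter constraints ($N_1=2N_3$ to absorb the Gevrey weight, $N-1-40N'\gg N/8$ for the Sobolev error term).
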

\begin{proof} Considering $G^{(n)}$, by combining Proposition \ref{roughgn} and \eqref{l2allgn3}, we can bound
\[\|g^{(m)}(t)\|_{\mathcal{G}_4^*}\leq \eta_0^{40mN'}(B(t))^m\exp\bigg(\frac{C\sqrt{m\eta_0}}{(\log k_0)^{N_1}}\bigg)\leq\exp\bigg(Cm-\frac{\sigma^2mk_0}{2}+\frac{C\sqrt{\eta_0m}}{(\log k_0)^{N_1}}\bigg)
\] for $t\in[1,T_0]$. Note that $\sqrt{\eta_0}\sim k_0(\log k_0)^{N_3/2}$; since by our choice $N_1=2N_3$, we can prove that the exponent is bounded above by $-\sigma^2mk_0/3$, thus
\[\|G^{(n)}(t)\|_{\mathcal{G}_4^*}\leq \sum_{m=1}^ne^{-\sigma^2mk_0/3}\leq e^{-\sigma^2k_0/4}.
\] On $[T_0,T_1]$ we similarly have
\[\|g^{(m)}(t)\|_{H^{20}}\leq\eta_0^{40mN'}(Cm\eta_0)^{20}(B(t))^m\leq \eta_0^{40mN'}(Cm\eta_0)^{20}\cdot\eta_0^{m(-N+1)}.
\] By our choice $N=100N'$, so $\|g^{(m)}(t)\|_{H^{20}}\leq \eta_0^{-mN/2}$ and hence $\|G^{(n)}(t)\|_{H^{20}}\leq \eta_0^{-N/2}$. As for the $H^{N+1}$ norm, we have \[\|g^{(1)}(t)\|_{H^{N+1}}\leq(C\eta_0)^{N+1}\eta_0^{10}B(t)\leq \eta_0^{15}\] due to \eqref{funcbt0} and \eqref{funcbt3}, and for $m\geq 2$, by \eqref{l2allgn3} we have
\begin{equation}\label{highhnbd}
\|g^{(m)}(t)\|_{H^{N+1}}\leq (Cm\eta_0)^{N+1}\eta_0^{40mN'}(B(t))^m\leq (Cm\eta_0)^{N+1}\eta_0^{40mN'}\cdot\eta_0^{-m(N-1)},
\end{equation} which again implies that 
\begin{equation}\label{order2}\|G^{(n)}(t)-g^{(1)}(t)\|_{H^{N+1}}\leq\sum_{m=2}^{n}\|g^{(m)}(t)\|_{H^{N+1}}\leq\eta_0^{-N/2}
\end{equation} due to our choice of $N=100N'$. This proves the estimates for $G^{(n)}$. Considering $E^{(n)}$, for $t\in[1,T_0]$ we use \eqref{multest3} to bound
\[
\begin{split}\|E^{(n)}(t)\|_{\mathcal{G}_2^*}&\leq\sum_{p=2}^{\infty}\sum_{\substack{n_1,\cdots,n_p\leq n;\\n_1+\cdots+n_p>n}}\sum_{i=1}^3\big\|\mathcal{N}_{p,i}(g^{(n_1)},\cdots,g^{(n_p)})\big\|_{\mathcal{G}_2^*}
\leq \sum_{p=2}^{\infty}\sum_{\substack{n_1,\cdots,n_p\leq n;\\n_1+\cdots+n_p>n}}C^pt^{4p}\prod_{j=1}^p\|g^{(n_j)}(t)\|_{\mathcal{G}_4^*}\\
&\leq \sum_{p=2}^{\infty}\sum_{\substack{n_1,\cdots,n_p\leq n;\\n_1+\cdots+n_p>n}}C^pt^{4p}\prod_{j=1}^pe^{-\sigma^2n_jk_0/3}\leq\sum_{p=2}^{\infty}\sum_{n'\geq\max(n,p)}C^pn^pt^{4p}e^{-\sigma^2n'k_0/3}\\&\leq\sum_{n'\geq n}\big(Cnt^{4}e^{-\sigma^2k_0/3}\big)^{n'}\leq (Cnt^{4})^ne^{-n\sigma^2k_0/3}\leq e^{-n\sigma^2k_0/4};
\end{split}
\] for $t\in[T_0,T_1]$, we can use \eqref{bdnpj} to bound
\[
\begin{split}\|E^{(n)}(t)\|_{H^{N+1}}&\leq\sum_{p=2}^{\infty}\sum_{\substack{n_1,\cdots,n_p\leq n;\\n_1+\cdots+n_p>n}}\sum_{i=1}^3\big\|\mathcal{N}_{p,i}(g^{(n_1)},\cdots,g^{(n_p)})\big\|_{H^{N+1}}\\
&\leq \sum_{p=2}^{\infty}\sum_{\substack{n_1,\cdots,n_p\leq n;\\n_1+\cdots+n_p>n}}C^pt^{4p}\sum_{j=1}^p\|g^{(n_j)}(t)\|_{H^{N+2}}\prod_{i\neq j}\|g^{(n_i)}(t)\|_{H^{3}}.
\end{split}
\] By similar arguments as in \eqref{highhnbd} we can show that $\|g^{(m)}(t)\|_{H^{N+2}}\leq \eta_0^{N+5}\cdot\eta_0^{-nN/2}$, thus
\[
\|E^{(n)}(t)\|_{H^{N+1}}\leq\sum_{p=2}^{\infty}\sum_{n'\geq\max(n,p)}C^pn^pt^{4p}\cdot\eta_0^{N+5}\cdot\eta_0^{-n'N/2}\leq (Cnt^4)^n\eta_0^{-nN/2}\eta_0^{N+5}\leq \eta_0^{-nN/4},
\] provided $10\leq n\leq \eta_0^{N'}$. This completes the proof.
\end{proof}
\section{Proof of Proposition \ref{main2}}\label{error}
In this section we finish the proof of Proposition \ref{main2}. Let $\varepsilon_1$ be fixed as in Proposition \ref{choiceeps1}, and the data $g(T_0)$ be fixed as in (\ref{perturbdata}). The system \eqref{eulernewrewrite} is locally well-posed in any reasonable space (say $H^N$ or $\mathcal{G}_{\lambda}^*$), so we may construct the solution $g=(f,h,\theta)$ on a time interval containing $T_0$. Note that $\underline{g}$ satisfies \eqref{aux} at time $1$, so it satisfies \eqref{aux} at time $T_0$; since \[\mathbb{P}_0f(T_0)=\mathbb{P}_0\underline{f}(T_0),\quad h(T_0)=\underline{h}(T_0),\quad\theta(T_0)=\underline{\theta}(T_0)\] due to \eqref{perturbdata}, we know that $g$ also satisfies \eqref{aux} at time $T_0$, and hence at all times. 

We now turn to (\ref{newbd})$\sim$(\ref{newbd2}). Let $n_0=\eta_0^{N'}$, $G^{(n_0)}$ and $E^{(n_0)}$ be defined in \eqref{defcapgn} and \eqref{defen}. We denote $(g^{\#},\rho^{\#})=(G^{(n_0)},E^{(n_0)})$, and define $\widetilde{g}=g^*-g^{\#}$. We then have
\begin{equation}\label{approxeqn}
\partial_tg^{\#}=\mathcal{L}g^{\#}+\mathcal{N}(g^{\#})-\rho^{\#},
\end{equation} where 
\begin{equation}\label{bdfinal1}
\|g^{\#}(t)\|_{\mathcal{G}_{2}^{*}}\leq e^{-\sigma^2k_0/4},\quad \|\rho^{\#}(t)\|_{\mathcal{G}_2^*}\leq e^{-\eta_0^{N'}}
\end{equation} for $t\in[1,T_0]$, and 
\begin{equation}\label{bdfinal2}
\|g^{\#}(t)\|_{H^{N+1}}\leq\eta_0^{15},\quad \|g^{\#}(t)\|_{H^{20}}\leq\eta_0^{-N/2},\quad \|\rho^{\#}\|_{H^{N+1}}\leq e^{-\eta_0^{N'}}.
\end{equation} Note that $\widetilde{g}$ satisfies the equation
\begin{equation}\label{eqndiff}\partial_t\widetilde{g}=\mathcal{L}\widetilde{g}+\sum_{p=2}^{\infty}\big(\mathcal{N}_p(g^*,\cdots,g^*)-\mathcal{N}_p(g^{\#},\cdots,g^{\#})\big)+\rho^{\#}; 
\end{equation} moreover, using \eqref{nonlins} and \eqref{multest1}$\sim$\eqref{multest2}, we can write 
\begin{equation}\label{semilinear}\mathcal{L}_i(\widetilde{g})=\Psi_{0,i}\cdot\nabla \widetilde{g_i}+\Phi_{1,i}(\widetilde{g}),\quad \mathcal{N}_{p,i}(g^*,\cdots,g^*)=\Psi_{p-1,i}(g^*,\cdots,g^*)\cdot\nabla g_i+\Phi_{p,i}(g^*,\cdot,g^*),
\end{equation} and similarly for $\mathcal{N}_{p,i}(g^\#,\cdots,g^\#)$, where $\Psi_{p,i}$ and $\Phi_{p,i}$ are nonlinearities (or multilinear operators) of order $p$, such that
\begin{equation}\label{semiest}
\|\Psi_{p,i}(g^1,\cdots,g^p)(t)\|_{\mathcal{G}_\lambda^*}+\|\Phi_{p,i}(g^1,\cdots,g^p)(t)\|_{\mathcal{G}_\lambda^*}\leq C^pt^{4p}\prod_{j=1}^p\|g^j(t)\|_{\mathcal{G}_{\lambda}^*}
\end{equation} for any $1/4\leq \lambda\leq 4$, and
\begin{equation}\label{semiest2}
\|\Psi_{p,i}(g^1,\cdots,g^p)(t)\|_{H^N}+\|\Phi_{p,i}(g^1,\cdots,g^p)(t)\|_{H^N}\leq C^pt^{4p}\sum_{j=1}^p\|g^j(t)\|_{H^N}\prod_{i\neq j}\|g^i(t)\|_{H^{3}}.
\end{equation} This implies that
\begin{equation}\label{eqndiff2}
\begin{split}
\partial_t\widetilde{g_i}&=\Psi_{0,i}\cdot\nabla \widetilde{g_i}+\Phi_{1,i}(\widetilde{g})+\sum_{p=2}^{\infty}\big(\nabla\widetilde{g_i}\cdot\Psi_{p-1,i}(g^*,\cdots,g^*)\big)+\rho_i^{\#}\\
&+\sum_{p=2}^{\infty}\sum_{j=1}^{p-1}\nabla g_i^{\#}\cdot\Psi_{p-1,i}(\underbrace{g^*,\cdots,g^*}_{j-1},\widetilde{g},\underbrace{g^{\#},\cdots,g^{\#}}_{p-1-j})+\sum_{p=2}^{\infty}\sum_{j=1}^p\Phi_{p,i}(\underbrace{g^*,\cdots,g^*}_{j-1},\widetilde{g},\underbrace{g^{\#},\cdots,g^{\#}}_{p-j}).
\end{split}
\end{equation}
Below we will consider the interval $[T_0,T_1]$ and $[1,T_0]$ separately.

(1) When $t\geq T_0$, we will control $\widetilde{g}$ in $H^{5}$. Suppose $\|\widetilde{g}\|_{H^{5}}\leq \eta_0^{-N/2}$ up to some time, then we have $\|g^{\#}(t)\|_{H^{6}}+\|g^*(t)\|_{H^{5}}\leq2\eta_0^{-N/2}$ up to this time. Now we compute, using \eqref{eqndiff2}, that
\begin{equation}\label{diffenergy1}
\begin{split}
\partial_t\|\widetilde{g}\|_{H^{5}}^2&=2\sum_{i=1}^3\sum_{|\mu|\leq 5}\int_{\mathbb{T}\times\mathbb{R}}\bigg(\nabla^{\mu}\widetilde{g_i}\cdot\nabla^{\mu}(\Psi_{0,i}\nabla\widetilde{g_i})+\sum_{p=2}^{\infty}\nabla^{\mu}\widetilde{g_i}\cdot\nabla^{\mu}\big(\Psi_{p-1,i}(g^*,\cdots,g^*)\cdot\nabla\widetilde{g_i}\big)\\
&+\sum_{p=2}^{\infty}\sum_{j=1}^{p-1}\nabla^{\mu}\widetilde{g_i}\cdot\nabla^{\mu}\big(\nabla g_i^{\#}\cdot\Psi_{p-1,i}(\underbrace{g^*,\cdots,g^*}_{j-1},\widetilde{g},\underbrace{g^{\#},\cdots,g^{\#}}_{p-1-j})\big)\\
&+\sum_{p=2}^{\infty}\sum_{j=1}^{p}\nabla^{\mu}\widetilde{g_i}\cdot\nabla^{\mu}\Phi_{p,i}(\underbrace{g^*,\cdots,g^*}_{j-1},\widetilde{g},\underbrace{g^{\#},\cdots,g^{\#}}_{p-j})+\nabla^{\mu}\widetilde{g_i}\cdot\nabla^{\mu}\rho_i^{\#}\bigg)\,\mathrm{d}x\mathrm{d}y.
\end{split}
\end{equation}  Whenever $\nabla^{\mu}$ hits $\nabla\widetilde{g_i}$, we can integrate by parts to reduce to lower order terms; if $\nabla^{\mu}$ hits $\nabla g_i^{\#}$, we can control the corresponding term using the bound \eqref{bdfinal2} for $\|g^{\#}\|_{H^{6}}$. Notice that we never have $\nabla^{\mu}$ hitting $\nabla g$; therefore by \eqref{semiest2} we have
\[
\begin{split}\partial_t\|\widetilde{g}\|_{H^{5}}^2&\lesssim\sum_{p\geq 0}C^{p}t^{4(p+2)}\|\widetilde{g}\|_{H^{5}}^2\cdot(\|g^{\#}\|_{H^{6}}+\|g^*\|_{H^{5}})^{p}+\|\widetilde{g}\|_{H^{5}}\|\rho^{\#}\|_{H^{5}}\\
&\lesssim \sum_{p\geq 0}C^{p}t^{4(p+2)}\|\widetilde{g}\|_{H^5}^2\cdot \eta_0^{5}\eta_0^{-N\max(p-1,0)/2}+\|\widetilde{g}\|_{H^{5}}\|\rho^{\#}\|_{H^{5}}\\
&\lesssim\eta_0^{15}\|\widetilde{g}\|_{H^{5}}^2+e^{-\eta_0^{N'}}\|\widetilde{g}\|_{H^{5}}.
\end{split}
\] Since $\widetilde{g}(T_0)=0$, and $N'=30$, we can solve this to get that $\|\widetilde{g}\|_{H^{5}}\leq e^{-\eta_0^{N'}/2}\ll\eta_0^{-N}$, which completes the bootstrap.

Now a similar estimate allows us to control $\|\widetilde{g}\|_{H^{N}}$; in fact, suppose $\|\widetilde{g}\|_{H^{N}}\leq \eta_0^{-N/2}$ up to some time, then using \eqref{semiest2} similar to above, we have that
\allowdisplaybreaks
\begin{align*}
\partial_t\|\widetilde{g}\|_{H^{N}}^2&\lesssim\sum_{p\geq 0}C^{p}t^{4(p+2)}\|\widetilde{g}\|_{H^{N}}\cdot(\|g^{\#}\|_{H^{N+1}}+\|\widetilde{g}\|_{H^N})\cdot(\|g^{\#}\|_{H^{6}}+\|g^*\|_{H^{5}})^{p}+\|\widetilde{g}\|_{H^N}\|\rho^{\#}\|_{H^N}\\
&\lesssim\eta_0^{25}\|\widetilde{g}\|_{H^{N}}^2+e^{-\eta_0^{N'}/2}\|\widetilde{g}\|_{H^{N}}.
\end{align*} Again this closes the bootstrap for the $H^{N}$ norm, since $N'=30$, resulting in the final estimate
\begin{equation}\label{finalestg}
\sup_{t\geq T_0}\|\widetilde{g}(t)\|_{H^{N}}\leq e^{-\eta_0^{N'}/3}.
\end{equation}

The above shows that the solution $g$ exists at least up to time $t=T_1$, and moreover
\[\|g^*(T_1)-g^{\#}(T_1)\|_{H^{N}}\leq e^{-\eta_0^{N'}/3}.
\] Since $\|g^{\#}(T_1)-g^{(1)}(T_1)\|_{H^N}\leq\eta_0^{-N/2}$ by \eqref{order2}, we know $\|f^*(T_1)-f^{(1)}(T_1)\|_{H^N}\leq 2\eta_0^{-N/2}$. This in particular implies that
\[\bigg(\sum_{k_0/2\leq |k|\leq 2k_0}\int_{\eta_0/2\leq|\xi|\leq 2\eta_0}\big|\mathscr{F}f^*(T_1,k,\xi)-\mathscr{F}f^{(1)}(T_1,k,\xi)\big|^2\,\mathrm{d}\xi\bigg)^{1/2}\leq 2\eta_0^{-3N/2}.
\] Combining this with \eqref{funcbt4}, we obtain that
\[
\bigg(\sum_{k_0/2\leq |k|\leq 2k_0}\int_{\eta_0/2\leq|\xi|\leq 2\eta_0}\big|\widehat{f^*}(T_1,k,\xi)\big|^2\,\mathrm{d}\xi\bigg)^{1/2}\geq \frac{1}{4}\eta_0^{-(N-1)},
\] which implies that
\[
\big\|\langle \partial_z\rangle^{N_0}f^*(T_1)\big\|_{L^2}\geq\frac{1}{4}k_0^{N}\eta_0^{-(N-1)}\geq \eta_0^2
\] since $N_0=3N$. Using that $f^*=f-\underline{f}$ and the estimate (\ref{useass1}) for $\underline{f}$, this proves the first inequality in \eqref{newbd2}. The second inequality is a consequence of \eqref{finalestg} which controls $h^*(T_1)-h^{\#}(T_1)$, \eqref{farsmall3} which controls $h^{(1)}(T_1)$, \eqref{order2} which controls $h^{\#}(T_1)-h^{(1)}(T_1)$, and (\ref{useass3}) which controls $\underline{h}(T_1)$.

(2) When $t\leq T_0$, we will control $\widetilde{g}$ in a suitable Gevrey space. Define
\[M(t)=\sum_{i=1}^3\sum_{k}\int_{\mathbb{R}}e^{2\beta(t)\kappa(k,\xi)}|\widehat{\widetilde{g_i}}(t,k,\xi)|^2\,\mathrm{d}\xi,
\] where $\nu$ increases from $\beta(1)=3/2$ to $\beta(T_0)=7/4$, and $\partial_t\beta(t)\gtrsim T_0^{-1}$, then we compute
\[\partial_tM(t)\geq \frac{1}{Ct}\sum_{i=1}^3\sum_k\int_{\mathbb{R}}e^{2\beta(t)\kappa(k,\xi)}\cdot\kappa(k,\xi)|\widehat{\widetilde{g_i}}(t,k,\xi)|^2\,\mathrm{d}\xi+\sum_{i=1}^3K_i,
\] where 
\[|K_1|\leq\sqrt{M(t)}\cdot\|\rho^{\#}(t)\|_{\mathcal{G}_{\beta(t)}^*}\leq e^{-\eta_0^{N'}}\sqrt{M(t)},
\]and
\[|K_2|\leq M(t)\cdot\bigg(\sum_{p=2}^{\infty}\|\nabla g^{\#}\|_{\mathcal{G}_{\beta(t)}^*}\cdot C^pt^{4p}(\|g^{\#}\|_{\mathcal{G}_{\beta(t)}^*}+\|g^*\|_{\mathcal{G}_{\beta(t)}^*})^{p-2}+\sum_{p=1}^{\infty}C^pt^{4p}(\|g^{\#}\|_{\mathcal{G}_{\beta(t)}^*}+\|g^*\|_{\mathcal{G}_{\beta(t)}^*})^{p-1}\bigg).
\] For $K_3$ which comes from the transport term, we have
\[K_3=-2\Im\sum_{p=1}^{\infty}\sum_{i=1}^3\sum_{k,l}\int_{\mathbb{R}^2}e^{2\beta(t)\kappa(k,\xi)}\overline{\widehat{\widetilde{g_i}}(t,k,\xi)}\cdot\widehat{\widetilde{g_i}}(t,l,\eta)\cdot\big(l\cdot\widehat{\Psi_{(1)}}(t,k-l,\xi-\eta)+\eta\cdot\widehat{\Psi_{(2)}}(t,k-l,\xi-\eta)\big)\,\mathrm{d}\xi\mathrm{d}\eta,
\] where $\Psi_{(1)}$ and $\Psi_{(2)}$ are components of $\Psi_{p-1,i}(g^*,g^*,\cdots,g^*)$, and are real valued. Fixing $p$ and $i$, by adding smooth cutoffs, we can decompose this integral into two parts, $K'$ where 
\[|k-l|+|\xi-\eta|\geq \frac{1}{10}(|k|+|l|+|\xi|+|\eta|),
\] and $K''$ where 
\[|k-l|+|\xi-\eta|\leq \frac{1}{5}(|k|+|l|+|\xi|+|\eta|).
\] In the support of $K'$ we have 
\[\kappa(k,\xi)\leq \kappa(k-l,\xi-\eta)+0.99\kappa(l,\eta)+O(1),
\] thus by Young's inequality and \eqref{semiest} we can estimate
\begin{equation*}
\begin{split}|K'|&\lesssim\big\|e^{\beta(t)\kappa(k,\xi)}\widehat{\widetilde{g_i}}(t)\big\|_{L^2}\cdot\big\|e^{\beta(t)\kappa(m,\zeta)}(\widehat{\Psi_{(1)}}(t),\widehat{\Psi_{(2)}}(t))\big\|_{L^2}\cdot \big\|(|l|+|\eta|+1)e^{0.99\beta(t)\kappa(l,\eta)}\widehat{\widetilde{g_i}}(t)\big\|_{L^1}\\
&\qquad\lesssim M(t)\cdot C^pt^{4p}\|g\|_{\mathcal{G}_{\beta(t)}^{*}}^{p}.
\end{split}
\end{equation*} For $K''$, we use symmetry to replace the integrand by
\[\overline{\widehat{\widetilde{g_i}}(t,k,\xi)}\cdot\widehat{\widetilde{g_i}}(t,l,\eta)\cdot\widehat{\Psi_{(2)}}(t,k-l,\xi-\eta)\cdot(\eta\cdot e^{2\beta(t)\kappa(k,\xi)}-\xi\cdot e^{2\beta(t)\kappa(l,\eta)}),
\] and the other term involving $\Psi_{(1)}$ which can be treated in the similar way. Now using elementary inequalities we can show
\[|\eta\cdot e^{2\beta(t)\kappa(k,\xi)}-\xi\cdot e^{2\beta(t)\kappa(l,\eta)}|\lesssim|\xi-\eta|\cdot (1+\kappa(k,\xi))\cdot \max(e^{2\beta(t)\kappa(k,\xi)},e^{2\beta(t)\kappa(l,\eta)}).
\] Combining this with the inequality
\[\kappa(k,\xi)\leq \kappa(l,\eta)+0.99\kappa(k-l,\xi-\eta)+O(1),
\] and using Young's inequality and \eqref{semiest} again, we deduce that
\begin{equation*}|K'|\lesssim\bigg(\sum_k\int_{\mathbb{R}}e^{2\beta(t)\kappa(k,\xi)}\cdot\kappa(k,\xi)|\widehat{\widetilde{g_i}}(t,k,\xi)|^2\,\mathrm{d}\xi\bigg)\cdot\big\|e^{\beta(t)\kappa(m,\zeta)}(\widehat{\Psi_{(1)}}(t),\widehat{\Psi_{(2)}}(t))\big\|_{L^2},
\end{equation*} while the second factor in the above line is bounded by $t^{-2}\varepsilon_0$ (which is an upper bound for functions appearing in $\Psi_0$) if $p=1$, and by $C^pt^{4p}\|g\|_{\mathcal{G}_{\beta(t)}^{*}}^{p-1}$ if $p\geq 2$. Using also \eqref{bdfinal1} and bootstrap $\sqrt{M(t)}\leq e^{-\sigma^2k_0}$, we can bound 
\[|K''|\leq \frac{1}{10Ct}\sum_{i=1}^3\sum_k\int_{\mathbb{R}}e^{2\beta(t)\kappa(k,\xi)}\cdot\kappa(k,\xi)|\widehat{\widetilde{g_i}}(t,k,\xi)|^2\,\mathrm{d}\xi,
\] and therefore we conclude that
\[\partial_tM(t)\geq -Ct^4M(t)-e^{-\eta_0^{N'}}\sqrt{M(t)},\quad M(T_0)=0,
\] which easily closes the bootstrap. Since $\|\widetilde{g}(t)\|_{\mathcal{G}_{3/2}^*}\lesssim M(t)$ and $\|g^{\#}(1)\|_{\mathcal{G}_{2}^{*}}\leq e^{-\sigma^2k_0/4}$, this proves $\|g^*(1)\|_{\mathcal{G}_{2}^{*}}\lesssim e^{-\sigma^2k_0/6}$ which, when combines with the estimates (\ref{useass1})$\sim$(\ref{useass4}) which controls $\underline{g}$, proves the first inequality in \eqref{newbd}.

Finally, to prove the second inequality of (\ref{newbd}), we use the first equation in (\ref{eulernew}) to analyze $f$ (instead of $f^*$) directly. Define
\[S(t)=\int_{\mathbb{T}\times\mathbb{R}}(1+|v|)^2|f(t,z,v)|^2\,\mathrm{d}z\mathrm{d}v,
\] then by (\ref{eulernew}) we can compute
\[\partial_tS(t)=2\int_{\mathbb{T}\times\mathbb{R}}(1+|v|)^2f\cdot(Y_1\partial_zf+Y_2\partial_vf)\,\mathrm{d}z\mathrm{d}v,
\] where 
\[Y_1=(h+1)\partial_v\phi,\quad Y_2=-(h+1)\partial_z\phi-\theta;\quad \|\langle \nabla\rangle Y_j\|_{L^{\infty}}\lesssim \varepsilon_0t^{-2},
\]thanks to (\ref{useass3})$\sim$(\ref{useass4}) and the estimates for $g^*=g-\underline{g}$ obtained above. Integrating by parts, we get
\[|\partial_tS(t)|\lesssim\int_{\mathbb{T}\times\mathbb{R}}|f|^2\cdot\big(|(1+|v|)^2\partial_zY_1|+|\partial_v((1+|v|)^2Y_2)|\big)\,\mathrm{d}z\mathrm{d}v\lesssim \varepsilon_0t^{-2}S(t),
\] which implies that $S(1)\lesssim S(T_0)$. Since 
\[S(T_0)\lesssim\int_{\mathbb{T}\times\mathbb{R}}(1+|v|)^2|\underline{f}(T_0,z,v)|^2\,\mathrm{d}z\mathrm{d}v+\varepsilon_1^2\int_{\mathbb{T}\times\mathbb{R}}(1+|v|)^2|\varphi_p(k_0\sqrt{\sigma}v)|^2\,\mathrm{d}z\mathrm{d}v,
\] and the functional involving $\underline{f}$ can be controlled by applying the above arguments to $\underline{f}$,  we eventually get that $S(1)\lesssim\varepsilon_0^2$. By Cauchy-Schwartz, this implies the second inequality of (\ref{newbd}). \qed
\appendix
\section{Proofs of the auxiliary estimates}\label{appproof} In this appendix we prove the supporting estimates stated in Section \ref{linearize}.
\subsection{Kernels of the linearized system} In this subsection we prove Propositions \ref{linstep0} and \ref{systemrough}.
\begin{proof}[Proof of Proposition \ref{linstep0}] 
Recall that $\underline{h}=\underline{h}(t,v)$, and 
\begin{equation}
\underline{\Delta_t}=\partial_z^2+(\underline{h}+1)^2(\partial_v-t\partial_z)^2+(\underline{h}+1)(\partial_v\underline{h})(\partial_v-t\partial_z),
\end{equation}
 we know that $\underline{\Delta_t}$ maps the Fourier mode $e^{ik z}$ to itself; moreover by conjugating with $e^{itkv}$, we only need to study the operator 
 \begin{equation}
 Y:=-k^2+(\underline{h}+1)^2\partial_v^2+(\underline{h}+1)(\partial_v\underline{h})\partial_v,\end{equation}
  and if 
  \begin{equation}\label{1dkernel}
  \widehat{Y^{-1}F}(\xi)=\frac{-1}{\xi^2+k^2}\bigg(\widehat{F}(\xi)+\int_{\mathbb{R}}M'(t,k,\xi,\eta)\widehat{F}(\eta)\,\mathrm{d}\eta\bigg),
  \end{equation} 
  then \eqref{kernel1} holds with $M(t,k,\xi,\eta)=M'(t,k,\xi-tk,\eta-tk)$. Now write $Y=Z+W$, where \begin{equation}
  Z=\partial_v^2-k^2,\quad W=\underline{h}(\underline{h}+2)\partial_v^2+(\underline{h}+1)(\partial_v\underline{h})\partial_v:=h_1\partial_v^2+h_2\partial_v,
  \end{equation} 
  then
  \begin{equation}\label{neumann}
  Y^{-1}=\sum_{n=0}^{\infty}Z^{-1}(WZ^{-1})^n.
  \end{equation}
   The term $n=0$ corresponds to the first term in \eqref{1dkernel}; for the other terms, since\begin{equation}
   WZ^{-1}=h_1\frac{\partial_v^2}{\partial_v^2-k^2}+h_2\frac{\partial_v}{\partial_v^2-k^2},
   \end{equation}
    we know that
    \begin{equation}
    \begin{split}
    \mathscr{F}Z^{-1}(WZ^{-1})^nF(\xi)&=\frac{-1}{\xi^2+k^2}\int_{\mathbb{R}^n}\prod_{j=1}^n\bigg(\widehat{h_1}(\eta_{j-1}-\eta_j)\frac{\eta_j^2}{\eta_j^2+k^2}+\widehat{h_2}(\eta_{j-1}-\eta_j)\frac{\eta_j}{\eta_j^2+k^2}\bigg)\widehat{F}(\eta_n)\,\mathrm{d}\eta_1\cdots\mathrm{d}\eta_n\\
    &=\frac{-1}{\xi^2+k^2}\int_{\mathbb{R}}M_n(t,k,\xi,\eta)\widehat{F}(\eta)\,\mathrm{d}\eta,
\end{split}
\end{equation}
 where $\eta_0=\xi$, and 
 \begin{equation}
 |M_n(t,k,\xi,\eta)|\leq C^n\int_{\zeta_1+\cdots+\zeta_n=\xi-\eta}\prod_{j=1}^n(|\widehat{h_1}(\zeta_j)|+|\widehat{h_2}(\zeta_j)|)\,\mathrm{d}\zeta_1\cdots\mathrm{d}\zeta_{n-1}
 \end{equation}
  for some absolute constant $C$. By \eqref{useass3} we have that
  \begin{equation}
  |\widehat{h_j}(t,\zeta)|\lesssim\varepsilon_0 e^{-(C_0-3)|\zeta|},\quad j\in\{1,2\},\end{equation} therefore we have \begin{equation}|M_n(t,k,\xi,\eta)|\leq (C\varepsilon_0)^n\int_{\zeta_1+\cdots+\zeta_n=\xi-\eta}\exp\bigg(-(C_0-3)\sum_{j=1}^n|\zeta_j|\bigg)\,\mathrm{d}\zeta_1\cdots\mathrm{d}\zeta_{n-1}\lesssim (C\varepsilon_0)^ne^{-(C_0-4)|\xi|},\end{equation} which implies \eqref{kernel2}.

In physical space, similarly we know that if
\begin{equation}Y^{-1}F(v)=\int_{\mathbb{R}}K(t,k,v,w)F(w)\,\mathrm{d}w,\end{equation} then \eqref{kernel3} holds with the same $K=K(t,k,v,w)$. Now we have constants $c$, $c_1$ and $c_2$ such that 
\begin{equation}\frac{1}{\partial_v^2-k^2}f(v)=\int_{\mathbb{R}}c|k|^{-1}e^{-|k||v-w|}f(w)\,\mathrm{d}w,\end{equation}\begin{equation}\frac{\partial_v^2}{\partial_v^2-k^2}f(v)=f(v)+\int_{\mathbb{R}}c_1|k|e^{-|k||v-w|}f(w)\,\mathrm{d}w,\quad \frac{\partial_v}{\partial_v^2-k^2}f(v)=\int_{\mathbb{R}}c_2\mathrm{sgn}(v-w)\cdot e^{-|k||v-w|}f(w)\,\mathrm{d}w,\end{equation} so choosing $n=0$ in \eqref{neumann} yields a term $K_0:=c|k|^{-1}e^{-|k||v-w|}$ in the kernel $q$; for $n\geq 1$, we have \begin{equation}Z^{-1}(WZ^{-1})^nf(v)=\int_{\mathbb{R}}K_n(t,k,v,w)f(w)\,\mathrm{d}w,\end{equation} where \begin{multline}K_n(t,k,v,w)=\int_{\mathbb{R}^n}|k|^{-1}e^{-|k||v-w_1|}\prod_{j=1}^n\bigg(h_1(w_j)(\delta(w_j-w_{j+1})+|k|e^{-|k||w_j-w_{j+1}|})\\+h_2(w_j)\mathrm{sgn}(w_j-w_{j+1})e^{-k|w_j-w_{j+1}|}\bigg)\,\mathrm{d}w_1\cdots\mathrm{d}w_n,\end{multline} with $w_{n+1}=w$. Absorbing the possible $\delta$ factors, we only need to study terms of form \begin{equation}\int_{\mathbb{R}^m}|k|^{-1}e^{-|k||v-w_1|}\prod_{j=1}^{m+1}(h_1(w_j))^{q_j-1}\prod_{j=1}^m(|k|h_1(w_j)+h_2(w_j)\mathrm{sgn}(w_j-w_{j+1}))e^{-|k||w_j-w_{j+1}|}\,\mathrm{d}w_1\cdots\mathrm{d}w_m,\end{equation} where $w_{m+1}=w$, $q_j\geq 1$ and $q_1+\cdots+q_{m+1}=n+1$. In particular\begin{equation}|K_n|\lesssim C^n(\|h_1\|_{L^{\infty}}+\|h_2\|_{L^{\infty}})^n|k|^{-2}\int_{z_1+\cdots +z_{m+1}=v-w}\prod_{j=1}^m(|k|e^{-|k||z_j|})\mathrm{d}z_1\cdots\mathrm{d}z_m\lesssim (C\varepsilon_0)^n|k|^{-1}e^{-|k||v-w|/2},\end{equation} and similar estimates hold for $\partial_{v,w}K_n$ and $\partial_{w}^2K_n$, except for the term $\partial_w^2K_n$ with $m=0$, which equals
\begin{equation}\partial_w^2\big(|k|^{-1}e^{-|k||v-w|}(h_1(w))^n\big)=O(\varepsilon_0^n)\delta(v-w)+O(\varepsilon_0^n)|k|e^{-|k||v-w|/2}.\end{equation} Therefore all $K_n$ satisfy the estimates \eqref{kernel4}$\sim$\eqref{kernel6}, and so does\[K=\sum_{n=0}^{\infty}K_n.\qedhere\]
\end{proof}
 \begin{proof}[Proof of Proposition \ref{systemrough}] We first prove part (1). Using \eqref{eulernewl} and \eqref{eulersupl}, we can compute
 \begin{align}
\nonumber\partial_tf'&=-\underline{\theta}\cdot\partial_v f'-(\underline{h}+1)\nabla^{\perp}\underline{\phi}\cdot\nabla f'-(\underline{h}+1)\big[\partial_v\underline{f}\cdot\partial_z\mathbb{P}_{\neq 0}\underline{\Delta_t}^{-1}f'-\partial_z\underline{f}\cdot\partial_v\mathbb{P}_{\neq 0}\underline{\Delta_t}^{-1}f'\big]\\
\nonumber&+(\underline{h}+1)(\partial_v\underline{f}\cdot\partial_z-\partial_z\underline{f}\cdot\partial_v)\mathbb{P}_{\neq 0}\underline{\Delta_t}^{-1}\big[2(\underline{h}+1)h'(\partial_v-t\partial_z)^2+h'\partial_v\underline{h}(\partial_v-t\partial_z)\big]\underline{\phi}\\
\nonumber&+(\underline{h}+1)(\partial_v\underline{f}\cdot\partial_z-\partial_z\underline{f}\cdot\partial_v)\mathbb{P}_{\neq 0}\underline{\Delta_t}^{-1}(\underline{h}+1)\partial_vh'(\partial_v-t\partial_z)\underline{\phi}\\
\label{expanded1}&-\nabla\underline{f}\cdot\nabla^{\perp}\underline{\phi}\cdot h'-\partial_v\underline{f}\cdot\theta',\\
\label{expanded2}\partial_th'&=-\frac{\mathbb{P}_0f'+h'}{t}-\underline{\theta}\cdot\partial_vh'-\partial_v\underline{h}\cdot\theta',\\
\nonumber\partial_t\theta'&=\frac{-2\theta'}{t}-\partial_v(\underline{\theta}\theta')+\frac{1}{t}\partial_z\underline{\phi}\cdot f'+\frac{1}{t}\underline{f}\cdot\partial_z\mathbb{P}_{\neq 0}\Delta_t^{-1}f'\\
\label{expanded3}&-\frac{1}{t}\underline{f}\cdot\partial_z\underline{\Delta_t}^{-1}\big(2(\underline{h}+1)h'(\partial_v-t\partial_z)^2+(h'\partial_v\underline{h}+(\underline{h}+1)\partial_vh')(\partial_v-t\partial_z)\big)\underline{\phi}.
\end{align} Using \eqref{useass1}$\sim$\eqref{useass4} to control the functions $(\underline{f},\underline{h},\underline{\theta},\underline{\phi})$ and their combinations, and using \eqref{kernel1}$\sim$\eqref{kernel2} to control the kernel of the operator $\underline{\Delta_t}^{-1}$, we first get the simple bounds\[|q_{13}(t,k,0,\xi,\eta)|\lesssim\varepsilon_0e^{-(C_0/2)(|k|+|\xi-\eta|)},\quad q_{21}(t,0,l,\xi,\eta)\equiv0,\quad |q_{22}(t,0,0,\xi,\eta)|\lesssim\varepsilon_0\frac{|\eta|+1}{t^2}e^{-(C_0/2)|\xi-\eta|},\] and \[|q_{33}(t,0,0,\xi,\eta)|\lesssim \varepsilon_0\frac{|\eta|+1}{t^2}e^{-(C_0/2)|\xi-\eta|},\quad |q_{23}(t,0,0,\xi,\eta)|\lesssim\bigg(\varepsilon_0^2+\frac{\varepsilon_0}{t}\bigg)e^{-(C_0/2)|\xi-\eta|},\] where the last inequality follows from decomposing \[\underline{h}=\mathbb{P}_0f_\infty+(\underline{h}-\mathbb{P}_0f_\infty)\] and using \eqref{useass2} and \eqref{useass3}, which proves \eqref{symbolbd3}$\sim$\eqref{symbolbd4} and \eqref{symbolbd7}.

Then, using \eqref{expanded1} we have \begin{align}|q_{11}(t,k,l,\xi,\eta)|&\lesssim\varepsilon_0e^{-(C_0/2)(|k-l|+|\xi-\eta|)}\frac{|l|+|\eta|}{t^2}+\varepsilon_0e^{-(C_0/2)|k-l|}\bigg(e^{-(C_0/2)|\xi-\eta|}\frac{\mathbf{1}_{l\neq 0}}{(\eta-tl)^2+l^2}(|l|+|k-l|\cdot|\eta|)\nonumber\\
\label{q11est}&+\int_{\mathbb{R}}e^{-(C_0/2)|\xi-\zeta|}\frac{\mathbf{1}_{l\neq 0}}{(\zeta-tl)^2+l^2}(|l|+|k-l|\cdot|\zeta|)|m(t,l,\zeta,\eta)|\,\mathrm{d}\zeta\bigg),\end{align} and \begin{align}|q_{12}(t,k,0,\xi,\eta)|&\lesssim\varepsilon_0^2\sum_{q}e^{-(C_0/2)(|k-q|+|q|)}\int_{\mathbb{R}}e^{-(C_0/2)|\xi-\zeta|}\frac{\mathbf{1}_{q\neq 0}}{(\zeta-tq)^2+q^2}(|q|+|k-q|\cdot|\zeta|)\bigg(1+\frac{1+|\eta|}{t}\bigg)\,\mathrm{d}\zeta\nonumber\\
&\times\bigg[e^{-(C_0/2)|\zeta-\eta|}+\int_{\mathbb{R}}e^{-(C_0/2)|\beta-\eta|}|m(t,q,\zeta,\nu)|\,\mathrm{d}\nu\bigg]+\varepsilon_0^2\frac{e^{-(C_0/2)(|k|+|\xi-\eta|)}}{t^2},\end{align}which imply \eqref{symbolbd1}$\sim$\eqref{symbolbd2} by elementary computation. Similarly, using \eqref{expanded3} we have
\begin{multline}
|q_{31}(t,0,l,\xi,\eta)|\lesssim\varepsilon_0 e^{-(C_0/2)|l|}\bigg[e^{-(C_0/2)|\xi-\eta|}\bigg(\frac{1}{t^3}+\frac{|l|}{t((\eta-tl)^2+l^2)}\bigg)\\+\int_{\mathbb{R}}e^{-(C_0/2)|\xi-\zeta|}\frac{|l|}{t((\zeta-tl)^2+l^2)}\cdot|m(t,l,\zeta,\eta)|\,\mathrm{d}\zeta\bigg],
\end{multline} and 
\begin{multline}
|q_{32}(t,0,0,\xi,\eta)|\lesssim\varepsilon_0^2\sum_{q}e^{-(C_0/2)|q|}\int_{\mathbb{R}}e^{-(C_0/2)|\xi-\zeta|}\frac{|q|}{t((\zeta-tq)^2+q^2)}\bigg(1+\frac{1+|\eta|}{t}\bigg)\\
\times\bigg(e^{-(C_0/2)|\zeta-\eta|}+\int_{\mathbb{R}}e^{-(C_0/2)|\nu-\eta|}|m(t,q,\zeta,\nu)|\,\mathrm{d}\nu\bigg),
\end{multline} which imply \eqref{symbolbd5}$\sim$\eqref{symbolbd6} by elementary computation. 

For part (2), we first study $q_{11}^R$. Using \eqref{expanded1} we can compute
\begin{align}|q_{11}^R(t,k,l,\xi,\eta)|&\lesssim\varepsilon_0e^{-(C_0/2)|k-l|}\bigg(e^{-(C_0/2)|\xi-\eta|}\frac{\mathbf{1}_{l\neq 0}}{(\eta-tl)^2+l^2}(|l|+|k-l|\cdot|\eta|)\nonumber\\
&+\int_{\mathbb{R}}e^{-(C_0/2)|\xi-\zeta|}\frac{\mathbf{1}_{l\neq 0}}{(\zeta-tl)^2+l^2}(|l|+|k-l|\cdot|\zeta|)|m(t,l,\zeta,\eta)|\,\mathrm{d}\zeta\bigg),\end{align}
where the right hand side is simply \eqref{q11est} without the first term. This then implies \eqref{reactonly}, in the same way as \eqref{symbolbd1}.

Next, the term $q_{11}^T$ already satisfies (\ref{defq11''}); we can moreover decompose $q_{11}^R$ by decomposing $\underline{f}$ using (\ref{useass1})$\sim$(\ref{useass2}), and decompose $\underline{\Delta_t}$ using (\ref{kernel1}). This gives rise to the term $q_{11}'$, plus an error term that satisfies (\ref{symbolbd1}) with an extra $\varepsilon_0$ factor in front. This provides the required decomposition $q_{11}=q_{11}'+q_{11}''$.
 \end{proof}
 \subsection{Energy estimates} In this subsection we prove Propositions \ref{energy1}$\sim$\ref{lastone}. First let us collect some auxiliary estimate for the symbols used in these estimates.
 
 For $\beta\geq 1$, define
 \begin{equation}\label{defakweigen}A_{k,\beta}(t,\xi)=\frac{1}{(\xi-kt)^2+\beta^2k^2},\,k\neq 0,\quad A_{0,\beta}(t,\xi)=\frac{1}{\xi^2+\beta^2};\quad A_{*,\beta}(t,\xi)=\sum_{k}e^{-2|k|}A_{k}(t,\xi),\end{equation}
 then we have $(A_{k},A_*)=(A_{k,\varepsilon_0^{-1/2}},A_{*,\varepsilon_0^{-1/2}})$, and $(\widetilde{A_k},\widetilde{A_*})=(A_{k,Q},A_{*,Q})$, see Propositions \ref{energy1} and \ref{lastone} for definitions.
 \begin{proposition} Let $\beta\geq 1$. We have the following estimates.
 \begin{align}
\label{weight1}|\partial_tA_{k,\beta}(t,\xi)|\lesssim \beta^{-1}A_k(t,\xi),\quad A_k(t,\xi)&\lesssim e^{|\xi-\eta|}A_k(t,\eta),\quad A_*(t,\xi)\lesssim e^{|\xi-\eta|}A_*(t,\eta),\\
\label{simple0}\frac{\mathbf{1}_{l\neq 0}}{(\xi-tl)^2+l^2}&\lesssim \beta^{-2}A_{l,\beta}(t,\xi),\\
\label{simple}e^{-|k-l|}\mathbf{1}_{k\neq l}\sqrt{A_{k,\beta}(t,\xi)A_{l,\beta}(t,\xi)}&\lesssim\begin{dcases}\min(|\xi|^{-2},\beta^{-2}),&|\xi|\gg t\max(|k|,|l|);\\
\beta^{-1}(t\max(|k|,|l|))^{-1},&|\xi|\lesssim t\max(|k|,|l|),\end{dcases}\\
\label{simple2}e^{-|k-l|}\sqrt{\frac{A_{k,\beta}(t,\xi)}{A_{l,\beta}(t,\xi)}}&\lesssim\begin{dcases}1,&|\xi-tk|\gtrsim t,\\
1+\beta^{-1}t\cdot(\max(|k|,|l|))^{-1},&|\xi-tk|\ll t,\end{dcases}\\
\label{simple3}e^{-2|k|}\lesssim\sqrt{\frac{A_{*,\beta}(t,\xi)}{A_{k,\beta}(t,\xi)}}&\lesssim e^{|k|}\big(\beta^{-1}te^{-|\xi|/t}+1\big),\\
\label{extra}e^{-|k-l|}\mathbf{1}_{k\neq l}|\xi|\cdot\sqrt{\frac{A_{k,\beta}(t,\xi)}{(\xi-tl)^2+l^2}}&\lesssim \frac{|k|}{\sqrt{(\xi-tk)^2+k^2}}+\frac{|l|}{\sqrt{(\xi-tl)^2+l^2}}:=F_k(t,\xi)+F_l(t,\xi),
\end{align}
 \end{proposition}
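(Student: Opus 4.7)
The plan is to verify each bound by direct algebraic computation. Writing $D_{k,\beta}(t,\xi) = (\xi-kt)^2 + \beta^2 k^2$ (with $D_{0,\beta} = \xi^2 + \beta^2$) so that $A_{k,\beta} = D_{k,\beta}^{-1}$, all six estimates reduce to applications of AM--GM, the triangle inequality, and elementary case splits based on the position of $\xi$ relative to the critical lines $\xi = qt$. Since $\beta \geq 1$ and $k \in \mathbb{Z}$, no subtle cancellation is involved.

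For \eqref{weight1}, I would compute $\partial_t A_{k,\beta} = 2k(\xi-kt)D_{k,\beta}^{-2}$ directly, then apply AM--GM in the form $2|k||\xi-kt| \leq \beta^{-1} D_{k,\beta}$ to obtain the $\partial_t$ bound. The $\xi$-Lipschitz estimate follows from $|\partial_\xi \log D_{k,\beta}| = 2|\xi-kt|/D_{k,\beta} \leq 1$, and the $A_*$ version by pulling the $e^{|\xi-\eta|}$ factor uniformly through the sum in $k$. Estimates \eqref{simple0}--\eqref{simple2} are direct comparisons between the $D_{k,\beta}$'s with different parameters; the key step in \eqref{simple} is a split into $|\xi| \gg t\max(|k|,|l|)$, where $|\xi-kt|, |\xi-lt| \gtrsim |\xi|$, against the opposite regime, where the indicator $\mathbf{1}_{k\neq l}$ forces at least one of $|\xi-kt|, |\xi-lt|$ to exceed a fixed fraction of $t\max(|k|,|l|)$. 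Estimate \eqref{simple2} is a ratio calculation after the split $|\xi-kt| \gtrsim t$ vs.\ $|\xi-kt| \ll t$, with the $\beta^{-1} t (\max(|k|,|l|))^{-1}$ correction arising from the latter regime where $D_{k,\beta} \sim \beta^2 k^2$ while $D_{l,\beta}$ may be as small as $\beta^2 l^2$.

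For \eqref{simple3}, the lower bound is obtained by taking the $k$-th term in the sum defining $A_{*,\beta}$. The upper bound requires identifying the dominant contribution: either the single term $A_{k,\beta}$ itself (yielding the $1$ in the bracket, up to the factor $e^{|k|}$), or the cluster of near-critical terms $q \approx \xi/t$, for which $A_{q,\beta} \sim \beta^{-2}q^{-2} \sim \beta^{-2}(t/|\xi|)^2$, and the weight $e^{-2|q|}$ supplies the $e^{-|\xi|/t}$ decay after summation; comparing with $\sqrt{A_{k,\beta}}^{-1} \sim \beta|k| + |\xi-kt|$ produces the $\beta^{-1} t$ prefactor. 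Finally, \eqref{extra} is the subtlest: writing $|\xi| \leq |\xi-kt| + t|k|$ and distributing the two summands, the first piece is controlled via $|\xi-kt|\sqrt{A_{k,\beta}} \leq 1$, and the resulting $((\xi-tl)^2+l^2)^{-1/2}$ is absorbed into $F_l$ using $|l| \geq 1$ together with $e^{-|k-l|}$ to shift $l \mapsto k$; the second piece $t|k|\sqrt{A_{k,\beta}/((\xi-tl)^2+l^2)}$ is bounded by $F_k$ via a further triangle-inequality decomposition. The main obstacle is ensuring that the shift $l \mapsto k$ in the denominator does not generate a spurious factor of $\beta$: this is precisely why the case $k = l$ must be excluded by the indicator, and why the exponential weight $e^{-|k-l|}$—though bounded by $1$ and thus apparently playing no sharpness role—is in fact essential to controlling the denominator shift uniformly in $\beta$.
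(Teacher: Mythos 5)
Your overall strategy—direct algebraic verification by case splits, AM--GM, and the triangle inequality—is the same as the paper's, and most of your sketch checks out. However, a few of the individual justifications would fail if executed as written.

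The clearest problem is in \eqref{simple}, Case 2 ($|\xi|\lesssim t\max(|k|,|l|)$). You assert that $k\neq l$ ``forces at least one of $|\xi-kt|$, $|\xi-lt|$ to exceed a fixed fraction of $t\max(|k|,|l|)$.'' This is false: take $k=100$, $l=99$, $\xi=100t$, so $|\xi-kt|=0$, $|\xi-lt|=t$, while $t\max(|k|,|l|)=100t$. What $k\neq l$ actually gives is $|(\xi-kt)-(\xi-lt)|=t|k-l|\geq t$, hence $\max(|\xi-kt|,|\xi-lt|)\geq t/2$. Combined with $\min(D_{k,\beta},D_{l,\beta})\geq\beta^2\min(|k|,|l|)^2$ this yields $\sqrt{A_{k,\beta}A_{l,\beta}}\lesssim(\beta t\min(|k|,|l|))^{-1}$, and you then \emph{need} the factor $e^{-|k-l|}$ to upgrade $\min(|k|,|l|)$ to $\max(|k|,|l|)$ in the denominator, via $\max(|k|,|l|)/\min(|k|,|l|)\leq 1+|k-l|\leq e^{|k-l|}$. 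So the exponential weight is essential already in \eqref{simple}, not only in \eqref{extra}; your text reads as though the indicator alone suffices there.

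Two further imprecisions. In \eqref{simple2} you attribute the correction term to the regime where ``$D_{l,\beta}$ may be as small as $\beta^2 l^2$''; but you are upper-bounding $\sqrt{A_{k,\beta}/A_{l,\beta}}=\sqrt{D_{l,\beta}/D_{k,\beta}}$, so the dangerous scenario is $D_{l,\beta}$ being \emph{large} (up to $\sim t^2(1+|k-l|)^2$, since $|\xi-lt|\gtrsim t$ once $|\xi-kt|\ll t$ and $l\neq k$) against $D_{k,\beta}\sim\beta^2k^2$ in the denominator—you have the roles of numerator and denominator reversed. In \eqref{extra}, your claim that the second piece $t|k|\sqrt{A_{k,\beta}/((\xi-tl)^2+l^2)}$ ``is bounded by $F_k$'' does not survive the regime $|\xi-tl|\ll t$ with $t$ large: there $t/\sqrt{(\xi-tl)^2+l^2}$ can be of size $t/|l|\gg e^{|k-l|}$, and the bound by $F_k$ breaks. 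In that regime one should instead use $|\xi-tk|\geq t/2$ (so $t/\sqrt{D_{k,\beta}}\lesssim 1$) together with $e^{-|k-l|}|k|\leq|l|$ to land on $F_l$, not $F_k$. All six estimates are true, and your framework is correct, but the case structure in \eqref{simple}, \eqref{simple2} and \eqref{extra} needs these repairs.
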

 \begin{proof} These follow from elementary computations. For example, to prove (\ref{simple}) for $kl\neq 0$ and $k\neq l$, we compute
 \[\sqrt{A_{k,\beta}(t,\xi)A_{l,\beta}(t,\xi)}=\frac{1}{\sqrt{((\xi-kt)^2+\beta^2k^2)((\xi-lt)^2+\beta^2l^2)}}.
 \] If $|\xi|\gg t\max(|k|,|l|)$ then we can estimate
 \[\frac{1}{\sqrt{((\xi-kt)^2+\beta^2k^2)((\xi-lt)^2+\beta^2l^2)}}\lesssim\min(|\xi|^{-2},\beta^{-2}|kl|^{-1});
 \] if $|\xi|\lesssim t\max(|k|,|l|)$, since $k\neq l$, we have $\max(|\xi-kt|,|\xi-lt|)\gtrsim t$, so
 \[\frac{1}{\sqrt{((\xi-kt)^2+\beta^2k^2)((\xi-lt)^2+\beta^2l^2)}}\lesssim\frac{1}{\beta t\min(|k|,|l|)}\lesssim    e^{|k-l|}\frac{1}{\beta t\max(|k|,|l|)}.
 \] The case $kl=0$ as well as the other inequalities are proved by similar arguments.
 \end{proof}
 \begin{proof}[Proof of Proposition \ref{energy1}] By symmetry, we only need to consider the interval $[T_2,T_0]$. We compute\begin{equation}\partial_tM_0(t)=\sum_{i=1}^3I_{i}(t)+\sum_{i,j=1}^3J_{ij}(t)+K(t),\end{equation} where 
 \begin{align*}
I_{1}(t)&=\sum_{k}\int_{\mathbb{R}}\partial_tA_k(t,\xi)\cdot e^{\tau\lambda_0(t)(|k- k_*|+|\xi- \eta_*|)}\big|\widehat{g_1'}(t,k,\xi)\big|^2\,\mathrm{d}\xi+\sum_{j=2}^3\int_{\mathbb{R}}\partial_tA_*(t,\xi)\cdot e^{\tau\lambda_0(t)(|k_*|+|\xi- \eta_*|)}\big|\widehat{g_j'}(t,0,\xi)\big|^2\,\mathrm{d}\xi,\\
I_2(t)&=\partial_t\lambda_0(t)\cdot\bigg(\sum_{k}\int_{\mathbb{R}}A_k(t,\xi)(|k- k_*|+|\xi- \eta_*|)e^{\tau\lambda_0(t)(|k- k_*|+|\xi- \eta_*|)}\big|\widehat{g_1'}(t,k,\xi)\big|^2\,\mathrm{d}\xi\\
&\qquad\quad+\sum_{j=2}^3\int_{\mathbb{R}}A_*(t,\xi)(|k_*|+|\xi- \eta_*|)e^{\tau\lambda_0(t)(|k_*|+|\xi- \eta_*|)}\big|\widehat{g_j'}(t,0,\xi)\big|^2\,\mathrm{d}\xi\bigg),\\
I_3(t)&=-2t^{-1}\Re\int_{\mathbb{R}}A_*(t,\xi)e^{\tau\lambda_0(t)(|k_*|+|\xi- \eta_*|)}\big(\big|\widehat{g_2'}(t,0,\xi)\big|^2+2\big|\widehat{g_3'}(t,0,\xi)\big|^2+\overline {\widehat{g_2'}(t,0,\xi)}\cdot \widehat{g_1'}(t,0,\xi)\big)\,\mathrm{d}\xi,\\
J_{1j}&=2\Re\sum_{k,l}\int_{\mathbb{R}^2}A_k(t,\xi)q_{1j}(t,k,l,\xi,\eta)e^{\tau\lambda_0(t)(|k- k_*|+|\xi- \eta_*|)}\overline{\widehat{g_1'}(t,k,\xi)}\widehat{g_j'}(t,l,\eta)\,\mathrm{d}\xi\mathrm{d}\eta\,\,\,\,(1\leq j\leq 3),\\
J_{ij}&=2\Re\sum_{l}\int_{\mathbb{R}^2}A_*(t,\xi)q_{ij}(t,0,l,\xi,\eta)e^{\tau\lambda_0(t)(|k_*|+|\xi- \eta_*|)}\overline{\widehat{g_i'}(t,0,\xi)}\widehat{g_j'}(t,l,\eta)\,\mathrm{d}\xi\mathrm{d}\eta\,\,\,\,(2\leq i\leq 3,1\leq j\leq 3),\\
K(t)&=2\Re\sum_{k}\int_{\mathbb{R}}A_k(t,\xi)e^{\tau\lambda_0(t)(|k- k_*|+|\xi- \eta_*|)}\overline{\widehat{g_1'}(t,k,\xi)}\widehat{\rho_1'}(t,k,\xi)\mathrm{d}\xi\\
&\qquad\quad+2\Re\sum_{j=2}^3\int_{\mathbb{R}}A_*(t,\xi)e^{\tau\lambda_0(t)(|k_*|+|\xi- \eta_*|)}\cdot\overline{\widehat{g_j'}(t,0,\xi)}\widehat{\rho_j'}(t,0,\xi)\mathrm{d}\xi.
\end{align*} Now by the first inequality in \eqref{weight1} we get $|I_1(t)|\lesssim  \varepsilon_0^{1/2}M_0(t)$, and moreover $I_2(t)\geq 0$. The term $K(t)$ is estimated by Cauchy-Schwartz,
\begin{equation*}
\begin{split}K(t)&\lesssim\bigg(\sum_{k}\int_{\mathbb{R}}A_k(t,\xi)e^{\tau\lambda_0(t)(|k- k_*|+|\xi- \eta_*|)}\big|\widehat{g_1'}(t,k,\xi)\big|^2\mathrm{d}\xi+\sum_{j=2}^3\int_{\mathbb{R}}A_*(t,\xi)e^{\tau\lambda_0(t)(|k_*|+|\xi- \eta_*|)}\cdot\big|\widehat{g_j'}(t,0,\xi)\big|^2\mathrm{d}\xi\bigg)^{1/2}\\
&\times \bigg(\sum_{k}\int_{\mathbb{R}}A_k(t,\xi)e^{\tau\lambda_0(t)(|k- k_*|+|\xi- \eta_*|)}\big|\widehat{\rho_1'}(t,k,\xi)\big|^2\mathrm{d}\xi+\sum_{j=2}^3\int_{\mathbb{R}}A_*(t,\xi)e^{\tau\lambda_0(t)(|k_*|+|\xi- \eta_*|)}\cdot\big|\widehat{\rho_j'}(t,0,\xi)\big|^2\mathrm{d}\xi\bigg)^{1/2}\\
&\lesssim\sqrt{M_0(t)}\cdot Z(t),
\end{split}
\end{equation*} and using (\ref{simple3}) and Cauchy-Schwartz we can bound $|I_3(t)|\lesssim (\varepsilon_0^{1/2}+t^{-1})M_0(t)$.

We now proceed to analyze $J_{ij}(t)$. Using (\ref{symbolbd3})$\sim$(\ref{symbolbd7}) and Cauchy-Schwartz, we easily see that
\begin{equation}\sum_{(i,j)\neq(1,1),(1,2)}|J_{ij}(t)|\lesssim\varepsilon_0\bigg(1+\frac{|\eta_*|}{t^2}\bigg)M_0(t)+\varepsilon_0^{1/2}I_2(t).\end{equation}
For $J_{11}$ we have
\begin{equation*}\begin{aligned}A_k(t,\xi)|q_{11}(t,k,l,\xi,\eta)|&\lesssim\varepsilon_0e^{-(C_0/4)(|k-l|+|\xi-\eta|)}A_k(t,\xi)\bigg(\frac{|k|+|\xi|+1}{t^2}+\varepsilon_0^{-1}A_l(t,\xi)(|k|+1+\mathbf{1}_{k\neq l}\cdot|\xi|)\bigg)\\
&\lesssim\varepsilon_0e^{-(C_0/5)(|k-l|+|\xi-\eta|)}\sqrt{A_k(t,\xi)A_l(t,\eta)}\sqrt{\frac{A_k(t,\xi)}{A_l(t,\xi)}}\frac{|k|+|\xi|+1}{t^2}\\
&\qquad+e^{-(C_0/5)(|k-l|+|\xi-\eta|)}\sqrt{A_k(t,\xi)A_l(t,\eta)}\cdot \big(e^{-|k-l|}(|k|+1)\sqrt{A_k(t,\xi)A_l(t,\xi)}\\
&\qquad\qquad+\mathbf{1}_{k\neq l}\cdot|\xi|e^{-|k-l|}\sqrt{A_k(t,\xi)A_l(t,\xi)}\big).\end{aligned}\end{equation*} Using \eqref{simple}$\sim$\eqref{simple2}, we can bound 
\begin{equation*}\frac{A_k(t,\xi)q_{11}(t,k,l,\xi,\eta)}{e^{-(C_0/5)(|k-l|+|\xi-\eta|)}\sqrt{A_k(t,\xi)A_l(t,\eta)}}\lesssim\varepsilon_0^{1/2}+\varepsilon_0\bigg(\frac{|k_*|+|\eta_*|}{t^2}+\frac{|k-k_*|+|\xi-\eta_*|}{t^2}\bigg).\end{equation*} By Cauchy-Schwartz, this gives\[|J_{11}(t)|\lesssim \bigg(\varepsilon_0^{1/2}+\varepsilon_0\frac{|k_*|+|\eta_*|}{t^2}\bigg)M_0(t)+\varepsilon_0^{1/2}I_2(t).\] For $J_{12}$, we have that
\begin{equation*}\frac{A_k(t,\xi)|q_{12}(t,k,0,\xi,\eta)|}{e^{-(C_0/5)(|k|+|\xi-\eta|)}\sqrt{A_k(t,\xi)A_*(t,\xi)}}\lesssim \varepsilon_0^2t^{-1}+\varepsilon_0^2\varepsilon_0^{-1}\sum_{q}e^{-(C_0/4)|q|}(1+\mathbf{1}_{q\neq k}\cdot|\xi|)\sqrt{A_k(t,\xi)A_l(t,\xi)}\lesssim\varepsilon_0^{3/2}\end{equation*} using \eqref{simple}, which gives $|J_{12}(t)|\lesssim\varepsilon_0^{3/2}M_0(t)$.

Summing up, we get that\[\partial_tM_0(t)\geq -C\bigg(\varepsilon_0^{1/2}+\frac{1}{t}+\varepsilon_0\frac{|k_*|+|\eta_*|}{t^2}\bigg)M_0(t)-C\sqrt{M_0(t)}\cdot Z(t),\] which implies \eqref{diffineq2}.
\end{proof}
\begin{proof}[Proof of Proposition \ref{gevrey2}] We compute, as in the proof of Proposition \ref{energy1}, that
\begin{equation}\partial_tM_1(t)=\sum_{i=1}^2I_{i}(t)+\sum_{i,j=1}^3J_{ij}(t)+K(t),\end{equation} where 
\begin{align*}
I_{1}(t)&=\sum_{k}\int_{\mathbb{R}}\partial_tA_k(t,\xi)\cdot \big|\widehat{g_1'}(t,k,\xi)\big|^2\,\mathrm{d}\xi+\sum_{j=2}^3\int_{\mathbb{R}}\partial_tA_*(t,\xi)\cdot \big|\widehat{g_j'}(t,0,\xi)\big|^2\,\mathrm{d}\xi,\\I_2(t)&=-2t^{-1}\Re\int_{\mathbb{R}}A_*(t,\xi)\big(\big|\widehat{g_2'}(t,0,\xi)\big|^2+2\big|\widehat{g_3'}(t,0,\xi)\big|^2+\overline {\widehat{g_2'}(t,0,\xi)}\cdot \widehat{g_1'}(t,0,\xi)\big)\,\mathrm{d}\xi,\\
J_{1j}&=2\Re\sum_{k,l}\int_{\mathbb{R}^2}A_k(t,\xi)q_{1j}(t,k,l,\xi,\eta)\overline{\widehat{g_1'}(t,k,\xi)}\widehat{g_j'}(t,l,\eta)\,\mathrm{d}\xi\mathrm{d}\eta,\,\,\,\,(1\leq j\leq 3),\\
J_{ij}&=2\Re\sum_{l}\int_{\mathbb{R}^2}A_*(t,\xi)q_{ij}(t,0,l,\xi,\eta)\overline{\widehat{g_i'}(t,0,\xi)}\widehat{g_j'}(t,l,\eta)\,\mathrm{d}\xi\mathrm{d}\eta,\,\,\,\,(2\leq i\leq 3,1\leq j\leq 3),\\ 
K(t)&=2\Re\sum_{k}\int_{\mathbb{R}}A_k(t,\xi)\overline{\widehat{g_1'}(t,k,\xi)}\widehat{\rho_1'}(t,k,\xi)\mathrm{d}\xi+2\Re\sum_{j=2}^3\int_{\mathbb{R}}A_*(t,\xi)\cdot\overline{\widehat{g_j'}(t,0,\xi)}\widehat{\rho_j'}(t,0,\xi)\mathrm{d}\xi.\end{align*} Moreover we will decompose $q_{11}=q_{11}^T+q_{11}^R$, see Proposition \ref{systemrough}, and define correspondingly the terms $J_{11}^T$ and $J_{11}^R$. 

Now the terms $I_1$, $I_2$, $J_{ij} (i\neq j)$, and $J_{11}^R$ can be estimated in the same way as in the proof of Proposition \ref{energy1}, using (\ref{symbolbd1})$\sim$(\ref{symbolbd7}), to obtain that \[|I_1|+|I_2|+\sum_{i\neq j}|J_{ij}|+|J_{11}^R|\lesssim(\varepsilon_0^{1/2}+t^{-1})M_1(t).\] The term $K(t)$ is estimated by Cauchy-Schwartz,
\begin{equation*}
|K(t)|\lesssim\sqrt{M_1(t)}\bigg(\sum_{k}\int_{\mathbb{R}}A_k(t,\xi)\big|\widehat{\rho_1'}(t,k,\xi)\big|^2\,\mathrm{d}\xi+\sum_{j=2}^3\int_{\mathbb{R}}A_*(t,\xi)\big|\widehat{\rho_j'}(t,0,\xi)\big|^2\,\mathrm{d}\xi\bigg)^{1/2}
\lesssim \sqrt{M_1(t)}Z(t).
\end{equation*}

It then suffices to estimate the terms $J_{11}^T$, $J_{22}$ and $J_{33}$. We only consider the first one, as the other two are similar. Using the definition (\ref{decompose}) we can write
\begin{equation}\label{transform}
J_{11}^R(t)=t^{-2}\varepsilon_0\Re\sum_{k,l}\int_{\mathbb{R}^2}A_k(t,\xi)(r_1(k-l,\xi-\eta)\cdot l+r_2(k-l,\xi-\eta)\cdot\eta)\overline{\widehat{g_1'}(t,k,\xi)}\widehat{g_1'}(t,l,\eta)\,\mathrm{d}\xi\mathrm{d}\eta,
\end{equation} where $r_j$ satisfies that $|r_j(m,\zeta)|\lesssim e^{-10(|m|+|\zeta|)}$, and $r_j(-m,-\zeta)=-\overline{r_j(m,\zeta)}$. Using symmetry we can bound this by
\begin{align*}
|\widetilde{J_{11}}(t)|&\lesssim \sum_{k,l}\int_{\mathbb{R}^2}\big( |r_1(k-l,\xi-\eta)|\cdot |q_{11}^k(t,k,l,\xi,\eta)|+|r_2(k-l,\xi-\eta)|\cdot |q_{11}^\xi(t,k,l,\xi,\eta)|\big)\\&\qquad\quad\times\sqrt{A_k(t,\xi)A_l(t,\eta)}|\widehat{g_1'}(t,k,\xi)|\cdot|\widehat{g_1'}(t,l,\eta)|\,\mathrm{d}\xi\mathrm{d}\eta,
\end{align*} where
\begin{equation}
q_{11}^k(t,k,l,\xi,\eta)=\frac{\varepsilon_0e^{-5(|k-l|+|\xi-\eta|)}}{t^2}\cdot\frac{l\cdot A_k(t,\xi)-k\cdot A_l(t,\eta)}{\sqrt{A_k(t,\xi)A_l(t,\eta)}},
\end{equation}
and
\begin{equation}
q_{11}^{\xi}(t,k,l,\xi,\eta)=\frac{\varepsilon_0e^{-5(|k-l|+|\xi-\eta|)}}{t^2}\cdot\frac{\eta\cdot A_k(t,\xi)-\xi\cdot A_l(t,\eta)}{\sqrt{A_k(t,\xi)A_l(t,\eta)}}.
\end{equation} To obtain the bound $|\widetilde{J_{11}}|\lesssim \varepsilon_0^{1/2}M_1(t)$, by Cauchy-Schwartz, it then suffices to prove that
\begin{equation}\label{newqest}|q_{11}^k|+|q_{11}^{\xi}|\lesssim \varepsilon_0^{1/2}.
\end{equation}

We will only prove \eqref{newqest} for the term $q_{11}^{\xi}$, since the proof for the term $q_{11}^k$ is similar (and easier). Decompose
\[
\eta\cdot A_k(t,\xi)-\xi\cdot A_l(t,\eta)=(\eta-\xi)\cdot A_k(t,\xi)+\xi\cdot(A_k(\xi)-A_l(\eta)),
\] and denote the contribution of the two terms above to $q_{11}^{\xi}$ by $q_{11}^{(j)}$ with $1\leq j\leq 2$. Then we have $|q_{11}^{(1)}|\lesssim\varepsilon_0$ using \eqref{simple0}$\sim$\eqref{simple3}. For $q_{11}^{(2)}$ we may assume $|k|\sim|l|$, and compute
\[
\begin{split}
|q_{11}^{(2)}|&\lesssim\frac{\varepsilon_0|\xi|}{t^2}e^{-5(|k-l|+|\xi-\eta|)}\frac{|A_k(\xi)-A_l(\eta)|}{\sqrt{A_k(\xi)A_l(\eta)}}\\
&\lesssim\frac{\varepsilon_0|\xi|}{t^2}e^{-4(|k-l|+|\xi-\eta|)}\sqrt{A_k(\xi)A_l(\eta)}\cdot\big|\varepsilon_0^{-1}k^2+(\xi-tk)^2-\varepsilon_0^{-1}l^2-(\eta-tl)^2\big|.
\end{split}
\] Let $X=|\xi-tk|$ and $Y=|\eta-tl|$, if $|\xi|\gg t|k|$, then $\sqrt{A_k(\xi)A_l(\eta)}\lesssim|\xi|^{-2}$ (notice that $|k|\sim|l|$), which implies $|q_{11}^{(2)}|\lesssim\varepsilon_0$; if $|\xi|\lesssim t|k|$, then
\[
|q_{11}^{(2)}|\lesssim\frac{\varepsilon_0|k|}{t}\cdot\frac{\varepsilon_0^{-1}(|k|+|l|)+t(|X|+|Y|)}{\sqrt{(X^2+\varepsilon_0^{-1}k^2)(Y^2+\varepsilon_0^{-1}l^2)}}\lesssim\varepsilon_0.
\] This completes the proof of \eqref{monotinicity2}.
\end{proof}
\begin{proof}[Proof of Proposition \ref{lastone}] Recall $M_0(t)$ defined in \eqref{diffineq}, with $k_*=\eta_*=0$. Using the differential inequality \eqref{diffineq2}, the support condition for $\widehat{\rho'}$, and the fact that $g'(T_0)=0$, we obtain, for $t\in[T_2,T_1]$, that
\begin{equation}\label{movexismall}\sum_{j=1}^3\sum_k\int_{\mathbb{R}}e^{(|k|+|\eta|)/2}\big|\widehat{g_j'}(t,k,\xi)\big|^2\,\mathrm{d}\xi\lesssim t^2M_0(t)\lesssim e^{CT_0^{21/20}}(Z(t))^2.
\end{equation}In particular, this implies that
\begin{equation}\label{errorsmallnew}
\sum_{j=1}^3\sum_k\int_{\mathbb{R}}\big(\mathbf{1}_{|k|\geq D T_0^{21/20}}+\mathbf{1}_{|\xi|\geq DT_0^{21/20}}\big)e^{(|k|+|\eta|)/4}\big|\widehat{g_j'}(t,k,\xi)\big|^2\,\mathrm{d}\xi\lesssim e^{-DT_0^{21/20}/4}(Z(t))^2.
\end{equation} Therefore, on the right hand side of \eqref{simpsys}, we may freely insert cutoffs of form
\[\mathbf{1}_{|k|\lesssim DT_0^{21/20}}\cdot \mathbf{1}_{|l|\lesssim DT_0^{21/20}}\cdot\psi\bigg(\frac{\xi}{DT_0^{21/20}}\bigg)\psi\bigg(\frac{\eta}{DT_0^{21/20}}\bigg)
\] at the expense of introducing error terms bounded by $e^{-DT_0^{21/20}/8}Z(t)$, which can always be absorbed by the $\rho'$ term in (\ref{inhomo}), and will thus be omitted below.

Now we define 
\[
\widetilde{\beta_k}(t,\xi)=\widetilde{A_k}(t,\xi)^{1/2}f'(t,k,\xi),\quad (\widetilde{\beta_h},\widetilde{\beta_\theta})(t,\xi)=\widetilde{A_*}(t,\xi)^{1/2}(h',\theta')(t,\xi).
\] Since we may restrict to $|k|+|l|\leq DT_0^{21/20}$ and $|\xi|+|\eta|\lesssim DT_0^{21/20}$, by using (\ref{symbolbd1})$\sim$(\ref{symbolbd7}) and (\ref{weight1})$\sim$(\ref{extra}), we can compute 
\begin{equation}\label{boundabs1}
\begin{split}
\big|\partial_t\widetilde{\beta_k}(t,\xi)\big|&\lesssim Q^{-1}\widetilde{\beta_k}(t,\xi)+\sum_{l}\int_{\mathbb{R}}\varepsilon_0e^{-(C_0/4)(|k-l|+|\xi-\eta|)}\frac{|k|+|\xi|+1}{t^2}\bigg(\frac{\widetilde{A_k}(t,\xi)}{\widetilde{A_l}(t,\eta)}\bigg)^{1/2}\big|\widetilde{\beta_l}(t,\eta)\big|\,\mathrm{d}\eta
\\&+\sum_{l\neq 0}\int_{\mathbb{R}}\varepsilon_0 e^{-(C_0/4)(|k-l|+|\xi-\eta|)}(|k|+\mathbf{1}_{l\neq k}\cdot|\xi|+1)\bigg(\frac{\widetilde{A_k}(t,\xi)}{(\eta-tl)^2+l^2}\bigg)^{1/2}\cdot Q\big|\widetilde{\beta_l}(t,\eta)\big|\,\mathrm{d}\eta\\
&+\sum_{q\neq 0}\int_{\mathbb{R}}\varepsilon_0^2 e^{-(C_0/4)(|k|+|q|+|\xi-\eta|)}(\mathbf{1}_{q\neq k}\cdot|\xi|+1)\bigg(\frac{\widetilde{A_k}(t,\xi)}{(\eta-tq)^2+q^2}\bigg)^{1/2}\cdot Q\big|\widetilde{\beta_h}(t,\eta)\big|\,\mathrm{d}\eta
\\&+\int_{\mathbb{R}}e^{-(C_0/4)(|k|+|\xi-\eta|)}\bigg(\frac{\widetilde{A_k}(t,\xi)}{\widetilde{A_*}(t,\eta)}\bigg)^{1/2}\bigg(\frac{\varepsilon_0^2}{t}\big|\widetilde{\beta_h}(t,\eta)\big|+\varepsilon_0\big|\widetilde{\beta_\theta}(t,\eta)\big|\bigg)\,\mathrm{d}\eta+\big|\widehat{\rho_1'}(t,k,\xi)\big|
\\
&\lesssim Q^{-1}\bigg(\sum_l\int_{\mathbb{R}}e^{-(C_0/5)(|k-l|+|\xi-\eta|)}\big|\widetilde{\beta_l}(t,\eta)\big|\,\mathrm{d}\eta+\int_{\mathbb{R}}e^{-(C_0/5)(|k|+|\xi-\eta|)}\big(\big|\widetilde{\beta_h}(t,\eta)\big|+\big|\widetilde{\beta_\theta}(t,\eta)\big|\big)\,\mathrm{d}\eta\bigg)\\
&+Q^{-1}\big|\widetilde{\beta_k}(t,\xi)\big|+\varepsilon_0Q\sum_l\int_{\mathbb{R}}\big(F_k(t,\xi)+F_l(t,\xi)\big)e^{-(C_0/4)(|k-l|+|\xi-\eta|)}\big|\widetilde{\beta_l}(t,\eta)\big|\,\mathrm{d}\eta+\big|\widehat{\rho_1'}(t,k,\xi)\big|,
\end{split}
\end{equation}and similarly,
\begin{equation}\label{boundabs2}
\begin{split} \big|\partial_t\widetilde{\beta_h}(t,\xi)\big|&\lesssim Q^{-1}\big(\big|\widetilde{\beta_h}(t,\xi)\big|+\big|\widetilde{\beta_\theta}(t,\xi)\big|\big)+\frac{1}{t}\bigg(\frac{\widetilde{A_*}(t,\xi)}{\widetilde{A_0}(t,\xi)}\bigg)^{1/2}\big|\widetilde{\beta_0}(t,\xi)\big|+\big|\widehat{\rho_2'}(t,k,\xi)\big|\\
&\qquad+Q^{-1}\int_{\mathbb{R}}e^{-(C_0/5)|\xi-\eta|}\big(\big|\widetilde{\beta_h}(t,\eta)\big|+\big|\widetilde{\beta_\theta}(t,\eta)\big|\big)\,\mathrm{d}\eta\\
&\lesssim Q^{-1}\big(\big|\widetilde{\beta_h}(t,\xi)\big|+\big|\widetilde{\beta_\theta}(t,\xi)\big|+\big|\widetilde{\beta_0}(t,\xi)\big|\big)+\big|\widehat{\rho_2'}(t,k,\xi)\big|\\
&\qquad+Q^{-1}\int_{\mathbb{R}}e^{-(C_0/5)|\xi-\eta|}\big(\big|\widetilde{\beta_h}(t,\eta)\big|+\big|\widetilde{\beta_\theta}(t,\eta)\big|\big)\,\mathrm{d}\eta,
\end{split}
\end{equation} and 
\begin{equation}\label{boundabs3}
\begin{split}
 \big|\partial_t\widetilde{\beta_\theta}(t,\xi)\big|&\lesssim Q^{-1}\big|\widetilde{\beta_\theta}(t,\xi)\big|+Q^{-1}\int_{\mathbb{R}}e^{-(C_0/5)|\xi-\eta|}\big(\big|\widetilde{\beta_h}(t,\eta)\big|+\big|\widetilde{\beta_\theta}(t,\eta)\big|\big)\,\mathrm{d}\eta+\big|\widehat{\rho_3'}(t,k,\xi)\big|\\
 &\qquad+Q^{-1}\sum_l\int_{\mathbb{R}}e^{-(C_0/5)(|l|+|\xi-\eta|)}\big|\widetilde{\beta_l}(t,\eta)\big|\,\mathrm{d}\eta.
 \end{split}
\end{equation} Suppose we are in the case $t''>t'>T_0$ (the other case being similar); let
\[B_k(t,\xi)=\sup_{s\in[T_0,t]}|\widetilde{\beta_k}(s,\xi)|,\quad B(t)=\bigg(\sum_k\int_{\mathbb{R}}|B_k(t,\xi)|^2\,\mathrm{d}\xi\bigg)^{1/2},
\] with suitable changes made when $k\in\{h,\theta\}$, then by \eqref{boundabs1}$\sim$\eqref{boundabs3}, on $[t',t'']$ we have
\[
\begin{split}
B_k(t,\xi)&\leq \big|\widetilde{\beta_k}(t',\xi)\big|+CQ^{-1}\int_{t'}^t B_k(s,\xi)\,\mathrm{d}s+CQ^{-1}\int_{t'}^t\sum_{l}\int_{\mathbb{R}}e^{-10(|k-l|+|\xi-\eta|)}B_l(s,\eta)\,\mathrm{d}\eta\mathrm{d}s\\
&+C\varepsilon_0Q\sum_l\int_{\mathbb{R}}e^{-10(|k-l|+|\xi-\eta|)}B_l(t,\eta)\cdot\int_{t'}^{t}(F_k(s,\xi)+F_l(s,\xi))\,\mathrm{d}s+\sum_{j=1}^3\int_{t'}^{t}\big|\widehat{\rho_j'}(s,k,\xi)\big|\,\mathrm{d}s.
\end{split} 
\]Taking the $l_k^2L_{\xi}^2$ norm and using the fact that 
\[\sup_{|k|+|\xi|\lesssim T_0^{9/8}}\int_{1}^{T_1}F_k(s,\xi)\,\mathrm{d}s\lesssim\log k_0,
\] we get
\[B(t)\leq M_2(t')+CQ^{-1}\int_{t'}^t B(s)\,\mathrm{d}s+C\varepsilon_0Q\cdot \log k_0\cdot B(t)+\int_{t'}^tZ(s)\,\mathrm{d}s.
\] By our choice of $Q$ we have $C\varepsilon_0Q\cdot \log k_0\ll 1$, so we have 
\[B(t)\leq \sqrt{M_2(t')}+CQ^{-1}\int_{t'}^t B(s)\,\mathrm{d}s+\int_{t'}^tZ(s)\,\mathrm{d}s,
\] and thus by Gronwall,
\[
\sqrt{M_2(t'')}\leq B(t'')\leq e^{CQ^{-1}(t''-t')}\sqrt{M_2(t')}+\int_{t'}^{t''}e^{CQ^{-1}(t''-s)}Z(s)\,\mathrm{d}s. 
\] Since $\partial_t\log Z(t)\gg Q^{-1}$, the right hand side is bounded by $e^{CQ^{-1}(t''-t')}M_2(t')+T_0\cdot Z(t'')$, which proves \eqref{improvedgrowth}.

Finally, suppose $\widehat{\rho'(t)}$ is supported in the smaller region $|\xi|\lesssim \varepsilon_0^{1/30}T_0$, then by using (\ref{diffineq2})and arguing as in (\ref{movexismall}) and (\ref{errorsmallnew}), we can restrict $\widehat{g'}$ to the region $|\xi|\lesssim \varepsilon^{1/35}T_0$, up to an acceptable error, on the whole of $[T_2,T_1]$.

Since $t\geq T_3\sim \varepsilon_0^{1/40}T_0$, we have $|\xi|\ll t$, so in particular
\[
\frac{1}{(\xi-tl)^2+l^2}\lesssim\frac{1}{t^2l^2}
\] for all $l\neq 0$. By \eqref{symbolbd1}$\sim$\eqref{symbolbd7}, we thus have 
\[|\partial_t\widetilde{L}(t)|\lesssim T_0^{-1/4}\widetilde{L}(t)+T_0^{1/4}\sqrt{\widetilde{L}(t)}\cdot Z(t),
\] where 
\[\widetilde{L}(t)=\sum_{j=1}^2\sum_k\int_{\mathbb{R}}|\widehat{g_j'}(t,k,\xi)|^2\,\mathrm{d}\xi+T_0^{1/2}\sum_k\int_{\mathbb{R}}|\widehat{g_3'}(t,k,\xi)|^2\,\mathrm{d}\xi.
\] Since $|\partial_t\log R(t)|\gg T_0^{-1/4}$, we get that $\widetilde{L}(t)\lesssim T_0^{1/2}(Z(t))^2$, this proves \eqref{improve}.

\end{proof}

\bibliographystyle{abbrv} \bibliography{Gevrey-instab}

\end{document}